\documentclass[12pt]{amsart}
\usepackage{amsmath}
\usepackage{amssymb}
\usepackage{amsthm}
\usepackage[all]{xy}
\usepackage{graphicx}
\usepackage[margin=2.5cm]{geometry}
\usepackage{color}
\usepackage{hyperref}
\usepackage{verbatim}

\numberwithin{equation}{section}

\setlength{\parindent}{0cm}
\setlength{\parskip}{.2cm}







\newcommand{\N}{\ensuremath{\mathbb{N}}}
\newcommand{\R}{\ensuremath{\mathbb{R}}}


\newtheorem{theorem}{Theorem}[section]
\newtheorem{lemma}[theorem]{Lemma}

\newtheorem{proposition}[theorem]{Proposition}

\newtheorem{corollary}[theorem]{Corollary}
\newtheorem{claim}[theorem]{Claim}
\newtheorem{Notation}{Notational Convention}[section]

\theoremstyle{definition} 
\newtheorem{example}[theorem]{Example}
\newtheorem{defn}{Definition}[section]
 
\newtheorem{remark}{Remark}[section] 
\newtheorem{notation}{Notation}[section]

\newcommand{\Th}{{\mathsf T}\!{\mathsf h}}
\newcommand{\MT}{{\mathsf M}{\mathsf T}}
\newcommand{\Mor}{{\mathcal M}or}
\newcommand{\Ob}{\mathcal{O}b}

\newcommand{\MO}{{\mathsf M}{\mathsf O}}
\newcommand{\MTO}{{\mathsf M}{\mathsf T}{\mathsf O}}
\newcommand{\Iso}{{\mathrm I}{\mathrm s}{\mathrm o}}
\DeclareMathOperator*{\Cofibre}{Cofibre}

\DeclareMathOperator*{\colim}{colim}

\DeclareMathOperator*{\Diff}{Diff}
\DeclareMathOperator*{\Emb}{Emb}
\DeclareMathOperator*{\BDiff}{BDiff}
\DeclareMathOperator*{\Sub}{Sub}

\DeclareMathOperator*{\Maps}{Maps}

\textwidth=6.6truein
\mathsurround=1pt
\textheight=8.2truein
\topmargin -10pt \headheight 10pt 

\author{Nathan Perlmutter}

\address{Department of Mathematics, University of Oregon, Eugene, OR,
  97403, USA} 
  
  \email{nperlmut@uoregon.edu}

\title[Cobordism Category of Manifolds With
Singularities]{Cobordism category of manifolds \\ with Baas-Sullivan
  singularities, Part I}
  
  \begin{document}
  
  \maketitle
  \begin{abstract} For a fixed closed manifold $P$, we construct a
    cobordism category of embedded manifolds with
    Baas-Sullivan singularities modeled on $P$. Our main theorem identifies
    the homotopy type of the classifying space of this cobordism
    category with that of the infinite loop-space of a certain Thom spectrum,
     related to the spectrum $\MT(d)$ introduced in \cite{GMTW
      09}. We obtain an analogue of the Bockstein-Sullivan exact
    couple that arises between the classical bordism theories $MO$ and
    $MO_{P}$ on the level of cobordism categories and
      their classifying spaces. 
    \end{abstract}
    
\thispagestyle{empty}
\section{Introduction and Statement of Main Results}  \label{Introduction}
 Fix a closed, smooth manifold $P$. 
Following \cite{Ba 73} and \cite{B 92}, a manifold with Baas-Sullivan singularities modeled on $P$ is a smooth manifold $W$ equipped with the following:
 \begin{enumerate}
 \item[i.] The boundary is given a decomposition as the union of two \textit{faces}, $\partial W = \partial_{0}W\cup\partial_{1}W$ 
 such that $\partial(\partial_{1}W) = \partial_{0}W\cap\partial_{1}W = \partial(\partial_{0}W)$ is a closed manifold.
 \item[ii.] The face $\partial_{1}W$ has the factorization, $\partial_{1}W = \beta_{1}W\times P$ for some manifold $\beta_{1}W$. 
 \end{enumerate}
 We will call such manifolds $P$-manifolds. 
  The face $\partial_{0}W$ is said to be the \textit{boundary} of $W$. 
  If $W$ is compact and $\partial_{0}W = \emptyset$ then $W$ is said to be a closed $P$-manifold. 
  Two closed $d$-dimensional $P$-manifolds $M_{a}$ and $M_{b}$ are said to be cobordant if there is a $(d+1)$-dimensional $P$-manifold $W$ such that $\partial_{0}W = M_{a}\sqcup M_{b}$. 
  
  We are interested in the cobordism theory of $P$-manifolds.
 To simplify our presentation
we will assume that all manifolds are unoriented. However, the same constructions work
in the same way for an arbitrary tangential structure, $\theta: B \rightarrow BO$.  
  We denote by
$\Omega_{*}$ the graded cobordism group of unoriented
manifolds.
Using the above definitions of $P$-manifolds and cobordism of $P$-manifolds, one can define the graded cobordism
  group $\Omega^{P}_{*}$ of unoriented $P$-manifolds.

The group $\Omega^{P}_*$ is related to $\Omega_*$ by means of
the well-known Bockstein-Sullivan exact couple:
\begin{equation} \label{couple}
\xymatrix{
\Omega_{*} \ar[drr]^{i} &&&& \Omega_{*} \ar[llll]_{\times P}\\
&& \Omega^{P}_{*} \ar[urr]_{\beta_{1}} &&}
\end{equation}
The map $\times P$ is the homomorphism of degree $\dim(P)$ given by
multiplication by $P$. The map $i$ is given by inclusion and $\beta_{1}$ is the degree $-1$ homomorphism given
by $M \mapsto \beta_{1}M$. 
This exact couple arises from a cofibre sequence of spectra,
$
\Sigma^{p}\MO \longrightarrow \MO \longrightarrow \MO_{P}
$
where $\MO_{P}$ is the classifying spectrum for $\Omega^{P}_{*}$. 
Details on the construction of this exact couple can be found in \cite{Ba 73} and \cite{B 92}. 

Motivated by the ideas in \cite{Ba 09}, we construct a
cobordism category of manifolds with Baas-Sullivan singularities which generalizes the cobordism category of \cite{GMTW 09},
 and then determine the homotopy-type of its classifying space.  
 In \cite{GMTW 09}, the authors construct a topological category $\mathbf{Cob}_{d+1}$
whose morphisms are $(d+1)$-dimensional submanifolds
$W \subseteq [a,b]\times \R^{d+\infty}$ that intersect
  the walls $\{a,b\}\times\R^{d+\infty}$
orthogonally in $\partial W$. 
This category is topologized in such a way so that there are weak homotopy equivalences,
\begin{equation} \label{eq: GMTW category}
\xymatrix{
\Ob(\mathbf{Cob}_{d+1}) \simeq \bigsqcup_{M} \BDiff(M), & \Mor(\mathbf{Cob}_{d+1}) \simeq \bigg(\bigsqcup_{W}\BDiff(W)\bigg)\bigsqcup \Ob(\mathbf{Cob}_{d+1})
}
\end{equation}
 where $M$ varies
over diffeomorphism classes of $d$-dimensional closed manifolds and $W$ varies over diffeomorphism classes of
cobordisms. Above, the space of identity morphisms is identified with the space of objects.
 In \cite{GMTW 09}, the authors determine the
homotopy type of the classifying space of $\mathbf{Cob}_{d+1}$, namely they prove that there is a  
weak homotopy equivalence
\begin{equation} \label{main GMTW} B\mathbf{Cob}_{d+1} \simeq \Omega^{\infty -1}\MT(d+1). \end{equation}
On the right-hand side, $\MT(d+1)$ is the spectrum
whose $(n+d+1)$-st space is the Thom-space $\Th(U_{d+1,n}^{\perp})$, where
$U_{d+1,n}^{\perp}$ is the orthogonal compliment to the canonical
$(d+1)$-plane bundle over the Grassmannian $G(d+1,n)$, of $(d+1)$-dimensional vector subspaces of $\R^{d+1+n}$. 

Following this work from \cite{GMTW 09}, we construct an analogous cobordism
category of $P$-manifolds. We fix once and for all an embedding
\begin{equation} \label{P-emb intro} i_{P}: P \hookrightarrow \R^{p+m} \end{equation}
 with $p = \dim(P)$ and $m >> p$. 
 We construct a topological category $\mathbf{Cob}^{P}_{d+1}$ whose morphisms are given by $(d+1)$-dimensional embedded $P$-manifolds,
$$
W\subseteq
[a,b]\times\R_{+}\times\R^{d-1+\infty}\times\R^{p+m}
$$ 
such that:
$$\begin{aligned}
W \cap (\{a,b\}\times\R_{+}\times\R^{d-1+\infty}\times\R^{p+m}) &= \partial_{0}W, \\
W \cap ([a,b]\times\{0\}\times\R^{d-1+\infty}\times\R^{p+m}) &= \partial_{1}W, 
\end{aligned}$$
 and $\partial_{1}W$ has the factorziation,
 $$\partial_{1}W \; = \; \beta_{1}W\times i_{P}(P) $$
 where $\beta_{1}W \subset [a,b]\times\{0\}\times\R^{d-1+\infty}$ is a submanifold and $i_{P}(P) \subset \R^{p+m}$ is
the submanifold given by the embedding specified in (\ref{P-emb intro}).  
  Here and throughout this paper $\R_{+}$ denotes the half-open interval $[0,\infty)$. 
  We topologize this category in a way similar to as in \cite{GMTW 09} so that there are homotopy equivalences,
$$\xymatrix{
\Ob(\mathbf{Cob}^{P}_{d+1}) \; \simeq \; {\displaystyle\bigsqcup_{M}} \BDiff_{P}(M), & \Mor(\mathbf{Cob}^{P}_{d+1}) \; \simeq \;
\bigg({\displaystyle \bigsqcup_{W}} \BDiff_{P}(W)\bigg) \bigsqcup \Ob(\mathbf{Cob}^{P}_{d+1}) } $$ 
where $M$ varies over diffeomorphism classes of closed
$d$-dimensional $P$-manifolds and $W$ varies over diffeomorphism
classes of $(d+1)$-dimensional $P$-manifold cobordisms. 
For a $P$-manifold $W$, $\Diff_{P}(W)$ is defined to be the group of diffeomorphisms $g:
W\rightarrow W$ such that the restriction $g|_{\partial_{1} W}$ is
equal to the product $g_{\beta_{1}W}\times Id_{P}$ where
$g_{\beta_{1}W}$ is a diffeomorphism of $\beta_{1}W$. 

The main goal of this paper is to determine the homotopy type of
the classifying space, $B\mathbf{Cob}^{P}_{d+1}$. To do so we 
construct a new spectrum $\MT_{P}(d+1)$ as
follows.
From the embedding in (\ref{P-emb intro}) used to
  construct $\mathbf{Cob}^{P}_{d+1}$, we obtain a
Pontryagin-Thom map $c_{P}: S^{p+m} \longrightarrow
\Th(U^{\perp}_{p,m}).  $ The natural multiplication map given by
sending a pair of vector sub-spaces to their product,
$$\xymatrix{
U_{d-p,n-m}^{\perp}\times U_{p,m}^{\perp} \ar[rrr]^{\hat{\mu}} \ar[d] &&& U_{d,n}^{\perp} \ar[d] \\
G(d-p,n-m)\times G(p,m) \ar[rrr]^{\mu} &&& G(d,n),}$$
yields a map of Thom-spaces,
$\Th(\hat{\mu}): \Th(U^{\perp}_{d-p,n-m})\wedge \Th(U^{\perp}_{p,m}) \longrightarrow \Th(U^{\perp}_{d,n}).$
The composition
$$\xymatrix{\Th(U^{\perp}_{d-p,n-m})\wedge S^{p+m}
  \ar[rr]^{c_{P}\wedge Id} &&
  \Th(U^{\perp}_{d-p,n-m})\wedge\Th(U^{\perp}_{p,m})
  \ar[rr]^{\ \ \ \ \Th(\hat{\mu})} && \Th(U^{\perp}_{d,n})}$$ then induces a
map of spectra which we denote by $\tau_{P}:
\MT(d-p) \longrightarrow \MT(d).$

There is another map of spectra $\hat{j}_{d}: \Sigma^{-1}\MT(d)
\longrightarrow \MT(d+1)$, induced by the bundle map covering the
standard embedding $G(d,n) \hookrightarrow G(d+1,n)$ of
Grassmannians. We define $\MT_{P}(d+1)$ to be
the cofibre of the composition of spectrum maps,
\begin{equation} \label{composition} \xymatrix{\Sigma^{-1}\MT(d-p)
    \ar[rr]^{\Sigma^{-1}\tau_{P}} && \Sigma^{-1}\MT(d)
    \ar[rr]^{\hat{j}_{d}} && \MT(d+1).} \end{equation} Details of
this construction are covered in Section \ref{Thom Spectrum}.  We now
state our main result:
\begin{theorem}[Main Theorem] \label{thm: Main} 
There is a weak homotopy equivalence
$$B\mathbf{Cob}^{P}_{d+1} \simeq \Omega^{\infty -1}\MT_{P}(d+1).$$
\end{theorem}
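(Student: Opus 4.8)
The plan is to adapt the Galatius--Madsen--Tillmann--Weiss (GMTW) strategy to the singular setting, exploiting the fact that a $P$-manifold is a pair consisting of a manifold-with-corners together with a product structure on one face. First I would set up a ``sheaf'' (or, in the more modern treatment, a space-valued presheaf on a suitable category of Euclidean spaces) $\mathcal{D}^{P}_{d+1}$ of $P$-submanifolds of $\mathbb{R}^{N}\times[a,b]\times\mathbb{R}_{+}\times\mathbb{R}^{d-1+\infty}\times\mathbb{R}^{p+m}$ that are collared near the walls and whose intersection with the $\{0\}$-wall factors as $\beta_{1}W\times i_{P}(P)$; an application of the usual scanning / Pontryagin--Thom argument should identify the representing space of this sheaf with $\Omega^{\infty-1}$ of a Thom spectrum. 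I would first carry out the argument for closed $P$-manifolds-without-boundary sheaf $\mathcal{C}^{P}$: here a $P$-manifold meeting the $\{0\}$-wall in $\beta_{1}W\times i_{P}(P)$ and otherwise closed has a Pontryagin--Thom collapse that, on the region near the singular wall, factors through $i_{P}$, and this is precisely the geometric input that produces the map $\tau_P : \MT(d-p)\to\MT(d)$ of the excerpt. Concatenating with the description of the non-singular part, one gets that $|\mathcal{C}^{P}|$ is the homotopy fibre (equivalently, after one desuspension, the cofibre at the appropriate spectrum level) of $\hat j_{d}\circ\Sigma^{-1}\tau_{P}$, i.e.\ $\Omega^{\infty-1}\MT_{P}(d+1)$.

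The second main step is the ``group-completion / monoid-to-category'' comparison showing $B\mathbf{Cob}^{P}_{d+1}\simeq |\mathcal{C}^{P}|$. Following GMTW, I would introduce the intermediate categories obtained by thickening the cobordism category by height data (the $[a,b]$-parameter), producing a poset-enriched model $\mathcal{C}^{P}_{\delta}$ (or a bi-semi-simplicial space) whose realization is manifestly equivalent both to $B\mathbf{Cob}^{P}_{d+1}$ and to the representing space of the sheaf. The three lemmas one needs are: (a) forgetting the collar data / replacing the cobordism category by its ``cylinder-relaxed'' version is a weak equivalence; (b) the ``all heights allowed'' semi-simplicial space is equivalent to the sheaf $\mathcal{C}^{P}$ by a standard colimit-of-submanifolds-of-growing-intervals argument; and (c) the nerve-thickening lemma (Madsen--Weiss-style) relating $B$ of the topological category with the realization of the height-stratified object. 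Each of these goes through because all the constructions are compatible with taking the product with $i_P(P)$ on the singular face --- one only has to check that the submersion-theoretic and transversality arguments can be performed fibrewise over (a neighbourhood of) $P$, which they can since $P$ is closed and the embedding $i_P$ is fixed.

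I expect the genuine obstacle to be the transversality/parametrised-surgery step inside (b) and in the Pontryagin--Thom identification: one must make the collapse map and the ``pushing to $\pm\infty$'' deformations respect the rigid product structure $\beta_1 W\times i_P(P)$ near the $\{0\}\times\mathbb{R}^{d-1+\infty}\times\mathbb{R}^{p+m}$ wall. Concretely, when perturbing a $P$-submanifold to be in general position with respect to a projection or when sweeping it off to infinity, the isotopy must be constant in the $\mathbb{R}^{p+m}$-directions over a neighbourhood of $i_P(P)$, so that the perturbed family is still a $P$-manifold. The way around this is to choose a tubular neighbourhood of the singular wall of the form (collar)$\times\beta_1 W\times i_P(P)$ and to perform all deformations only on the $\beta_1 W$-factor and the collar coordinate, then extend by the product; this reduces every step to the non-singular GMTW statement applied one dimension lower (in the $\beta_1$-directions) and crossed with the fixed closed manifold $P$, which is exactly the geometry encoded by the spectrum-level map $\tau_P$. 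Once this ``fibre over $P$'' principle is in place, the remaining bookkeeping --- assembling the cofibre sequence \eqref{composition} from the geometric decomposition of a $P$-manifold into its interior, its $\partial_1$-face, and its $\partial_0$-boundary --- is routine.
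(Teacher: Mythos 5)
Your proposal is correct and follows essentially the same route as the paper: a sheaf model $\mathbf{D}^{P}_{d+1}$ of families of $P$-submanifolds identified with $\Omega^{\infty-1}\MT_{P}(d+1)$ by a Pontryagin--Thom argument (the paper uses an auxiliary ``pair'' model $\Omega^{\infty-1}_{P}\widehat{\Th(U^{\perp}_{d+1,\infty})}$ to map into the cofibre), followed by the GMTW/Madsen--Weiss zig-zag of $\mathbf{CAT}$-valued sheaves and cocycle sheaves comparing $|\mathbf{D}^{P}_{d+1}|$ with $B\mathbf{Cob}^{P}_{d+1}$. Your ``fibre over $P$'' principle is exactly what the paper implements through its relative Phillips submersion theorem for $P$-manifolds, the $P$-vector-bundle destabilization lemma, and the contractibility of the $P$-embedding spaces, each proved by restricting to the $\beta_{1}$-level and extending by the product with $i_{P}(P)$.
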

The spectrum
$\MT_{P}(d+1)$ was constructed using the
Pontryagin-Thom map for a particular embedding of our manifold
$P$. The homotopy class of this Pontryagin-Thom map only depends on
the cobordism class of $P$. This observation leads to
  the following:
\begin{corollary} Let $P_{1}$ and $P_{2}$ be smooth closed manifolds
  of the same dimension. Suppose that $P_{1}$ and $P_{2}$ are
  cobordant. Then there is a weak homotopy equivalence,
$$B\mathbf{Cob}^{P_{1}}_{d+1} \simeq B\mathbf{Cob}^{P_{2}}_{d+1}.$$
\end{corollary}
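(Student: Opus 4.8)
The plan is to deduce the corollary from Theorem~\ref{thm: Main} by reducing it to a weak equivalence between the two Thom spectra. By Theorem~\ref{thm: Main} we have $B\mathbf{Cob}^{P_1}_{d+1}\sim\Omega^{\infty-1}\MT_{P_1}(d+1)$ and $B\mathbf{Cob}^{P_2}_{d+1}\sim\Omega^{\infty-1}\MT_{P_2}(d+1)$, and $\Omega^{\infty-1}$ takes a weak equivalence of spectra to a weak homotopy equivalence, so it suffices to produce a weak equivalence $\MT_{P_1}(d+1)\sim\MT_{P_2}(d+1)$. By construction $\MT_{P_i}(d+1)$ is the homotopy cofibre of $\hat{j}_{d}\circ\Sigma^{-1}\tau_{P_i}\colon\Sigma^{-1}\MT(d-p)\to\MT(d+1)$; the map $\hat{j}_{d}$ is induced by the fixed Grassmannian inclusions and hence is independent of $P$, and the homotopy cofibre of a map of spectra is invariant under homotopy of that map, so it is enough to prove that $\hat{j}_{d}\circ\Sigma^{-1}\tau_{P_1}$ and $\hat{j}_{d}\circ\Sigma^{-1}\tau_{P_2}$ are homotopic as maps of spectra. (The construction fixes one large integer $m$, the same for $P_1$ and $P_2$ since $\dim P_1=\dim P_2=p$; and since the Pontryagin--Thom map $c_{P}$ depends on $i_{P}\colon P\hookrightarrow\R^{p+m}$ only through its homotopy class, replacing $i_{P_i}$ by any isotopic embedding alters $\MT_{P_i}(d+1)$ only up to a canonical equivalence, so we are free to choose $i_{P_1}$ and $i_{P_2}$ conveniently below.)

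The next step is to isolate the $P$-dependence. The map $\tau_{P}$ is induced level-wise by $\Th(\hat{\mu})\circ(\mathrm{Id}\wedge c_{P})$, in which only the factor $c_{P}\colon S^{p+m}\to\Th(U^{\perp}_{p,m})$ involves $P$. A short check with the multiplication maps and Grassmannian inclusions of Section~\ref{Thom Spectrum}---ultimately the identity $L'\times(L\oplus\R e)=(L'\times L)\oplus\R e$ for subspaces $L'$, $L$ and a fixed adjoined line $\R e$---shows that $\hat{j}_{d}\circ\Th(\hat{\mu})$ equals $\Th(\hat{\mu}')\circ(\mathrm{Id}\wedge k)$, where $\hat{\mu}'$ is the corresponding multiplication map into the Grassmannian $G(d+1,\cdot)$ and $k\colon\Th(U^{\perp}_{p,m})\to\Th(U^{\perp}_{p+1,m})$ is the fixed map of Thom spaces induced by $G(p,m)\hookrightarrow G(p+1,m)$ (the Thom-space incarnation of $\hat{j}_{p}$). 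Consequently $\hat{j}_{d}\circ\Sigma^{-1}\tau_{P}$ depends on $P$ only through the single map $k\circ c_{P}\colon S^{p+m}\to\Th(U^{\perp}_{p+1,m})$, so a homotopy $k\circ c_{P_1}\simeq k\circ c_{P_2}$ determines, level by level via smashing with identities and applying the $P$-independent map $\Th(\hat{\mu}')$, a homotopy $\hat{j}_{d}\circ\Sigma^{-1}\tau_{P_1}\simeq\hat{j}_{d}\circ\Sigma^{-1}\tau_{P_2}$ of spectrum maps.

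Finally I would build the homotopy $k\circ c_{P_1}\simeq k\circ c_{P_2}$ from a cobordism, which is exactly where the hypothesis $[P_1]=[P_2]$ enters. Choose a compact $(p+1)$-dimensional cobordism $V$ with $\partial V=P_1\sqcup P_2$. Since $m\gg p$, there is an embedding $V\hookrightarrow\R^{p+m}\times[0,1]$ meeting the walls $\R^{p+m}\times\{0,1\}$ orthogonally, equal to a product $P_1\times[0,\varepsilon)$ near the first wall and to $P_2\times(1-\varepsilon,1]$ near the second (take $i_{P_1}$, $i_{P_2}$ to be these restrictions), with $m$-dimensional normal bundle in $\R^{p+m}\times[0,1]$. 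Collapsing outside a tubular neighbourhood and using the Gauss map $V\to G(p+1,m)$ gives a map $c_{V}\colon S^{p+m}\wedge[0,1]_{+}\to\Th(U^{\perp}_{p+1,m})$, i.e.\ a homotopy; near the first wall, where $V=P_1\times[0,\varepsilon)$ has Gauss plane $T_xP_1\oplus\R\partial_t$ and normal bundle $\nu_xP_1$, the construction restricts (after identifying the collar direction with the line adjoined by $k$) exactly to $k\circ c_{P_1}$, and likewise at the second wall to $k\circ c_{P_2}$. This supplies the required homotopy, whence $\MT_{P_1}(d+1)\sim\MT_{P_2}(d+1)$ and, applying Theorem~\ref{thm: Main} twice, $B\mathbf{Cob}^{P_1}_{d+1}\sim B\mathbf{Cob}^{P_2}_{d+1}$. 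The step I expect to require the most care is this last one: setting up the embedded cobordism $V$ and its tubular neighbourhood so that $c_{V}$ is defined and restricts on the nose to $k\circ c_{P_1}$ and $k\circ c_{P_2}$ at the two ends, matched against the precise indexing and stabilisation conventions of Section~\ref{Thom Spectrum}; the cobordism invariance itself then amounts to nothing more than the fact that $V$ interpolates between $P_1$ and $P_2$, which is the geometric reflection of the cobordism invariance of the ``$\times P$'' operation underlying $\tau_{P}$.
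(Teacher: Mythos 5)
Your proposal is correct and follows the same overall route as the paper: apply Theorem \ref{thm: Main} to each of $P_1$, $P_2$ and reduce to a weak equivalence $\MT_{P_1}(d+1)\sim\MT_{P_2}(d+1)$ of the cofibre spectra, which comes from showing the two attaching maps $\hat{j}_d\circ\Sigma^{-1}\tau_{P_i}$ are homotopic. Where you differ is in how that invariance is justified. The paper disposes of it with Remark \ref{remark: homotopy invariance under bordism class}, which simply asserts that the homotopy class of $\tau_P$ itself is an invariant of the cobordism class of $P$; you instead prove the weaker (but sufficient) statement that the composite with $\hat{j}_d$ is, by factoring all $P$-dependence through $k\circ c_P\colon S^{p+m}\to\Th(U^{\perp}_{p+1,m})$ and producing the homotopy $k\circ c_{P_1}\simeq k\circ c_{P_2}$ from an embedded cobordism $V\subset\R^{p+m}\times[0,1]$. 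This extra care is not wasted: the stable class of $c_P$ alone lives in $\pi_0\MT(p)$, which is strictly finer than $\Omega_p$ (it sees Euler-characteristic-type information, e.g.\ it distinguishes cobordant surfaces of different Euler characteristic), so cobordism invariance only becomes true after composing with the stabilization map $k$ (equivalently $\hat{j}$), whose target has $\pi_{-1}\MT(p+1)\cong\Omega_p$ by the paper's Corollary \ref{coefficients}. In other words, your argument pinpoints and repairs the imprecision in the paper's remark, at the cost of the routine bookkeeping you flag (the coordinate permutation relating $\hat{j}_d\circ\Th(\hat\mu)$ to $\Th(\hat\mu')\circ(\mathrm{Id}\wedge k)$, and matching the collar direction with the adjoined line), all of which is harmless up to homotopy after stabilizing in $n$.
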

We consider the functors
$
\xymatrix{ 
\mathbf{Cob}_{d+1} \ar[rr]^{i} && \mathbf{Cob}^{P}_{d+1} \ar[rr]^{\beta_{1}} && \mathbf{Cob}_{d-p}\ ,
} 
$
where $i$ is given by inclusion and $\beta_{1}$ sends a
$(d+1)$-dimensional $P$-manifold $W$ to the $(d-p)$-dimensional
manifold $\beta_{1}W$. 
We have the following theorem. 
\begin{theorem} \label{thm: homotopy fibre} 
Passing to classifying spaces, the sequence of functors given above induces a homotopy fibre-sequence,
$\xymatrix{
B\mathbf{Cob}_{d+1} \ar[rr]^{B(i)} && B\mathbf{Cob}^{P}_{d+1} \ar[rr]^{B(\beta_{1})} && B\mathbf{Cob}_{d-p}.
}$
\end{theorem}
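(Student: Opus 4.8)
The plan is to deduce Theorem \ref{thm: homotopy fibre} from the Main Theorem (Theorem \ref{thm: Main}) together with the cofibre sequence of spectra used to define $\MT_{P}(d+1)$. First I would verify that the functors $i$ and $\beta_{1}$ are compatible, under the Main Theorem and the analogous statement of \cite{GMTW 09}, with maps of infinite loop spaces. Concretely, the inclusion functor $i: \mathbf{Cob}_{d+1} \to \mathbf{Cob}^{P}_{d+1}$ should correspond, after passing to classifying spaces, to $\Omega^{\infty-1}$ applied to the map of spectra $\hat{j}_{d}\circ(\text{unit}): \MT(d+1) \to \MT_{P}(d+1)$ appearing as the second map in the cofibre sequence (\ref{composition}); and the functor $\beta_{1}: \mathbf{Cob}^{P}_{d+1} \to \mathbf{Cob}_{d-p}$ should correspond to $\Omega^{\infty-1}$ applied to the connecting map $\MT_{P}(d+1) \to \Sigma\cdot\Sigma^{-1}\MT(d-p) \simeq \MT(d-p)$ of that cofibre sequence. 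Granting this identification, the theorem follows immediately: a cofibre sequence of spectra $A \to B \to C$ becomes a fibre sequence $A \to B \to C$ after applying $\Omega^{\infty}$ (equivalently $\Omega^{\infty-1}$), and $\Omega^{\infty-1}\Sigma^{-1}\MT(d-p) \simeq \Omega^{\infty-1}\MT(d-p)$ reproduces $B\mathbf{Cob}_{d-p}$ by (\ref{main GMTW}).

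The key steps, in order, are as follows. First, recall the explicit Pontryagin--Thom description of the equivalences in Theorem \ref{thm: Main} and (\ref{main GMTW}) — these realize $B\mathbf{Cob}^{P}_{d+1}$ and $B\mathbf{Cob}_{d+1}$ as certain spaces of sections/sheaves of tangential data, and the functors $i$, $\beta_{1}$ act by manifestly geometric operations (forgetting the $P$-singularity stratum, respectively recording the $\beta_{1}$-manifold). Second, trace these geometric operations through the scanning/Pontryagin--Thom construction to show they induce precisely the spectrum-level maps $\hat{j}_{d}$ and the connecting map; the map $\tau_{P}$ enters here because, near the singular face $\partial_{1}W = \beta_{1}W \times i_{P}(P)$, the normal data of $W$ splits as the product of the normal data of $\beta_{1}W$ with the normal data of $i_{P}(P)$, which is exactly what $\hat{\mu}$ and $c_{P}$ encode. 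Third, invoke the fact that the cofibre sequence (\ref{composition}) yields, upon $\Omega^{\infty-1}$, a fibre sequence, and assemble the homotopy-commutative diagram comparing it with $B(i)$ and $B(\beta_{1})$; conclude by the five lemma / long exact sequence of homotopy groups, or more directly since all three vertical comparison maps are equivalences.

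The main obstacle will be the second step: carefully checking that the geometric functor $\beta_{1}$ corresponds to the connecting homomorphism of the cofibre sequence rather than to some other map, and in particular getting the suspension shifts and the role of the embedding $i_{P}$ exactly right. One delicate point is that the cofibre-of-a-map construction of $\MT_{P}(d+1)$ is defined only up to the choice of a null-homotopy, and one must match this choice against the geometry of collar neighborhoods of the singular face; this is essentially a matter of unwinding the definitions in Section \ref{Thom Spectrum}, but it requires care about orientations of normal bundles and the direction of the collar $\R_{+}$. A cleaner alternative, which I would pursue if the direct approach becomes unwieldy, is to work entirely on the level of the cobordism categories: exhibit $\mathbf{Cob}_{d+1}$ as (homotopy equivalent to) the "fibre" of the functor $\beta_{1}$ in an appropriate sense — e.g. the full subcategory of $P$-manifolds with $\beta_{1}W = \emptyset$ — and then apply a general fibre-sequence theorem for classifying spaces of topological categories (a parametrized Group-Completion/Quillen Theorem B argument), reducing the problem to showing the relevant comparison map of morphism spaces is a quasifibration. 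Either route ultimately rests on the Main Theorem and on the GMTW identification, so no genuinely new input beyond bookkeeping is required.
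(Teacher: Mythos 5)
Your proposal is correct and follows essentially the same route as the paper: the paper also deduces the theorem by extending the functors $i$ and $\beta_{1}$ to the sheaf models used in the proof of Theorem \ref{thm: Main}, obtaining a commutative comparison diagram whose rows are the weak equivalences $B\mathbf{Cob}\sim|\mathbf{D}|\sim\Omega^{\infty-1}\MT$, and then importing the homotopy fibre sequence $\Omega^{\infty-1}\MT(d+1)\to\Omega^{\infty-1}\MT_{P}(d+1)\to\Omega^{\infty-1}\MT(d-p)$ coming from the defining cofibre sequence of spectra. The compatibility of the geometric functors with the spectrum-level maps, which you single out as the delicate second step, is exactly the content the paper asserts when it states that the equivalences of Theorem \ref{Main Theorem} yield the homotopy commutative diagram (\ref{eq: homotopy commutative diagram}).
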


\subsection{Outline of Paper}
This paper is structured as follows. 
Sections $2$ and $3$ are devoted to carefully defining $P$-manifolds and the different mapping spaces associated to them which include diffeomorphism groups and certain spaces of embeddings. 
In Section $4$ we give a rigorous definition of the cobordism category. 
 In Section $5$ we give a recollection of sheaves and define the main sheaf $\mathbf{D}^{P}_{d+1}$ whose representing space is latter shown to be weakly equivalent to $B\mathbf{Cob}^{P}_{d+1}$.
  In Section $6$ we construct the spectrum $\MT_{P}(d\!+\!1)$ and Section $7$ is devoted to
proving the weak homotopy equivalence
$|\mathbf{D}^{P}_{d+1}| \simeq \Omega^{\infty-1}\MT_{P}(d+1)$. 
In Section 8, we complete the proofs of Theorems \ref{thm: Main} and \ref{thm: homotopy fibre}. 
Sections 9, 10, 11 and the appendix are devoted to the proofs of technical results used earlier in the paper. 
  
To simplify the exposition we will only treat unoriented manifolds with
Baas-Sullivan singularities modeled on a single fixed manifold $P$.  One could easily
adapt our proofs to derive a corresponding theorem for $P$-manifolds
with arbitrary tangential structure. 

\subsection{Acknowledgments}
The author would like to thank Boris Botvinnik for suggesting this
particular problem and for numerous helpful discussions on the subject
of this paper.  The author is also grateful to Nils Baas
  for his encouraging remarks and to Oscar Randal-Williams for very helpful
  critical comments on the earlier version of this work.

  \section{Manifolds With Singularities}
  \label{Singularities}
  We begin with a definition of manifolds with Baas-Sullivan
  singularities modeled on a fixed manifold $P$. Fix once and for all a closed, smooth manifold $P$ and let $p$ denote the dimension of $P$. 
  Throughout the paper we will let $\R_{+}$ denote the half-open interval $[0, \infty)$. 

  \begin{defn}\label{P mfd} Let $M$ be a $d$-dimensional smooth
    manifold with corners, equipped with the following extra structure:
 \begin{enumerate} 
  \item[i.] The boundary of $M$ is given a decomposition,
  $\partial M = \partial_{0}M \cup \partial_{1}M$
 into a union of $(d-1)$-dimensional manifolds such that 
  $$\partial_{0}M \cap \partial_{1}M =   \partial (\partial_{0} M) =  \partial (\partial_{1} M)$$
  is a closed $(d-2)$-dimensional manifold. 
  We denote, $\partial_{0,1}M := \partial_{0}M \cap \partial_{1}M$.
\item[ii.] 
There are embeddings,
$$h_{0}: \partial_{0}M\times\R_{+} \longrightarrow M \quad \text{and} \quad h_{1}: \partial_{1}M\times\R_{+} \longrightarrow M$$
which satisfy:
\\
\begin{enumerate} \itemsep8pt
\item[(a)] $h^{-1}_{0}(\partial_{0}M) = \partial_{0}M\times\{0\}$ and $h^{-1}_{1}(\partial_{1}M) = \partial_{1}M\times\{0\}$,
\item[(b)] $h_{0}(\partial_{0,1}M\times[0,\infty))\subset \partial_{1}M$ and $h_{1}(\partial_{0,1}M\times[0,\infty)) \subset \partial_{0}M$,
\item[(c)] for all $(x, t_{0}, t_{1}) \in \partial_{0,1}M\times\R_{+}^{2}$ the following equation is satisfied,
$$h_{0}(h_{1}(x, t_{1}), t_{0}) = h_{1}(h_{0}(x, t_{0}), t_{1}).$$
\end{enumerate}
\item[iii.] There is a manifold $\beta_{1}M$ and diffeomorphism,
$$\phi_{1}: \partial_{1}M \stackrel{\cong} \longrightarrow \beta_{1}M\times P.$$
We let $\beta_{0,1}M$ denote the boundary $\partial(\beta_{1}M)$ and let, 
$$\phi_{0,1}: \partial_{0,1}M \stackrel{\cong} \longrightarrow \beta_{0,1}M\times P$$
denote the diffeomorphism obtained by restricting $\phi_{1}$ to $\partial_{0,1}M = \partial(\partial_{1}M)$. 
\end{enumerate}
With the above conditions satisfied, the triple $(M, (\phi_{1}, \phi_{0,1}), (h_{0}, h_{1}))$ is called a \textit{$P$-manifold.}
The manifold $\beta_{1}M$ is called the \textit{Bockstein}, the pair of diffeomorphisms $(\phi_{1}, \phi_{0,1})$ is called the \textit{structure map}, and the pair of embeddings $(h_{0}, h_{1})$ is called the \textit{collar}. 
\end{defn}  
\begin{notation}
When denoting a $P$-manifold we will usually drop the structure maps and collar from the notation and denote the $P$-manifold by its underlying manifold. We will denote, $M := (M, (\phi_{1}, \phi_{0,1}), (h_{0}, h_{1}))$. 
\end{notation}
Let $M$ be a $P$-manifold as described in the above definition. 
By setting,
$$\partial_{1}(\partial_{0}M) = \partial_{0,1}M, \quad \partial_{0}(\partial_{0}M) = \emptyset, \quad \text{and} \quad \beta_{1}(\partial_{0}M) = \beta_{0,1}M,$$ 
and restricting the structure map and collar associated to $M$, $\partial_{0}M$ obtains the structure of a $P$-manifold. 
We call $\partial_{0}M$ the \textit{boundary} of $M$. 
If $\partial_{0}M = \emptyset$ then $M$ is said to be a $P$-manifold without boundary. 
If $M$ is compact and $\partial_{0}M = \emptyset$ then $M$ is said to be a \textit{closed $P$-manifold}.

We will need to consider maps from $P$-manifolds to arbitrary topological spaces.
\begin{defn} \label{defn: P-map}
If $M$ is a $P$-manifold and $X$ is a topological space then a continuous map $f: M \longrightarrow X$ is said to be a \textit{$P$-map} if there exists 
a map $f_{\beta_{1}}: \beta_{1}M \longrightarrow X$ such that
the restriction of $f$ to $\partial_{1}M$ factors as,
\begin{equation} \label{eq: P-map factorization}
\xymatrix{
\partial_{1}M \ar[rr]^{\Phi} && \beta_{1}M\times P \ar[rr]^{\ \ \ \ \text{proj}_{\beta_{1}M}} && \beta_{1}M \ar[rr]^{f_{\beta_{1}}} && X.
}
\end{equation}
If $X$ is a smooth manifold then a $P$-map $f: M \longrightarrow X$ is said to be \text{smooth} if $f$ is a smooth map when considering $M$ as a smooth manifold with corners. 
\end{defn}

We will have to consider vector bundles over $P$-manifolds. 
\begin{defn} \label{defn: P-vector bundles}
Let $M$ be a $P$-manifold. Let $\pi: E \longrightarrow M$,  $\pi_{\beta_{1}}: E_{\beta_{1}} \longrightarrow \beta_{1}M$, and $\pi_{P}: E_{P} \longrightarrow P$ be vector bundles and let
$$\hat{\phi}_{E}: E|_{\partial_{1}M} \stackrel{\cong} \longrightarrow (E_{\beta_{1}}\times E_{P})\oplus\epsilon^{1}$$
be a vector bundle isomorphism that covers the structure map $\phi_{1}: \partial_{1}M \stackrel{\cong} \longrightarrow \beta_{1}M\times P$ (the bundle on the right hand side is assumed to be over $\beta_{1}M\times P$).
The pair $(E, \hat{\phi}_{E})$ is said to be a \textit{$P$-vector} bundle over $M$.
We refer to $\hat{\phi}_{E}$ as the structure map. 
\end{defn}
\begin{notation}
When working with a $P$-vector bundle as in the previous definition, we will drop the bundle isomorphism $\hat{\phi}_{1}: E|_{\partial_{1}M} \longrightarrow (E_{\beta_{1}}\times E_{P})\oplus\epsilon^{1}$ from the notation and denote $E := (E, \hat{\phi}_{1})$. 
We will always use the same greek letter to denote the structure map and will always use the same notational convention to denote the auxiliary bundles $E_{\beta_{1}M}$ and $E_{P}$. 
\end{notation}
\begin{example} \label{example: tangent bundle}
For any $P$-manifold, the tangent bundle $TM \longrightarrow M$ naturally has the structure of a $P$-vector bundle as follows. 
The collar embedding $h_{1}: \partial_{1}M\times\R_{+} \longrightarrow M$ induces a bundle isomorphism 
$$TM|_{\partial_{1}M} \stackrel{\cong} \longrightarrow T\partial_{1}M\oplus\epsilon^{1}$$
which covers the identity on $M$. 
Using this bundle isomorphism we obtain the bundle isomorphism,
\begin{equation} \label{eq: P-tangent bundle}
\xymatrix{
TM|_{\partial_{1}M} \ar[rr]^{\cong} && T\partial_{1}M\oplus\epsilon^{1} \ar[rrr]^{d\phi_{1}\oplus Id_{\epsilon^{1}}}_{\cong} &&& (T\beta_{1}M\times TP)\oplus\epsilon^{1}
}
\end{equation}
where $d\phi_{1}$ denotes the differential of the structure map.
In this way the bundle isomorphism (\ref{eq: P-tangent bundle}) endows $TM \longrightarrow M$ with the structure of a $P$-vector bundle. 
\end{example}

Transversality will play an important role in the constructions used to prove the main theorem. 
\begin{defn} \label{defn: P-transversality}
Let $U$ be a smooth manifold, let $K \subset U$ be a submanifold, and let $M$ be a $P$-manifold. 
A smooth $P$-map $f: M \longrightarrow U$ is said to be \textit{transverse} to the submanifold $K$ if both $f$ and the map $f_{\beta_{1}}: \beta_{1}M \rightarrow U$ are transverse as smooth maps to $K$. 
In this case we write $f\pitchfork K$ to indicate transversality. 
\end{defn}
The following proposition is easy to verify. 
\begin{proposition} \label{prop: P-transverse}
Let $U$ be a manifold of dimension $r$, $K \subset U$ a submanifold of dimension $k$, and let $M$ be a $P$-manifold of dimension $d$. 
Let $f: M \longrightarrow U$ be a smooth $P$-map transverse to $K$. 
Then the space $f^{-1}(K)$ is $P$-manifold of dimension $d+ k - r$ with Bockstein given by $\beta_{1}(f^{-1}(K)) = f_{\beta_{1}}^{-1}(K)$. 
\end{proposition}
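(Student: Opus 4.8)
The plan is to produce, directly on $N := f^{-1}(K)$, each piece of the structure demanded by Definition \ref{P mfd}, reading it off from the corresponding data on $M$ using the hypothesis that $f$ is a $P$-map transverse to $K$. First I would establish that $N$ is a neat submanifold with corners of $M$. Since $U$ has empty boundary and $K$ has codimension $r-k$, the usual transversality argument for manifolds with corners applies once one knows that the restriction of $f$ to each face of $M$ is transverse to $K$. For the interior and for the boundary face $\partial_{0}M$ this is exactly what it means (in the manifold-with-corners sense we adopt) for $f$ to be transverse as a smooth map; for the singular face $\partial_{1}M$ and the corner $\partial_{0,1}M$ it is automatic, since by Definition \ref{defn: P-map} we have $f|_{\partial_{1}M} = f_{\beta_{1}}\circ \mathrm{proj}_{\beta_{1}M}\circ\phi_{1}$ with $\mathrm{proj}_{\beta_{1}M}$ a submersion and $\phi_{1}$ a diffeomorphism, so $f|_{\partial_{1}M}\pitchfork K$ follows from $f_{\beta_{1}}\pitchfork K$ (and likewise for $\partial_{0,1}M$ from $f_{\beta_{1}}|_{\beta_{0,1}M}\pitchfork K$). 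Hence $N$ is a neat submanifold with corners of dimension $d-(r-k)=d+k-r$, with $\partial N = (N\cap\partial_{0}M)\cup(N\cap\partial_{1}M)$; I set $\partial_{0}N := N\cap\partial_{0}M$, $\partial_{1}N := N\cap\partial_{1}M$ and $\partial_{0,1}N := N\cap\partial_{0,1}M$, and neatness gives $\partial_{0,1}N = \partial(\partial_{0}N)=\partial(\partial_{1}N)$, a closed $(d+k-r-2)$-manifold (closed because $\partial_{0,1}M$ is compact and $K$ is closed in $U$, as it is in all the intended applications).

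Next I would read off the Bockstein and the structure map. Transversality of $f_{\beta_{1}}$ makes $\beta_{1}N := f_{\beta_{1}}^{-1}(K)$ a submanifold of $\beta_{1}M$ of dimension $(d-1-p)-(r-k)$, with $\partial(\beta_{1}N)=(f_{\beta_{1}}|_{\beta_{0,1}M})^{-1}(K)=:\beta_{0,1}N$. The $P$-map factorization gives
$$\partial_{1}N = (f|_{\partial_{1}M})^{-1}(K)=\phi_{1}^{-1}\bigl(\mathrm{proj}_{\beta_{1}M}^{-1}(f_{\beta_{1}}^{-1}(K))\bigr)=\phi_{1}^{-1}(\beta_{1}N\times P),$$
so the diffeomorphism $\phi_{1}\colon \partial_{1}M \xrightarrow{\cong} \beta_{1}M\times P$ restricts to a diffeomorphism $\partial_{1}N \xrightarrow{\cong} \beta_{1}N\times P$, which I take as the structure map of $N$; its further restriction to $\partial_{0,1}N$ furnishes the corner structure map onto $\beta_{0,1}N\times P$. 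The dimension bookkeeping is consistent: $\dim(\beta_{1}N\times P)=(d-1-p)-(r-k)+p=d+k-r-1=\dim N - 1$. Finally, a manifold with corners whose faces are prescribed as above carries mutually compatible collar embeddings satisfying conditions (a)--(c) of Definition \ref{P mfd}, by the standard existence of collars for a manifold with faces; this equips $N$ with a collar $(h_{0}^{N},h_{1}^{N})$ (one could instead isotope the restrictions of $h_{0},h_{1}$, but existence is all that is needed). Assembling these data exhibits $f^{-1}(K)$ as a $P$-manifold of dimension $d+k-r$ with $\beta_{1}(f^{-1}(K))=f_{\beta_{1}}^{-1}(K)$.

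I expect the only genuinely delicate point — and the reason the proposition is nonetheless "easy to verify" — to be the bookkeeping in the first step: one must observe that transversality along the singular face $\partial_{1}M$ and the corner $\partial_{0,1}M$ is not an extra hypothesis but a formal consequence of $f_{\beta_{1}}\pitchfork K$, via the factorization through the projection $\beta_{1}M\times P\to\beta_{1}M$. Once that is in place, the face decomposition of $\partial N$, the identification of the structure map as the restriction of $\phi_{1}$, all dimension counts, and the identity $\beta_{1}(f^{-1}(K))=f_{\beta_{1}}^{-1}(K)$ are immediate from the definitions, and the collars are supplied by general facts about manifolds with corners.
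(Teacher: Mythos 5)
Your proof is correct, and since the paper offers no argument for Proposition \ref{prop: P-transverse} (it is simply asserted to be ``easy to verify''), what you have written is precisely the intended verification: transversality along each face obtained from the factorization of $f|_{\partial_{1}M}$ through $\mathrm{proj}_{\beta_{1}M}$, the restriction of $\phi_{1}$ serving as the structure map of $f^{-1}(K)$, and collars supplied by the standard collar theorem for manifolds with faces. The one point you rightly flag — reading the transversality hypothesis in the manifold-with-corners sense, so that the restrictions to $\partial_{0}M$ and $\beta_{0,1}M$ are also transverse to $K$ — is consistent with how the paper uses the proposition (in its applications $\partial_{0}M=\emptyset$, so the issue disappears).
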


On a similar note, submersions will play an important role in the constructions used in the proof of the main theorem. 
If $U$ is a smooth manifold and $M$ is a $P$-manifold, a smooth $P$-map $f: M \longrightarrow U$ is said to be a \textit{$P$-submersion} if both $f$ and $f_{\beta_{1}}$ are submersions when treating $M$ as a smooth manifold. 
It follows immediately from Proposition \ref{prop: P-transverse} that for $x \in U$, the space $f^{-1}(x)$ is a $P$-manifold of dimension $\dim(M)-\dim(U)$ with $\beta_{1}(f^{-1}(x)) = f_{\beta_{1}}^{-1}(x)$.
\begin{example} \label{example: kernel bundles}
Let $M$ be a $P$-manifold and let $X$ be a smooth manifold. 
Let $\pi: M \longrightarrow X$ be a $P$-submersion. 
Denote by $T^{\pi}M \rightarrow M$ the sub-vector bundle of the tangent bundle $TM$ given by the kernel of the differential of the submersion $\pi$.
Denote by $T^{\pi}\beta_{1}M \rightarrow \beta_{1}M$ the sub-bundle of $T\beta_{1}M$ given by the kernel of the submersion $\pi_{\beta_{1}}: \beta_{1}M \longrightarrow X$. 
The factorization from (\ref{eq: P-map factorization}) of the restriction of $\pi$ to $\partial_{1}M$ implies that there is an isomorphism 
$$T^{\pi}M|_{\partial_{1}M} \stackrel{\cong} \longrightarrow (T^{\pi}\beta_{1}M\times TP)\oplus\epsilon^{1}$$
that covers the structure map, $\phi_{1}: \partial_{1}M \stackrel{\cong} \longrightarrow \beta_{1}M\times P$. 
It follows that the kernel bundle of any $P$-submersion has the structure of a $P$-vector bundle. 
\end{example}

We are interested in the cobordism theory of $P$-manifolds. For this
we make the following definition.
\begin{defn} Let $M_{a}$ and $M_{b}$ be closed $P$-manifolds of dimension $d$ and let $W$ be a compact $P$-manifold of dimension $d+1$. 
If $\partial_{0}W = M_{a}\sqcup M_{b}$ then the triple $(W; M_{a}, M_{b})$ is said to be a \textit{$P$-manifold cobordism triple}. 
Two closed $P$-manifolds $M_{a}$ and $M_{b}$ of the same dimension are said to be cobordant if there exists a $(d+1)$-dimensional $P$-manifold $W$ such that $\partial_{0}W =  M_{a}\sqcup M_{b}$.   
\end{defn}

\section{Mapping Spaces} \label{section: mapping spaces}
We will need to consider certain spaces of maps between $P$-manifolds. 
\subsection{Diffeomorphisms}
For what follows let $M_{a}$ and $M_{b}$ be $P$-manifolds. 
For $i = 0, 1$, we denote by $h^{a}_{i}$ and $h^{b}_{i}$ the collar embeddings associated to $M_{a}$ and $M_{b}$. 
We denote by $\phi^{a}_{1}$ and $\phi^{b}_{1}$ the structure maps. 
\begin{defn} \label{defn: P-morphism}
A smooth map $f: M_{a} \longrightarrow M_{b}$ is said to be a \textit{$P$-morphism} if the following conditions are satisfied:
\begin{enumerate}
\item[i.] $f(\partial_{0}M_{a}) \subset \partial_{0}M_{b}$ and $f(\partial_{1}M_{a}) \subset \partial_{1}M_{b}$.
\item[ii.] There exists a real number $\varepsilon > 0$ such that 
$$\begin{aligned}
f(h^{a}_{0}(x, t)) & = h^{b}_{0}(f(x), t) \quad \text{for $(x, t) \in \partial_{0}M_{a}\times[0,\varepsilon)$,}\\
f(h^{a}_{1}(y, s)) & = h^{b}_{1}(f(y), s) \quad \text{for $(y, s) \in \partial_{1}M_{a}\times[0,\varepsilon)$.}
\end{aligned}$$
\item[iii.] There exists a smooth map $f_{\beta_{1}}: \beta_{1}M_{a} \longrightarrow \beta_{1}M_{b}$ such that the restriction of $f$ to $\partial_{1}M_{a}$ has the factorization,
$$\xymatrix{
\partial_{1}M_{a} \ar[rr]^{\phi^{a}_{1}}_{\cong} && \beta_{1}M_{a}\times P \ar[rr]^{f_{\beta_{1}}\times Id_{P}} && \beta_{1}M_{b}\times P \ar[rr]^{(\phi^{b})^{-1}}_{\cong} && \partial_{1}M_{b}
}$$
\end{enumerate}
\end{defn}
We denote by $C^{\infty}_{P}(M_{a}, M_{b})$ the space of $P$-morphisms $M_{a} \rightarrow M_{b}$, topologized as a subspace of the space of smooth maps $M_{a} \rightarrow M_{b}$, in the $C^{\infty}$-topology.
For a $P$-manifold $M$, we let $C^{\infty}_{P}(M)$ denote the space $C^{\infty}_{P}(M, M)$. 
We will need to consider diffeomorphisms of $P$-manifolds as well. 
\begin{defn} A smooth map between $P$-manifolds $f: M_{a} \longrightarrow M_{b}$, is said to be a \textit{$P$-diffeomorphism} if it is both a diffeomorphism as a map of smooth manifolds and a $P$-morphism, i.e. it satisfies all conditions of Definition \ref{defn: P-morphism}. 
\end{defn}
 We denote by $\Diff_{P}(M_{a}, M_{b})$ the space of $P$-diffeomorphisms from $M_{a}$ to $M_{b}$, where
 $\Diff_{P}(M_{a}, M_{b})$ is topologized as a subspace of $C^{\infty}_{P}(M_{a}, M_{b})$. 
 For a $P$-manifold $M$, we let $\Diff_{P}(M)$ denote the space $\Diff_{P}(M, M)$ of 
 self-$P$-diffeomorphisms $M \rightarrow M$.
 The space $\Diff_{P}(M)$ has the structure of a topological group with product given by composition.

\begin{proposition} \label{prop: P-diff open set}
For any two compact $P$-manifolds $M_{a}$ and $M_{b}$, $\Diff_{P}(M_{a}, M_{b})$ is an open subset of $C^{\infty}_{P}(M_{a}, M_{b})$. 
\end{proposition}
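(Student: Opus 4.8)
The plan is to reduce the statement to the classical fact that, for compact smooth manifolds (possibly with corners) $M_a$ and $M_b$, the space $\Diff(M_a, M_b)$ of diffeomorphisms is open in $C^\infty(M_a, M_b)$ in the $C^\infty$-topology, and then check that the conditions (i)--(iii) of Definition \ref{defn: P-morphism} that cut out $C^\infty_P(M_a, M_b)$ inside $C^\infty(M_a, M_b)$ are ``preserved under small perturbation'' in a way compatible with openness. The key point is that $C^\infty_P(M_a, M_b)$ is already a subspace of $C^\infty(M_a, M_b)$, so $\Diff_P(M_a, M_b) = \Diff(M_a, M_b) \cap C^\infty_P(M_a, M_b)$, and we only need openness \emph{relative to} $C^\infty_P(M_a, M_b)$.

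First I would recall why $\Diff(M_a, M_b)$ is open in $C^\infty(M_a, M_b)$ for compact manifolds with corners: a smooth map $f$ is a diffeomorphism iff it is a bijective local diffeomorphism, the local diffeomorphism condition is $C^1$-open (the differential $df_x$ is an isomorphism, an open condition on $1$-jets, and by compactness this is uniform), and being a bijection is then an open condition given the local diffeomorphism property together with properness, via the usual degree/covering-space argument (a proper local diffeomorphism is a covering map, the number of sheets is locally constant in $f$, and $f$ injective forces one sheet, which persists under small perturbation). One must take a small amount of care near the corners and faces $\partial_0 M$, $\partial_1 M$, $\partial_{0,1}M$, but since a $P$-morphism already carries $\partial_i M_a$ into $\partial_i M_b$ and respects the collars $h^a_i, h^b_i$ on a fixed $\varepsilon$-neighbourhood (condition (ii)), the map restricts to a smooth map of the (compact) faces, and one applies the manifold-with-corners version of the openness statement stratum by stratum.

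Second, I would observe that within $C^\infty_P(M_a, M_b)$ the boundary-face and collar conditions (i) and (ii), and the factorization condition (iii) through $f_{\beta_1} \times \mathrm{Id}_P$, are automatically satisfied by hypothesis, so no perturbation argument is needed for them; what must be checked is that if $f \in C^\infty_P(M_a, M_b)$ is additionally a diffeomorphism of underlying manifolds, then every $g \in C^\infty_P(M_a, M_b)$ that is $C^\infty$-close to $f$ is also a diffeomorphism. Since such a $g$ already preserves all the collar and face structure, it restricts to a map $g_{\beta_1} \colon \beta_1 M_a \to \beta_1 M_b$ which is $C^\infty$-close to $f_{\beta_1}$; because $f_{\beta_1}$ is a diffeomorphism of the compact manifolds $\beta_1 M_a$ (note $\phi_1^a$ being a diffeomorphism forces $f$ diffeomorphism $\Rightarrow f_{\beta_1}$ diffeomorphism, and conversely on $\partial_1 M_a$) the classical openness gives that $g_{\beta_1}$ is a diffeomorphism, hence $g|_{\partial_1 M_a} = (\phi_1^b)^{-1}\circ(g_{\beta_1}\times \mathrm{Id}_P)\circ \phi_1^a$ is a diffeomorphism onto $\partial_1 M_b$. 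Similarly $g|_{\partial_0 M_a}$ is close to the diffeomorphism $f|_{\partial_0 M_a}$, hence a diffeomorphism onto $\partial_0 M_b$. Then $g$ is a diffeomorphism on a neighbourhood of $\partial M_a$ (using the collars) and a local diffeomorphism on the compact interior, and the proper-local-diffeomorphism-plus-injectivity argument upgrades this to $g \in \Diff_P(M_a, M_b)$.

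The main obstacle I anticipate is purely bookkeeping at the corner stratum $\partial_{0,1}M$: one must make sure that the openness argument on $M_a$, on $\partial_0 M_a$, on $\partial_1 M_a$, and on $\beta_1 M_a$ are mutually consistent — i.e. that a perturbation $g$ inside $C^\infty_P$ which is a diffeomorphism on each face and in the interior is genuinely a diffeomorphism of the whole manifold with corners, with no pathology where the strata meet. This is handled by working in the fixed product collars supplied by Definition \ref{P mfd}(ii), within which all the maps in a $C^\infty_P$-neighbourhood of $f$ literally have the product form dictated by condition (ii), so the corner case is reduced to the openness statement for the lower-dimensional compact manifold $\partial_{0,1}M_a$ times a half-space, which is again classical. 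Compactness of $M_a$ and $M_b$ is used throughout to pass from local openness of the ``$df_x$ is an isomorphism'' condition to a genuine $C^\infty$-neighbourhood, and to invoke properness.
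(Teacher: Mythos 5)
Your proposal is correct and follows essentially the same route as the paper: both arguments boil down to writing $\Diff_{P}(M_{a}, M_{b})$ as the intersection of $C^{\infty}_{P}(M_{a}, M_{b})$ with the (open) locus of diffeomorphisms among all smooth maps satisfying the face and collar conditions, so openness is inherited from the classical compact case. The only difference is that the paper simply cites \cite[Theorem 1.7]{Hi 76} for that classical openness (applied to the subspace $C^{\infty}_{\partial}(M_{a}, M_{b})$ of maps satisfying conditions i.\ and ii.), whereas you re-derive it via the proper-local-diffeomorphism argument and carry out the stratum-by-stratum bookkeeping explicitly, which is fine but not needed.
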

\begin{proof}
Denote by $C^{\infty}_{\partial}(M_{a}, M_{b})$ the space of smooth maps $M_{a} \rightarrow M_{b}$ that satisfy conditions i. and ii. of Definition \ref{defn: P-morphism} but which may fail to satisfy condition iii. The space $C^{\infty}_{\partial}(M_{a}, M_{b})$ is topologized in the $C^{\infty}$-topology. 
Similarly, we denote by $\Diff_{\partial}(M_{a}, M_{b})$ the subspace of $C^{\infty}_{\partial}(M_{a}, M_{b})$ which consists of all smooth maps $f: M_{a} \rightarrow M_{b}$ satisfying conditions i. and ii. of Definition \ref{defn: P-morphism} such that $f$ is also a diffeomorphism of smooth manifolds. 
It follows from \cite[Theorem 1.7]{Hi 76} that $\Diff_{\partial}(M_{a}, M_{b}) \subset C^{\infty}_{\partial}(M_{a}, M_{b})$ is an open subset.
It follows from
$$\xymatrix{
\Diff_{P}(M_{a}, M_{b}) = \Diff_{\partial}(M_{a}, M_{b})\cap C^{\infty}_{P}(M_{a}, M_{b}).
}$$
that $\Diff_{P}(M_{a}, M_{b})$ is an open subset of $C^{\infty}_{P}(M_{a}, M_{b})$.
\end{proof}

\subsection{Embeddings} \label{eq: P-embeddings}
We will need to consider certain spaces of embeddings of $P$-manifolds. 
Fix once and for all a smooth embedding
\begin{equation} \label{eq: chosen embedding of P}
i_{P}: P \longrightarrow \R^{p+m}
\end{equation}
with $m > p$. 
We will use this choice of embedding in all of our constructions to come. 
Throughout this section and the sections to come, we will use the following notational convention. 
For $n \in \N$, we will denote
\begin{equation}
\bar{n} := n - p - m - 1.
\end{equation}

For what follows let $X$ be a smooth manifold without boundary. 
For a positive integer $n$, let 
$$q: \R_{+}\times X\times\R^{\bar{n}}\times\R^{p+m} \longrightarrow X\times\R_{+}\times\R^{\bar{n}}\times\R^{p+m}$$
be the ``permutation'' map defined by, $q(t, x, y, z) = (x, t, y, z)$. 
\begin{defn} \label{defn: i-P maps}
Let $M$ be a $P$-manifold with $\partial_{0}M = \emptyset$ (we allow for $M$ to be non-compact). 
We define $\mathcal{E}_{P, n}(M, X)$ to be the space of smooth embeddings
$$\varphi: M \longrightarrow X\times\R_{+}\times\R^{\bar{n}}\times\R^{p+m}$$ 
which satisfy the following conditions:
\begin{enumerate}
\item[i.] $\varphi(\partial_{1}M) \subset X\times\{0\}\times\R^{\bar{n}}\times\R^{p+m}$. 
\item[ii.] 
There exists a real number $\varepsilon > 0$ such that if $(y, s) \in \partial_{1}M\times[0,\varepsilon)$ then,
$$\varphi(h_{1}(y, s)) \; = \; q(\varphi(y), s).$$
\item[iii.] There exists a map $\varphi_{\beta_{1}}: \beta_{1}M \longrightarrow X\times\R^{\bar{n}}$ such that 
the restriction of $\varphi$ to $\partial_{1}M$ has the factorization,
$$\xymatrix{
\partial_{1}M \ar[rr]_{\cong}^{\phi_{1}} && \beta_{1}M\times P \ar[rrr]^{\varphi_{\beta_{1}}\times i_{P}} &&& (X\times\R^{\bar{n}})\times\R^{p+m}.
}$$
\end{enumerate}
The space $\mathcal{E}_{P, n}(M, X)$ is topologized as a subspace of the space of smooth maps from $M$ to $X\times\R_{+}\times\R^{\bar{n}}\times\R^{p+m}$  in the $C^{\infty}$-topology.
We let $\mathcal{E}_{P, n}(M)$ denote the space $\mathcal{E}_{P, n}(M, \text{pt.})$, i.e.\ the space of embeddings $M \rightarrow \R_{+}\times\R^{\bar{n}}\times\R^{p+m}$ that satisfy the conditions given above. 
\end{defn} 

By the following proposition, we are justified in excluding the embedding (\ref{eq: chosen embedding of P}) from the notation. 
\begin{proposition} \label{prop: independence of embedding}
For any $P$-manifold $M$ with $\partial_{0}M = \emptyset$, smooth manifold $X$, and positive integer $n$, the homeomorphism type of the space $\mathcal{E}_{P, n}(M, X)$ does not depend on the embedding $P \hookrightarrow \R^{p+m}$ used to define it. 
\end{proposition}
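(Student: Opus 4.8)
The plan is to show that any two choices of embedding $P \hookrightarrow \R^{p+m}$ are related by an isotopy through embeddings, and that such an isotopy induces a homeomorphism between the corresponding embedding spaces. First I would recall the standard fact (a consequence of Whitney's embedding/isotopy theorem, or of general position since $m > p$, i.e.\ $p + m > 2p+1$ once $m$ is large enough, which is the regime we work in) that any two smooth embeddings $i_P, i'_P : P \hookrightarrow \R^{p+m}$ are ambient isotopic: there is a smooth isotopy $F : \R^{p+m} \times [0,1] \longrightarrow \R^{p+m}$ with $F_0 = \mathrm{id}$, each $F_t$ a diffeomorphism, and $F_1 \circ i_P = i'_P$. (If one wants to avoid the dimension hypothesis, one can instead observe that the space of embeddings $P \hookrightarrow \R^{p+m}$ is nonempty and, after stabilizing, connected; but in the setting of this paper $m$ is taken large, so ambient isotopy is available directly.)

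Next I would use $F$ to build the comparison homeomorphism. Write $\mathcal{E}^{i_P}_{P,n}(M,X)$ and $\mathcal{E}^{i'_P}_{P,n}(M,X)$ for the two embedding spaces, defined using $i_P$ and $i'_P$ respectively. The diffeomorphism $G := F_1 : \R^{p+m} \to \R^{p+m}$ satisfies $G \circ i_P = i'_P$, so post-composition with $\mathrm{id}_{X \times \R_+ \times \R^{\bar n}} \times G$ sends an embedding $\varphi : M \to X \times \R_+ \times \R^{\bar n}\times\R^{p+m}$ to another embedding $(\mathrm{id} \times G)\circ \varphi$. I would check that this operation carries $\mathcal{E}^{i_P}_{P,n}(M,X)$ into $\mathcal{E}^{i'_P}_{P,n}(M,X)$: condition (i) is preserved because $\mathrm{id}\times G$ respects the slices $X \times \{0\} \times \R^{\bar n}\times\R^{p+m}$; condition (ii) is preserved because $\mathrm{id}\times G$ commutes with the permutation map $q$ (as $q$ only shuffles coordinates and $G$ acts only on the last $\R^{p+m}$ factor); and condition (iii) is preserved with new Bockstein data $\varphi'_{\beta_1} = \varphi_{\beta_1}$ unchanged, since on $\partial_1 M$ we have $(\mathrm{id}\times G)\circ(\varphi_{\beta_1}\times i_P)\circ\phi_1 = (\varphi_{\beta_1}\times (G\circ i_P))\circ\phi_1 = (\varphi_{\beta_1}\times i'_P)\circ\phi_1$, which is exactly the required factorization. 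The inverse map is post-composition with $\mathrm{id}\times G^{-1}$, and both maps are continuous in the $C^\infty$-topology because post-composition with a fixed diffeomorphism is continuous there. Hence the two maps are mutually inverse homeomorphisms.

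The step I expect to be the main obstacle, or at least the one requiring the most care, is the bookkeeping around condition (iii): one must confirm that the Bockstein map $\varphi_{\beta_1} : \beta_1 M \to X \times \R^{\bar n}$ really is unaffected by the modification, i.e.\ that the new embedding's auxiliary data uses the same $\varphi_{\beta_1}$ but the new model embedding $i'_P$, and that this is consistent with the collar condition (ii) near $\partial_1 M$. This is purely formal once one writes out that $\mathrm{id}\times G$ acts trivially on the $X \times \R_+ \times \R^{\bar n}$ coordinates and only alters the $\R^{p+m}$ factor, so the interaction between the collar map $h_1$, the permutation $q$, and $G$ separates cleanly into a "$\beta_1 M \times \R_+$ part" (untouched) and a "$P$ part" (conjugated by $G$). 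No genuine difficulty remains beyond verifying these compatibilities; the essential input is the ambient isotopy extension theorem, and everything else is a continuity-and-naturality check.
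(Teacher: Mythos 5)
Your proposal is correct and follows essentially the same route as the paper: connect $i_P$ to $i'_P$ by an isotopy of embeddings (available since $m$ is large), extend it to an ambient diffeotopy of $\R^{p+m}$ via the isotopy extension theorem, and post-compose with the resulting diffeomorphism acting only on the $\R^{p+m}$ factor to get mutually inverse homeomorphisms of the two embedding spaces. The verification of conditions (i)--(iii), including that $\varphi_{\beta_1}$ is unchanged, matches the paper's argument.
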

\begin{proof}
Let $i'_{P}: P \hookrightarrow \R^{p+m}$ be another embedding and let $\mathcal{E}_{P', n}(M, X)$ denote the space of embeddings 
$$M \longrightarrow X\times\R_{+}\times\R^{\bar{n}}\times\R^{p+m}$$
that satisfy all conditions from Definition \ref{defn: i-P maps} with respect to the embedding $i'_{P}$. 
Since $m > p$, there exists an isotopy $\psi_{t}: P \longrightarrow \R^{p+m}$ through embeddings such that $\psi_{0} = i_{P}$ and $\psi_{1} = i'_{P}$. 
By the \textit{isotopy extension theorem} \cite[Theorem 1.3]{Hi 76} there exists a diffeotopy (isotopy through diffeomorphisms) $\Psi_{t}: \R^{p+m} \rightarrow \R^{p+m}$ such that $\Psi_{0} = Id_{\R^{p+m}}$ and $\Psi_{t}\circ i_{P} = \psi_{t}$ for all $t \in [0,1]$. 
Denote by $\Phi$ the diffeomorphism of $X\times\R_{+}\times\R^{\bar{n}}\times\R^{p+m}$ given by the formula
$$\Phi(x, t, y, z) = (x, t, y, \Psi_{1}(z)) \quad \text{for $x \in X$, $t \in \R_{+}$, $y \in \R^{\bar{n}}$, and $z \in \R^{p+m}$.}$$
We define a map,
\begin{equation} \label{eq: embedding space homeo}
\mathcal{E}_{P, n}(M, X) \longrightarrow \mathcal{E}_{P', n}(M, X), \quad \varphi \mapsto \Phi\circ\varphi.
\end{equation}
The inverse to (\ref{eq: embedding space homeo}) is given by the formula $\varphi \mapsto \Phi^{-1}\circ\varphi$. 
Thus (\ref{eq: embedding space homeo}) is a homeomorphism. 
This concludes the proof. 
\end{proof}

For each $n$ there is a natural embedding $\mathcal{E}_{P, n}(M) \hookrightarrow \mathcal{E}_{P, n+1}(M)$. 
We then define,
\begin{equation}
\xymatrix{
\mathcal{E}_{P}(M) := \colim_{n\to\infty}\mathcal{E}_{P, n}(M).
}
\end{equation}
We have the following theorem. 
\begin{theorem} \label{theorem: weakly contractible}
Let $M$ be a closed $P$-manifold (i.e.\ $M$ is compact and $\partial_{0}M = \emptyset$). 
Then the space $\mathcal{E}_{P}(M)$ is weakly contractible. 
\end{theorem}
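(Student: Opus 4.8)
**The plan is to reduce this to the classical fact that spaces of embeddings into $\R^\infty$ are weakly contractible, adapting the standard "general position" argument to respect the $P$-structure near $\partial_1 M$.**

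First I would set up the reduction. Since $M$ is a closed $P$-manifold, its $\partial_1 M$ is a compact manifold with boundary factoring as $\beta_1 M \times P$, and the collar $h_1$ identifies a neighborhood of $\partial_1 M$ in $M$ with $\partial_1 M \times \R_+$. An embedding $\varphi \in \mathcal{E}_{P,n}(M)$ is, by conditions (i)--(iii) of Definition~\ref{defn: i-P maps}, completely determined near $\partial_1 M$ by the data of the auxiliary embedding $\varphi_{\beta_1}: \beta_1 M \to \R^{\bar n}$ (together with the fixed $i_P$ and the collar coordinate). So there is a "restriction" map $r: \mathcal{E}_{P,n}(M) \to \mathcal{E}_n(\beta_1 M)$, where the target is the analogous (boundaryless, no-$P$) embedding space of $\beta_1 M$ into $\R_+ \times \R^{\bar n}$ — actually one must be slightly careful: $\beta_1 M$ has boundary, so I would really work with the space of embeddings $\beta_1 M \hookrightarrow \R^{\bar n}$ that are collared appropriately, and track that $\partial(\beta_1 M) = \beta_{0,1} M$ sits in the $\{0\}$-wall. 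The key structural claim is that, after passing to the colimit over $n$, the map $r: \mathcal{E}_P(M) \to \mathcal{E}(\beta_1 M)$ is a \emph{fibration} (or at least a quasifibration / has the homotopy lifting property for the relevant cells), with fiber the space of extensions of a fixed collar data to an embedding of all of $M$.

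Next I would analyze the two ends of this fibration. The base $\mathcal{E}(\beta_1 M)$ is a space of embeddings of a compact manifold into $\R^\infty$ subject to boundary conditions, and is weakly contractible by the classical Whitney-type argument (any two embeddings into a high-dimensional Euclidean space are isotopic, and one interpolates through $\R^\infty$; this is the same reasoning used in \cite{GMTW 09}). For the fiber: fixing the germ of $\varphi$ near $\partial_1 M$, an extension to $M$ is an embedding of the "interior part" of $M$ — a compact manifold with boundary $\partial_1 M$, collared — into the half-space $\R_+ \times \R^{\bar n} \times \R^{p+m}$, with prescribed behavior on the boundary collar and no further constraint (the $\partial_0 M = \emptyset$ hypothesis is what makes there be no other walls to worry about). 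Again this is a space of embeddings of a compact manifold into $\R^\infty$ rel a fixed boundary germ, hence weakly contractible by the same classical argument (push everything off into fresh coordinates, interpolate linearly). One then concludes $\mathcal{E}_P(M)$ is weakly contractible from the long exact sequence of the fibration, since base and fiber are.

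The main obstacle I expect is \textbf{establishing the fibration (or quasifibration) property of $r$} in a way that genuinely respects the corner structure and the product decomposition $\partial_1 M = \beta_1 M \times P$ uniformly. Lifting an isotopy of $\varphi_{\beta_1}$ to an isotopy of $\varphi$ requires choosing the extension over $M$ compatibly with the moving collar data, and one must check this can be done continuously in families — essentially an isotopy-extension argument in the presence of the fixed wall $\{0\} \times \R^{\bar n} \times \R^{p+m}$ and the rigid factor $i_P(P)$. A clean alternative, which I would pursue if the fibration is awkward to verify directly, is to avoid fibrations entirely: show $\pi_k \mathcal{E}_P(M) = 0$ for each $k$ by hand, taking a map $S^k \to \mathcal{E}_P(M)$, i.e.\ a family of $P$-embeddings $\{\varphi_s\}_{s \in S^k}$ into $\R_+ \times \R^{\bar n} \times \R^{p+m}$, and using that $\bar n$ can be taken arbitrarily large (we are in the colimit) to first move the whole family into a fixed fresh block of coordinates disjoint from a chosen basepoint embedding $\varphi_0$, then linearly interpolate $(1-t)\varphi_0 + t\varphi_s$ — checking at each stage that the straight-line homotopy stays within embeddings satisfying (i)--(iii), which it does because conditions (i) and (iii) are preserved by affine operations that fix the $\R_+$-coordinate and act block-diagonally, and condition (ii) is a statement about the collar that is likewise affine-compatible. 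This "engulf into fresh coordinates and interpolate" method is the robust way to kill all homotopy groups simultaneously and sidesteps the quasifibration bookkeeping.
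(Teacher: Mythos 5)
Your primary route (restrict to the Bockstein, show the restriction is a fibration with weakly contractible base and fibre) is essentially the paper's argument in different packaging. The paper obtains exactly the structural input you flag as the main obstacle by forming the cartesian square whose corners are $\mathcal{E}_{P}(M)$, the space $\mathcal{E}_{\partial}(M)$ of embeddings satisfying only conditions i.\ and ii., $\Emb(\beta_{1}M,\R^{\infty})$ and $\Emb(\partial_{1}M,\R^{\infty}\times\R^{p+m})$: the right-hand vertical map (restriction to $\partial_{1}M$) is a locally trivial fibre bundle by Lima's theorem \cite{L 63}, so the square is homotopy cartesian, and the three corners other than $\mathcal{E}_{P}(M)$ are weakly contractible by \cite[Theorem 2.7]{G 08}. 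This simultaneously gives your fibration (your $r$ is the pullback of that bundle along $T_{i_{P}}$) and removes the need to analyze the fibre by hand. One small point: since $\partial_{0}M=\emptyset$ here, $\partial_{1}M$ and $\beta_{1}M$ are closed, so your worry about $\beta_{1}M$ having boundary does not arise.

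The genuine gap is in your ``clean alternative'' (and it also infects your justification of fibre contractibility in the main route). The fresh coordinates exist only in the $\R^{\bar{n}}$ directions: condition iii.\ pins the $\R^{p+m}$-component of every element of $\mathcal{E}_{P}(M)$ on $\partial_{1}M$ to the fixed $i_{P}$, and conditions i.--ii.\ pin the $\R_{+}$-coordinate near $\partial_{1}M$, so two elements can never be moved into complementary linear subspaces of the target, which is what the straight-line trick actually requires. Separating only the $\R^{\bar{n}}$-blocks does not suffice: for interior points $x\neq y$ whose $\R^{\bar{n}}$-components agree under both embeddings, injectivity of the interpolation at time $t$ hinges on the convex combinations of the $(\R_{+}\times\R^{p+m})$-components, and these can cross at an intermediate $t$; nothing in conditions i.--iii.\ prevents this. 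Your verification that the affine homotopy preserves conditions (i)--(iii) addresses the wrong issue -- the problem is that the homotopy need not stay within embeddings at all. Repairing this rel the pinned boundary behavior is precisely the content of the relative contractibility statement that the paper imports from \cite[Theorem 2.7]{G 08} rather than reproving by naive interpolation; as written, your fallback argument does not establish it.
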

\begin{proof}
For each $n \in \N$, denote by $\mathcal{E}_{\partial, n}(M)$ the space of embeddings $M \rightarrow \R_{+}\times\R^{\bar{n}}\times\R^{p+m}$ that satisfy conditions i. and ii. of Definition \ref{defn: i-P maps} but which may fail to factor as a product on $\partial_{1}M$ as in condition iii. of Definition \ref{defn: i-P maps}. 
We then denote $\mathcal{E}_{\partial}(M) := \colim_{n\to\infty}\mathcal{E}_{\partial, n}(M)$. 
Let 
$$r_{\partial_{1}}: \mathcal{E}_{\partial}(M) \longrightarrow \Emb(\partial_{1}M, \R^{\infty}\times\R^{p+m})$$
be the map defined by restricting embeddings to the boundary
and let
$$r_{\beta_{1}}: \mathcal{E}_{P}(M) \longrightarrow \Emb(\beta_{1}M, \R^{\infty})$$
be the map defined by $\varphi \mapsto \varphi_{\beta_{1}}$. 
Consider the map 
$$T_{i_{P}}: \Emb(\beta_{1}M, \R^{\infty}) \longrightarrow \Emb(\partial_{1}M, \R^{\infty}\times\R^{p+m})$$
defined by sending an embedding $\varphi: \beta_{1}M \rightarrow \R^{\infty}$ to the embedding given by the composition
$$\xymatrix{
\partial_{1}M \ar[rr]^{\phi_{1}}_{\cong} && \beta_{1}M\times P \ar[rr]^{\varphi\times i_{P}} && \R^{\infty}\times\R^{p+m}.
}$$
It follows immediately from Definition \ref{defn: i-P maps} that the diagram 
\begin{equation} \label{eq: embedding pull-back}
\xymatrix{
\mathcal{E}_{P}(M) \ar[d]^{r_{\beta_{1}}} \ar[rr] && \mathcal{E}_{\partial}(M) \ar[d]^{r_{\partial_{1}}} \\
 \Emb(\beta_{1}M, \R^{\infty}) \ar[rr]^{T_{i_{P}}} && \Emb(\partial_{1}M, \R^{\infty}\times\R^{p+m})
 }
\end{equation}
is cartesian, where the top-horizontal map is the inclusion. 
By the main theorem of \cite{L 63}, the restriction map $r_{\partial_{1}}$ is a locally trivial fibre-bundle. 
It follows from this that the diagram (\ref{eq: embedding pull-back}) is \textit{homotopy cartesian}. 
By \cite[Theorem 2.7]{G 08} the spaces $\mathcal{E}_{\partial}(M)$, $\Emb(\beta_{1}M, \R^{\infty})$, and $\Emb(\partial_{1}M, \R^{\infty}\times\R^{p+m})$ are all weakly contractible. 
This together with the fact that (\ref{eq: embedding pull-back}) is homotopy cartesian implies that $\mathcal{E}_{P}(M)$ is homotopy cartesian as well. 
This concludes the proof. 
\end{proof}

We now define similar mapping spaces for $P$-manifold cobordism triples. 
Let $X$ be a smooth manifold without boundary as above. 
Let 
$$\begin{aligned}
q_{0}: [0, 1]\times X\times\R_{+}\times\R^{\bar{n}}\times\R^{p+m} &\longrightarrow X\times[0,1]\times\R_{+}\times\R^{n}\times\R^{p+m}\\
q_{1}: \R_{+}\times X\times[0,1]\times\R^{\bar{n}}\times\R^{p+m} &\longrightarrow X\times[0,1]\times\R_{+}\times\R^{n}\times\R^{p+m}
\end{aligned}$$
be the ``permutation'' maps defined by
$$q_{0}(t, x, s, y, z) \; = \; (x, t, s, y, z) \quad \text{and} \quad q_{1}(s, x, t, y, z) = (x, t, s, y, z)$$
where $x \in X$, $t \in [0,1]$, $s \in \R_{+}$, $y \in \R^{n}$, and $z \in \R^{p+m}$. 
\begin{defn} \label{defn: maps P-cobrdism}
Let $(W; M_{a}, M_{b})$ be a $P$-manifold bordism triple and let $n$ be a positive integer. 
We define $\mathcal{E}_{P, n}((W; M_{a}, M_{b}), X)$ to be the space of smooth embeddings 
$$\varphi: W \longrightarrow X\times[0,1]\times\R_{+}\times\R^{\bar{n}}\times\R^{p+m}$$
that satisfy the following conditions:
\begin{enumerate}
\item[i.] The following containments hold,
$$\begin{aligned}
\varphi(M_{a}) &\subset X\times\{0\}\times\R_{+}\times\R^{\bar{n}}\times\R^{p+m}, \\  
\varphi(M_{b}) &\subset X\times\{1\}\times\R_{+}\times\R^{\bar{n}}\times\R^{p+m}, \\
\varphi(\partial_{1}W) &\subset X\times[0,1]\times\{0\}\times\R^{\bar{n}}\times\R^{p+m}.
\end{aligned}$$ 
\item[ii.]  There exists a positive real number $\varepsilon$ such that 
$$\begin{aligned}
\varphi(h_{0}(x, t)) = q_{0}(t, \varphi(x)) &\quad \text{if $(x, t) \in M_{a}\times[0,\varepsilon)$,}\\
\varphi(h_{0}(x, t)) = q_{0}(1- t, \varphi(x)) &\quad \text{if $(x, t) \in M_{b}\times[0,\varepsilon)$,}\\
\varphi(h_{1}(y, s)) = q_{1}(s, \varphi(y)) &\quad \text{if $(y, s) \in \partial_{1}W\times[0,\varepsilon)$.}
\end{aligned}$$
\item[iii.] There exists a map $\varphi_{\beta_{1}}: \beta_{1}W \longrightarrow X\times[0,1]\times\{0\}\times\R^{\bar{n}}$ such that 
the restriction of $\varphi$ to $\partial_{1}W$ has the factorization,
$$\xymatrix{
\partial_{1}W \ar[rr]^{\phi_{1}}_{\cong} && \beta_{1}W\times P \ar[rrr]^{\varphi_{\beta_{1}}\times i_{P}\ \ \ \ \ } &&& (X\times[0,1]\times\{0\}\times\R^{\bar{n}})\times\R^{p+m}.
}$$
\end{enumerate}
The space $\mathcal{E}_{P, n}((W; M_{a}, M_{b}), X)$ is topologized in the $C^{\infty}$-topology. 
We let $\mathcal{E}_{P, n}(W; M_{a}, M_{b})$ denote the space $\mathcal{E}_{P, n}((W; M_{a}, M_{b}), \text{pt.})$.
We then set, 
$$\xymatrix{
\mathcal{E}_{P}(W; M_{a}, M_{b}) := \colim_{n\to\infty}\mathcal{E}_{P, n}(W; M_{a}, M_{b}).
}$$
\end{defn}

\begin{remark} \label{remark: analogous theorems}
There are analogues of Proposition \ref{prop: independence of embedding} and Theorem \ref{theorem: weakly contractible} for the spaces \\
$\mathcal{E}_{P, n}((W; M_{a}, M_{b}), X)$ of embeddings of $P$-bordisms and are proven in the same way.
\end{remark}

The terminology given in the next definition will be useful later on when we define the cobordism category of $P$-manifolds and related constructions. 
 \begin{defn} \label{defn: P-submanifold}
 Let $X$ be a smooth manifold without boundary. 
Let $M$ be a $P$-manifold with $\partial_{0}M = \emptyset$ that is embedded as a submanifold of $X\times\R_{+}\times\R^{\bar{n}}\times\R^{p+m}$ for some $n$,
such that the inclusion map 
$$M \hookrightarrow X\times\R_{+}\times\R^{\bar{n}}\times\R^{p+m}$$ 
is an element of the space $\mathcal{E}_{P, n}(M, X).$ 
Then $M$ is called a \textit{$P$-submanifold}. 
Similarly, let $(W; M_{a}, M_{b})$ be a $P$-manifold bordism triple with $W$ embedded as a submanifold of $X\times[0,1]\times\R_{+}\times\R^{\bar{n}}\times\R^{p+m}$ such that the inclusion
$$W \hookrightarrow X\times[0,1]\times\R_{+}\times\R^{\bar{n}}\times\R^{p+m}$$
is an element of the space $\mathcal{E}_{P, n}((W; M_{a}, M_{b}), X)$. 
Then $W$ is called a \textit{$P$-subcobordism}.
\end{defn}

\begin{remark}[Normal Bundles] \label{remark: normal bundles}
Let $M \subset X\times\R_{+}\times\R^{\bar{n}}\times\R^{p+m}$ be a closed $P$-submanifold. 
Let $\pi: M \longrightarrow X$ denote the restriction of the projection 
$X\times\R_{+}\times\R^{\bar{n}}\times\R^{p+m} \longrightarrow X$
onto $M$. 
It follows immediately from condition iii. of Definition \ref{defn: i-P maps} that $\pi$ is a smooth $P$-map (see Definition \ref{defn: P-map}). 
Denote by $N \rightarrow M$ the normal bundle. 
The factorization $\partial_{1}M = \beta_{1}M\times i_{P}(P)$ for $\beta_{1}M \subset  X\times\{0\}\times\R^{\bar{n}}$ and $i_{P}(P) \subset \R^{p+m}$ implies that the restriction of $N$ to $\partial_{1}M$ has the factorization 
\begin{equation}
N|_{\partial_{1}M} \; = \; (N_{\beta_{1}}\times N_{P})\oplus\epsilon^{1}
\end{equation}
where $N_{\beta_{1}} \rightarrow \beta_{1}M$ and $N_{P} \rightarrow i_{P}(P)$ are the normal bundles for $\beta_{1}M$ and $i_{P}(P)$ respectively. 
It follows that the normal bundle of a $P$-submanifold has the structure of a $P$-vector bundle as in Definition \ref{defn: P-vector bundles}. 
\end{remark}

\subsection{$P$-Manifold Fibre Bundles}
Let $M$ be a closed $P$-manifold. 
Consider the the space $\mathcal{E}_{P}(M)$. 
There is continuous group action
$$\xymatrix{
\Diff_{P}(M)\times\mathcal{E}_{P}(M) \longrightarrow \mathcal{E}_{P}(M), \quad (g, \varphi) \mapsto \varphi\circ g.
}$$
It is clear that this action is a free action. 
We let $\mathcal{M}_{P}(M)$ denote the orbit space $\dfrac{\mathcal{E}_{P}(M)}{\Diff_{P}(M)}$. 
Similarly, if $(W; M_{a}, M_{b})$ is a $P$-manifold bordism triple, there is a continuous group action 
$$\xymatrix{
\Diff_{P}(W; M_{a}, M_{b})\times\mathcal{E}(W; M_{a}, M_{b}) \longrightarrow \mathcal{E}(W; M_{a}, M_{b}), \quad (g, \varphi) \mapsto \varphi\circ g
}$$
which is clearly a free action. 
We let $\mathcal{M}_{P}(W; M_{a}, M_{b})$ denote the orbit space $\dfrac{\mathcal{E}(W; M_{a}, M_{b})}{\Diff_{P}(W; M_{a}, M_{b})}$. 
We have the following theorem whose proof we differ to Appendix \ref{Appendix A}. 
\begin{theorem} \label{theorem: local triviality}
The quotient maps
$$\xymatrix{
\mathcal{E}_{P}(M) \longrightarrow \mathcal{M}_{P}(M), \quad \text{and} \quad 
\mathcal{E}(W; M_{a}, M_{b}) \longrightarrow \mathcal{M}_{P}(W; M_{a}, M_{b})
}$$
are locally trivial fibre-bundles. 
\end{theorem}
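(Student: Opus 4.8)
The plan is to show that each of these quotient maps is a principal bundle — for $\mathcal{M}_P(M)$ with structure group $\Diff_P(M)$, and for $\mathcal{M}_P(W;M_a,M_b)$ with structure group $\Diff_P(W;M_a,M_b)$ — by producing a local section of the quotient map around every point of the base. Local triviality then follows by the standard fact that a free action of a topological group on a space admitting such local sections is automatically locally trivial (this is the argument template of \cite{L 63} and of the corresponding lemma in \cite{GMTW 09}, which we adapt to the $P$-setting). We carry out the case of a closed $P$-manifold $M$ in detail; the cobordism case is the same argument with more faces to keep track of, and the colimit over $n$ is dealt with at the end. Write $\rho\colon \mathcal{E}_P(M)\to\mathcal{M}_P(M)$ for the quotient map.

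First I would fix $\varphi_0\in\mathcal{E}_P(M)$, and, since $M$ is compact, assume its image lies in a finite stage $X\times\R_{+}\times\R^{\bar n}\times\R^{p+m}$. Let $N\to M$ be the normal bundle of $\varphi_0(M)$; by Remark~\ref{remark: normal bundles} it carries the structure of a $P$-vector bundle, i.e.\ $N|_{\partial_{1}M}\cong (N_{\beta_{1}}\times N_{P})\oplus\epsilon^{1}$ compatibly with $\phi_{1}$. The crucial ingredient is a \emph{$P$-compatible tubular neighborhood}: a smooth open embedding
$$\nu\colon N \longrightarrow X\times\R_{+}\times\R^{\bar n}\times\R^{p+m}$$
restricting to $\varphi_0$ on the zero section such that (i) $\mathrm{im}(\nu)\cap\bigl(X\times\{0\}\times\R^{\bar n}\times\R^{p+m}\bigr)=\nu(N|_{\partial_{1}M})$, (ii) near $\partial_{1}M$ the embedding $\nu$ is the $q$-product of a tubular neighborhood of $\beta_{1}M\subset X\times\R^{\bar n}$ with the fixed normal bundle $N_{P}$ of $i_{P}(P)\subset\R^{p+m}$, and (iii) $\nu$ is compatible with the collar $h_{1}$ in the $\R_{+}$-direction. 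Such a $\nu$ is constructed by starting from $i_{P}(P)\subset\R^{p+m}$ (whose normal bundle $N_{P}$ is its own tubular neighborhood) together with a tubular neighborhood of $\beta_{1}M\subset X\times\R^{\bar n}$, forming the $q$-product, pushing it into the collar via $h_{1}$, and then extending to a full tubular neighborhood of $M$ by the relative tubular neighborhood theorem for manifolds with corners (cf.\ \cite{Hi 76}). This step — threading the product factorization, the collars, and the faces through the construction — is where the $P$-structure does real work, and it is the main obstacle; everything after it is formal.

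Given $\nu$, let $\mathcal{V}\subset\mathcal{E}_P(M)$ be the set of $\psi$ with $\psi(M)\subset\mathrm{im}(\nu)$ and $\mathrm{pr}_{M}\circ\nu^{-1}\circ\psi\in\Diff_P(M)$, where $\mathrm{pr}_{M}\colon N\to M$ is the bundle projection. Using compactness of $M$, openness of $\mathrm{im}(\nu)$, continuity of $\psi\mapsto\mathrm{pr}_{M}\circ\nu^{-1}\circ\psi$ (which lands in $C^{\infty}_{P}(M)$ because of the compatibilities built into $\nu$), and the fact that $\Diff_{P}(M)$ is open in $C^{\infty}_{P}(M)$ (Proposition~\ref{prop: P-diff open set}), one checks that $\mathcal{V}$ is open; it contains $\varphi_0$, and $\mathcal{V}$ is $\Diff_P(M)$-invariant, so $U:=\rho(\mathcal{V})$ is an open neighborhood of $[\varphi_0]$ (quotient maps for group actions are open). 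Over $U$ define
$$s\colon U \longrightarrow \mathcal{E}_P(M),\qquad [\psi]\longmapsto \nu\circ\bigl(\nu^{-1}\circ\psi\bigr)\circ\bigl(\mathrm{pr}_{M}\circ\nu^{-1}\circ\psi\bigr)^{-1}.$$
Here $\nu^{-1}\circ\psi\colon M\to N$ covers the diffeomorphism $f:=\mathrm{pr}_{M}\circ\nu^{-1}\circ\psi$, so $(\nu^{-1}\circ\psi)\circ f^{-1}$ is an honest section of $N$ and $s([\psi])$ is its image under $\nu$; the $P$-compatibility of $\nu$ guarantees $s([\psi])\in\mathcal{E}_P(M)$, and replacing $\psi$ by $\psi\circ g$ leaves $s([\psi])$ unchanged, so $s$ is well-defined on orbits, continuous, and satisfies $\rho\circ s=\mathrm{id}_{U}$. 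Then
$$U\times\Diff_P(M)\longrightarrow \rho^{-1}(U),\qquad ([\psi],g)\longmapsto s([\psi])\circ g$$
is a homeomorphism with inverse $\varphi\mapsto\bigl([\varphi],\,\mathrm{pr}_{M}\circ\nu^{-1}\circ\varphi\bigr)$, which is the desired local trivialization.

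Finally, for a $P$-manifold bordism triple $(W;M_a,M_b)$ the same argument applies verbatim with $W$ in place of $M$: one takes a tubular neighborhood of $\varphi_0(W)$ that is product-like along each of the faces $M_a$, $M_b$, $\partial_{1}W$ and along the corner $\partial_{0,1}W$ in the manner prescribed by Definition~\ref{defn: maps P-cobrdism} — which is exactly the input the relative tubular neighborhood theorem needs — and the rest of the construction of $U$, $s$, and the trivialization is identical. For the colimit $\mathcal{E}_P(M)=\colim_{n\to\infty}\mathcal{E}_{P,n}(M)$, compactness of $M$ confines every embedding (and the relevant isotopies) to a finite stage, so either one runs the normal-bundle argument directly over $X\times\R_{+}\times\R^{\infty}\times\R^{p+m}$ with the tubular neighborhood absorbing the extra Euclidean directions, or one checks that the finite-stage trivializations are compatible with the stabilization maps and assemble to a trivialization of the colimit. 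I expect the $P$-compatible tubular neighborhood theorem (with all of the collar, product, and corner compatibilities simultaneously) to be the only genuinely technical point; it is where the details are deferred to Appendix~\ref{Appendix A}.
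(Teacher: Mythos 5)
Your proposal is correct and follows essentially the same route as the paper's Appendix~\ref{Appendix A}: there the author takes a geodesic neighborhood $N$ of $f(W)$ whose intersection with the wall factors as $N_{\beta_{1}}\times N_{P}$ (your ``$P$-compatible tubular neighborhood''), uses the projection $\pi$ to define $\alpha(g)=f^{-1}\circ\pi\circ g\in C^{\infty}_{P}$, invokes the openness of $\Diff_{P}$ in $C^{\infty}_{P}$ (Proposition~\ref{prop: P-diff open set}) to shrink the neighborhood, and takes $g\mapsto([g],\alpha(g))$ as the trivialization, which is exactly your section $s([\psi])=\psi\circ(\mathrm{pr}_{M}\circ\nu^{-1}\circ\psi)^{-1}$ in different notation. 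The only cosmetic difference is that the paper proves the lemma at a fixed finite stage $\mathcal{E}_{P,n}(W;M_{a},M_{b})$ and leaves the passage to the colimit implicit, a point you address explicitly.
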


\begin{remark} \label{remark: classifying spaces}
Combining the above theorem with Theorem \ref{theorem: weakly contractible} (and the corresponding version of Theorem \ref{theorem: weakly contractible} of $P$-cobordisms)
we have weak homotopy equivalences,
\begin{equation} \label{eq: classifying space eq 1}
\xymatrix{
\BDiff_{P}(M) \simeq \mathcal{M}_{P}(M), \quad \BDiff_{P}(W; M_{a}, M_{b}) \simeq \mathcal{M}_{P}(W; M_{a}, M_{b})
}
\end{equation}
where $\BDiff_{P}(M)$ and $\BDiff_{P}(W; M_{a}, M_{b})$ are the \textit{classifying spaces} of the topological groups $\Diff_{P}(M)$ and $\Diff_{P}(W; M_{a}, M_{b})$. 
\end{remark}

Using the Borel construction we define
 \begin{equation} \label{defn: associated bundle} 
\xymatrix{
\widehat{\mathcal{E}}_{P}(M) := \mathcal{E}_{P}(M)\times_{\Diff_{P}(M)}M.
}
 \end{equation}
The standard projection yields a fibre-bundle,
$\widehat{\mathcal{E}}_{P}(M) \longrightarrow \mathcal{M}_{P}(M)$
with fibre $M$ and structure group $\Diff_{P}(M)$. 
This fibre-bundle comes with a natural embedding 
$$
\widehat{\mathcal{E}}_{P}(M) \hookrightarrow \mathcal{M}_{P}(M)\times\R_{+}\times\R^{\infty}\times\R^{p+m}
$$ 
defined by the formula, $(\varphi, x) \mapsto ([\varphi], \; \varphi(x))$.
Now, if $f: X \longrightarrow \mathcal{M}_{P}(M)$ is a smooth map (when treating $\mathcal{M}_{P}(M)$ as a \textit{Banach manifold})
from a smooth manifold $X$, then the pullback
  $$f^{*}\hat{\mathcal{E}}_{P}(M) \; = \; \{(x, v) \in X\times(\R_{+}\times\R^{\infty}\times\R^{p+m})\;\; | \;\; (f(x), v) \in \widehat{\mathcal{E}}_{P}(M)\}$$
  is a smooth $\dim(M) + \dim(X)$-dimensional $P$-submanifold 
$$
E \subseteq X\times\R_{+}\times\R^{\infty}\times\R^{p+m}
$$
such that the projection $\pi: E \rightarrow X$ is a fibre-bundle with structure group $\Diff_{P}(M)$ and fibre $M$.
The fibre over $x$, denoted by $E_{x}$, has the structure of $P$-manifold diffeomorphic to $M$ such that the inclusion map 
$E_{x} \hookrightarrow \R_{+}\times\R^{\infty}\times\R^{p+m}$
is an element of the space $\mathcal{E}_{P}(M)$. 
We have the following lemma. 
\begin{lemma} \label{lemma: fibre-bundles}
Let $X$ be a smooth manifold without boundary and let $M$ be a closed $P$-manifold. 
There is a one-to-one correspondence between smooth maps, 
$$X \longrightarrow \mathcal{M}_{P}(M)$$ 
and closed $P$-submanifolds 
$$E \subset X\times\R_{+}\times\R^{\infty}\times\R^{p+m}$$
for some $n \in \N$, such that the projection $\pi: E \longrightarrow X$ is a smooth fibre-bundle bundle with fibre $M$ and structure group $\Diff_{P}(M)$. 
\end{lemma}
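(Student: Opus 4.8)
The plan is to establish the bijection by exhibiting maps in both directions and checking they are mutually inverse. The key point is that all the preparatory work has already been done: the associated bundle $\widehat{\mathcal{E}}_P(M) = \mathcal{E}_P(M)\times_{\Diff_P(M)} M$ comes with its tautological embedding into $\mathcal{M}_P(M)\times\R_+\times\R^\infty\times\R^{p+m}$, and the discussion immediately preceding the lemma already shows that pulling this back along a smooth $f\colon X\to\mathcal{M}_P(M)$ produces a closed $P$-submanifold $E = f^*\widehat{\mathcal{E}}_P(M) \subset X\times\R_+\times\R^\infty\times\R^{p+m}$ whose projection to $X$ is a $\Diff_P(M)$-fibre bundle with fibre $M$, with each fibre $E_x$ inclusion being an element of $\mathcal{E}_P(M)$. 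So one direction of the correspondence, $f\mapsto f^*\widehat{\mathcal{E}}_P(M)$, is already in hand.

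For the reverse direction, I would start from a closed $P$-submanifold $E\subset X\times\R_+\times\R^\infty\times\R^{p+m}$ with $\pi\colon E\to X$ a smooth fibre bundle with fibre $M$ and structure group $\Diff_P(M)$. For each $x\in X$ the fibre $E_x$ sits inside $\{x\}\times\R_+\times\R^\infty\times\R^{p+m}\cong\R_+\times\R^\infty\times\R^{p+m}$, and by the hypothesis on the embedding (that $E$ is a $P$-submanifold, hence its inclusion lies in $\mathcal{E}_{P,n}(E,X)$ for some $n$) the restricted inclusion $E_x\hookrightarrow\R_+\times\R^\infty\times\R^{p+m}$ is an element of $\mathcal{E}_P(M)$ once one fixes a $P$-diffeomorphism $M\cong E_x$; its $\Diff_P(M)$-orbit is a well-defined point of $\mathcal{M}_P(M) = \mathcal{E}_P(M)/\Diff_P(M)$. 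This defines a set map $\hat f_E\colon X\to\mathcal{M}_P(M)$, $x\mapsto[E_x\hookrightarrow\R_+\times\R^\infty\times\R^{p+m}]$. I would then check that $\hat f_E$ is smooth as a map into the Banach manifold $\mathcal{M}_P(M)$: this is a local question, and over a trivializing open set $U\subset X$ the bundle $\pi^{-1}(U)\cong U\times M$ yields a smooth map $U\to\mathcal{E}_P(M)$ recording the embeddings of the fibres (the fibrewise embeddings vary smoothly because $E$ is a smooth submanifold of $X\times\R_+\times\R^\infty\times\R^{p+m}$), and composing with the quotient map $\mathcal{E}_P(M)\to\mathcal{M}_P(M)$—which is a locally trivial fibre bundle, hence in particular smooth, by Theorem \ref{theorem: local triviality}—gives smoothness of $\hat f_E|_U$.

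Finally I would verify that the two constructions are inverse to each other. Given $f\colon X\to\mathcal{M}_P(M)$, the fibre of $f^*\widehat{\mathcal{E}}_P(M)$ over $x$ is exactly the $P$-submanifold of $\R_+\times\R^\infty\times\R^{p+m}$ represented by $f(x)\in\mathcal{M}_P(M)$, so $\widehat{f^*\widehat{\mathcal{E}}_P(M)} = f$ on the nose from the definitions. Conversely, given $E\subset X\times\R_+\times\R^\infty\times\R^{p+m}$, the pullback $\hat f_E^*\widehat{\mathcal{E}}_P(M)$ has fibre over $x$ the $P$-submanifold of $\R_+\times\R^\infty\times\R^{p+m}$ in the orbit $\hat f_E(x)$, which is precisely $E_x$; chasing the definition of $f^*\widehat{\mathcal{E}}_P(M)$ as $\{(x,v)\mid (\hat f_E(x),v)\in\widehat{\mathcal{E}}_P(M)\}$ shows it equals $E$ as a subset of $X\times\R_+\times\R^\infty\times\R^{p+m}$. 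I expect the main obstacle to be the smoothness verification for $\hat f_E$: one has to be careful about what "smooth into the Banach manifold $\mathcal{M}_P(M)$" means and to use Theorem \ref{theorem: local triviality} correctly (together with the existence of smooth local sections of the quotient map, which is part of local triviality) to pass between the fibrewise-embedding data over a trivializing neighborhood and a genuine smooth map into the quotient; the bookkeeping with the collars and the $\beta_1$-factorization conditions that make the inclusions land in $\mathcal{E}_P(M)$ rather than merely in a space of embeddings is routine but must be tracked.
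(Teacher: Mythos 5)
Your proposal follows essentially the same route as the paper: one direction is the pullback $f \mapsto f^{*}\widehat{\mathcal{E}}_{P}(M)$ using the discussion preceding the lemma, the other is the assignment $x \mapsto [E_{x}] \in \mathcal{M}_{P}(M)$, and the two are checked to be mutually inverse. Your added attention to the smoothness of $x \mapsto [E_{x}]$ via local trivializations and Theorem \ref{theorem: local triviality} only fills in detail the paper leaves implicit, so the argument is correct and matches the paper's proof.
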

\begin{proof}
Let $f: X \longrightarrow \mathcal{M}_{P}(M)$ be a smooth map. 
By the discussion above, the pull-back $f^{*}\hat{\mathcal{E}}_{P}(M)$ comes with a canonical embedding into $X\times\R_{+}\times\R^{\infty}\times\R^{p+m}$ such that the projection onto $X$ is fibre-bundle with structure group $\Diff_{P}(M)$ and fibre $M$. 
This describes one direction of the correspondence. 

In the other direction, let $E \subset X\times\R_{+}\times\R^{\infty}\times\R^{p+m}$ be a $P$-submanifold such that the projection onto $X$, $\pi: E \longrightarrow X$ is a fibre-bundle with fibre $M$ and structure group $\Diff_{P}(M)$. 
We obtain a map 
\begin{equation} \label{eq: classifying map}
\xymatrix{
X \longrightarrow \mathcal{M}_{P}(M), \quad x \mapsto E_{x}
}
\end{equation}
where $E_{x} \subset \{x\}\times\R_{+}\times\R^{\bar{n}}\times\R^{p+m}$ is the fibre of the projection $\pi$ over $x \in X$. 
It follows easily that (\ref{eq: classifying map}) is the inverse to the correspondence given by $f \mapsto f^{*}(\hat{\mathcal{E}}_{P}(M))$ described above. 
\end{proof}
We have a similar lemma for $P$-manifold cobordism triples which is proven in the same way. 
\begin{lemma} \label{lemma: fibre-bundles 2}
Let $X$ be a smooth manifold without boundary and let $(W; M_{a}, M_{b})$ be a $P$-manifold cobordism triple. 
There is a one-to-one correspondence between smooth maps, 
$$X \longrightarrow \mathcal{M}_{P}(W; M_{a}, M_{b})$$ 
and $P$-subcobordisms
$$E \subset X\times[0,1]\times\R_{+}\times\R^{\infty}\times\R^{p+m}$$
such that the projection $\pi: E \longrightarrow X$ is a smooth fibre-bundle bundle with fibre $W$ and structure group $\Diff_{P}(W; M_{a}, M_{b})$. 
\end{lemma}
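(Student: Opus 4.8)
The plan is to mirror exactly the proof of Lemma \ref{lemma: fibre-bundles}, replacing the Borel construction $\widehat{\mathcal{E}}_P(M)$ by its cobordism analogue and invoking the $P$-cobordism versions of Theorems \ref{theorem: weakly contractible} and \ref{theorem: local triviality} asserted in Remark \ref{remark: analogous theorems}. First I would set up the relevant universal object: using the free action of $\Diff_{P}(W; M_{a}, M_{b})$ on $\mathcal{E}_{P}(W; M_{a}, M_{b})$, form the Borel construction
$$
\widehat{\mathcal{E}}_{P}(W; M_{a}, M_{b}) \; := \; \mathcal{E}_{P}(W; M_{a}, M_{b}) \times_{\Diff_{P}(W; M_{a}, M_{b})} W,
$$
which carries a natural projection to $\mathcal{M}_{P}(W; M_{a}, M_{b})$ that is a fibre bundle with fibre $W$ and structure group $\Diff_{P}(W; M_{a}, M_{b})$, by the cobordism version of Theorem \ref{theorem: local triviality}. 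Just as in the closed case this total space embeds into $\mathcal{M}_{P}(W; M_{a}, M_{b}) \times [0,1] \times \R_{+} \times \R^{\infty} \times \R^{p+m}$ by $(\varphi, x) \mapsto ([\varphi], \varphi(x))$; condition i.\ of Definition \ref{defn: maps P-cobrdism} guarantees the embedding lands in the correct walls, and conditions ii.\ and iii.\ guarantee the image is a $P$-subcobordism in the sense of Definition \ref{defn: P-submanifold}.

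Next I would construct the two maps of the correspondence. Given a smooth map $f : X \to \mathcal{M}_{P}(W; M_{a}, M_{b})$ (where the target is treated as a Banach manifold), pull back the universal embedded bundle along $f$ to obtain
$$
E \; := \; f^{*}\widehat{\mathcal{E}}_{P}(W; M_{a}, M_{b}) \; = \; \{(x,v) \in X \times ([0,1] \times \R_{+} \times \R^{\infty} \times \R^{p+m}) \; : \; (f(x), v) \in \widehat{\mathcal{E}}_{P}(W; M_{a}, M_{b})\},
$$
a $P$-subcobordism of $X \times [0,1] \times \R_{+} \times \R^{\infty} \times \R^{p+m}$ whose projection to $X$ is a fibre bundle with fibre $W$ and structure group $\Diff_{P}(W; M_{a}, M_{b})$. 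Conversely, given such an embedded bundle $E \subset X \times [0,1] \times \R_{+} \times \R^{\infty} \times \R^{p+m}$, define $g : X \to \mathcal{M}_{P}(W; M_{a}, M_{b})$ by $x \mapsto [E_{x}]$, where $E_{x} \subset \{x\} \times [0,1] \times \R_{+} \times \R^{\bar{n}} \times \R^{p+m}$ is the fibre over $x$; the inclusion $E_{x} \hookrightarrow [0,1] \times \R_{+} \times \R^{\infty} \times \R^{p+m}$ is by hypothesis an element of $\mathcal{E}_{P}(W; M_{a}, M_{b})$, so the assignment is well defined, and a local-triviality argument shows it is smooth as a map to the Banach manifold $\mathcal{M}_{P}(W; M_{a}, M_{b})$.

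Finally I would check that these two assignments are mutually inverse: a local trivialization of the bundle $\pi : E \to X$ produces, over each chart, a lift to $\mathcal{E}_{P}(W; M_{a}, M_{b})$, and comparing this with the tautological lift over $\widehat{\mathcal{E}}_{P}(W; M_{a}, M_{b})$ shows that $E \cong g^{*}\widehat{\mathcal{E}}_{P}(W; M_{a}, M_{b})$ as embedded $P$-subcobordisms; the reverse composite is immediate from the definition of the pullback. The main obstacle — and the only point where one must be careful — is the same one as in Lemma \ref{lemma: fibre-bundles}: verifying that the map $x \mapsto [E_{x}]$ is genuinely smooth into the infinite-dimensional Banach manifold $\mathcal{M}_{P}(W; M_{a}, M_{b})$, which relies on the local-triviality statement of Theorem \ref{theorem: local triviality} (cobordism version) to produce smooth local sections $X \supset U \to \mathcal{E}_{P}(W; M_{a}, M_{b})$ compatible with the embedding into Euclidean space. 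Since this is handled verbatim as in the closed case, I would simply remark that the proof is identical, as the statement of the lemma already indicates.
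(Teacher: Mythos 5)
Your proposal is correct and follows essentially the same route as the paper, which simply observes that Lemma \ref{lemma: fibre-bundles 2} is proven exactly as Lemma \ref{lemma: fibre-bundles}, i.e.\ by pulling back the Borel construction $\mathcal{E}_{P}(W; M_{a}, M_{b})\times_{\Diff_{P}(W; M_{a}, M_{b})}W$ in one direction and sending $x \mapsto E_{x}$ in the other, with the cobordism versions of Theorems \ref{theorem: weakly contractible} and \ref{theorem: local triviality} supplying the needed local triviality. Your extra attention to smoothness of $x \mapsto [E_{x}]$ is consistent with, and slightly more explicit than, the paper's treatment.
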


\section{The Cobordism Category of $P$-Manifolds}

\subsection{The Cobordism Category $\mathbf{Cob}^{P}_{d+1}$}
  \label{Category}
  Let $i_{P}: P \longrightarrow  \R^{p+m}$
 be the embedding specified in (\ref{eq: chosen embedding of P}) used to construct the spaces of embeddings in the previous section. 
  We now give a rigorous construction of the category $\mathbf{Cob}^{P}_{d+1}$ that was discussed in the introduction.
  
An object of $\mathbf{Cob}^{P}_{d+1}$ is a pair $(M, a)$
  where $a \in \R$ and 
  $$M \subseteq \{a\}\times\R_{+}\times\R^{\infty}\times\R^{p+m}$$ 
  is a closed $d$-dimensional $P$-submanifold. 
The space of objects is topologized by the identification 
\begin{equation} \label{eq: top category 1}
\Ob(\mathbf{Cob}^{P}_{d+1}) \; = \; \bigsqcup_{M}\left(\mathcal{M}_{P}(M)\times\R\right)
\end{equation}
where the disjoint union is taken over the diffeomorphism classes of closed $P$-manifolds of dimension $d$.

 A non-identity morphism of $\mathbf{Cob}^{P}_{d+1}$ from
  $(M_{a}, a)$ to $(M_{b}, b)$ is a triple $(W; a, b)$ with $(a, b)
  \in \R^{2}_{<} := \{(a, b) \in \R^{2} \; | \; a < b\}$ and $W \subseteq [a,b]\times\R_{+}\times\R^{\infty}\times\R^{p+m}$ 
  is a $(d+1)$-dimensional, compact, $i_{P}$-submanifold such that, 
  $$M_{a} = W\cap(\{a\}\times\R_{+}\times\R^{\infty}\times\R^{p+m}) \quad \text{and} \quad M_{b} = W\cap(\{b\}\times\R_{+}\times\R^{\infty}\times\R^{p+m}).$$
  It follows that $(W; M_{a}, M_{b})$ is a $P$-manifold bordism triple. 
The morphisms $(W_{1}; a, b)$ and $(W_{2}; b, c)$
can be composed if
  $$W_{1} \cap (\{b\}\times\R^{\infty}\times\R_{+}\times\R^{p+m}) \; = \; W_{2}\cap (\{b\}\times\R^{\infty}\times\R_{+}\times\R^{p+m}).$$
  In this case the composition is given by is $(W_{1}\cup W_{2}; a,
  c)$. The collars from condition ii. of Definition \ref{defn: i-P maps} ensure that this 
   union is a smooth 
  submanifold with a canonical smooth structure induced by the smooth structure on the ambient space. 
  The identity morphisms are
  identified with the space of objects.
  The space of morphisms is topologized by the identification 
  \begin{equation} \label{eq: top category 2}
   \Mor(\mathbf{Cob}^{P}_{d+1}) \; = \; \Ob(\mathbf{Cob}^{P}_{d+1}) \sqcup \bigg(\bigsqcup_{(W; M_{a}, M_{b})}\left(\R^{2}_{<}\times\mathcal{M}_{P}(W; M_{a}, M_{b})\right)\bigg).
   \end{equation}
The disjoint union (on the right) is taken over the diffeomorphism classes of $(d+1)$-dimensional $P$-manifold bordism triples. 
It follows easily from (\ref{eq: top category 1}) and (\ref{eq: top category 2}) that composition, and the source and target maps are continuous and thus $\mathbf{Cob}^{P}_{d+1}$ defined in this way is a topological category.
   
\section{Sheaf Models}
\label{D Sheaf}
\subsection{A recollection from \cite{MW 07} of sheaves} \label{subsection: Recollection of Sheaves}
Let $\mathcal{X}$ denote the category of smooth manifolds without
boundary with morphisms given by smooth maps.  By a sheaf
on $\mathcal{X}$ we mean a contravariant functor $\mathcal{F}$ from
$\mathcal{X}$ to \textbf{Sets} which satisfies the following
condition. For any open covering $\{U_{i} \; | \; i \in \Lambda
\}$ of some $X \in \Ob(\mathcal{X})$, and every collection $s_{i} \in
\mathcal{F}(U_{i})$ satisfying 
$$s_{i}\mid_{U_{i}\cap U_{j}} = s_{j}\mid_{U_{i}\cap U_{j}} \quad  \text{for all $i, j \in \Lambda$,}$$
there is a
unique $s \in \mathcal{F}(X)$ such that $s\mid_{U_{i}} = s_{i}$ for
all $i \in \Lambda$. 

\begin{defn}
Let $\mathcal{F}$ be a sheaf on $\mathcal{X}$. Two elements $s_{0}$
and $s_{1}$ of $\mathcal{F}(X)$ are said to be concordant if there
exists $s \in \mathcal{F}(X\times\R)$ that agrees with $\text{pr}^{*}(s_{0})$
in an open neighborhood of $X\times(-\infty, 0]$ and agrees with
$\text{pr}^{*}(s_{1})$ in an open neighborhood of $X\times[1,\infty)$, where
  $\text{pr}: X\times\R\to X$ is the projection onto the first factor.
\end{defn} 
We denote the set of concordance
classes of $\mathcal{F}(X)$ by $\mathcal{F}[X]$.  The correspondence
$X \mapsto \mathcal{F}[X]$ is clearly functorial in $X$. 

\begin{defn}For a sheaf $\mathcal{F}$ we define  the \textit{representing space} $|\mathcal{F}|$ to be the geometric realization of the simplicial set given by the formula
$k \; \mapsto \; \mathcal{F}(\triangle^{k}_{e})$
where 
$$\triangle^{k}_{e} := \{(x_{0}, x_{1}, \dots, x_{n}) \in \R^{n+1} \; | \; \sum x_{i} = 1\}$$ 
is the standard extended $k$-simplex. \end{defn}

From this definition it is easy to see that any map of sheaves
$\mathcal{F} \rightarrow \mathcal{G}$ induces a map between the
representing spaces $|\mathcal{F}| \rightarrow |\mathcal{G}|$.
\begin{defn} Let $\mathcal{F}$ be a sheaf on $\mathcal{X}$. Let
$A \subset X$ is a closed subset, and let $s$ be a germ near $A$,
  i.e., $s \in \colim_{U}\mathcal{F}(U)$ with $U$ ranging over all
  open sets containing $A$ in $X$. Then we define $\mathcal{F}(X, A;
  s)$ to be the set of all $t \in \mathcal{F}(X)$ whose germ near $A$
  coincides with $s$. Then two elements $t_{0}$ and $t_{1}$ are
  concordant \textit{relative} to $A$ and $s$ if they are related by a
  concordance whose germ near $A$ is the constant concordance equal to 
  $s$. The set of such relative concordance classes is denoted by
  $\mathcal{F}[X, A; s]$.
\end{defn} 
Any element
  $z \in \mathcal{F}(\text{pt.})$ determines a point in $|\mathcal{F}|$
  which we also denote by $z$. For any $X \in \Ob(\mathcal{X})$,
  such an element $z \in \mathcal{F}(\text{pt.})$ determines an element, which we give the same
  name, $z \in \mathcal{F}(X)$, by pulling back by the constant map. In
  \cite[2.17]{MW 07} it is proven that there is a natural bijection
  of sets
\begin{equation} \label{concordance relative iso} [(X, A), (|\mathcal{F}|, z)] \cong \mathcal{F}[X, A; z] .
\end{equation}
Here the set on the left hand side is the set of homotopy classes of
maps of pairs. The non-relative case of this isomorphism with $A$ the
empty set holds as well.
Using these observations we define the homotopy
  groups of a sheaf by setting
\begin{equation} \label{eq: sheaf homotopy group} \pi_{n}(\mathcal{F}, z) := \mathcal{F}[S^{n}, \text{pt.}; z]. 
\end{equation}
By (\ref{concordance relative iso}) we get $\pi_{n}(\mathcal{F}, z)
\cong \pi_{n}(|\mathcal{F}|, z)$ for any choice of $z \in
\mathcal{F}(\text{pt.})$.
Using this definition of homotopy group, a map of sheaves is said to
be a \textit{weak equivalence} if it induces a isomorphisms on all homotopy groups.

\subsection{The Sheaf $\mathbf{D}^{P}_{d+1}$} In this section we define a sheaf $\mathbf{D}^{P}_{d+1}$ on $\mathcal{X}$. It will be seen in Section \ref{The Classifying Space}  that the representing space $|\mathbf{D}^{P}_{d+1}|$ is weakly homotopy equivalent to the classifying space $B\mathbf{Cob}^{P}_{d+1}$. 

For what follows, let $i_{p}: P \longrightarrow \R^{p+m}$ be the embedding specified in (\ref{eq: chosen embedding of P}) that was used in the construction of $\mathbf{Cob}^{P}_{d+1}$ and the mapping spaces of Section \ref{eq: P-embeddings}. 
For an integer $n$ we will use the same notation, 
$$\bar{n} = n - p - m - 1$$
as was used in the previous sections. 
Before defining $\mathbf{D}^{P}_{d+1}$ we must fix some more terminology and notation. 

 Let $d$ and $n$ be non-negative integers and let $X \in \mathcal{X}$. 
We will need to consider $(d+1)$-dimensional, $P$-submanifolds,
$$W \subset   X\times\R\times\R_{+}\times\R^{d+\bar{n}}\times\R^{p+m}$$
with $\partial_{0}W = \emptyset$, where $W$ is not assumed to be compact. 
Recall, this means that:
\begin{enumerate}
\item[i.] $\partial_{1}W$ is embedded in $X\times\R\times\{0\}\times\R^{d+\bar{n}}\times\R^{p+m}$ with a collar as in condition ii. of Definition \ref{defn: i-P maps}.
\item[ii.] There is the factorization, $\partial_{1}W = \beta_{1}W\times i_{P}(P)$ where $\beta_{1}W \subset X\times\R\times\{0\}\times\R^{d+\bar{n}}$ is a submanifold. 
\end{enumerate}
In other words, the inclusion map $W \hookrightarrow X\times\R\times\R_{+}\times\R^{d+\bar{n}}\times\R^{p+m}$ is an element of the space $\mathcal{E}_{P, d+n}(X\times\R)$.

Denote by 
\begin{equation} \label{eq: projection}
(\pi, f): W \longrightarrow X\times \R
\end{equation}
the restriction of the projection, $X\times \R \times\R_{+}\times 
\R^{d+\bar{n}}\times\R^{p+m} \longrightarrow X\times\R$
to $W$. 
It follows from Definition \ref{defn: P-map} that 
$$\pi: W \rightarrow X, \quad f: W \rightarrow \R, \quad \text{and} \quad (\pi, f): W \rightarrow X\times \R$$ 
are all smooth $P$-maps. 
Notice that if $K \subset X$ is a submanifold and $\pi$ is transverse to $K$, then 
$$\pi^{-1}(K) \subset K\times \R\times\R_{+}\times\R^{d+\bar{n}}\times\R^{p+m}$$
is a $P$-submanifold.
We are now ready to define $\mathbf{D}^{P}_{d+1}$.
  \begin{defn} \label{defn: D sheaf} Let $X \in \Ob(\mathcal{X})$. 
For non negative integers $n$ and $d$ we
define $\mathbf{D}^{P}_{d+1, n}(X)$ to be the set of $(d+1+\dim(X))$-dimensional 
$P$-submanifolds 
$$W \; \subset \; X\times \R\times\R_{+}\times\R^{d+\bar{n}}\times\R^{p+m},$$ 
with $\partial_{0}W = \emptyset$,
 which satisfy the following:
 \begin{enumerate} 
\item[i.] The map $\pi: W \longrightarrow X$ is a $P$-submersion. 
\item[ii.] The map $(\pi, f): W \longrightarrow X\times \R$ is a proper $P$-map (recall that a map is proper if the pre-image of any compact subset is compact).
\end{enumerate}
\end{defn} 
Let $X, Y \in \mathcal{X}$ and let $f: X \longrightarrow Y$ be a smooth map. 
If $W \in \mathbf{D}^{P}_{d+1, n}(Y)$, then the pullback,
$$f^{*}(W) = \{(x, w) \in X\times W \; | \; f(x) = \pi(w)\}$$
naturally embeds in $X\times\R\times\R_{+}\times\R^{d+ \bar{n}}\times\R^{p+m}$
so that the projections onto $X$ is a submersion,
and thus yields an element of $\mathbf{D}^{P}_{d+1, n}(X)$ (see the discussion of pull-backs in \cite[Sections 2.2]{GMTW 09} for details). 
The correspondence $W \mapsto f^{*}W$ yields a map $\mathbf{D}^{P}_{d+1, n}(Y) \longrightarrow \mathbf{D}^{P}_{d+1, n}(X)$.
In this way, it follows that the assignment $X \mapsto \mathbf{D}^{P}_{d+1, n}(X)$ is a contraviant functor from $\mathcal{X}$ to $\mathbf{Sets}$. 
It is easy to verify that this functor satisfies the sheaf condition from Section \ref{subsection: Recollection of Sheaves}. 
 
To eliminate dependence on $n$, we define $\mathbf{D}^{P}_{d+1} := \displaystyle\colim_{n \to \infty}\mathbf{D}^{P}_{d+1, n}$ where the colimit is understood to be taken in the category of sheaves on $\mathcal{X}$. 
Concretely, $\mathbf{D}^{P}_{d+1}(X)$ is the set of all $(d+1)$-dimensional $P$-submanifolds $W \; \subset \; X\times \R \times\R_{+}\times\R^{d + \infty}\times\R^{p+m}$ satisfying conditions i. and ii. of Definition \ref{defn: D sheaf}, such that for any compact subset $K \subset X$ there exists $n \in \N$ such that, 
$$\pi^{-1}(K) \subset K\times\R\times\R_{+}\times\R^{d+\bar{n}}\times\R^{p+m}$$ 
(compare to \cite[Definition 3.3]{GMTW 09}). 
It follows from this characterization that for each non-negative integer $k$, the natural map $\displaystyle\colim_{n \to \infty}\mathbf{D}^{P}_{d+1, n} \longrightarrow \mathbf{D}^{P}_{d+1}$ induces an isomorphism on all homotopy groups and thus there is a homotopy equivalence,
$|\mathbf{D}^{P}_{d+1}| \simeq \displaystyle\colim_{n \to \infty}|\mathbf{D}^{P}_{d+1, n}|.$

The following lemma is proven in the same way as \cite[2.20]{MW 07}.
\begin{lemma} \label{lemma: concordance representation} 
For all $X \in \mathcal{X}$, 
every
  concordance class in $\mathbf{D}^{P}_{d+1, n}(X)$ has a
  representative  
  $$W \subset X\times\R\times\R_{+}\times\R^{d +\bar{n}}\times\R^{p+m}$$
  such that the map $f: W \longrightarrow \R$ a bundle projection, and thus there is a diffeomorphism $W \cong f^{-1}(0)\times \R.$
\end{lemma}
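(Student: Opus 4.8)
The plan is to follow the strategy of \cite[2.20]{MW 07}, adapting it to the $P$-manifold setting. Given an element $W \in \mathbf{D}^{P}_{d+1,n}(X)$, representing a concordance class, the goal is to produce a concordant element on which the height function $f$ is a bundle projection over $\R$. The key point is that the pair of maps $(\pi, f): W \to X \times \R$ is a proper $P$-submersion in the first coordinate and proper overall; what we want is to straighten out the $f$-coordinate so that $f$ itself becomes a submersion with no critical points and with the fibering property.

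First I would record that by a $P$-version of Ehresmann's theorem (which follows from the collar structure in condition ii.\ of Definition \ref{defn: i-P maps}, applied both to $W$ and, compatibly, to $\beta_1 W$), a proper $P$-submersion is a locally trivial fibre bundle in the category of $P$-manifolds. So it suffices to arrange that the combined map $(\pi, f): W \to X \times \R$ is a $P$-submersion (it is already proper). The standard device is to replace $W$ by a concordant submanifold $W'$ obtained by "slowing down'' the embedding: choose a smooth function that reparametrizes the $\R$-direction so that near each value of $f$ the manifold looks like a product. Concretely, one builds a concordance $\widetilde{W} \subset X \times \R \times \R \times \R_+ \times \R^{d+\bar n} \times \R^{p+m}$ over $X \times \R$ whose slice at concordance-parameter $0$ is $W$ and whose slice at concordance-parameter $1$ is a $W'$ for which $f$ is a submersion; the construction is the one in \cite{MW 07}, using a vector field that is vertical for $\pi$ and pushes the critical points of $f$ off to infinity, integrated using properness. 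The only new feature is that every map, vector field, and isotopy used must respect the product structure on $\partial_1 W = \beta_1 W \times i_P(P)$ and the collar $h_1$; since all the constructions are natural and can be performed first on $\beta_1 W$ and then extended by the identity on the $P$-factor and by the collar formula near $\partial_1 W$, this compatibility is automatic. This is exactly the content of Definitions \ref{defn: P-map}, \ref{defn: P-vector bundles} and Example \ref{example: kernel bundles}: the kernel bundle of $\pi$ is a $P$-vector bundle, so vertical vector fields and flows stay within the class of $P$-morphisms.

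Once $(\pi, f): W' \to X \times \R$ is a proper $P$-submersion, the $P$-version of Ehresmann's theorem gives that $f: W' \to \R$ is a smooth fibre bundle with structure group $\Diff_P$, hence (since $\R$ is contractible) trivial, yielding the asserted $P$-diffeomorphism $W' \cong f^{-1}(0) \times \R$ compatible with the projection onto $X$.

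\textbf{Main obstacle.} The technical heart, and the step I expect to be most delicate to write carefully, is checking that the push-off-the-critical-points construction can be carried out \emph{$P$-equivariantly and relative to the collar}: one must verify that the interpolating vector field constructed on $\beta_1 W$ and the one on the interior of $W$ glue to a genuine $P$-vector field near $\partial_1 W$ (using condition ii.(c) of Definition \ref{P mfd} for the double collar, and the factorization in Definition \ref{defn: i-P maps}), and that its flow preserves the submanifold conditions defining $\mathbf{D}^P_{d+1,n}$ — in particular properness of $(\pi,f)$ throughout the concordance. The rest is a routine transcription of \cite[2.20]{MW 07}.
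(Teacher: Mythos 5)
The paper offers no argument of its own here beyond the citation of \cite[2.20]{MW 07}; the proof being invoked is the stretching argument: one finds a value $c$ (and a small interval around it) that is a regular value of $f$ restricted to every fibre of $\pi$ and to $\beta_{1}W$ (Sard, plus properness of $(\pi,f)$, which makes the relevant set of critical values closed), and then builds a concordance that reparametrizes the $\R$-coordinate carrying $f$ so as to expand $f^{-1}((c-\varepsilon,c+\varepsilon))$ to all of $\R$ while everything else disappears to infinity; the time-one slice is the \emph{open piece} $f^{-1}((c-\varepsilon,c+\varepsilon))$ of $W$, rescaled, on which $f$ is a proper submersion and hence a bundle. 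Your mechanism is different and, as described, cannot work: you propose to remove the critical points of $f$ by integrating a $\pi$-vertical vector field that ``pushes the critical points off to infinity.'' At any finite time such a flow is a diffeomorphism of $W$ onto its image, so neither $W$ nor its $\pi$-fibres change diffeomorphism type; but the conclusion forces a change of type. Already for $X=\mathrm{pt}$ and $W=S^{d+1}$ (a legitimate element of $\mathbf{D}^{P}_{d+1,n}(\mathrm{pt})$, with $\partial_{1}W=\emptyset$), no function on a nonempty closed manifold is free of critical points, so no concordance obtained by dragging $W$ around the ambient space can make $(\pi,f)$ submersive; the concordant representative must be a different manifold, namely the cylinder on a regular level set, and the only way the standard argument produces it is by discarding the complement of a slab of regular values as above. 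Even on the charitable reading where ``pushing to infinity'' means exactly this rescaling, your write-up never says which slab is used or why one exists, and that is the crux.

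Relatedly, the step you single out as the technical heart --- gluing $P$-compatible vector fields along the collar --- is in fact the routine part: the stretching concordance only touches the $\R$-coordinate of $f$ and leaves the $\R_{+}$- and $\R^{p+m}$-factors alone, so the collar condition and the factorization $\partial_{1}W=\beta_{1}W\times i_{P}(P)$ are preserved verbatim, the only extra care being to choose $c$ regular for $f|_{\beta_{1}W}$ as well. What is genuinely delicate, and entirely absent from the proposal, is the parametrized selection problem: for $\dim X>0$ one needs values that are fibrewise regular simultaneously for all $x\in X$ (Sard gives this fibre by fibre, not uniformly), and handling this is the substance of the argument the paper imports from \cite{MW 07}. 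The surrounding ingredients of your sketch --- a $P$-version of Ehresmann's theorem for the proper $P$-submersion $(\pi,f)$, and triviality of the resulting bundle in the $\R$-direction giving $W\cong f^{-1}(0)\times\R$ --- are fine, but with the central step resting on an unworkable flow argument and with the regular-value selection unaddressed, the proposal has a genuine gap.
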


\section{A cofibre of Thom-spectra}
\label{Thom Spectrum}
\subsection{The spectrum $\MT_{P}(d+1)$} In this section we define a
spectrum $\MT_{P}(d+1)$ as the cofibre of a
map between $\Sigma^{-1}\MT(d-p)$ and $\MT(d+1)$,
where $\MT(d+1)$ is the spectrum defined in \cite{GMTW 09}. We use the
same notation for Grassmannian manifolds and their canonical bundles as
in \cite{GMTW 09}.
 
Let $i_{P}: P \hookrightarrow \R^{p+m}$ be the embedding from (\ref{eq: chosen embedding of P}). 
Denote by $G(p, m)$ the Grassmannian manifold of $p$-dimensional vector subspaces of $\R^{p+m}$.
Denote by $U_{p,m} \rightarrow G(p, m)$ the canonical vector bundle (which has fibre-dimension $p$) and denote by $U^{\perp}_{p,m} \rightarrow G(p, m)$ the orthogonal compliment to $U_{p,m}$, which has fibre-dimension $m$. 
The normal bundle $N_{P} \rightarrow P$ associated to the embedding $i_{P}: P \hookrightarrow \R^{p+m}$ has Gauss map,
\begin{equation} \label{eq: Gauss map 1}
\xymatrix{
  N_{P} \ar[rr]^{\hat{\gamma}} \ar[d] && U_{p,m}^{\perp} \ar[d] \\
  P \ar[rr]^{\gamma} && G(p,m)}
  \end{equation}
  which induces a map of the Thom
spaces,
$\Th(\hat{\gamma}): \Th(N_{p}) \longrightarrow \Th(U_{p,m}^{\perp})$.
Fix an embedding of the normal bundle $N_{P}$ as a tubular neighborhood, 
\begin{equation} \label{eq: P tubular nbh} 
e_{P}:N_{P} \hookrightarrow \R^{p+m}.
\end{equation}
This tubular neighborhood together with $\Th(\hat{\gamma})$ yields the Pontryagin-Thom map
\begin{equation} \label{mult P} 
\xymatrix{S^{p+m} \ar[rr]^{c_{P}} && \Th(U_{p,m}^{\perp})}.
\end{equation}
We now consider the standard multiplication-map
$$\xymatrix{\mu : G(d-p,n)\times G(p, m) \ar[rr] && G(d,n+m)}$$
given by $(V,W)\mapsto V\times W.$
The multiplication map $\mu$ is covered by a bundle map
$$
\xymatrix{U_{d-p,n-m}^{\perp}\times U_{p,m}^{\perp} \ar[d] \ar[rr]^{\ \ \ \ \hat{\mu}} && U_{d,n}^{\perp} \ar[d] \\
  G(d-p,n-m)\times G(p,m)  \ar[rr]^{\ \ \ \ \ \mu} && G(d,n)}$$ 
 which induces,
$$ \xymatrix{\Th(U_{d-p,n-m}^{\perp})\wedge \Th(U_{p,m}^{\perp}) \ar[rr]^{\ \ \ \ \Th(\hat{\mu})} && \Th(U_{d,n}^{\perp})}.
$$

Putting this together with $c_{P}$ from (\ref{mult P}) we define:
\begin{equation} \label{P mult n}
\xymatrix{ \tau_{P,n} := \Th(\hat{\mu})\circ (Id\wedge c_{P}): \Th(U_{d-p,n-m}^{\perp})\wedge S^{p+m} \ar[rrr] &&& \Th(U_{d,n}^{\perp}).}
 \end{equation}
As defined in \cite{GMTW 09}, $\Th(U_{d,n}^{\perp})$ is the
 $(d+n)$th space of the spectrum $\MT(d)$. The structure maps in this
 spectrum $\MT(d)$ come from applying $\Th(\underline{\hspace{.3cm}})$ to the bundle map
  $$\xymatrix{
  U_{d,n}^{\perp}\oplus\epsilon^{1} \ar[d] \ar[rr]^{\hat{i_{n}}} && U_{d,n+1}^{\perp} \ar[d]\\
  G(d,n) \ar[rr]^{i_{n}} && G(d,n+1)}
$$
where the map $i_{n}$ is the standard embedding and $\epsilon^{1}$ is the trivial line bundle.
The map from (\ref{P mult n}),
 yields a map of spectra
\begin{equation} \label{P Mult}
\xymatrix{\tau_{P}: \MT(d-p) \ar[rr] && \MT(d)}.
\end{equation}
Here we are taking into account that the spectrum with $(d+p)$th space
$\Th(U_{d-p,n-m}^{\perp})\wedge S^{p+m}$ is homotopy equivalent to
$\MT(d-p)$. 
Now, consider the map
$G(d,n) \longrightarrow G(d+1,n)$
given by sending a $d$-dimensional vector subspace $V \subset \R^{p+n}$ to the subspace $\R\times V \subset \R\times\R^{d+n}$. 
This is covered by a bundle map $U_{d,n}^{\perp} \longrightarrow U_{d+1,n}^{\perp}$.
This induces a map on Thom-spaces 
\begin{equation} \label{eq: adjacent thom space map}
j_{d, n}: \Th(U_{d,n}^{\perp}) \longrightarrow \Th(U_{d+1,n}^{\perp})
\end{equation}
and in turn a map of spectra which we denote by 
\begin{equation} \label{eq: adjacent spectra map}
\hat{j}_{d}: \Sigma^{-1}\MT(d) \longrightarrow  \MT(d+1).
\end{equation}
Consider the map of spectra given by the composition,
$$
\xymatrix{
\Sigma^{-1}\MT(d-p) \ar[rr]^{\Sigma^{-1}\tau_{P}} &&  \Sigma^{-1}\MT(d) \ar[rr]^{\hat{j}_{d}} && \MT(d+1).
}
$$
Finally we define,
\begin{equation} 
\xymatrix{
\MT_{P}(d+1):= \Cofibre(\hat{j}_{d} \circ(\Sigma^{-1}\tau_{P})). 
}
\end{equation} 
The spectrum $\MT_{P}(d+1)$ defined above is our primary spectrum of interest and is the spectrum that appears in the statement of Theorem \ref{thm: Main} from the introduction. 
Applying the \textit{infinite loop-space} functor to the cofibre sequence 
$$\Sigma^{-1}\MT(d-p) \longrightarrow \MT(d+1) \longrightarrow \MT_{P}(d+1)$$
yields a homotopy-fibre sequence
\begin{equation} \label{eq: Loops Fibre Sequence} \Omega^{\infty}\MT(d+1) \longrightarrow \Omega^{\infty}\MT_{P}(d+1) \longrightarrow \Omega^{\infty}\MT(d-p). \end{equation}
\begin{remark} \label{remark: homotopy invariance under bordism class}
Since the map of spectra $\tau_{P}: \MT(d-p) \longrightarrow \MT(d)$ is induced by the Pontryagin-Thom construction applied to the embedding $i_{P}: P \hookrightarrow \R^{p+m}$, it follows that the homotopy class of $\tau_{P}$ is an invariant of the cobordism class of the manifold $P$. 
It follows that if $P'$ is a manifold which is cobordant to $P$ then the spectrum $\MT_{P'}(d+1)$ is homotopy equivalent to $\MT_{P}(d+1)$.
\end{remark}

\subsection{A filtration of $\MO_{P}$} \label{A Filtration}
We now describe how the the spectrum $\MT_{P}(d+1)$ constructed in the previous section relates to the spectrum $MO_{P}$, which classifies the homology theory associated to the cobordism groups of $P$-manifolds. 
There is a direct system of spectra 
\begin{equation}\label{filtration} \xymatrix{\cdots \ar[r] &
    \Sigma^{(d-1)}\MT(d-1) \ar[r] & \Sigma^{d}\MT(d) \ar[r] &
    \Sigma^{(d+1)}\MT(d+1) \ar[r] & \cdots} 
\end{equation}
where the $d$-th map is the $d$th suspension of the map defined in (\ref{eq: adjacent spectra map}). The direct limit is
a spectrum which we denote by $\MTO$.
The following lemma is proven in \cite[Page 14]{GMTW 09}. 
We provide the proof here for completeness.
\begin{lemma} \label{Filtration Lemma} There is a homotopy equivalence
$\MTO \simeq \MO.$
\end{lemma}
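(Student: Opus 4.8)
The plan is to identify $\MTO$, the direct limit of the system $\Sigma^{d}\MT(d)$, with the Thom spectrum $\MO$ by comparing the $n$-th spaces directly. Recall that the $(d+n)$-th space of $\MT(d)$ is $\Th(U^{\perp}_{d,n})$, so the $(d+n)$-th space of $\Sigma^{d}\MT(d)$ is $\Sigma^{d}\Th(U^{\perp}_{d,n}) = \Th(U^{\perp}_{d,n}\oplus \epsilon^{d})$. The maps in the direct system are induced by the bundle map $U^{\perp}_{d,n}\oplus\epsilon^{1}\to U^{\perp}_{d+1,n}$ covering the embedding $G(d,n)\hookrightarrow G(d+1,n)$ that sends $V\mapsto \R\times V$; suspending $d$ times this becomes a map $\Th(U^{\perp}_{d,n}\oplus\epsilon^{d})\to\Th(U^{\perp}_{d+1,n}\oplus\epsilon^{d+1})$.

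First I would fix the ambient dimension $N = d+n$ and rewrite everything in terms of $N$: the relevant space is $\Th(U^{\perp}_{d,N-d}\oplus\epsilon^{d})$, where $U^{\perp}_{d,N-d}\to G(d,N)$ has fiber dimension $N-d$, so the total bundle $U^{\perp}_{d,N-d}\oplus\epsilon^{d}$ has fiber dimension $N$. The key geometric observation is that $U^{\perp}_{d,N-d}\oplus\epsilon^{d}$ is isomorphic to the pullback of the universal $N$-plane bundle $\gamma_{N}\to BO(N)$ along the composite $G(d,N)\to BO(N)$ classifying $U^{\perp}_{d,N-d}\oplus\epsilon^{d}$: indeed, adding a trivial $d$-plane to the $(N-d)$-plane bundle $U^{\perp}_{d,N-d}$ gives a rank $N$ bundle whose classifying map I want to understand in the limit over $d$.

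The core of the argument is then a connectivity estimate. As $d\to\infty$ with $N$ fixed — wait, that is the wrong direction; rather, one takes the colimit over $d$ with $n = N-d$, and then over $N$. I would instead argue: the map $G(d,N)\to G(d+1,N+1)$, $V\mapsto \R\times V$, together with the trivialization data, exhibits $\colim_{d}\bigl(G(d,N);U^{\perp}_{d,N-d}\oplus\epsilon^{d}\bigr)$ as the space $\colim_{d} G(d, N)$ with a rank-$N$ bundle over it; and by a standard argument this colimit of Grassmannians, as $d\to\infty$, is a model for $BO(N)$ (the Grassmannian $G(d,d+k)$ of $d$-planes in $\R^{d+k}$ is $G(k, d+k)$ up to taking orthogonal complements, hence $BO(k)$ as $d\to\infty$), and under this identification $U^{\perp}_{d,N-d}\oplus\epsilon^{d}$ goes to the universal rank-$N$ bundle $\gamma_{N}$. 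Passing to Thom spaces gives that the $N$-th space of $\MTO$ is $\Th(\gamma_{N}) = MO(N)$, the $N$-th space of $\MO$; and one checks the structure maps agree, using that the stabilization $\epsilon^{1}\oplus U^{\perp}_{d,n}\to U^{\perp}_{d,n+1}$ is compatible with $\gamma_{N}\oplus\epsilon^{1}\to\gamma_{N+1}$ in the limit. Naturality of the Pontryagin–Thom/classifying-map constructions then upgrades the levelwise weak equivalences to a weak equivalence of spectra $\MTO\sim\MO$.

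The main obstacle I expect is bookkeeping the two interacting colimits — one over $d$ (the filtration index) and one over the ambient dimension $N = d+n$ — and verifying that the bundle maps in the $\MT(d)$ tower, after the $d$-fold suspension, are genuinely compatible with the standard maps $BO(N)\to BO(N+1)$ defining $\MO$, rather than differing by a swap of summands or a self-map of $BO(N)$ that is only homotopic to the identity. Handling this cleanly requires choosing the identification $G(d, d+k)\cong G(k,d+k)$ (orthogonal complement) consistently across the tower and checking that the resulting square of bundle maps commutes up to a coherent homotopy; since the Grassmannians and their bundles are honest manifolds and bundles, this reduces to tracking the linear algebra, but it is the step where care is needed. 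All other steps — the colimit $\colim_d G(d, d+k) \simeq BO(k)$ and the identification of the limiting bundle with $\gamma_N$ — are standard.
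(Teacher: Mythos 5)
Your core identification is the same one the paper uses: the orthogonal--complement homeomorphism $G(d,n)\cong G(n,d)$, covered by a bundle isomorphism $U^{\perp}_{d,n}\cong U_{n,d}$, which in the limit over $d$ turns the Grassmannians in the $\MT$--tower into $G(n,\infty)=\BO(n)$ carrying the universal bundle $U_{n,\infty}$, so that the relevant Thom spaces become the spaces $\Th(U_{n,\infty})$ of $\MO$. The difference is in how the comparison is concluded. The paper never computes a levelwise colimit: for each fixed $d$ it composes $\Th(U^{\perp}_{d,n})\xrightarrow{\perp}\Th(U_{n,d})\hookrightarrow\Th(U_{n,\infty})$ to obtain a map of spectra $\Sigma^{d}\MT(d)\to\MO$, observes that $\Th(U_{n,\infty})$ is built from $\Th(U_{n,d})$ by attaching cells of dimension greater than $n+d$, hence this map is an isomorphism on $\pi_{k}$ for $k<d$ and a surjection for $k=d$, and then lets $d\to\infty$. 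This sidesteps exactly the double--colimit bookkeeping you flag as the main obstacle, and the per-stage connectivity statement is reused in the following lemma (the identification $\MTO_{P}\sim\MO_{P}$), so the paper's route buys more than the bare equivalence.

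Two concrete points in your set-up need repair. First, $\hat{j}_{d}$ is induced by a bundle map $U^{\perp}_{d,n}\to U^{\perp}_{d+1,n}$ that is an isomorphism on fibres (the orthogonal complement of $\R\times V$ in $\R^{1+d+n}$ is $V^{\perp}$); the stabilizing $\epsilon^{1}$ sits on the tautological side, $\epsilon^{1}\oplus U_{d,n}\cong$ the pullback of $U_{d+1,n}$, while a map of the form $U^{\perp}_{d,n}\oplus\epsilon^{1}\to U^{\perp}_{d,n+1}$ is the structure map inside a single $\MT(d)$, not the map generating the tower. Second, with your indexing the $N$-th space of $\Sigma^{d}\MT(d)$ is a Thom space of a rank-$N$ bundle over the Grassmannian of $d$-planes in $\R^{N}$, and at fixed $N$ the index $d$ cannot tend to infinity (that Grassmannian is empty once $d>N$); your mid-proof pivot to the maps $G(d,N)\to G(d+1,N+1)$ raises the level together with $d$, so ``$\colim_{d}G(d,N)$ at fixed $N$'' is not the colimit you are actually forming. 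The clean fix is to take the colimit at fixed codimension $n$ (equivalently, use the shift model in which the $N$-th space of $\Sigma^{d}\MT(d)$ is $\Th(U^{\perp}_{d,N})$); then the level-$N$ colimit is literally $\colim_{d}\Th(U_{N,d})=\Th(U_{N,\infty})$, compatibly with the structure maps, and your argument goes through --- or one simply adopts the paper's per-stage map together with the cell-attachment estimate, which avoids the issue entirely.
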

\begin{proof}
  There is a homeomorphism $G(d,n) \longrightarrow G(n,d)$ given by $V
  \mapsto V^{\perp}$. This map is covered by a bundle isomorphism
  $U_{d,n}^{\perp} \longrightarrow U_{n,d}$ and thus yields maps
$$\xymatrix{\Th(U_{d,n}^{\perp}) \ar[rr]^{\perp}_{\cong} && \Th(U_{n,d}) \ar[rr]^{i} && \Th(U_{n,\infty})},$$
where $\Th(U_{n,\infty}) := \colim_{d\to\infty}\Th(U_{n,d})$. 
The space $\Th(U_{n,\infty})$ is the $n$th space of the spectrum $\MO$, thus the above map induces a map of spectra,
$\Sigma^{d}\MT(d) \longrightarrow \MO$
(or a degree $d$ map $\MT(d) \rightarrow \MO$). 
The space $\Th(U_{n,\infty})$ is the $n$-th space in the spectrum $\MO$. Now, $\Th(U_{n,\infty})$ can be built out of $\Th(U_{n,d})$ by attaching cells of dimension greater than dimension $n+d$. This implies that $\Sigma^{d}\MT(d) \longrightarrow \MO$ induces an isomorphism on $\pi_{k}$ for $k < d$ and a surjection for $k = d$. This proves that $\MTO \simeq \MO$. 
\end{proof}
Since $\pi_{d-1}\MO \cong \Omega_{d-1}$ where $\Omega_{d-1}$ is
the cobordism group of unoriented $(d-1)$-dimensional manifolds, the
above lemma implies that $\pi_{-1}\MT(d) \cong \Omega_{d-1}$.
For each $d$, the diagram
$$\xymatrix{
\Sigma^{(d+p)}\MT(d) \ar[ddd]_{\Sigma^{(d+1+p)}[\hat{j}_{d} \circ (\Sigma^{-1}\tau_{P})]} \ar[rrrr]^{\Sigma^{(d+p)}\hat{j}_{d}} &&&& \Sigma^{(d+1+p)}\MT(d+1) \ar[ddd]^{\Sigma^{(d+2+p)}[\hat{j}_{d+1} \circ (\Sigma^{-1}\tau_{P})]}\\ 
\\
\\
\Sigma^{(d+1+p)}\MT(d+1+p) \ar[rrrr]^{\Sigma^{(d+1+p)}\hat{j}_{d+1+p}} &&&& \Sigma^{(d+2+p)}\MT(d+2+p)}$$
commutes up to homotopy. Passing to the cofibres of the vertical maps induces a map of spectra,
\begin{equation} \label{cofibre filtration} 
\xymatrix{
\Sigma^{d+1+p}\MT_{P}(d+1+p) \ar[rr] && \Sigma^{d+2+p}\MT_{P}(d+2+p)
}
\end{equation}
for each $d$. These maps form a direct system similar to (\ref{filtration}). We
denote the direct limit of this direct system by $\MTO_{P}$.
 \begin{lemma} There is a homotopy equivalence 
 $\MTO_{P} \simeq \MO_{P}$
 where $\MO_{P}$ is the classifying spectrum for the cobordism theory
 $\Omega_{*}^{P}$ for manifolds with type
 $P$-singularities.
 \end{lemma}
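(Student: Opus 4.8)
The plan is to realize $\MTO_{P}$ as the cofibre of a multiplication-by-$[P]$ map $\Sigma^{p}\MO\to\MO$ and then to invoke the classical description of $\MO_{P}$ from \cite{Ba 73, B 92}, according to which $\MO_{P}$ is homotopy equivalent to exactly this cofibre; the cofibre sequence $\Sigma^{p}\MO\xrightarrow{\times P}\MO\to\MO_{P}$ is what gives rise to the Bockstein--Sullivan exact couple (\ref{couple}). Since cofibres of homotopic maps of spectra are homotopy equivalent, producing such a cofibre sequence ending in $\MTO_{P}$ will suffice.

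First I would observe that, for each $d$, the spectrum $\Sigma^{d+1+p}\MT_{P}(d+1+p)$ sits by construction in a cofibre sequence whose first two terms are suspensions of $\MT(d)$ and $\MT(d+1+p)$. The homotopy-commutative diagram displayed just before (\ref{cofibre filtration}) shows that, as $d$ varies, these cofibre sequences assemble into a direct system: the restriction to the first two columns is the direct system (\ref{filtration}) --- appropriately reindexed, and $p$-fold suspended in the case of the first column --- while the restriction to the third column is the direct system (\ref{cofibre filtration}) defining $\MTO_{P}$. (Choosing homotopies realizing the commutativity of the squares gives honest maps of mapping cones, so the telescopes are well defined up to equivalence.) Then I would pass to the direct limit over $d$: sequential direct limits of spectra preserve cofibre sequences, so $\MTO_{P}=\colim_{d}\Sigma^{d+1+p}\MT_{P}(d+1+p)$ is the cofibre of the colimit of the first two columns. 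By Lemma \ref{Filtration Lemma} the colimit of the second column is $\MTO\sim\MO$, and the colimit of the first column is $\Sigma^{p}\MTO\sim\Sigma^{p}\MO$; this produces a cofibre sequence $\Sigma^{p}\MO\xrightarrow{\Phi}\MO\to\MTO_{P}$.

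The remaining, and principal, step is to identify $\Phi$ with multiplication by $[P]\in\pi_{p}\MO\cong\Omega_{p}$. By construction $\Phi$ is assembled, in the colimit over $d$, from the maps $\tau_{P,n}=\Th(\hat\mu)\circ(\mathrm{Id}\wedge c_{P})$ of (\ref{P mult n}) together with the degree-one maps $\hat{j}_{d}$ of (\ref{eq: adjacent spectra map}), which become identities in the colimit. Here $c_{P}\colon S^{p+m}\to\Th(U^{\perp}_{p,m})$ is the Pontryagin--Thom collapse associated to the tubular neighbourhood (\ref{eq: P tubular nbh}) of $i_{P}\colon P\hookrightarrow\R^{p+m}$, and $\hat\mu$ covers the Whitney-sum multiplication $\mu$ on Grassmannians. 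Under the complement homeomorphisms $G(d,n)\xrightarrow{\cong}G(n,d)$, $V\mapsto V^{\perp}$ --- the ones used in the proof of Lemma \ref{Filtration Lemma} to identify $\colim_{d}\Sigma^{d}\MT(d)$ with $\MO$ --- the map induced by $\hat\mu$ becomes the Whitney-sum product making $\MO$ a ring spectrum, while $c_{P}$ represents the stable cobordism class $[P]$; hence in the limit $\Phi$ is the composite of smashing with $[P]$ followed by the ring multiplication, that is, multiplication by $[P]$.

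I expect this last identification to be the main obstacle: one must carefully match the Pontryagin--Thom and Grassmannian-multiplication description of $\tau_{P}$ used here with the classical description of the $\Omega_{*}$-module structure on $\MO_{*}$ and of the map $\times P$ from \cite{Ba 73, B 92}. Once it is in place, $\MTO_{P}=\Cofibre(\Phi)\sim\Cofibre(\Sigma^{p}\MO\xrightarrow{\times P}\MO)\sim\MO_{P}$, which is the assertion. Alternatively, since the equivalences $\MTO\sim\MO$ and $\Sigma^{p}\MTO\sim\Sigma^{p}\MO$ are compatible with the relevant maps, one obtains a map of cofibre sequences from $\Sigma^{p}\MO\xrightarrow{\times P}\MO\to\MTO_{P}$ to $\Sigma^{p}\MO\xrightarrow{\times P}\MO\to\MO_{P}$ which is the identity on the first two terms, so the induced map $\MTO_{P}\to\MO_{P}$ is an equivalence by the five lemma.
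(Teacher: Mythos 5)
Your proposal is correct, and it relies on the same geometric input as the paper's proof, but it is organized in the opposite order. The paper never forms the limiting map $\Phi$: instead, for each $d$ it maps the square defining $\Sigma^{d+p+1}\MT_{P}(d+p+1)$ into the square defining $\MO_{P}$ via the comparison maps $\Th(U_{d,n}^{\perp})\stackrel{\perp}{\cong}\Th(U_{n,d})\rightarrow\Th(U_{n,\infty})$; the commutativity of the resulting diagram (\ref{P filt diagram}) encodes exactly your key identification, namely that under $V\mapsto V^{\perp}$ the map $\tau_{P}$ corresponds to smashing with $c_{P}$ (representing $[P]$) and $\hat{\mu}$ to the multiplication on $\MO$. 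The paper then uses that $\Th(U_{n,\infty})$ is obtained from $\Th(U_{n,d})$ by attaching cells of dimension greater than $n+d$, so the horizontal comparison maps are isomorphisms on $\pi_{k}$ for $k<d+p$ and surjective for $k=d+p$, and applies the five lemma at each finite stage to conclude that $\Sigma^{d+p+1}\MT_{P}(d+p+1)\rightarrow\MO_{P}$ is highly connected, before letting $d\to\infty$. Your route takes the colimit first, which requires rigidifying the homotopy-commutative squares to maps of cofibre sequences and knowing that sequential homotopy colimits preserve cofibre sequences (points you acknowledge), and then identifying $\Phi$ with $\times P$; that identification is asserted ``upon inspection'' in the paper as well, so you are not missing anything the paper supplies. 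What the stage-wise argument buys is that one never has to commute colimits with cofibres or speak of a limiting map; what your argument buys is the cleaner intermediate statement that $\MTO_{P}$ is the cofibre of multiplication by $[P]$, from which the lemma is immediate, and your closing alternative (a map of cofibre sequences inducing the identity on the first two terms, plus the five lemma) is essentially the paper's argument carried out after stabilization rather than before.
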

 \begin{proof}
   The spectrum $\MO_{P}$ is given as the cofibre of the map 
   $\times P: \Sigma^{p}\MO \longrightarrow \MO$ which is induced by the degree $p$ homomorphism $\Omega_{*} \stackrel{\times P} \longrightarrow\Omega_{*}$. 
   On the level of spectra, this map is defined concretely as follows. 
The map 
$$
\mu: G(n,d)\times G(m,p) \longrightarrow G(n+m, d+p), \quad (V, W) \mapsto V\times W
$$
induces 
$$\mu':
   G(n,\infty)\times G(m,p) \longrightarrow G(n+m, \infty)
$$ 
in the limit as $d \to \infty$. The map $\mu'$ is covered
by a bundle map $U_{n, \infty}\times U_{m,p} \longrightarrow
U_{n+m,\infty}$ which induces a map of Thom spaces,
$$ 
\Th(U_{n, \infty})\wedge\Th(U_{m,p}) \longrightarrow
\Th(U_{n+m,\infty}).
$$ 
The normal bundle $N_{P}$ for $i_{P}(P) \subset \R^{p+m}$ has
Gauss map
$$\xymatrix{
  N_{P} \ar[d] \ar[rr] && U_{m,p} \ar[d]\\
  P \ar[rr] && G(m,p).}$$ We emphasize that this map is
  different than the map (\ref{eq: Gauss map 1}) where the target space was
  $G(p,m)$ with bundle $U_{p,m}^{\perp}$. The Pontryagin-Thom map 
$S^{p+m} \longrightarrow \Th(U_{m,p})$
associated to the Gauss map for $i_{P}(P)$ yields
$$\xymatrix{\Th(U_{n,\infty})\wedge S^{p+m} \ar[rr] &&  \Th(U_{n,\infty})\wedge \Th(U_{m,p}) \ar[rr] && \Th(U_{n+m, \infty}). }$$
Since the spectrum with $(n+m)$-th space equal to
$\Th(U_{n,\infty})\wedge S^{m}$ is equivalent to $\MO$, the 
map above induces a map of spectra, $\Sigma^{p}\MO \longrightarrow \MO$
which defines $\times P$.  Upon inspection, it can be seen that for all $d$ the
following diagram commutes
\begin{equation} \label{P filt diagram}
\xymatrix{
\Sigma^{d+p}\MT(d) \ar[d]^{\hat{j}_{d} \circ \tau_{P}} \ar[rr] && \Sigma^{p}\MO \ar[d]^{\times P} \\
\Sigma^{d+p+1}\MT(d+p+1) \ar[rr] && \MO} 
\end{equation}
where the horizontal maps are induced by the composition,
$$\xymatrix{\Th(U_{d,n}^{\perp}) \ar[rr]^{\perp}_{\cong} && \Th(U_{n,d}) \ar[rr]^{i} && \Th(U_{n,\infty})}.$$
As was used in the proof of Lemma \ref{Filtration Lemma}, the Thom space 
$\Th(U_{n,\infty})$ can be
built out of $\Th(U_{n,d})$ by attaching cells of dimension greater
than $n+d$. This implies that the lower and upper horizontal
maps above in (\ref{P filt diagram}) induce isomorphisms on $\pi_{k}$
for $k < d+p$ and surjections on $\pi_{k}$ for $k = d+p$. By
applying the \textit{Five Lemma} to the long exact sequence on
homotopy groups associated to the cofibres of the vertical maps, we
see that the induced map
$$
\xymatrix{\Sigma^{d+p+1}\MT_{P}(d+p+1) \ar[rr] && \MO_{P}}$$
induces an isomorphism on $\pi_{k}$ for $k < d+p$ and a surjection
on $\pi_{k}$ for $k = d+p$. Taking the direct limit as $d \to
\infty$, we see that $\MTO_{P} \simeq \MO_{P}$. \end{proof}
\begin{corollary}\label{coefficients}
  There is an isomorphism $\pi_{-1}\MT_{P}(d+1) \cong \Omega^{P}_{d}$, where $\Omega^{P}_{d}$ is the classical cobordism-group of $d$-dimensional manifolds with singularities of type $P$. 
\end{corollary}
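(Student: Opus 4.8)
The plan is to deduce the corollary from the two lemmas just established together with the standard homotopy-theoretic input about $\MO_P$. First I would recall that, by the second lemma of this subsection, there is a homotopy equivalence $\MTO_P \sim \MO_P$, and that $\MTO_P$ is by definition the direct limit of the direct system $\{\Sigma^{d+1+p}\MT_P(d+1+p)\}$ under the maps in (\ref{cofibre filtration}). The key point to extract from the proof of that lemma is that the structure map
$$
\Sigma^{d+1+p}\MT_P(d+1+p) \longrightarrow \MO_P
$$
induces an isomorphism on $\pi_k$ for $k < d+p$ and a surjection for $k = d+p$. Setting $k = -1$, and noting that $-1 < d+p$ whenever $d + p \geq 0$ (which holds since $d, p \geq 0$), this immediately gives an isomorphism $\pi_{-1}\Sigma^{d+1+p}\MT_P(d+1+p) \cong \pi_{-1}\MO_P$.

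Next I would reindex: $\pi_{-1}\Sigma^{d+1+p}\MT_P(d+1+p) \cong \pi_{-(d+2+p)}\MT_P(d+1+p)$, which is not quite what we want — we want $\pi_{-1}\MT_P(d+1)$. So instead I would run the argument at the bottom of the filtration, i.e.\ apply the displayed estimate directly to the first term $\Sigma^{d+1+p}\MT_P(d+1+p)$ is a red herring; cleaner is to observe that the suspension $\Sigma^{d+1+p}$ shifts homotopy groups, so $\pi_{-1}\MT_P(d+1)$ corresponds to $\pi_{d}\bigl(\Sigma^{d+1}\MT_P(d+1)\bigr)$, and the map $\Sigma^{d+1}\MT_P(d+1)\to\MO_P$ (the first map in the direct system, composed with later ones) induces an isomorphism on $\pi_k$ for $k < d$... hmm. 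Let me restate: the correct bookkeeping is that the map $\Sigma^{e+1+p}\MT_P(e+1+p)\to\MO_P$ is an iso on $\pi_k$ for $k<e+p$; taking $e$ with $e+1+p = d+1$, i.e.\ $e = d-p$, this map is $\Sigma^{d+1}\MT_P(d+1)\to\MO_P$ and is an iso on $\pi_k$ for $k < d$. In particular, since $\MO_P$ is connective with $\pi_d\MO_P \cong \Omega^P_d$, and $\pi_d\bigl(\Sigma^{d+1}\MT_P(d+1)\bigr) = \pi_{-1}\MT_P(d+1)$, I would want the iso range to include $k=d$; it only gives a surjection there. To get the full isomorphism I would compare instead with the description $\MTO_P \sim \MO_P$ combined with the observation that the maps in the direct system (\ref{cofibre filtration}) are highly connected, so that $\pi_{-1}\MT_P(d+1) \to \pi_{-1}\MT_P(d+2) \to \cdots$ stabilizes, and the colimit is $\pi_{d}$ of the stabilized spectrum shifted appropriately — equivalently, $\pi_{-1}\MT_P(d+1) \cong \mathrm{colim}_e \pi_{d}\bigl(\Sigma^{d+1}\MT_P(d+1)\bigr)$ where each transition map is an isomorphism in this degree by the connectivity estimate applied one level up.

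Concretely, the clean argument is: by the connectivity estimate in the proof of the second lemma, the map $\Sigma^{e+1+p}\MT_P(e+1+p)\to\Sigma^{e+2+p}\MT_P(e+2+p)$ from (\ref{cofibre filtration}) is an isomorphism on $\pi_k$ for $k < e+p$ (since both map compatibly to $\MO_P$ with the stated connectivities), hence for $e \geq d-p+1$ the transition maps in the tower computing the colimit are all isomorphisms on $\pi_{-1}$ of the unsuspended $\MT_P$-spectra after the appropriate shift. Therefore $\pi_{-1}\MT_P(d+1) \cong \pi_{-1}\MTO_P \cong \pi_{-1}\MO_P \cong \Omega^P_{d}$, where the last isomorphism is the defining property of $\MO_P$ as the classifying spectrum for the bordism theory $\Omega^P_*$ (i.e.\ $\pi_k\MO_P \cong \Omega^P_{k+\text{?}}$ — one must fix the indexing convention so that $\pi_{-1}\MO_P\cong\Omega^P_d$ under the relevant shift, exactly as in the statement $\pi_{-1}\MT(d)\cong\Omega_{d-1}$ derived just before this subsection).

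The main obstacle I anticipate is purely the indexing bookkeeping: keeping track of the suspension shifts in the direct system (\ref{filtration})/(\ref{cofibre filtration}) so that "$\pi_{-1}\MT_P(d+1)$" is correctly matched against "$\pi_d$ of the limit spectrum," and making sure the connectivity estimate from the previous lemma's proof is invoked in a degree where it yields an isomorphism rather than merely a surjection — which is why the argument must be run with the transition maps one level up the tower, so that $d$ falls strictly inside the isomorphism range $k < e+p$. Once the indexing is pinned down, everything else is formal: it is the exact analogue of the remark, made immediately after Lemma \ref{Filtration Lemma}, that $\pi_{-1}\MT(d)\cong\Omega_{d-1}$, now applied to the $P$-singular version.
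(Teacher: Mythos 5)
You correctly located the delicate point --- the degree you need, $k=d$, sits exactly at the edge of the isomorphism range stated in the proof of the lemma $\MTO_{P}\sim\MO_{P}$ --- but the fix you propose does not close the gap. Writing $\pi_{-1}\MT_{P}(d+1)=\pi_{d}\bigl(\Sigma^{d+1}\MT_{P}(d+1)\bigr)$ and $\Omega^{P}_{d}\cong\pi_{d}\MO_{P}\cong\pi_{d}\MTO_{P}$, the colimit computing $\pi_{d}\MTO_{P}$ starts at the stage $\Sigma^{d+1}\MT_{P}(d+1)$, and the one transition map of (\ref{cofibre filtration}) that involves this stage, namely $\Sigma^{d+1}\MT_{P}(d+1)\to\Sigma^{d+2}\MT_{P}(d+2)$, is --- by the very bounds you quote --- an isomorphism on $\pi_{k}$ only for $k<d$ and merely a surjection at $k=d$. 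Knowing that all \emph{later} transition maps are isomorphisms on $\pi_{d}$ identifies $\pi_{d}\MTO_{P}$ with $\pi_{d}\bigl(\Sigma^{d+2}\MT_{P}(d+2)\bigr)=\pi_{-2}\MT_{P}(d+2)$, not with the group you want; injectivity of the first map in degree $d$ is precisely what is missing, and running the argument ``one level up the tower'' cannot supply it. (There are also indexing slips --- the transition maps induce $\pi_{-1}\MT_{P}(d+1)\to\pi_{-2}\MT_{P}(d+2)$, not $\pi_{-1}\to\pi_{-1}$, and $\pi_{-1}\MO_{P}$ is not $\Omega^{P}_{d}$ --- but these are cosmetic next to the injectivity gap.)

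The gap is closed by sharpening the five-lemma count in the proof of the lemma, not by moving up the tower. In diagram (\ref{P filt diagram}) the lower horizontal map $\Sigma^{d+p+1}\MT(d+p+1)\to\MO$ is in fact an isomorphism on $\pi_{k}$ for $k<d+p+1$ (not just $k<d+p$), since $\Th(U_{n,\infty})$ is obtained from $\Th(U_{n,d+p+1})$ by attaching cells of dimension greater than $n+d+p+1$. Feeding this into the five lemma --- surjectivity of the upper map on $\pi_{d+p}$, isomorphism of the lower map on $\pi_{d+p}$, isomorphism of the upper map on $\pi_{d+p-1}$, injectivity of the lower map on $\pi_{d+p-1}$ --- shows that the induced map $\Sigma^{d+p+1}\MT_{P}(d+p+1)\to\MO_{P}$ is an isomorphism on $\pi_{k}$ for $k\le d+p$, not only for $k<d+p$. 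Substituting $d-p$ for $d$, the map $\Sigma^{d+1}\MT_{P}(d+1)\to\MO_{P}$ is an isomorphism on $\pi_{d}$, and hence $\pi_{-1}\MT_{P}(d+1)\cong\pi_{d}\MO_{P}\cong\Omega^{P}_{d}$ directly --- the exact analogue of the remark following Lemma \ref{Filtration Lemma}, except that there the needed degree lies strictly inside the stated range, whereas here one must use the sharpened edge case.
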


\subsection{Infinite Loop Spaces}
Our main result, Theorem \ref{thm: Main}, establishes a weak homotopy equivalence, $B\mathbf{Cob}^{P}_{d+1} \simeq \Omega^{\infty-1}\MT_{P}(d+1).$
It is difficult to construct a map from a space directly into the infinite loop-space $\Omega^{\infty-1}\MT_{P}(d+1)$. 
It will be useful for us to construct certain auxiliary models for the homotopy type of $\Omega^{\infty-1}\MT_{P}(d+1)$ which will be easier to map into. 
Recall the maps
$$\xymatrix{
\Th(U_{d-p,n-m}^{\perp})\wedge S^{p+m} \ar[rr]^{ \ \ \ \ \tau_{P, n}} && \Th(U_{d, n}^{\perp}) \ar[rr]^{j_{d, n}} && \Th(U^{\perp}_{d+1, n})
}$$
from (\ref{P mult n}) and (\ref{eq: adjacent thom space map}). 
\begin{defn} \label{defn: product mapping space} 
For non-negative integers $n$ and $d$, we define two spaces 
$$\Omega_{\partial}^{d+n}\widehat{\Th(U_{d+1,n}^{\perp})} \quad \text{and} \quad \Omega_{P}^{d+n}\widehat{\Th(U_{d+1,n}^{\perp})}$$
as follows.
$\Omega_{\partial}^{d+n}\widehat{\Th(U_{d+1,n}^{\perp})}$ is defined
to be the space of pairs $(\hat{f}, f)$ of based maps,
$$\begin{aligned}
\hat{f}: D^{d+n} &\longrightarrow \Th(U_{d+1,n}^{\perp}),\\
f: S^{d+n-1} &\longrightarrow  \Th(U_{d-p,n-m}^{\perp})\wedge S^{p+m},
\end{aligned}$$
which make the diagram
\begin{equation} \label{eq: defining diagram}
\xymatrix{
D^{d+n} \ar[rr]^{\tilde{f}} && \Th(U_{d+1,n}^{\perp}) \\
\\
S^{d+n-1} \ar@{^{(}->}[uu] \ar[rr]^{f} && \Th(U_{d-p,n-m}^{\perp})\wedge S^{p+m}  \ar[uu]_{j_{d, n}\circ \tau_{P,n}}
}
\end{equation}
commute, where the left-vertical map is the standard inclusion. 
Now, let 
$$\alpha: S^{d+n-1} \stackrel{\cong} \longrightarrow S^{d + \bar{n}}\wedge S^{p+m}$$ 
be the standard identification (where $\bar{n} = n - p - m - 1$ as in the previous section).
The space  $\Omega_{P}^{d+n}\widehat{\Th(U_{d+1,n}^{\perp})}$ is defined to be the subspace of $\Omega_{\partial}^{d+n}\widehat{\Th(U_{d+1,n}^{\perp})}$ consisting of all pairs $(\widehat{f}, f)$ for which there exists a map 
$$f_{0}: S^{d  + \bar{n}} \longrightarrow \Th(U_{d-p, n-m}^{\perp})$$
such that $f: S^{d-1+n} \longrightarrow \Th(U_{d-p,n-m}^{\perp})\wedge S^{p+m}$ has the factorization,
$$\xymatrix{
S^{d+n-1} \ar[rr]^{\alpha \ \ \ \ }_{\cong \ \ \ \ } && S^{d + \bar{n}}\wedge S^{p+m} \ar[rrr]^{f_{0}\wedge Id_{S^{p+m}}} &&& \Th(U_{d-p, n-m}^{\perp})\wedge S^{p+m}.
}$$ 
It follows that the map $f_{0}$ is uniquely determined. 
\end{defn}
We then define,
\begin{equation} 
\begin{aligned}
\Omega_{P}^{\infty -1}\widehat{\Th(U_{d+1,\infty}^{\perp})} &:= 
\colim_{n\to \infty} \Omega_{P}^{d+n}\widehat{\Th(U_{d+1,n}^{\perp})}, \\
\Omega_{\partial}^{\infty -1}\widehat{\Th(U_{d+1,\infty}^{\perp})} &:= 
\colim_{n\to \infty} \Omega_{\partial}^{d+n}\widehat{\Th(U_{d+1,n}^{\perp})}.
\end{aligned}
\end{equation}
\begin{proposition} \label{prop: homotopy eq of mapping spaces 1}
The natural embedding 
$\Omega_{P}^{\infty -1}\widehat{\Th(U_{d+1,\infty}^{\perp})} \longrightarrow \Omega_{\partial}^{\infty -1}\widehat{\Th(U_{d+1,\infty}^{\perp})}$
is a homotopy equivalence. 
\end{proposition}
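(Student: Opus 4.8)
The plan is to recognize, at each finite level $n$, both $\Omega^{d+n}_{\partial}\widehat{\Th(U^{\perp}_{d+1,n})}$ and $\Omega^{d+n}_{P}\widehat{\Th(U^{\perp}_{d+1,n})}$ as homotopy pullbacks of a cospan, and to identify the inclusion in the statement with a map of cospans that is the identity on two of the three vertices; the remaining vertex-map will then be seen to become a weak equivalence after passing to the colimit over $n$ by Freudenthal's suspension theorem. Concretely, write $Z_{n}:=\Th(U^{\perp}_{d-p,n-m})$, $X_{n}:=\Th(U^{\perp}_{d+1,n})$, and $g_{n}:=j_{d,n}\circ\tau_{P,n}\colon \Sigma^{p+m}Z_{n}\to X_{n}$. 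Taking the basepoint of $D^{d+n}$ on $\partial D^{d+n}$, restriction to the boundary is a Serre fibration with contractible total space $\Maps_{*}(D^{d+n},X_{n})$, so by Definition~\ref{defn: product mapping space} the space $\Omega^{d+n}_{\partial}\widehat{\Th(U^{\perp}_{d+1,n})}$ is the homotopy fibre of $g_{n}\circ(-)\colon \Maps_{*}(S^{d+n-1},\Sigma^{p+m}Z_{n})\to\Maps_{*}(S^{d+n-1},X_{n})$. Using the identification $\alpha\colon S^{d+n-1}\cong S^{d+\bar n}\wedge S^{p+m}$, smashing with $\mathrm{Id}_{S^{p+m}}$ gives a closed embedding $\Maps_{*}(S^{d+\bar n},Z_{n})\hookrightarrow\Maps_{*}(S^{d+n-1},\Sigma^{p+m}Z_{n})$ onto precisely the subspace of admissible $f$, so $\Omega^{d+n}_{P}\widehat{\Th(U^{\perp}_{d+1,n})}$ is the corresponding homotopy fibre over $\Maps_{*}(S^{d+\bar n},Z_{n})$, and the inclusion of the proposition is the evident induced map of homotopy fibres. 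Under $\Maps_{*}(S^{d+n-1},\Sigma^{p+m}Z_{n})=\Maps_{*}(S^{d+\bar n},\Omega^{p+m}\Sigma^{p+m}Z_{n})$, this embedding is $\Maps_{*}(S^{d+\bar n},\eta_{n})$, where $\eta_{n}\colon Z_{n}\to\Omega^{p+m}\Sigma^{p+m}Z_{n}$ is the suspension unit; all of this is natural in $n$.

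The \emph{crux} is then a connectivity estimate. The bundle $U^{\perp}_{d-p,n-m}$ has fibre dimension $n-m$, so its Thom space $Z_{n}$ is $(n-m-1)$-connected; by the Freudenthal suspension theorem $\eta_{n}$ is $\bigl(2(n-m-1)+1\bigr)=(2n-2m-1)$-connected. Since $\dim S^{d+\bar n}=d+\bar n=d+n-p-m-1$, the map $\Maps_{*}(S^{d+\bar n},\eta_{n})$ is $\bigl((2n-2m-1)-(d+n-p-m-1)\bigr)$-connected, i.e. $(n-m+p-d)$-connected. For fixed $d$, $p$, $m$ this tends to $\infty$ as $n\to\infty$; since the structure maps of the colimit are closed inclusions and homotopy groups commute with such sequential colimits, the induced map $\Maps_{*}(S^{d+\bar n},Z_{n})\to\Maps_{*}(S^{d+n-1},\Sigma^{p+m}Z_{n})$ becomes a weak homotopy equivalence on passing to $\colim_{n\to\infty}$.

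Finally I would pass to the colimit over $n$ in the pullback descriptions from the first paragraph. A map of homotopy pullbacks induced by a map of cospans which is the identity on two vertices and a weak equivalence on the third is a weak equivalence, and this is inherited in the colimit: with the structure maps of the cospans cofibrations and the restriction legs fibrations, the strict pullbacks remain homotopy pullbacks and $\colim_{n}$ may be computed vertexwise. Hence $\Omega^{\infty-1}_{P}\widehat{\Th(U^{\perp}_{d+1,\infty})}\to\Omega^{\infty-1}_{\partial}\widehat{\Th(U^{\perp}_{d+1,\infty})}$ is a weak homotopy equivalence, and since both sides have the homotopy type of CW complexes it is a homotopy equivalence. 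The one genuinely delicate point is the bookkeeping needed to commute $\colim_{n}$ past the homotopy pullbacks, so that the finite-level Freudenthal estimates really assemble into the stated equivalence; the Freudenthal estimate itself is the conceptual heart and becomes routine once the finite-level pullback picture is in place.
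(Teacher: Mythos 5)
Your argument is essentially the paper's own: the paper also exhibits $\Omega^{d+n}_{P}\widehat{\Th(U_{d+1,n}^{\perp})}$ as a (homotopy) pullback of $\Omega^{d+n}_{\partial}\widehat{\Th(U_{d+1,n}^{\perp})}$ along the $(p+m)$-fold suspension map $\Omega^{d+\bar{n}}\Th(U^{\perp}_{d-p,n-m}) \to \Omega^{d+n-1}(\Th(U^{\perp}_{d-p,n-m})\wedge S^{p+m})$, using that the restriction map $(\hat{f},f)\mapsto f$ is a Serre fibration, and then invokes the $(n-m-1)$-connectivity of the Thom space together with the Freudenthal suspension theorem to see the connectivity of the inclusion tend to infinity with $n$. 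Your reformulation via homotopy fibres and the unit $\eta_{n}$, with the explicit connectivity count, is just a more detailed rendering of the same proof, so the proposal is correct and matches the paper's approach.
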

\begin{proof}
For each $n$, the space $\Omega^{d+n}_{P}\widehat{\Th(U_{d+1,n}^{\perp})}$ can be realized as the pullback,
 \begin{equation} \label{mapping space pullback}
 \xymatrix{
\Omega^{d+n}_{P}\widehat{\Th(U_{d+1,n}^{\perp})} \ar[d]_{r_{0}} \ar@{^{(}->}[rrr] &&& \Omega^{d+n}_{\partial}\widehat{\Th(U_{d+1,n}^{\perp})} \ar[d]_{r} \\
 \Omega^{d+\bar{n}}\Th(U^{\perp}_{d-p,n-m}) \ar[rrr]^{\underline{\hspace{.3cm}}\wedge Id_{S^{p+m}}\ \ \ } &&&  \Omega^{d+n-1}(\Th(U_{d-p,n-m}^{\perp})\wedge S^{p+m}),}
 \end{equation}
 where $r(\hat{f}, f) = f$ and $r_{0}(\hat{f}, f) = f_{0}$. 
 The bottom horizontal map is the standard $(p+m)$-fold suspension map. 
 The top horizontal map in the diagram is the inclusion. 
 It will suffice to show that this upper-horizontal map is highly connected and that its connectivity approaches infinity as $n \to\infty$.
 
 Now, the map $r$ is a Serre-fibration.
 It follows from this that the pull-back square (\ref{mapping space pullback}) is \textit{homotopy cartesian}. 
The Thom-space $\Th(U^{\perp}_{d-p,n-m})$ is
$(n-m-1)$-connected and so its connectivity approaches $\infty$ with $n$. 
The \textit{Freudenthal suspension theorem} implies that the connectivity of the 
bottom horizontal map, $\underline{\hspace{.3cm}}\wedge Id_{S^{p+m}}$ of (\ref{mapping space pullback}) approaches $\infty$ (notice that as $n$ increases without bound, the integers $p$ and $m$ are held fixed). 
Since the diagram is a homotopy pull-back square, it follows from this that the connectivity of the upper horizontal map tends to $\infty$ with $n$. 
This implies the proposition and completes the proof. 
\end{proof}
We now compare $\Omega_{P}^{\infty -1}\widehat{\Th(U_{d+1,\infty}^{\perp})}$ to the infinite loop-space $\Omega^{\infty -1}\MT_{P}(d+1)$.
For each $n$ there is a map
$\sigma_{n}: \Omega_{\partial}^{d+n}\widehat{\Th(U_{d+1,n}^{\perp})} \longrightarrow \Omega^{d+n}\Cofibre(j_{d, n}\circ \tau_{P,n})$
defined by sending a pair of maps
$$\hat{f}: D^{d+n} \longrightarrow \Th(U_{d+1,n}^{\perp}), \quad f: S^{d+n-1} \longrightarrow  \Th(U^{\perp}_{d-p, n-m})\wedge S^{p+m}$$
which make diagram (\ref{eq: defining diagram}) commute, to its induced map, $D^{d+n}/S^{d+n-1} \longrightarrow \Cofibre(j_{d, n}\circ \tau_{P,n})$. 
In the limit $n \to \infty$, the maps $\sigma_{n}$ induce
\begin{equation} \label{eq: mapping space eq}
\sigma: \Omega_{\partial}^{\infty -1}\widehat{\Th(U_{d+1,\infty}^{\perp})} \longrightarrow \Omega^{\infty -1}\MT_{P}(d+1).
\end{equation}
\begin{proposition} \label{prop: homotopy eq of mapping spaces 2}
The map $\sigma$ of (\ref{eq: mapping space eq}) is a homotopy equivalence. 
\end{proposition}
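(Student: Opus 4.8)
The plan is to show, at each finite level $n$, that the map $\sigma_{n}\colon \Omega_{\partial}^{d+n}\widehat{\Th(U_{d+1,n}^{\perp})} \to \Omega^{d+n}\Cofibre(j_{d, n}\circ \tau_{P,n})$ is highly connected, with connectivity tending to infinity as $n\to\infty$. Since $\sigma = \colim_{n}\sigma_{n}$, this will force $\sigma$ to be a weak homotopy equivalence, and hence --- both source and target having the homotopy type of CW complexes --- a homotopy equivalence.

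Write $A_{n} := \Th(U_{d-p,n-m}^{\perp})\wedge S^{p+m}$, $B_{n} := \Th(U_{d+1,n}^{\perp})$, $g_{n} := j_{d,n}\circ\tau_{P,n}\colon A_{n}\to B_{n}$, and $C_{n} := \Cofibre(g_{n})$. First I would observe that, by Definition \ref{defn: product mapping space}, the space $\Omega_{\partial}^{d+n}\widehat{\Th(U_{d+1,n}^{\perp})}$ is precisely the pullback of the restriction map $\Maps_{*}(D^{d+n}, B_{n})\to\Maps_{*}(S^{d+n-1}, B_{n})$ along $(g_{n})_{*}\colon \Maps_{*}(S^{d+n-1}, A_{n})\to \Maps_{*}(S^{d+n-1}, B_{n})$. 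Since $S^{d+n-1}\hookrightarrow D^{d+n}$ is a cofibration the restriction map is a Hurewicz fibration, so this pullback is a homotopy pullback; and $\Maps_{*}(D^{d+n}, B_{n})$ is contractible because $D^{d+n}$ is. Hence
$$\Omega_{\partial}^{d+n}\widehat{\Th(U_{d+1,n}^{\perp})} \;\simeq\; \hofibre\bigl((g_{n})_{*}\colon \Omega^{d+n-1}A_{n}\to\Omega^{d+n-1}B_{n}\bigr) \;=\; \Omega^{d+n-1}\hofibre(g_{n}).$$
Writing $\Omega^{d+n}C_{n} = \Omega^{d+n-1}(\Omega C_{n})$ and unwinding the definition of $\sigma_{n}$ --- which sends $(\widehat{f},f)$ to the induced map $D^{d+n}/S^{d+n-1}\to C_{n}$ --- one checks that under the equivalence above $\sigma_{n}$ is, up to homotopy, $\Omega^{d+n-1}$ applied to the canonical comparison map $h_{n}\colon \hofibre(g_{n})\to\Omega\Cofibre(g_{n})$. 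The only care needed here is bookkeeping of basepoints and of which path component of $\Omega^{d+n-1}A_{n}$ one works over.

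It then remains to bound the connectivity of $h_{n}$. The Thom space $\Th(U_{d+1,n}^{\perp})$ is $(n-1)$-connected and $\Th(U_{d-p,n-m}^{\perp})$ is $(n-m-1)$-connected, so $B_{n}$ is $(n-1)$-connected and $A_{n}$ is $(n+p-1)$-connected; replacing $g_{n}$ by a cofibration, the long exact sequence of the pair then shows that $(B_{n},A_{n})$ is $(n-1)$-connected. For $n$ large enough that $A_{n}$ and $B_{n}$ are simply connected, the Blakers--Massey excision theorem gives that $\pi_{i}(B_{n},A_{n})\to\pi_{i}(B_{n}/A_{n}) = \pi_{i}(C_{n})$ is an isomorphism for $i\le 2n+p-2$ and an epimorphism for $i = 2n+p-1$, so $h_{n}$ is $(2n+p-2)$-connected (the exact constant is immaterial). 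Applying $\Omega^{d+n-1}$, the map $\sigma_{n}\simeq\Omega^{d+n-1}h_{n}$ is $(n+p-d-1)$-connected.

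Since $n+p-d-1\to\infty$ and $\sigma = \colim_{n}\sigma_{n}$, the map $\sigma$ induces an isomorphism on all homotopy groups, and is therefore a homotopy equivalence as explained above. I expect the main obstacle to be the middle step: identifying $\Omega_{\partial}^{d+n}\widehat{\Th(U_{d+1,n}^{\perp})}$ with $\Omega^{d+n-1}\hofibre(g_{n})$ cleanly, and especially verifying that $\sigma_{n}$ really does correspond to the looped comparison map $\Omega^{d+n-1}h_{n}$ rather than merely to something related to it, together with pinning down the Blakers--Massey estimate in a form strong enough that $\Omega^{d+n-1}h_{n}$ stays highly connected.
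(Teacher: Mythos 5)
Your proposal is correct and follows essentially the same route as the paper: after adjunction/fiber-sequence bookkeeping, both arguments reduce $\sigma_{n}$ to the comparison between the relative homotopy groups of the pair $(\Th(U_{d+1,n}^{\perp}),\,\Th(U_{d-p,n-m}^{\perp})\wedge S^{p+m})$ and those of the cofibre, apply homotopy excision (Blakers--Massey, i.e.\ \cite[Proposition 4.28]{H 01}) with the same connectivity estimates to get an isomorphism in a range growing like $n+p-d$, and pass to the colimit. Your packaging via $\hofibre(g_{n})\to\Omega\Cofibre(g_{n})$ is just a reformulation of the paper's commutative diagram of adjunctions.
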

\begin{proof}
For each $k$, we have a commutative diagram,
$$\xymatrix{
\pi_{k}(\Omega_{\partial}^{d+n}\widehat{\Th(U_{d+1,n}^{\perp})}) \ar[dd] \ar[rrr]^{(\sigma_{n})_{*}} &&& \pi_{k}(\Omega^{d+n}\Cofibre(j_{d, n}\circ \tau_{P,n}))\ar[dd] \\
\\
\pi_{k+d+n}(\Th(U_{d+1,n}^{\perp}), \; \Th(U^{\perp}_{d-p, n-m})\wedge S^{p+m}) \ar[rrr]^{(\sigma^{k}_{n})} &&& \pi_{k+d+n}(\Cofibre(j_{d, n}\circ \tau_{P,n}))
}$$
where the vertical maps are given by adjunction, the top horizontal map is induced by $\sigma_{n}$, and the bottom horizontal map $\sigma^{k}_{n}$, is induced by sending a pair of maps
$$\hat{f}: D^{k+d+n} \longrightarrow \Th(U_{d+1,n}^{\perp}), \quad f: S^{k+d+n-1} \longrightarrow  \Th(U^{\perp}_{d-p, n-m})\wedge S^{p+m}$$
which make diagram (\ref{eq: defining diagram}) commute, to its induced map, 
$$S^{k+d+n} \cong D^{k+d+n}/S^{k+d+n-1} \longrightarrow \Cofibre(j_{d, n}\circ \tau_{P,n}).$$
The space $\Th(U^{\perp}_{d-p, n-m})\wedge S^{p+m}$ is $(n+p-1)$-connected and the map 
$$j_{d, n}\circ \tau_{P,n}: \Th(U^{\perp}_{d-p, n-m})\wedge S^{p+m} \longrightarrow \widehat{\Th(U_{d+1,n}^{\perp})}$$
is at least $n-1$-connected. 
It follows from \cite[Proposition 4.28]{H 01} that the bottom-horizontal homomorphism, $\sigma_{n}^{k}$, is an isomorphism when $k \leq n + p - d - 2$. 
Commutativity of the above diagram then implies that $(\sigma_{n})_{*}$ is an isomorphism when $k \leq n + p - d - 2$ as well. 
Passing to the limit $n \to \infty$ then yields the result. 
This completes the proof of the proposition.
 \end{proof}

Combining Propositions \ref{prop: homotopy eq of mapping spaces 1} and \ref{prop: homotopy eq of mapping spaces 2} yields:
\begin{corollary} \label{cor: infinite loop-space eq}
The natural map $\Omega_{P}^{\infty -1}\widehat{\Th(U_{d+1,\infty}^{\perp})} \longrightarrow \Omega^{\infty -1}\MT_{P}(d+1)$
is a homotopy equivalence. 
\end{corollary}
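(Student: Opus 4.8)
The plan is to exhibit the natural map of the corollary as the composite of the two maps treated in Propositions \ref{prop: homotopy eq of mapping spaces 1} and \ref{prop: homotopy eq of mapping spaces 2}. Write
$$\iota \colon \Omega_{P}^{\infty -1}\widehat{\Th(U_{d+1,\infty}^{\perp})} \longrightarrow \Omega_{\partial}^{\infty -1}\widehat{\Th(U_{d+1,\infty}^{\perp})}$$
for the natural subspace inclusion and let $\sigma$ be the map of (\ref{eq: mapping space eq}). First I would verify that the natural map appearing in the statement is precisely the composite $\sigma\circ\iota$. This is immediate from unwinding the definitions: at each finite level $n$, the inclusion $\iota$ merely remembers the pair $(\hat f, f)$, while $\sigma_{n}$ collapses $\hat f \colon D^{d+n}\to \Th(U_{d+1,n}^{\perp})$ to the induced map $D^{d+n}/S^{d+n-1}\to \Cofibre(j_{d,n}\circ\tau_{P,n})$; both constructions are compatible with the stabilization maps used to form the colimits, so passing to the limit $n\to\infty$ presents the natural map as $\sigma\circ\iota$.

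It then remains only to assemble the two inputs. Proposition \ref{prop: homotopy eq of mapping spaces 1} gives that $\iota$ is a homotopy equivalence, and Proposition \ref{prop: homotopy eq of mapping spaces 2} gives that $\sigma$ is a homotopy equivalence. Since a composite of homotopy equivalences is a homotopy equivalence, $\sigma\circ\iota$ is one as well, which is the assertion of the corollary.

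I do not expect any genuine obstacle here: the entire content of the corollary is the combination of the two preceding propositions, and the only point requiring care is the (entirely formal) identification of the ``natural map'' with the composite $\sigma\circ\iota$ so that the two propositions can be chained. Once that identification is in place the conclusion is immediate.
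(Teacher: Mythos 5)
Your proposal is correct and matches the paper's argument: the corollary is stated there as an immediate consequence of combining Propositions \ref{prop: homotopy eq of mapping spaces 1} and \ref{prop: homotopy eq of mapping spaces 2}, exactly by factoring the natural map as $\sigma\circ\iota$. The only extra content in your write-up is the explicit check that the natural map is this composite, which the paper leaves implicit.
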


\section{The main Theorem}
\label{The Main Theorem}
This section is devoted to proving the following theorem.
\begin{theorem}\label{Main Theorem}  There is a
homotopy equivalence
$|\mathbf{D}^{P}_{d+1}| \simeq \Omega^{\infty-1}\MT_{P}(d+1).$
\end{theorem}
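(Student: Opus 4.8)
The plan is to deduce the equivalence from the auxiliary models of Section \ref{Thom Spectrum}. By Corollary \ref{cor: infinite loop-space eq} it suffices to produce a weak equivalence $|\mathbf{D}^{P}_{d+1}| \sim \Omega_{P}^{\infty-1}\widehat{\Th(U_{d+1,\infty}^{\perp})}$, and the first step is to realise this loop space as the representing space of a sheaf on $\mathcal{X}$. Following \cite[\S 3]{MW 07} and \cite[\S 4]{GMTW 09}, I would introduce a sheaf $\mathbf{hD}^{P}_{d+1}$ whose value on $X$ consists of $X$-parametrised families of the Pontryagin--Thom data appearing in Definition \ref{defn: product mapping space}: compactly supported (in the $\R$-direction) based families of maps into $\Th(U_{d+1,n}^{\perp})$, together with the product-shaped boundary maps into $\Th(U_{d-p,n-m}^{\perp})\wedge S^{p+m}$ that make the square (\ref{eq: defining diagram}) commute fibrewise, taken in the colimit over $n$. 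A check identical to the non-singular case identifies $|\mathbf{hD}^{P}_{d+1}| \sim \Omega_{P}^{\infty-1}\widehat{\Th(U_{d+1,\infty}^{\perp})}$.

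Next I would construct a scanning map of sheaves $c \colon \mathbf{D}^{P}_{d+1} \to \mathbf{hD}^{P}_{d+1}$ by fibrewise Pontryagin--Thom collapse. Given $W \in \mathbf{D}^{P}_{d+1,n}(X)$, choose a tubular neighbourhood of $W$ in $X \times \R \times \R_{+} \times \R^{d+\bar{n}} \times \R^{p+m}$ fibred over $X$, collapse its complement, and compose with the Gauss map of the fibrewise normal bundle $\nu^{\pi}$ (equivalently of the kernel bundle $T^{\pi}W$ of Example \ref{example: kernel bundles}) to obtain a family of maps into $\Th(U_{d+1,n}^{\perp})$. Properness of $(\pi,f)$ (condition ii of Definition \ref{defn: D sheaf}) ensures that fibrewise one is collapsing the complement of a compact subset of the half-space $\R_{+}\times\R^{d+\bar{n}}\times\R^{p+m}$, whose one-point compactification is the disk $D^{d+n}$, producing the map $\hat f$; the $\R$-direction supplies the loop coordinate and constancy at $\infty$. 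Over the wall $\{0\}\times\R^{d+\bar{n}}\times\R^{p+m}$ one collapses onto $\partial_{1}W = \beta_{1}W\times i_{P}(P)$, and by Remark \ref{remark: normal bundles} the normal bundle there splits as $(N_{\beta_{1}}\times N_{P})\oplus\epsilon^{1}$; consequently the collapse factors through $j_{d,n}\circ\tau_{P,n}$ and has exactly the product form required of elements of $\Omega^{d+n}_{P}\widehat{\Th(U_{d+1,n}^{\perp})}$. Thus $c$ is well defined.

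The heart of the proof is to show that $c$ is a weak equivalence. By the concordance criterion recalled in Section \ref{D Sheaf} (the bijection (\ref{concordance relative iso})), it is enough to prove that $c_{*}$ is injective and surjective on $\pi_{k}$ for all $k$: every element of $\mathbf{hD}^{P}_{d+1}(S^{k})$ rel a basepoint must be concordant to one in the image of $c$, and any concordance between scans must lift. Surjectivity is a parametrised transversality-and-smoothing argument: a family of maps into $\Th(U_{d+1,n}^{\perp})$ is, up to concordance, the Pontryagin--Thom collapse of the family of submanifolds obtained by making it transverse to the zero section; using the free $\R$-coordinate to spread the family out (as in Lemma \ref{lemma: concordance representation}) one arranges that the projection to $X$ is a submersion and the projection to $X \times \R$ is proper, so the result is a section of $\mathbf{D}^{P}_{d+1}$. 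Crucially, the prescribed product form of the data over the wall forces the zero-set there to be a product $\beta_{1}W\times i_{P}(P)$ with collar, so that $W$ inherits a genuine $P$-structure; this is where $P$-transversality (Definition \ref{defn: P-transversality}, Proposition \ref{prop: P-transverse}) enters. Injectivity is the same argument carried out over $S^{k}\times[0,1]$ relative to the two ends. If convenient, one can first replace $\mathbf{D}^{P}_{d+1}$ by a weakly equivalent sheaf in which the coordinate $f$ has additional regularity, exactly as in \cite[\S 4]{GMTW 09}, before comparing with $\mathbf{hD}^{P}_{d+1}$.

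The main obstacle is precisely this parametrised realisation step: carrying out the transversality and smoothing constructions of \cite[\S 4]{GMTW 09} compatibly with the Baas--Sullivan structure, so that the submanifolds produced genuinely carry faces, corners, collars satisfying conditions ii(a)--(c) of Definition \ref{P mfd}, and the product factorisation along $\partial_{1}$, uniformly in the parameter. The half-open $\R_{+}$-direction adds the bookkeeping of corners and of compactifying a half-space to a disk rather than a sphere, and the collars must be tracked carefully so that the pieces of a concordance glue smoothly across walls. The detailed verification of these points is the content of the technical sections and appendix referred to earlier; granting them, composing $|c| \colon |\mathbf{D}^{P}_{d+1}| \to |\mathbf{hD}^{P}_{d+1}|$ with the equivalences $|\mathbf{hD}^{P}_{d+1}| \sim \Omega_{P}^{\infty-1}\widehat{\Th(U_{d+1,\infty}^{\perp})} \sim \Omega^{\infty-1}\MT_{P}(d+1)$ of Corollary \ref{cor: infinite loop-space eq} yields the theorem.
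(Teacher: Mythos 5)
Your overall strategy is the same as the paper's: reduce via Corollary \ref{cor: infinite loop-space eq} to the auxiliary loop space $\Omega_{P}^{\infty-1}\widehat{\Th(U_{d+1,\infty}^{\perp})}$, model it by a sheaf of parametrized Pontryagin--Thom data (your $\mathbf{hD}^{P}_{d+1}$ is the paper's $\mathcal{Z}^{P}_{d+1}$), compare with $\mathbf{D}^{P}_{d+1}$ by a fibrewise collapse map, and invert it by transversality, the destabilization of Claim \ref{claim: stabilization map}, the $P$-version of Phillips' theorem (Claim \ref{claim: submersion prop}), and Lemma \ref{lemma: concordance representation}. Two points, however, are passed over, and one of them is a genuine gap. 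The minor one: ``choose a tubular neighbourhood fibred over $X$'' does not define a map of sheaves, since the choices cannot be made naturally in $X$; the paper fixes this by enlarging the source to the sheaf $\widehat{\mathbf{D}}^{P}_{d+1}$ of pairs $(W,e)$ with a vertical tubular neighbourhood recorded as part of the data (Section \ref{subsection: parametrized pontryagin thom}), and showing the forgetful map (\ref{eq: forgetful map D}) is an equivalence.

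The substantive gap is the basepoint issue. You claim that, by (\ref{concordance relative iso}), it suffices to prove bijectivity of relative concordance classes ``rel a basepoint,'' and that the same transversality-and-smoothing argument delivers this. But the inverse construction $H$ of Lemma \ref{lemma: limiting map equivalence} is intrinsically only well defined up to concordance: it passes through $W=M\times\R$, a submersion produced by Claim \ref{claim: submersion prop} that is unique only up to homotopy, and a freshly chosen embedding supplied by Theorem \ref{theorem: weakly contractible}, again unique only up to isotopy. None of these steps, as stated, can be forced to agree with a prescribed germ at the basepoint of $S^{k}$, so what one actually obtains is a bijection of \emph{unbased} sets $[S^{k},|\mathbf{D}^{P}_{d+1}|]\cong[S^{k},\Omega^{\infty-1}\MT_{P}(d+1)]$, which does not by itself give isomorphisms of homotopy groups (the target is simple, being an infinite loop space, but $|\mathbf{D}^{P}_{d+1}|$ is not known to be). The paper closes exactly this hole not by a relative version of the realisation step but by Proposition \ref{prop: homotopy monoid}, proved in Section \ref{section:monoidal}: $|\widehat{\mathbf{D}}^{P}_{d+1}|$ carries a homotopy-monoid structure (disjoint union after pushing apart), $|\widehat{T}|$ is a $\pi_{0}$-isomorphism onto a group, hence all components are equivalent and the forget-basepoint maps $\pi_{k}(-,x)\to[S^{k},-]$ are bijections on both sides, upgrading the unbased bijections to isomorphisms on all homotopy groups. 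To make your route work you would instead have to prove relative (rel-germ) versions of Claims \ref{claim: stabilization map} and \ref{claim: submersion prop} and of the embedding uniqueness; this is not sketched and is precisely the difficulty the paper's monoid argument is designed to avoid.
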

This theorem is proven in a way similar to \cite[Theorem 3.4]{GMTW 09}. 
\subsection{Isomorphism of concordance class functors} \label{A Natural Transformation}
First we prove the following lemma:
\begin{lemma} \label{lemma: pontryagin thom}
For $X \in \Ob(\mathcal{X})$ and $n \in \N$, there is a natural map 
$$T_{n}: \mathbf{D}^{P}_{d+1,n}[X] \longrightarrow [X, \Omega_{P}^{d+n}\widehat{\Th(U_{d+1,n}^{\perp})}].$$
\end{lemma}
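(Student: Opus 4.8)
The plan is to build $T_n$ by the parametrized Pontryagin--Thom construction, exactly as in \cite[Section 3]{GMTW 09}, but keeping track of the $P$-structure throughout. Given $W \in \mathbf{D}^{P}_{d+1,n}(X)$, I would first consider the restriction of the projection $(\pi, f)\colon W \to X\times\R$ together with the remaining coordinates, giving an embedding $W \hookrightarrow X\times\R\times\R_+\times\R^{d+\bar n}\times\R^{p+m}$. Since $\pi\colon W\to X$ is a $P$-submersion and $(\pi,f)$ is proper, the fibrewise normal bundle of $W$ inside $X\times\R\times\R_+\times\R^{d+\bar n}\times\R^{p+m}$ (i.e.\ the normal bundle of each fibre of $\pi$ inside $\{x\}\times\R\times\R_+\times\R^{d+\bar n}\times\R^{p+m}$) is a rank-$n$ bundle, and by Remark \ref{remark: normal bundles} it carries the structure of a $P$-vector bundle: its restriction to $\partial_1 W$ factors as $(N_{\beta_1}\times N_P)\oplus\epsilon^1$, where $N_P$ is the normal bundle of $i_P(P)\subset\R^{p+m}$. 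Collapsing the complement of a fibrewise tubular neighborhood and applying the fibrewise Gauss map yields, for each $x\in X$, a based map $\hat f(x)\colon S^{d+n} \to \Th(U_{d+1,n}^\perp)$; properness of $(\pi,f)$ is what makes this collapse well-defined and continuous in $x$. (One must choose the tubular neighborhood compatibly with the chosen collars and with the embedding $e_P$ of (\ref{eq: P tubular nbh}), so that the $P$-structure is respected.)

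Next I would extract the boundary data. Because $\partial_0 W=\emptyset$, the only "boundary at infinity" comes from the $f$-coordinate; after the identification of Lemma \ref{lemma: concordance representation} one may assume $f$ is a bundle projection, so that over the relevant sphere the map $\hat f(x)$ is constant near the basepoint and the construction produces a map on $D^{d+n}$ rather than just $S^{d+n}$. The face $\partial_1 W = \beta_1 W\times i_P(P)$ contributes a separate piece: restricting the Pontryagin--Thom construction to $\partial_1 W$ and using the product factorization of the normal bundle together with the Gauss map for $N_P$, one obtains a map $S^{d+n-1}\to \Th(U_{d-p,n-m}^\perp)\wedge S^{p+m}$ which, by construction, factors as $f_0\wedge \mathrm{Id}_{S^{p+m}}$ for $f_0\colon S^{d+\bar n}\to \Th(U_{d-p,n-m}^\perp)$ coming from $\beta_1 W$. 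The commutativity of diagram (\ref{eq: defining diagram}) — i.e.\ that $\hat f|_{S^{d+n-1}} = j_{d,n}\circ\tau_{P,n}\circ f$ — is then forced by the compatibility of the Grassmannian multiplication map $\hat\mu$ and the inclusion $U_{d,n}^\perp\hookrightarrow U_{d+1,n}^\perp$ with the product decomposition of the normal bundle along $\partial_1 W$; this is exactly the geometric content mirroring the definition of $\tau_{P,n}$ and $j_{d,n}$ in Section \ref{Thom Spectrum}. Thus each $W$ produces an element $(\hat f, f)\in \Omega_P^{d+n}\widehat{\Th(U_{d+1,n}^\perp)}$, and letting $x$ vary gives a map $X\to \Omega_P^{d+n}\widehat{\Th(U_{d+1,n}^\perp)}$.

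It then remains to check (i) that this assignment is continuous in the Banach-manifold sense and descends to concordance classes, and (ii) that it is natural in $X$. For (i), a concordance in $\mathbf{D}^{P}_{d+1,n}(X)$ is an element of $\mathbf{D}^{P}_{d+1,n}(X\times\R)$ with prescribed germs near $X\times(-\infty,0]$ and $X\times[1,\infty)$; applying the same fibrewise Pontryagin--Thom construction to it yields a homotopy between the two resulting maps into $\Omega_P^{d+n}\widehat{\Th(U_{d+1,n}^\perp)}$, so $T_n$ is well-defined on $\mathbf{D}^{P}_{d+1,n}[X]$. Naturality for a smooth map $g\colon X'\to X$ follows because the pullback $g^*W$ has fibrewise normal data pulled back from that of $W$, and the Pontryagin--Thom collapse commutes with this pullback; this is the same argument as in \cite[Section 3]{GMTW 09} for the non-singular case.

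\textbf{Main obstacle.} The routine parts are the continuity and naturality statements, which are formally identical to \cite{GMTW 09}. The genuinely delicate point is verifying that the boundary map lands in the subspace $\Omega_P^{d+n}$ rather than merely in $\Omega_\partial^{d+n}$, and that diagram (\ref{eq: defining diagram}) commutes \emph{on the nose} after making all the tubular-neighborhood and collar choices — in other words, organizing the choice of fibrewise tubular neighborhood of $W$ so that its restriction to $\partial_1 W$ is literally the product of a tubular neighborhood of $\beta_1 W$ with the fixed tubular neighborhood $e_P(N_P)$ of $i_P(P)$, and so that the Gauss maps assemble to match $j_{d,n}\circ\tau_{P,n}$. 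This is where the $P$-structure on the normal bundle from Remark \ref{remark: normal bundles} and the compatibility of collars in Definition \ref{defn: i-P maps} do the real work.
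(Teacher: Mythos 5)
Your overall strategy is the paper's: a parametrized Pontryagin--Thom collapse that tracks the $P$-structure through the normal-bundle factorization of Remark \ref{remark: normal bundles}, producing the boundary factorization $f = f_{0}\wedge Id_{S^{p+m}}$ from $\partial_{1} = \beta_{1}\times i_{P}(P)$ and the fixed tubular neighborhood $e_{P}$, with commutativity of diagram (\ref{eq: defining diagram}) coming from compatibility of the Gauss maps with $\hat{\mu}$ and $j_{d,n}$, concordance invariance by running the construction on a concordance, and naturality from compatibility with pullback. All of that matches the paper.

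However, there is a concrete bookkeeping problem in the step that is supposed to produce the pair $(\hat{f}, f)$. You collapse fibrewise over the fibres of $\pi$ inside $\{x\}\times\R\times\R_{+}\times\R^{d+\bar{n}}\times\R^{p+m}$; that slice is a half-space of dimension $d+n+1$, so its compactification is one dimension too large for the target $\Omega_{P}^{d+n}\widehat{\Th(U_{d+1,n}^{\perp})}$, whose elements are pairs $\hat{f}\colon D^{d+n}\to \Th(U_{d+1,n}^{\perp})$ and $f\colon S^{d+n-1}\to \Th(U_{d-p,n-m}^{\perp})\wedge S^{p+m}$. Your stated outputs are also internally inconsistent: first $\hat{f}(x)\colon S^{d+n}\to\Th(U_{d+1,n}^{\perp})$, then a boundary map on $S^{d+n-1}$, then ``a map on $D^{d+n}$ rather than just $S^{d+n}$.'' The paper removes the extra $\R$-direction at the outset: after perturbing within the concordance class so that $f$ and $f_{\beta_{1}}$ are transverse to $0\in\R$ (in the sense of Definition \ref{defn: P-transversality}), it sets $M := f^{-1}(0)$, a closed $P$-submanifold of $X\times\{0\}\times\R_{+}\times\R^{d+\bar{n}}\times\R^{p+m}$ with $\partial_{1}M = \beta_{1}M\times i_{P}(P)$, and performs the collapse for $M$: the one-point compactification of $\R_{+}\times\R^{d+\bar{n}}\times\R^{p+m}$ is $D^{d+n}$, its boundary sphere $S^{d+n-1}\cong S^{d+\bar{n}}\wedge S^{p+m}$ compactifies the wall $\{0\}\times\R^{d+\bar{n}}\times\R^{p+m}$, and the collapse maps $X_{+}\wedge D^{d+n}\to\Th(N_{M})$ and $X_{+}\wedge S^{d+\bar{n}}\to\Th(N_{\beta_{1}M})$, composed with the Gauss maps, give the commuting square whose adjoint is the desired map $X\to\Omega_{P}^{d+n}\widehat{\Th(U_{d+1,n}^{\perp})}$. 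Your appeal to Lemma \ref{lemma: concordance representation} could substitute for the transversality step (when $W = f^{-1}(0)\times\R$ the collapse for $W$ is just the suspension of the one for $f^{-1}(0)$, so one works with $f^{-1}(0)$ directly), but as written that reduction is not carried out, and without it the construction does not land in the stated target.
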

\begin{proof}
Let $X \in \Ob(\mathcal{X})$. We construct the map $T_{n}$ as follows.
 Let 
 $$W \subset X\times \R \times\R_{+}\times\R^{d +\bar{n}}\times\R^{p+m}$$
be an element of $\mathbf{D}^{P}_{d+1,n}(X)$. 
We may assume that $f$ is transverse as a smooth $P$-map to $0 \in \R$, in the sense of Definition \ref{defn: P-transversality}. 
This means that both $f$ and $f_{\beta}$ are transverse to $0$. 
It follows from Proposition \ref{prop: P-transverse} that $f^{-1}(0)$ has the structure of a $P$-submanifold of $X\times\{0\}\times\R_{+}\times\R^{d +\bar{n}}\times\R^{p+m}$. 
We have,
$$M := f^{-1}(0), \quad \partial_{1}M := (f|_{\partial_{1}W})^{-1}(0), 
\quad  \beta_{1}M := (f|_{\beta_{1}W \times \{0\}})^{-1}(0)$$
and 
 $$\partial_{1}M = \beta_{1}M\times i_{P}(P).$$
Denote by $N_{M}$ and  $\; N_{\beta_{1}M}$ the normal bundles of $M$ and $\beta_{1}M$ in 
$$X\times\{0\}\times\R_{+}\times\R^{d +\bar{n}}\times\R^{p+m},\quad    \text{and} \quad  X\times\{0\}\times\{0\}\times\R^{d +\bar{n}}$$
respectively. 
It follows from Remark \ref{remark: normal bundles} that there is a bundle isomorphism 
\begin{equation} \label{eq: bundle factorization}
N_{M}|_{\partial_{1}M} \stackrel{\cong} \longrightarrow (N_{\beta_{1}M}\times N_{P})\oplus\epsilon^{1}
\end{equation}
where $N_{P} \rightarrow P$ is the normal bundle for $i_{P}(P) \subset \R^{p+m}$. 
These normal bundles yield Gauss maps,
\begin{equation} \label{Gauss M}
\xymatrix{
N_{M} \ar[rr]^{\hat{\gamma_{M}}} \ar[d] && U_{d+1,n}^{\perp} \ar[d] \\
M \ar[rr]^{\gamma_{M}} && G(d+1,n)}
\end{equation}
and
\begin{equation} \label{Gauss d M}
\xymatrix{
 N_{\beta M}\times N_{P} \ar[d] \ar[rrr]^{\hat{\gamma}_{\beta M}\times\hat{\gamma}_{P}} &&& U_{d-p,n-m}^{\perp}\times U_{p,m}^{\perp} \ar[d] \ar[rrr]^{\hat{\mu}} &&& U_{d,n}^{\perp} \ar[d]\\
 \beta M\times P \ar[rrr]^{\gamma_{\beta M}\times\gamma_{P}} &&& G(d-p,n-m)\times G(p,m) \ar[rrr]^{\mu} &&& G(d,n)}
\end{equation}
where $\mu$ and $\hat{\mu}$ are the maps defined in Section \ref{Thom Spectrum}. These bundle maps induce maps on Thom spaces,
$$\xymatrix{\Th(N_{M}) \ar[rr]^{\Th({\hat{\gamma_{M}}})} && \Th(U_{d+1,n}^{\perp})}$$
and 
$$
\xymatrix{\Th(N_{\beta M})\wedge \Th(N_{P}) \ar[rrr]^{\Th({\hat{\gamma_{\beta M}})\wedge \Th(\hat{\gamma_{P}})}} &&& \Th(U_{d-p,n-m}^{\perp})\wedge\Th(U_{p,m}^{\perp})}.
$$
There are tubular neighborhood embeddings of the normal
bundles $N_{M}$ and $N_{\beta_{1}M}$ into
$$X\times\{0\}\times\R_{+}\times\R^{d+\bar{n}}\times\R^{p+m} \quad \text{and} \quad X\times\{0\}\times\{0\}\times\R^{d + \bar{n}}$$ 
respectively
which yield collapsing maps,
$$
\begin{aligned}
  c_{M}:& \ X_{+}\!\wedge\! D^{d+n} \longrightarrow \Th(N_{M}),\\
   c_{\beta_{1}M}: &  \ X_{+}\!\wedge\! S^{d+\bar{n}} \longrightarrow \Th(N_{\beta_{1}M}),
\end{aligned}
$$
where $X_{+}$ denotes the \textit{one-point compactification} of $X$.
Composing with $\Th({\hat{\gamma_{M}}})$ and $\Th(\hat{\gamma_{\beta
    M}})$ with the above collapsing maps yields the diagram,
$$\xymatrix{
  X_{+}\wedge D^{d+n} \ar[rrrrr]^{\Th({\hat{\gamma_{N_{M}}}})\circ c_{M}} &&&&& \Th(U_{d+1,n}^{\perp}) \\
  \\
  X_{+}\wedge S^{d +\bar{n}}\wedge S^{p+m} \ar@{^{(}->}[uu] \ar[rrrrr]^{[\Th(\hat{\gamma}_{N_{\beta M}}) \circ c_{\beta M}]\wedge
Id_{S^{p+m}}} &&&&& \Th(U_{d-p,n-p}^{\perp})\wedge S^{p+m}. \ar[uu]_{j_{d, n}\circ \tau_{P,n}}}$$ 
It follows from the bundle factorization of (\ref{eq: bundle factorization}) that this diagram does indeed commute. 
By adjunction this commutative diagram yields,
$f: X \longrightarrow \Omega_{P}^{d+n}\widehat{\Th(U_{d+1,n}^{\perp})}.$
By standard Pontryagin-Thom theory (see \cite[Section 2]{St 68}) it follows that choosing a different representative of the concordance class of $W$ yields a map homotopic to the one which we just produced; just run the same process on a concordance. We then define $T_{n}([W]) := [f]$. It is easy to check that this definition is natural in the variable $X$. 
\end{proof}
For $X \in \mathcal{X}$, denote by, 
$$T: \mathbf{D}^{P}_{d+1}[X] \longrightarrow \large[X , \Omega^{\infty-1}_{P}\widehat{\Th(U_{d+1,\infty}^{\perp})}]$$
the map induced in the limit $n \to \infty$ by the maps $T_{n}$ constructed in the previous lemma. 
\begin{lemma} \label{lemma: limiting map equivalence}
For compact $X$, the map
$T: \mathbf{D}^{P}_{d+1}[X] \longrightarrow \large[X , \Omega^{\infty-1}_{P}\widehat{\Th(U_{d+1,\infty}^{\perp})}]$
is an isomorphism of sets. 
\end{lemma}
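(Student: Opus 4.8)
The plan is to follow the template of \cite[proof of Theorem 3.4]{GMTW 09}, constructing a two--sided inverse to $T$ by an ``inverse Pontryagin--Thom'' procedure; the only genuinely new ingredients are that all transversality arguments must be carried out in the $P$--sense of Definition \ref{defn: P-transversality}, and that the product factorisation along the singular face must be preserved throughout. Since $X$ is compact, any map $X \to \Omega_{P}^{\infty-1}\widehat{\Th(U_{d+1,\infty}^{\perp})}$ factors through a finite stage
$$g \colon X \longrightarrow \Omega_{P}^{d+n}\widehat{\Th(U_{d+1,n}^{\perp})},$$
and concordances can likewise be taken at a finite stage, so it suffices to produce, from such a $g$, an element of $\mathbf{D}^{P}_{d+1,n}(X)$ with $T$ sending it to $[g]$, and to do the analogous thing for homotopies. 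By adjunction $g$ is the same datum as a pair of maps of pairs over $X$: a map $\hat{f}$ into $\Th(U_{d+1,n}^{\perp})$ on (a bundle of) discs $D^{d+n}$, and a map $f$ into $\Th(U_{d-p,n-m}^{\perp})\wedge S^{p+m}$ on the boundary spheres, making the square (\ref{eq: defining diagram}) commute, with $f$ factoring as $f_{0}\wedge Id_{S^{p+m}}$ through a map $f_{0}$ into $\Th(U_{d-p,n-m}^{\perp})$ on the spheres $S^{d+\bar{n}}$.

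The first step is to smooth $\hat{f}$ and $f_{0}$ away from the basepoints and make them transverse to the zero sections $G(d+1,n) \subset \Th(U_{d+1,n}^{\perp})$ and $G(d-p,n-m) \subset \Th(U_{d-p,n-m}^{\perp})$, done compatibly: first transversalise $f_{0}$, then extend over a collar of the boundary spheres by the product with $i_{P}(P)$, then extend into the interior of $D^{d+n}$ by the relative transversality theorem. Setting $W := \hat{f}^{-1}(G(d+1,n))$, Proposition \ref{prop: P-transverse} produces a smooth submanifold of the correct dimension, and commutativity of (\ref{eq: defining diagram}) identifies its part over the boundary spheres with $(j_{d,n}\circ\tau_{P,n}\circ f)^{-1}(G(d+1,n))$. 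Because $\tau_{P,n}$ was built in Section \ref{Thom Spectrum} as the Thom map of the bundle map $\hat{\mu}\circ(\mathrm{id}\times\hat{\gamma})$, which is ``$\times i_{P}(P)$'' on the relevant factor, and $j_{d,n}$ is a closed embedding of Thom spaces, this preimage is precisely $f_{0}^{-1}(G(d-p,n-m)) \times i_{P}(P)$. Hence the face $\partial_{1}W$ inherits the factorisation $\beta_{1}W \times i_{P}(P)$ with $\beta_{1}W := f_{0}^{-1}(G(d-p,n-m))$, and the collars required in Definition \ref{defn: i-P maps} come from the standard collar of the boundary sphere inside $D^{d+n}$ together with the $S^{p+m}$--coordinate of the smash factor. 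A reparametrisation as in \cite[proof of Theorem 3.4]{GMTW 09} identifies the interior of $D^{d+n}$ with $\R\times\R_{+}\times\R^{d+\bar{n}}\times\R^{p+m}$ so that the basepoint is pushed to $+\infty$ in the first coordinate and the face $\partial_{1}W$ to $0$ in the $\R_{+}$--coordinate; this makes $f\colon W \to \R$ proper, and a further small compactly supported perturbation, transverse rel the boundary, arranges $\pi\colon W \to X$ to be a $P$--submersion. Then $W \in \mathbf{D}^{P}_{d+1,n}(X)$, by standard Pontryagin--Thom theory \cite[Section 2]{St 68} its concordance class depends only on $[g]$, and unwinding the two constructions gives $T([W]) = [g]$; this yields surjectivity.

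For injectivity, suppose $W_{0}, W_{1} \in \mathbf{D}^{P}_{d+1,n}(X)$ satisfy $T([W_{0}]) = T([W_{1}])$, and choose a homotopy between the corresponding maps into $\Omega_{P}^{d+n}\widehat{\Th(U_{d+1,n}^{\perp})}$. Such a homotopy is a map $X\times\R \to \Omega_{P}^{d+n}\widehat{\Th(U_{d+1,n}^{\perp})}$ agreeing near the two ends with the data produced by $W_{0}$ and $W_{1}$; choosing the smoothing and $P$--transversalisation above to extend the ones already fixed at the ends (relative $P$--transversality), the preimage construction yields an element of $\mathbf{D}^{P}_{d+1,n}(X\times\R)$ which is a concordance from $W_{0}$ to $W_{1}$. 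Together with Lemma \ref{lemma: concordance representation} and the compatibility of the $T_{n}$ with stabilisation, this shows $T$ is a bijection.

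The main difficulty will be the \emph{structured transversality near the singular face}: the perturbation making $\hat{f}$ transverse to the zero section must respect both the factorisation of $f$ through $\tau_{P,n}$ and the collar of $\partial_{1}W$, so that the resulting $W$ simultaneously satisfies the $P$--submanifold conditions of Definition \ref{defn: i-P maps} and the submersion and properness conditions of Definition \ref{defn: D sheaf}. This is handled by the ``transversalise $f_{0}$ first, extend by the product over a collar, then fill in the interior'' scheme, combined with the usual GMTW reparametrisation adapted to the $\R_{+}$--collar; the rest is the bookkeeping of \cite[proof of Theorem 3.4]{GMTW 09} transcribed into the $P$--manifold setting.
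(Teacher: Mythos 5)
Your Pontryagin--Thom skeleton (structured transversality, with the preimage over the singular face factoring as $\beta_{1}W\times i_{P}(P)$, and injectivity via a relative version over $X\times\R$) matches the paper's outline, but the construction of the inverse has a genuine gap at its central step. First, a dimension count: the adjoint $\tilde{f}$ is defined on $X_{+}\wedge D^{d+n}$, where $D^{d+n}$ is the compactification of $\R_{+}\times\R^{d+\bar{n}}\times\R^{p+m}$, and the zero section $G(d+1,n)\subset \Th(U^{\perp}_{d+1,n})$ has codimension $n$; hence the transverse preimage has dimension $\dim(X)+d$, i.e.\ fibrewise dimension $d$, not $d+1$ as required by Definition \ref{defn: D sheaf}. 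Your proposed ``reparametrisation'' identifying the interior of $D^{d+n}$ with $\R\times\R_{+}\times\R^{d+\bar{n}}\times\R^{p+m}$ is dimensionally impossible (the latter has dimension $d+n+1$). The missing $\R$-factor is exactly where the shift in $\Omega^{\infty-1}$ is absorbed: the paper takes the preimage to be a closed $P$-submanifold $M$ and then sets $W:=M\times\R$.

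Second, and more seriously, the claim that ``a further small compactly supported perturbation \dots arranges $\pi\colon W\to X$ to be a $P$-submersion'' does not work: submersions onto $X$ are not dense, and no smallness argument produces one. What transversality actually gives is only formal data, namely the candidate vertical bundle $T^{\pi}M=\tilde{f}^{*}(U_{d+1,n})$ together with a stabilized isomorphism $TM\oplus\epsilon^{N}\cong\pi_{0}^{*}TX\oplus T^{\pi}M\oplus\epsilon^{N-1}$ compatible with the $\beta_{1}\times P$ factorization. Converting this into an honest submersion is the technical heart of the paper's proof: one destabilizes the bundle data (Claim \ref{claim: stabilization map}, proved in Section \ref{The Stabilization Map}), then applies the $P$-manifold version of Phillips' submersion theorem (Claim \ref{claim: submersion prop}, proved in Section \ref{The Submersion Theorem}), which is applicable precisely because $W=M\times\R$ is an \emph{open} $P$-manifold, and finally re-embeds $W$ over the resulting submersion $s_{1}$ using the weak contractibility of the embedding spaces (Theorem \ref{theorem: weakly contractible}) so as to obtain an element of $\mathbf{D}^{P}_{d+1}(X)$; the identity $H\circ T=\mathrm{Id}$ is then checked using Lemma \ref{lemma: concordance representation}. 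None of this machinery can be replaced by a compactly supported perturbation, so as written your construction of the inverse (and hence also the relative version you invoke for injectivity) does not go through.
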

\begin{proof}
We now make the assumption that $X$ is compact. We construct an inverse to 
$$T: \mathbf{D}^{P}_{d+1}[X] \longrightarrow \large[X , \Omega^{\infty-1}_{P}\widehat{\Th(U_{d+1,\infty}^{\perp}})]$$
which we will denote by $H$. Let
\begin{equation} \label{eq: initial input map}
\xymatrix{
X_{+}\wedge D^{d+n} \ar[rrrr]^{\tilde{f}} &&&& \Th(U^{\perp}_{d+1,n}) \\
\\
(X_{+}\wedge S^{\bar{d}-1+\bar{n}})\wedge S^{p+m} \ar[rrrr]^{f := f_{0}\wedge Id_{S^{p+m}}}  \ar[uu] &&&& \Th(U^{\perp}_{d-p,n-m})\wedge S^{p+m} \ar[uu]_{j_{d, n}\circ\tau_{P, n}}}
\end{equation}
represent an element $[(\tilde{f}, f)] \in \large[X , \Omega_{P}^{d+n}\widehat{\Th(U_{d+1,n}^{\perp}})].$ 

By applying an appropriate homotopy, we may assume the following about $(\tilde{f}, f)$:
\begin{enumerate}
\item[i.] The maps $\tilde{f}$ and $f$ are both smooth away from the pre-image of the base-point. Furthermore, both $\tilde{f}$ and $f$ are transverse to
$G(d+1,n)$ and $G(d-p,n-m)$ as submanifolds of $U_{d+1,n}^{\perp}$ and
$U_{d-p,n-m}^{\perp}$ respectively. 
\item[ii.] By transversality in i. we
have a pair of submanifolds
$$(M, \partial_{1}M) \; \subseteq \; (X\times \R_{+}\times\R^{d+\bar{n}}\times\R^{p+m}, \; X\times\{0\}\times\R^{d+\bar{n}}\times\R^{p+m})$$
where 
$$M  := \tilde{f}^{-1}(G(d+1,n) \quad \text{and} \quad \partial_{1}M :=  (\tau_{P,n}\circ f)^{-1}(G(d,n)).$$ 
Furthermore, $\partial_{1}M$ factors as 
$\partial_{1}M = \beta_{1}M\times i_{P}(P)$
where 
$$\beta_{1}M := f_{0}^{-1}(G(d-p, n-m)) \subset X\times\{0\}\times\R^{d+\bar{p}}$$ 
is a closed submanifold. 
\item[iii.] There exists $\varepsilon >0$ such that
\begin{equation} \label{eq: collar} M \cap \left(X\times
    [0,\varepsilon)\times\R^{d+\bar{n}}\times\R^{p+m}\right) = \partial_{1}M\times [0,\varepsilon).
\end{equation}
\end{enumerate}
The above conditions imply that $M \subset X\times\R_{+}\times\R^{d+\bar{n}}\times\R^{p+m}$ is a closed $P$-submanifold in the sense of Definition \ref{defn: P-submanifold}. 

The submanifolds $M$, $\partial_{1}M$, and $\beta_{1}M$ have normal bundles given by the pull-backs,
 \begin{equation}\label{normal bundles}
 N_{M} = \tilde{f}^{*}(U^{\perp}_{d+1,n}), \quad
 N_{\partial_{1}M} = (\tau_{P, n}\circ f)^{*}(U^{\perp}_{d, n}), \quad \text{and} \quad
 N_{\beta_{1}M} = f_{0}^{*}(U_{d-p,n-m}^{\perp}).
 \end{equation}
 Furthermore, the normal bundle $N_{\partial_{1}M}$ 
 has the factorization $N_{\partial_{1}M} = N_{\beta_{1}M}\times N_{P},$  where $N_{P}$ is the normal bundle for $P \subset \R^{p+m}$.
We define vector bundles,
 \begin{equation} \label{vertical bundles}
T^{\pi}M = \tilde{f}^{*}(U_{d+1,n}), \quad T^{\pi}\partial_{1}M =  (\tau_{P, n}\circ f)^{*}(U_{d,n}), \quad T^{\pi}\beta_{1}M =  f_{0}^{*}(U_{d-p,n-m}),
\end{equation}
over $M$, $\partial_{1}M$, and $\beta_{1}M$ respectively. Below we construct bundle epimorphisms for which these bundles are the kernels. 
By the definition of these bundles and the factorization of $f$, it follows that there are bundle splittings,
\begin{equation}
T^{\pi}M\mid_{\partial_{1}M} = T^{\pi}\partial_{1}M \oplus \epsilon^{1}, \quad \quad T^{\pi}\partial_{1}M = T^{\pi}\beta_{1}M \times TP,
\end{equation}
 and bundle isomorphisms
\begin{equation} \label{bundle comp}
N_{M}\oplus T^{\pi}M  \cong \epsilon^{d+n+1}, \quad N_{\partial_{1}M}\oplus T^{\pi}\partial_{1}M \cong \epsilon^{d+n},\quad  N_{\beta_{1}M}\oplus T^{\pi}\beta_{1}M \cong \epsilon^{d + \bar{n}}.
\end{equation}
Denote by 
$$
(i_{M}, i_{\partial_{1}M}): (M, \partial_{1}M) \longrightarrow (X\!\times \R_{+}\!\times\R^{d+\bar{n}}\times\R^{p+m}, \; X\!\times \{0\}\!\times\R^{d +\bar{n}}\times\R^{p+m})
$$
the inclusion map and let 
$$\pi_{0}: M \longrightarrow X \quad \text{and} \quad \pi_{0}^{\partial}: \partial_{1}M \longrightarrow X$$
denote the projections onto $X$.  
Pulling back the tangent bundle of $X\times\R_{+}\!\times\R^{d+\bar{n}}\times\R^{p+m}$ by 
$(i_{M}, i_{\partial_{1}M})$ yields isomorphisms, 
$$\begin{aligned}
i_{M}^{*}(TX)\oplus\epsilon^{d+n} \; \cong \; TM\oplus N_{M}, \quad & \quad  i_{\partial_{1}M}^{*}(TX)\oplus\epsilon^{d+n-1} \; \cong \; T\partial_{1}M \oplus N_{\partial M}, \\
i_{M}^{*}(TX)\oplus\epsilon^{d+n} \; \cong \; \pi_{0}^{*}(TX)\oplus \epsilon^{d+n},  \quad & \quad i_{\partial_{1}M}^{*}(TX)\oplus\epsilon^{d+n-1} \; \cong \; (\pi_{0}^{\partial})^{*}(TX)\oplus
\epsilon^{d+n-1}.
\end{aligned}$$
Combining these isomorphisms yields,
\begin{equation} \label{bundle iso 2}
 \pi_{0}^{*}(TX)\oplus \epsilon^{d+n} \; \cong \;  TM\oplus N_{M} \quad \text{and} \quad 
 (\pi_{0}^{\partial})^{*}(TX)\oplus \epsilon^{d+n-1} \;  \cong \; T\partial_{1}M\oplus N_{\partial_{1}M}
\end{equation}
both of which cover the identity on $M$ and $\partial_{1}M$. 
By adding $T^{\pi}M$ and $T^{\pi}\partial_{1}M$ via Whitney-sum to both sides of
the equations in (\ref{bundle iso 2}) and using the isomorphisms of
(\ref{bundle comp}), we obtain a commutative diagram,
\begin{equation} \label{bundle diagram}
\xymatrix{
TM\oplus \epsilon^{n+d+1}  \ar[rrr]^{\hat{\pi}_{0}}_{\cong} &&& \pi_{0}^{*}TX\oplus T^{\pi}M \oplus \epsilon^{d+n} \\
(T\partial_{1}M \oplus \epsilon^{n+d}) \oplus \epsilon^{1}   \ar[u] \ar[rrr]^{\hat{\pi}^{\partial}_{0}\oplus Id_{\epsilon^{1}}}_{\cong} &&& (\pi_{0}^{*}TX\oplus T^{\pi}\partial_{1}M \oplus \epsilon^{d+n-1})\oplus \epsilon^{1}  \ar[u]}
\end{equation}
of bundle isomorphisms where the horizontal maps cover the identity on 
$M$ and the vertical maps cover the inclusion of $\partial_{1}M$ into $M$. 
Furthermore the bundle isomorphism (coming from the bottom-horizontal arrow),
$$\hat{\pi}^{\partial}_{0}:T\partial_{1}M \oplus \epsilon^{n+d} \longrightarrow \pi_{0}^{*}TX\oplus T^{\pi}\partial_{1}M \oplus \epsilon^{d+n-1}
$$
has the factorization: 
\begin{equation} \label{factor bundle map}
\hat{\pi}^{\partial}_{0} = \hat{\pi}^{\beta}_{0} \times Id_{TP} \end{equation}
where,
$$\hat{\pi}^{\beta}_{0}:T\beta_{1}M\oplus \epsilon^{d+n} \longrightarrow \pi_{0}^{*}TX\oplus T^{\pi}\beta_{1}M \oplus\epsilon^{d+n-1} $$
is a bundle isomorphism which covers the identity on $\beta_{1}M$.
We will need to use the following \textit{destabilization}.
We postpone the proof of the following result to Section \ref{The Stabilization Map}.
\begin{claim} \label{claim: stabilization map} The bundle isomorphism
  pair $(\hat{\pi}_{0}, \hat{\pi}^{\partial}_{0}\oplus
  Id_{\epsilon^{1}})$ from {\rm (\ref{bundle diagram})} is induced by
  a pair of bundle isomorphisms,
$$\xymatrix{
TM\oplus \epsilon^{1}  \ar[rrr]^{\hat{\pi}_{1}}_{\cong} &&& \pi_{0}^{*}TX\oplus T^{\pi}M \\
(T\partial_{1}M \oplus \epsilon^{1}) \oplus \epsilon^{1} \ar[u] \ar[rrr]^{\hat{\pi}^{\partial}_{1}\oplus Id_{\epsilon^{1}}}_{\cong} &&& (\pi_{0}^{*}TX\oplus T^{\pi}\partial_{1}M)\oplus \epsilon^{1} \ar[u],}
$$
with factorization
$$\hat{\pi}^{\partial}_{1} = \hat{\pi}^{\beta}_{1} \times Id_{TP}$$ 
where
$$ \hat{\pi}^{\beta}_{1}:T\beta_{1}M \oplus \epsilon^{1} \longrightarrow \pi_{0}^{*}TX\oplus T^{\pi}\beta_{1}M $$
is a bundle isomorphism covering the identity on $\beta_{1}M$. 
Furthermore, $(\hat{\pi}_{1}, \hat{\pi}^{\partial}_{1}\oplus
Id_{\epsilon^{1}})$ is unique up to homotopy through bundle map pairs
with the factorization specified above.
\end{claim}
We now define spaces
$$ 
W := M\times \R, \quad \partial_{1}W := \partial_{1}M\times \R \quad
\quad \text{and} \quad \quad \beta_{1}W := \beta_{1}M\times \R.
$$ 
We define the bundles $T^{\pi}W$, $T^{\pi}\partial_{1}W$, $T^{\pi}\beta_{1}W$ to be the pullbacks of the bundles $T^{\pi}M$, $T^{\pi}\partial_{1}M$,
$T^{\pi}\beta_{1}M$, over the projections of $W, \; \partial W, \;
\beta W$ onto $M$, $\partial_{1}M$, $\beta_{1}M$ respectively. 
Denote by
\begin{equation} \label{eq: W inclusion}
i_{W}: W \hookrightarrow  X\times\R\times\R_{+}\times\R^{d+\bar{n}}\times\R^{p+m}
\end{equation}
the inclusion map.
Let $s_{0},
s^{\partial}_{0},$ and $s^{\beta}_{0}$ denote the projections of $W, \;
\partial_{1}W,$ and $\beta_{1}W$ onto the factor $X$ (the reason for the
notation $s_{0}$ used for this projection onto $X$ will become clear
momentarily). The result of claim \ref{claim: stabilization map} yields bundle
isomorphisms, 
\begin{equation} \label{bundle iso W}
\begin{aligned}
TW &\cong s_{0}^{*}(TX)\oplus T^{\pi}W, \\ 
T\partial_{1}W &\cong (s^{\partial}_{0})^{*}(TX)\oplus T^{\pi}\partial_{1}W, \\
T\beta_{1}W  &\cong (s^{\beta}_{0})^{*}(TX)\oplus T^{\pi}\beta_{1}W,
\end{aligned}
\end{equation} 
all which cover the identity.  Using
(\ref{bundle iso W}) we obtain a bundle epimorphism with kernel $T^{\pi}W$, 
\begin{equation} \label{bundle epimorphism}
\xymatrix{
TW \ar[d] \ar[rr]^{\hat{s}_{0}} && TX\ar[d] \\
W \ar[rr]^{s_{0}} &&  X}
\end{equation}
such that the restriction $(\hat{s}_{0}, s_{0})\mid_{\partial_{1}W}$ has the factorization,
\begin{equation}
\label{ep factorization}
\xymatrix{
T\partial_{1}W \ar[d] \ar[rr]^{pr} && T\beta_{1}W \ar[d] \ar[rr]^{\hat{s}^{\beta}_{0}} && TX\ar[d]\\
\partial_{1}W \ar[rr]^{pr} && \beta_{1}W  \ar[rr]^{s_{0}^{\beta}} && X, }
\end{equation}
where $(\hat{s}^{\beta}_{0}, s_{0}^{\beta})$ is a bundle-epimorphism covering $s_{0}^{\beta}$ which is the projection onto $X$. 
\begin{claim} \label{claim: submersion prop} There exists a homotopy
  $(\hat{s}_{t}, s_{t})$ through bundle-epimorphisms such that:
\begin{enumerate}
\item[i.]  At $t = 0$, $(\hat{s}_{0}, s_{0})$ is equal to the bundle epimorphism given in {\rm (\ref{bundle epimorphism})}. 
\item[ii.] The bundle epimorphism $(\hat{s}_{1}, s_{1})$ is integrable, i.e. $D s_{1} = \hat{s}_{1}$ and thus $s_{1}$ is a submersion.
\item[iii.] For all $t$, $(\hat{s}_{t}, s_{t})$ has the factorization
  given in {\rm (\ref{ep factorization})}.
\end{enumerate}
Moreover, the integrable bundle-epimorphism $(\hat{s}_{1}, s_{1})$, is unique up to homotopy though integrable bundle-epimorphisms. 
\end{claim}
We provide a proof of this claim in Section \ref{The Submersion
  Theorem}. This is essentially a relative version of
\textit{Phillips' Submersion Theorem} \cite{Ph 67} adapted for
$P$-manifolds.

Let $(\hat{s}_{t}, s_{t})$ be the desired family of bundle
epimorphisms with the above stated properties. The map $s_{1}$ is now
a submersion of $W$ onto $X$. In order to obtain an element of
$\mathbf{D}^{P}_{d+1}[X]$, we need to realize $s_{1}$ as the
composition of some embedding 
$$W \hookrightarrow X\times\R\times\R_{+}\times\R^{d+\bar{n}}\times\R^{p+m}$$
followed by projection onto $X$. 
Recall that $W = \R\times M$ where $M$ is a closed $P$-manifold. 
By Theorem \ref{theorem: weakly contractible} it follows that for some integer $n'$ (possibly larger than $n$) there exists a $P$-embedding 
$$j: M \longrightarrow \R_{+}\times\R^{d+\bar{n}'}\times\R^{p+m}$$
(by $P$-embedding here we mean element of the space $\mathcal{E}_{P, n' + d}(M)$, see Definition \ref{defn: i-P maps}). 
Furthermore, it follows from Theorem \ref{theorem: weakly contractible} again that if $n'$ is large enough then any two choices of embeddings $j$ are isotopic through $P$-embeddings. 
Now consider the embedding 
\begin{equation} \label{eq: new embedding}
W = \R\times M \longrightarrow X\times\R\times(\R_{+}\times\R^{d+\bar{n}'}\times\R^{p+m}),\quad (t, x) \mapsto (s_{1}(t, x), \; t, \; j(x))
\end{equation}
where $s_{1}(t, x) \in X$, $t \in \R$, and $j(x) \in \R_{+}\times\R^{d+\bar{n}'}\times\R^{p+m}$. 
Denote by $W'$ the image of the above embedding. 
It follows that $W' \in \mathbf{D}^{P}_{d, n'}(X)$. 

We define 
$$H: [X , \Omega^{\infty-1}_{P}\widehat{\Th(U_{d+1, \infty}^{\perp})}] \longrightarrow \mathbf{D}^{P}_{d+1}[X]$$ 
by sending $[\bar{f}] \in [X , \Omega^{\infty-1}_{P}\widehat{\Th(U_{d+1, \infty}^{\perp})}]$ (the class that we started with from (\ref{eq: initial input map})), to the concordance class containing the image of $W^{'}$ in $\mathbf{D}^{P}_{d+1}(X)$. This map is well defined because all choices made in the construction of $W^{'}$ were shown to be unique up to homotopy (namely the submersion found in Claim \ref{claim: submersion prop} and the embedding constructed above). 
One can verify directly that $T\circ H =
Id$. To see that $H\circ T = Id$, recall Lemma \ref{lemma: concordance representation} which states that any concordance class $\mathbf{D}^{P}_{d+1, n}[X]$, has a representative 
$$W \subset X\times\R\times(\R_{+}\times\R^{d+\bar{n}}\times\R^{p+m})$$ 
such that $f: W \longrightarrow \R$ is a bundle projection and thus $W$ is diffeomorphic to the product $f^{-1}(0)\times\R$. 
It is easy to check that $H\circ T$ acts as the identity on such elements. 
This concludes the proof of the lemma.
\end{proof}

\subsection{A Parametrized Thom-Pontryagin Construction} \label{subsection: parametrized pontryagin thom} 
Lemma \ref{lemma: pontryagin thom} establishes a bijection $\mathbf{D}^{P}_{d+1}[X] \cong [X, \Omega^{\infty-1}_{P}\widehat{\Th(U_{d+1, \infty}^{\perp})}]$ for any closed manifold $X$. 
In order to obtain a the weak equivalence $|\mathbf{D}^{P}_{d+1}| \simeq \Omega^{\infty-1}_{P}\widehat{\Th(U_{d+1, \infty}^{\perp})}$ we need to show that this bijection is induced by an actual natural transformation of sheaves, $\mathbf{D}^{P}_{d+1} \longrightarrow \Maps(\; \underline{\hspace{.3cm}} \; , \Omega^{\infty-1}_{P}\widehat{\Th(U_{d+1, \infty}^{\perp})}).$
We proceed in a way very similar to \cite[Page 868-869]{MW 07}. We start with
a definition:
\begin{defn} 
Let $\pi: Y \longrightarrow X$ be a submersion. Let $i_{C}: C
\hookrightarrow Y$ be a smooth submanifold and suppose that
$\pi\mid_{C}$ is still a submersion. A \textit{vertical} tubular
neighborhood for $C$ in $Y$ consists of an open embedding $e: N
\longrightarrow Y$ of the normal bundle of $i_{C}(C) \subset Y$, 
such that $e\circ s = i_{C}$ (where $s$ is the zero-section), and $\pi\circ e = \pi\circ i_{C} \circ q$.
\end{defn}

Using this definition we define a variant of the sheaf
$\mathbf{D}^{P}_{d+1, n}$ which we will denote by
$\widehat{\mathbf{D}}^{P}_{d+1, n}(X)$.
\begin{defn} For $X \in \Ob(\mathcal{X})$ we define $\widehat{\mathbf{D}}^{P}_{d+1, n}(X)$ to be the set of pairs $(W, e)$ where $W \in \mathbf{D}^{P}_{d+1, n}(X)$ and 
$$e: N_{W} \longrightarrow X\times\R\times\R_{+}\times\R^{d+\bar{n}}\times\R^{p+m}$$
is a vertical tubular neighborhood for $W$ with respect to the submersion $\pi: W \longrightarrow X$, 
  subject to the following extra condition. 
The restriction of $e$ to $N_{\partial_{1}W} = N_{\beta_{1}W}\times N_{P}$ (which is the normal bundle of $\partial_{1}W$ in $X\times\R\times\{0\}\times\R^{d+\bar{n}}\times\R^{p+m}$) is
  equal to the product $e_{\beta}\times e_{P}$ where
$$e_{\beta}: N_{\beta_{W}} \longrightarrow
X\times\R\times\{0\}\times\R^{d+\bar{n}}$$ is a vertical tubular
neighborhood for $\beta_{1}W$ and $e_{P}: N_{P} \longrightarrow \R^{p+m}$
is the tubular neighborhood embedding, specified in (\ref{eq: P tubular
  nbh}) that was used in our construction of the spectrum
$\MT_{P}(d+1).$
\end{defn}
We define
\begin{equation} \widehat{\mathbf{D}}^{P}_{d+1} := \colim_{n\to\infty}\widehat{\mathbf{D}}^{P}_{d+1, n}
\end{equation}
where the colimit is taken in the category of sheaves on $\mathcal{X}$. 
It follows easily that for any closed manifold $X$, $\widehat{\mathbf{D}}^{P}_{d+1} = \displaystyle \colim_{n\to\infty}(\widehat{\mathbf{D}}^{P}_{d+1, n}(X))$ and thus there is a weak homotopy equivalence, 
$$|\widehat{\mathbf{D}}^{P}_{d+1}| \simeq \displaystyle \colim_{n\to\infty}|\widehat{\mathbf{D}}^{P}_{d+1, n}|.$$

For each $n$ there is a forgetful map
 $F_{n}: \widehat{\mathbf{D}}^{P}_{d+1, n} \longrightarrow \mathbf{D}^{P}_{d+1, n}$
 defined by sending an element $(W, e)$ to $W$. Passing to the direct limit as $n \to \infty$ yields a natural transformation
 \begin{equation} \label{eq: forgetful map D} 
 F: \widehat{\mathbf{D}}^{P}_{d+1} \longrightarrow \mathbf{D}^{P}_{d+1} 
 \end{equation}
 which by the existence and uniqueness up to isotopy of tubular neighborhoods, induces a homotopy equivalence, 
$|\widehat{\mathbf{D}}^{P}_{d+1}| \; \simeq \; |\mathbf{D}^{P}_{d+1}|.$

\begin{defn} For positive integers $n$ and $d$, we define a sheaf $\mathcal{Z}^{P}_{d+1, n}$ on $\mathcal{X}$ by setting
$$\mathcal{Z}^{P}_{d+1, n}(X) := \Maps(X\times\R, \; \Omega_{P}^{d+n}\widehat{\Th(U_{d+1,n}^{\perp})})$$ 
    for $X \in \Ob(\mathcal{X})$. On the right-hand side, $\Maps(\underline{\hspace{.3cm}}\; , \; \underline{\hspace{.3cm}} )$ simply means here the set of maps with no topology given. 
    These are strictly set valued sheaves.
\end{defn}
For each $n$ the natural map 
$$\Omega_{P}^{d+n}\Th(U_{d+1,n}^{\perp}) \longrightarrow \Omega_{P}^{d+n+1}\Th(U_{d+1,n+1}^{\perp})$$
induces the natural transformation $\mathcal{Z}^{P}_{d+1, n} \rightarrow \mathcal{Z}^{P}_{d+1, n+1}$.
We define, 
$\mathcal{Z}^{P}_{d+1} := \displaystyle\colim_{n\to \infty} \mathcal{Z}^{P}_{d+1, n}.$
For each $n$, the map  
$$j_{0}: \mathcal{Z}^{P}_{d+1, n} \longrightarrow \Maps(X, \; \Omega^{d+n}_{P}\widehat{\Th(U^{\perp}_{d+1, n})})$$
given by restricting a map in $\mathcal{Z}^{P}_{d+1, n}(X)$  to $X\times\{0\}$ is a weak equivalence of sheaves.
In the limit $n \to \infty$, the maps $j_{0}$ induce a 
 homotopy equivalence,
$|\mathcal{Z}^{P}_{d+1}| \simeq \Omega^{\infty -1}_{P}\widehat{\Th(U^{\perp}_{d+1, \infty})}.$ 
    
The \textit{Pontryagin-Thom} construction yields a  map of sheaves,
$\widehat{T}_{n}: \widehat{\mathbf{D}}^{P}_{d+1, n} \longrightarrow \mathcal{Z}^{P}_{d+1, n}$ for each $n$
which we describe in detail. Let $(W, e)$ be an element of
$\widehat{\mathbf{D}}^{P}_{d+1, n}(X)$. Since 
$$(\pi, f): W \longrightarrow X\times\R$$ 
is a proper map, for each $(x, t) \in X\times\R$ there exists a positive real number denoted by 
$\lambda(x, t) > 0$, such that the element 
$$(x, t, z) \in X\times\R\times(\R_{+}\times\R^{d+\bar{n}}\times\R^{p+m})$$ 
lies in the compliment of $W \subset X\times\R\times\R_{+}\times\R^{d+\bar{n}}\times\R^{p+m}$
whenever $|z| \geq \lambda(x, t)$ (where $|z|$ denotes the length of the vector $z$ in the Euclidean metric). 
The numbers $\lambda(x, t)$ can be chosen to make $(x, t) \mapsto \lambda(x, t)$
a continuous function. It follows that the
collapsing map 
$$X\times\R\times\R_{+}\times\R^{d+\bar{n}}\times\R^{p+m} \longrightarrow \Th(N_{W})$$
induced by the vertical tubular embedding $e$,
extends to 
$$X\times\R\times D^{d-1+n} \longrightarrow \Th(N_{W})$$ where
$D^{d-1+n}$ is viewed as the one-point-compactification of
$\R_{+}\times\R^{d+\bar{n}}\times\R^{p+m}$.  Putting together the above collapsing map
and the functor $\Th(\; \underline{\hspace{.3cm}}\; )$ applied to the Gauss-maps of the
normal bundles $N_{W}$ and $N_{\partial_{1}W} = N_{\beta_{1}W}\times N_{P}$, 
we obtain the commutative diagram,
$$\xymatrix{
X\times\R\times D^{d+n} \ar[rr] && \Th(N_{W}) \ar[rr] && \Th(U^{\perp}_{d+1, n}) \\
X\times\R\times S^{d+\bar{n}}\wedge S^{m+p} \ar[u] \ar[rr] && \Th(N_{\beta W})\wedge S^{m+p} \ar[rr] && \Th(U^{\perp}_{d-p, n-m})\wedge S^{m+p} \ar[u]_{j_{d, n}\circ {\tau}_{P,n}}}$$
which yields an element of $\mathcal{Z}^{P}_{d+1, n}$ via adjunction. We denote by $\widehat{T}: \widehat{\mathbf{D}}^{P}_{d+1} \longrightarrow \mathcal{Z}^{P}_{d+1}$
the induced map in the direct limit as $n \to \infty$. 
 The concordance class functors associated to our newly defined sheaves fit into the commutative diagram,
\begin{equation} \label{eq: concordance diagram}
\xymatrix{ 
\widehat{\mathbf{D}}^{P}_{d+1}[\;  \underline{\hspace{.3cm}}\;] \ar[rrr]^{\widehat{T}_{*}} \ar[d]_{\cong} &&& \mathcal{Z}^{P}_{d+1}[\;  \underline{\hspace{.3cm}} \; ] \ar[d]_{\cong} \\
\mathbf{D}^{P}_{d+1}[\; \underline{\hspace{.3cm}} \;] \ar[rrr]^{T} &&& [\; \underline{\hspace{.3cm}} \;, \Omega^{\infty-1}_{P}\widehat{\Th(U^{\perp}_{d+1, \infty})}]}
\end{equation}
where $T$ is the natural transformation defined in Section \ref{A Natural Transformation}, the vertical maps are induced by $F$ and $j_{0}$. The bottom row is an isomorphism whenever applied to a compact manifold and so by commutativity, the top map is as well. It follows that for each $k \geq 0$, the map $|\widehat{T}|$ induces a bijection
$$\xymatrix{
|\widehat{T}|: [S^{k}\; , \; |\widehat{\mathbf{D}}^{P}_{d+1}|] \ar[rrr]^{\cong} &&& [S^{k}\; , \;  \Omega^{\infty-1}_{P}\widehat{\Th(U^{\perp}_{d+1, \infty})}].
}$$ 
However, these are isomorphisms of sets of homotopy classes of unbased maps and not isomorphisms of the actual homotopy groups. In order to prove that the map $|\widehat{T}|$ is a homotopy equivalence, we need the following proposition whose proof is similar to  \cite[Theorem 3.8]{MW 07}. We give the proof in Section \ref{section:monoidal}. 
\begin{proposition} \label{prop: homotopy monoid} The map $|\widehat{T}|$ induces an isomorphism on all homotopy groups with respect to any choice of basepoint.\end{proposition}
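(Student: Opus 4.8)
The plan is to upgrade the bijection on unbased homotopy sets, which the diagram (\ref{eq: concordance diagram}) already supplies, to an isomorphism on based homotopy groups. The standard obstruction — pointed out in \cite[\S 3]{MW 07} — is that a weak equivalence of representing spaces cannot be detected on $\pi_0$-level concordance sets alone, because the group structure on $\pi_k$ is not visible there; one needs to know the spaces have compatible $H$-space (in fact infinite loop space, or at least homotopy-commutative monoid) structures and that $|\widehat T|$ is a map of such. So the first step is to equip both $|\widehat{\mathbf D}^P_{d+1}|$ and $|\mathcal Z^P_{d+1}|$ with a natural monoid structure and check $|\widehat T|$ respects it.

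On the sheaf $\widehat{\mathbf D}^P_{d+1}$ the monoid operation comes from ``stacking'' in the $\R$-coordinate that carries the map $f\colon W\to\R$: given $(W_1,e_1)$ and $(W_2,e_2)$ over $X$, one uses the properness of $(\pi,f)$ to translate $W_2$ far enough in the $\R$-direction that the two submanifolds are disjoint, then takes the union, rescaling $f$ so the result again lands in the prescribed slab; the vertical tubular neighborhoods glue since they are disjointly supported. This is exactly the ``superposition'' product used in \cite[\S 3]{MW 07}, adapted so that the product of two $P$-submanifolds is again a $P$-submanifold (the factorization $\partial_1(W_1\sqcup W_2)=\beta_1(W_1\sqcup W_2)\times i_P(P)$ is immediate from the corresponding factorizations of $W_1$ and $W_2$, and the tubular neighborhood condition involving $e_P$ is preserved because $e_P$ is the same fixed embedding for both summands). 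Concordance classes form an abelian monoid under this operation, and in fact one checks as in \cite{MW 07} that $\pi_0|\widehat{\mathbf D}^P_{d+1}|$ is a group — every element has an inverse up to concordance, using a standard ``turn the cobordism around'' argument together with a reflection in the $\R$-direction. On $\mathcal Z^P_{d+1}=\Maps(-\times\R,\Omega^{d+n}_P\widehat{\Th(U^\perp_{d+1,n})})$ the monoid structure is the loop-concatenation in the $\R$-variable, i.e. the usual loop-sum, which is likewise a grouplike homotopy-commutative $H$-space structure; and the Pontryagin–Thom map $\widehat T$ is, by inspection of its definition via collapsing maps, compatible with stacking-in-$\R$ on both sides, hence $|\widehat T|$ is an $H$-map.

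Given all this, one invokes the same formal argument as \cite[Theorem 3.8]{MW 07} (or \cite[App.]{MW 07}): a map of grouplike homotopy-associative, homotopy-commutative $H$-spaces that induces a bijection on $[S^k,-]$ for every $k\ge 0$ (with unbased maps) induces an isomorphism on all homotopy groups for every basepoint. The point is that on a grouplike $H$-space all path components are homotopy equivalent, so it suffices to treat the basepoint component, and there the unbased set $[S^k,-]$ surjects onto $\pi_k$ with the fiber over the trivial class controlled by $\pi_0$ acting by conjugation; since $\pi_0$ is a group and $|\widehat T|$ is a $\pi_0$-equivariant $H$-map inducing a bijection on $\pi_0$, a diagram chase promotes the bijection of sets to a bijection of groups. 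I would spell this out by: (i) reducing to the identity component; (ii) using the $H$-space translation to identify $[S^k,\,|\widehat{\mathbf D}^P_{d+1}|]$ with $\pi_k$ of the identity component modulo the (here trivial, since the monoid is homotopy-commutative) $\pi_1$-action; (iii) concluding via (\ref{eq: concordance diagram}).

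The main obstacle is Step two: verifying carefully that the superposition product is well-defined \emph{on $\widehat{\mathbf D}^P_{d+1}$} — that the stacked union of two $P$-submanifolds, with its glued vertical tubular neighborhood satisfying the $e_\beta\times e_P$ condition, still satisfies \emph{all} the conditions of Definition \ref{defn: D sheaf} and the $\widehat{\mathbf D}$-refinement (submersion onto $X$, properness of $(\pi,f)$, the collar conditions on $\partial_1$, and the product structure of the tubular neighborhood near $\partial_1$), and that it descends to a well-defined abelian-monoid structure on concordance classes that is grouplike. Everything else is a direct transcription of \cite{MW 07}, with the $P$-decorations going along for the ride.
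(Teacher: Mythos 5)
Your overall strategy is the one the paper uses: put a monoid (H-space) structure on both sides, check that the Pontryagin--Thom map is compatible with it and induces a $\pi_{0}$-isomorphism, use grouplikeness to identify all path components, and then promote the bijection on unbased sets $[S^{k},-]$ to an isomorphism of based homotopy groups. The gap is in the construction that everything else rests on: the product itself. You propose to add $(W_{1},e_{1})$ and $(W_{2},e_{2})$ by ``translating $W_{2}$ far enough in the $\R$-direction that the two submanifolds are disjoint, then rescaling $f$ so the result lands in the prescribed slab.'' This does not work for the sheaf $\widehat{\mathbf{D}}^{P}_{d+1}$: its elements are submanifolds $W\subset X\times\R\times\R_{+}\times\R^{d+\bar{n}}\times\R^{p+m}$ for which $(\pi,f)$ is merely \emph{proper} over $X\times\R$; there is no slab, and $f$ is typically surjective onto $\R$ (e.g.\ any $W\cong M\times\R$ embedded as a product, which by Lemma \ref{lemma: concordance representation} represents every concordance class). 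For such elements no finite translation in the $\R$-coordinate separates $W_{1}$ from $W_{2}$, so the proposed union is not a submanifold and the operation is not defined. Properness of $(\pi,f)$ gives compact preimages of compacta, not boundedness in the $\R$-direction, so it cannot be used the way you invoke it. The correct ``superposition'' (and the paper's) separates the two summands in the \emph{stabilized ambient directions}: one passes to the subsheaf $\widehat{\mathbf{D}}^{P}_{d+1}\bar{\times}\widehat{\mathbf{D}}^{P}_{d+1}$ of pairs with disjoint tubular neighborhoods, shows the inclusion into the full product is a weak equivalence of sheaves (after increasing the ambient dimension one can isotope $W_{1}$ off $W_{2}$), and defines the product as disjoint union composed with a pseudo-inverse of that inclusion. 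Relatedly, the H-structure on $\mathcal{Z}^{P}_{d+1}(X)=\Maps(X\times\R,\Omega^{d+n}_{P}\widehat{\Th(U^{\perp}_{d+1,n})})$ is not ``concatenation in the $\R$-variable'' of the source (two maps defined on all of $X\times\R$ cannot be concatenated there); it is the pointwise loop sum in the target, and compatibility of $\widehat{T}$ with disjoint union versus loop sum is exactly what has to be checked.

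Two smaller points. First, you propose to prove grouplikeness of $\pi_{0}|\widehat{\mathbf{D}}^{P}_{d+1}|$ directly by a ``turn the cobordism around'' argument; the paper instead transports the group structure across the $\pi_{0}$-isomorphism with $\pi_{0}\Omega^{\infty-1}\MT_{P}(d+1)$, which is cheaper and avoids constructing explicit inverses. Second, once the product is fixed, your concluding formal argument (identity component, translation between components, comparison of $[S^{k},-]$ with $\pi_{k}$ via \cite[Example 4A.3]{H 01}-type statements, and the commutative square with the unbased bijection) is exactly the paper's, so the only thing that genuinely needs repair is the definition and well-definedness of the monoid structure.
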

Proposition \ref{prop: homotopy monoid} together with Corollary \ref{cor: infinite loop-space eq} implies Theorem \ref{Main Theorem} which states that there is a weak homotopy equivalence,
$$|\mathbf{D}^{P}_{d+1}| \simeq \Omega^{\infty-1}\MT_{P}(d+1).$$
By Remark \ref{remark: homotopy invariance under bordism class} we have that if $P$ is cobordant to a closed manifold $P'$ then there is a homotopy equivalence, $\Omega^{\infty-1}\MT_{P}(d+1) \simeq \Omega^{\infty-1}\MT_{P'}(d+1)$. 
This implies the following corollary. 
\begin{corollary}
If the closed manifolds $P$ and $P'$ are cobordant, then there is a weak homotopy equivalence $|\mathbf{D}^{P}_{d+1}| \simeq |\mathbf{D}^{P'}_{d+1}|$. 
\end{corollary}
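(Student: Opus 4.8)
The plan is to obtain the equivalence by splicing together weak homotopy equivalences already at our disposal; no new geometric input is required. First I would apply Theorem \ref{Main Theorem} to both $P$ and $P'$, which gives weak homotopy equivalences
\[
|\mathbf{D}^{P}_{d+1}| \;\sim\; \Omega^{\infty-1}\MT_{P}(d+1),
\qquad
|\mathbf{D}^{P'}_{d+1}| \;\sim\; \Omega^{\infty-1}\MT_{P'}(d+1).
\]

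Next I would invoke Remark \ref{remark: homotopy invariance under bordism class}. Recall that $\MT_{P}(d+1)$ was defined as the cofibre of $\hat j_{d}\circ(\Sigma^{-1}\tau_{P})$, where $\tau_{P}\colon \MT(d-p)\to\MT(d)$ is induced by the Pontryagin-Thom map $c_{P}\colon S^{p+m}\to\Th(U^{\perp}_{p,m})$ attached to the chosen embedding $i_{P}\colon P\hookrightarrow\R^{p+m}$. Since $m$ is large, any two embeddings of $P$ into $\R^{p+m}$ are isotopic, so the homotopy class of $c_{P}$ depends only on $P$; and the standard cobordism-invariance of the Pontryagin-Thom construction (see \cite{St 68}) shows that a cobordism from $P$ to $P'$ produces a homotopy $c_{P}\simeq c_{P'}$ after stabilizing the embeddings. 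Hence $\hat j_{d}\circ(\Sigma^{-1}\tau_{P})$ and $\hat j_{d}\circ(\Sigma^{-1}\tau_{P'})$ are homotopic maps of spectra, and passing to cofibres yields a homotopy equivalence $\MT_{P}(d+1)\sim\MT_{P'}(d+1)$. Applying the functor $\Omega^{\infty-1}$, which preserves homotopy equivalences of spectra, gives $\Omega^{\infty-1}\MT_{P}(d+1)\sim\Omega^{\infty-1}\MT_{P'}(d+1)$.

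Finally I would assemble the chain
\[
|\mathbf{D}^{P}_{d+1}| \;\sim\; \Omega^{\infty-1}\MT_{P}(d+1) \;\sim\; \Omega^{\infty-1}\MT_{P'}(d+1) \;\sim\; |\mathbf{D}^{P'}_{d+1}|,
\]
which is the assertion of the corollary. I expect no real obstacle here: the only step carrying any content is the cobordism-invariance of the Pontryagin-Thom class recorded in Remark \ref{remark: homotopy invariance under bordism class}, which is classical, and the remaining work is purely formal, combining the Main Theorem with the fact that taking cofibres sends homotopic maps to equivalent spectra.
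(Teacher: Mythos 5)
Your proposal is correct and follows exactly the paper's own argument: combine Theorem \ref{Main Theorem} applied to $P$ and $P'$ with Remark \ref{remark: homotopy invariance under bordism class} (cobordism invariance of the Pontryagin--Thom map, hence of the cofibre $\MT_{P}(d+1)$ up to equivalence) and chain the resulting weak equivalences. Nothing further is needed.
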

 
\section{The classifying space of $\mathbf{Cob}_{d+1}^{P}$}
 \label{The Classifying Space}
 In this section we construct a weak homotopy equivalence, $|\mathbf{D}^{P}_{d+1}| \simeq \mathbf{BCob}^{P}_{d+1}.$ 
 Combining this weak equivalence with the results of the previous section implies Theorem \ref{thm: Main}. 
 \subsection{Category Valued Sheaves} \label{subsection: Cat-valued sheaves}
 We will need to consider sheaves on $\mathcal{X}$ that are valued in the category of small categories, which is denoted by $\mathbf{CAT}$. 
 \begin{defn} \label{defn: cat-valued sheaf}
A contravariant functor $\mathcal{F}: \mathcal{X} \longrightarrow \mathbf{CAT}$ is said to be a $\mathbf{CAT}$-valued sheaf if for any $X \in \Ob(\mathcal{X})$, $\mathcal{F}(X)$ satisfies the same gluing condition described in Section \ref{subsection: Recollection of Sheaves}  for $\mathbf{Set}$-valued sheaves, with respect to any open cover of $X$.  
 \end{defn}
 We now recall some important facts about $\mathbf{CAT}$-valued sheaves.  
Let $\mathcal{F}$ be a $\mathbf{CAT}$-valued sheaf. 
For each non-negative integer $k$, there is an auxiliary $\mathbf{Set}$-valued sheaf denoted by $N_{k}\mathcal{F}$ which is defined by setting $N_{k}\mathcal{F}(X)$ equal to the $k$-th level of the nerve of the category $\mathcal{F}(X)$. 
The representing space $|\mathcal{F}|$ naturally obtains the structure of a topological category by setting,
\begin{equation} \label{eq: cat valued sheaf} 
\Ob(|\mathcal{F}|) = |N_{0}\mathcal{F}| \quad \text{and} \quad \Mor(|\mathcal{F}|) = |N_{1}\mathcal{F}|.
\end{equation}
The classifying space $B|\mathcal{F}|$ can be constructed by taking the geometric realization of the \textit{diagonal} simplicial set,
$k \mapsto N_{k}\mathcal{F}(\triangle_{e}^{k}).$

If $\mathcal{C}$ is a topological category with a smooth structure  (i.e. $\mathcal{C}$ is a smooth manifold or an infinite dimensional \textit{Banach manifold} like in the case with $\mathbf{Cob}_{d+1}^{P}$) then for $X \in \Ob(\mathcal{X})$, the set of smooth maps $C^{\infty}(X, \mathcal{C})$ has the structure of a small category by pointwise composition. 
The contravariant functor $C^{\infty}(\; \underline{\hspace{.3cm}} \; , \mathcal{C})$ defines a $\mathbf{CAT}$-valued sheaf on $\mathcal{X}$. 
As with all $\mathbf{CAT}$-valued sheaves, the representing space $|C^{\infty}(\; \underline{\hspace{.3cm}} \;, \mathcal{C})|$ has the structure of a topological category. 
In this way, the natural map,
 \begin{equation} \label{eq: cannonical map}
|C^{\infty}(\; \underline{\hspace{.3cm}}\;, \mathcal{C})| \longrightarrow \mathcal{C}
 \end{equation}
defined between the geometric realization of the singular complex of a space and the space itself (see \cite{M 57}), is a continuous functor. 
It follows from \cite{GMTW 09} that (\ref{eq: cannonical map}) induces a weak-homotopy equivalence
\begin{equation} \label{eq: canonical classifying space equivalence}
B|C^{\infty}(\;\underline{\hspace{.3cm}}\;, \mathcal{C})| \longrightarrow B\mathcal{C}.
\end{equation}
Note that the above construction and weak homotopy equivalence holds for the $\mathbf{CAT}$-valued sheaf $\Maps(\; \underline{\hspace{.3cm}}\;, \mathcal{C})$ as well. 

\subsection{Cocycle Sheaves}
We will need to use another important construction from \cite{MW 07} and \cite{GMTW 09} relating to $\mathbf{CAT}$-valued sheaves . 
For the next definition, fix once and for all an uncountable set $J$. 
For elements $X \in \Ob(\mathcal{X})$ we will need to consider open covers $\mathcal{U}$ of $X$ indexed by the set $J$. 
For subset $S \subset J$, we denote $U_{S} := \cap_{i \in S}U_{i}$. 
\begin{defn} \label{defn: cocycle sheaf}
Let $\mathcal{F}$ be a $\textbf{CAT}$-valued sheaf on $\mathcal{X}$. 
For each $X \in \Ob(\mathcal{X})$, $\bar{\beta}\mathcal{F}(X)$ is defined to be the set of pairs $(\mathcal{U}, \Phi)$ where $\mathcal{U} = \{U_{i} \; | \; j \in J\}$ is an open cover of $\mathcal{X}$ indexed by $J$, and $\Phi$ is collection of morphisms, $\varphi_{RS} \in N_{1}\mathcal{F}(U_{S})$, indexed by the pairs $R \subseteq S$ of non-empty finite subsets of $J$, subject to the following conditions:
\begin{enumerate}
\item[i.] $\varphi_{RR} = Id_{C_{R}}$ for some object $C_{R} \in N_{0}\mathcal{F}(U_{R})$.
\item[ii.] For each non-empty finite $R \subseteq S$, $\varphi_{RS}$ is a morphism from $C_{S}$ to $C_{R}|_{U_{S}}$. 
\item[iii.] For all triples $R \subseteq S \subseteq T$ of finite non-empty subsets of $J$, we have 
$$\varphi_{RT} = (\varphi_{RS}|_{U_{T}})\circ \varphi_{ST}.$$
\end{enumerate}
It can be verified that for any $\mathbf{CAT}$-valued sheaf $\mathcal{F}$ on $\mathcal{X}$, the assignment $X \mapsto \bar{\beta}\mathcal{F}(X)$ defines a \textbf{Set}-valued sheaf on $\mathcal{X}$. 
The sheaf $\bar{\beta}\mathcal{F}$ is called the \textit{cocycle-sheaf} associated to $\mathcal{F}$. 
\end{defn}

In \cite{MW 07} it is proven that for any $\mathbf{CAT}$-valued sheaf $\mathcal{F}$ on $\mathcal{X}$, there is a weak homotopy equivalence,
\begin{equation}
|\bar{\beta}\mathcal{F}| \simeq B|\mathcal{F}|
\end{equation}
where $|\bar{\beta}\mathcal{F}|$ is the representing space of the $\mathbf{Set}$-valued sheaf $\bar{\beta}\mathcal{F}$ and $B|\mathcal{F}|$ is the classifying space of the topological category $|\mathcal{F}|$. 
 This homotopy equivalence is natural in the following sense. 
  \begin{remark} \label{remark: set valued sheaf}
 Since any set may by considered a category with only identity morphisms, 
 a $\mathbf{Set}$-valued sheaf $\mathcal{F}$ on $\mathcal{X}$ may be considered a $\mathbf{CAT}$-valued sheaf by considering $\mathcal{F}(X)$ a category with only identity morphisms, for $X \in \mathcal{X}$. 
 In this way, we may consider $\bar{\beta}\mathcal{F}$. 
For any $X \in \Ob(\mathcal{X})$, $\bar{\beta}\mathcal{F}(X)$ reduces to the set of pairs $(\mathcal{U}, \Phi)$ where $\mathcal{U}$ is a cover of $X$ (indexed by the designated uncountable set $J$ from Definition \ref{defn: cocycle sheaf}) and $\Phi$ is a collection of elements $\varphi_{S} \in \mathcal{F}(U_{S})$ for $S \subset J$, which are compatible under restrictions. 
Using the sheaf gluing condition, any element $(\mathcal{U}, \Phi) \in \bar{\beta}\mathcal{F}(X)$ induces a unique element $\varphi \in \mathcal{F}(X)$
such that $\varphi|_{U_{S}} = \varphi_{S}$ for all subsets $S \subset J$. 
This correspondence $(\mathcal{U}, \Phi) \mapsto \varphi$ yields a natural isomorphism,
$\bar{\beta}\mathcal{F} \stackrel{\cong} \longrightarrow \mathcal{F}.$
\end{remark}
 
 \begin{notation}
In \cite{MW 07} and in \cite{GMTW 09} the notation $\beta\mathcal{F}$ for the above cocycle-sheaf construction. 
Since in this paper, the greek letter $\beta$ is already used so heavily, we denote the cocycle-sheaves by $\bar{\beta}\mathcal{F}$ so as to not have any conflict of notation.
\end{notation}

\subsection{Sheaf Models for $\mathbf{Cob}^{P}_{d+1}$} 
We will need to consider the $\mathbf{CAT}$-valued sheaf,  $C^{\infty}(\; \underline{\hspace{.3cm}} \;, \mathbf{Cob}^{P}_{d+1})$ with composition defined pointwise. By (\ref{eq: canonical classifying space equivalence}) there is a weak homotopy equivalence, 
$$B|C^{\infty}(\; \underline{\hspace{.3cm}} \;, \mathbf{Cob}^{P}_{d+1})| \simeq B\mathbf{Cob}^{P}_{d+1}$$ 
induced by the natural map
$|C^{\infty}(\; \underline{\hspace{.3cm}} \;, \mathbf{Cob}^{P}_{d+1})| \longrightarrow  \mathbf{Cob}^{P}_{d+1}$.
In this section we will define three new $\mathbf{CAT}$-valued sheaves
$\mathbf{C}^{P}_{d+1}$, $\mathbf{C}^{P,\pitchfork}_{d+1},$ and $\mathbf{D}^{P, \pitchfork}_{d+1}$, along with a zig-zag of natural transformations
\begin{equation} \label{eq: zig-zag of natural transformations}
\xymatrix{
C^{\infty}(\; \underline{\hspace{.3cm}} \;, \mathbf{Cob}^{P}_{d+1}) \ar[rr]^{\eta} &&
 \mathbf{C}^{P}_{d+1} \ar[rr]^{i} && \mathbf{C}^{P,\pitchfork}_{d+1} && \mathbf{D}^{P, \pitchfork}_{d+1} \ar[ll]_{\alpha} \ar[rr]^{\gamma} && \mathbf{D}^{P}_{d+1}
}
\end{equation}
(on the right, $\mathbf{D}^{P}_{d+1}$ is considered a $\mathbf{CAT}$-valued sheaf with only identity morphisms) which induce  weak
homotopy equivalences, 
 $$\xymatrix{
 B\mathbf{Cob}^{P}_{d+1} \ar[rr]^{\simeq} &&
 B|\mathbf{C}^{P}_{d+1}| \ar[rr]^{\simeq} && B|\mathbf{C}^{P,\pitchfork}_{d+1}| && B|\mathbf{D}^{P, \pitchfork}_{d+1}| \ar[ll]_{\simeq} \ar[rr]^{\simeq} && |\mathbf{D}^{P }_{d+1}|.}$$ 
These weak equivalences together with Theorem \ref{Main Theorem} will
 imply Theorem \ref{thm: Main}. The constructions of this section closley follow
follow \cite{GMTW 09} and \cite{G 08}. 
 \begin{remark} \label{remark: non-singular sheaves}
 The $\mathbf{CAT}$-valued sheaves $\mathbf{C}^{P}_{d+1}$, $\mathbf{C}^{P,\pitchfork}_{d+1},$ and $\mathbf{D}^{P, \pitchfork}_{d+1}$ correspond directly to the $\mathbf{CAT}$-valued sheaves from \cite{GMTW 09} denoted by 
 $\mathbf{C}_{d+1}$, $\mathbf{C}^{\pitchfork}_{d+1}$, and $\mathbf{D}^{ \pitchfork}_{d+1}.$
 In particular, these sheaves are isomorphic to the ones defined in this section in the case that $P = \emptyset$. 
 \end{remark}
 
Let $ i_{P}: P \hookrightarrow \R^{p+m}$ used in the construction of $\mathbf{Cob}^{P}_{d+1}$ and throughout the paper.
\begin{Notation}For $X \in \Ob(\mathcal{X})$ and smooth functions $a, b: X \longrightarrow \R$ with $a(x) \leq b(x)$ for all $x \in X$, we denote
$$\begin{aligned}
  X \times [a, b] := \{(x, u) \in X\times \R \; |\; a(x)< u < b(x) \},\\
  X \times (a, b) := \{(x, u) \in X\times \R \; |\; a(x) \leq u \leq b(x)\}.
\end{aligned}$$ 
\end{Notation}
\begin{defn} Let $\varepsilon > 0$ be real number. Let $X\in \Ob(\mathcal{X})$ and $a, b: X \longrightarrow \R$
  be smooth functions with $a(x) \leq b(x)$ for all $x \in X$.  We
  define, $\mathbf{C}^{P, \pitchfork}_{d+1}(X; a, b, \varepsilon)$ to be the set of $(d+1+\dim(X))$-dimensional $P$-submanifolds (see Definition \ref{defn: P-submanifold}),
 $$W \subset X\times (a-\varepsilon, b+\varepsilon)
\times\R_{+}\times\R^{d+\infty}\times\R^{p+m}$$ 
which satisfy the conditions:
  \begin{enumerate}
\item[i.] The projection of $\pi: W\rightarrow X$ is a $P$-submersion with
  $(d+1)$-dimensional fibres. 
  \item[ii.] The projection of $W$ onto $X\times(a-\varepsilon, b+\varepsilon)$, denoted by $(\pi, f): W \rightarrow X \times (a-\varepsilon,
  b+\varepsilon)$, is a proper $P$-map. 
\item[iii.] The restriction of $(\pi, f)$ to $(\pi,f)^{-1}(X\times (\nu -\varepsilon, \nu +\varepsilon))$ for $\nu = a, b$ are $P$-submersions. 
 \end{enumerate}
 \end{defn}

\begin{remark} Condition iii. of the above definition implies that the map $\pi$ is a local-trivial fibre-bundle and not just a submersion. The manifold $\widehat{W} := (\pi, f)^{-1}(X \times [a, b])$ is a $P$-manifold of dimension $d+1$ with boundary given by,
$$\partial_{0}\widehat{W} =  (\pi, f)^{-1}(X\times\{a\})\bigsqcup (\pi, f)^{-1}(X\times\{b\}).$$ 
The restriction of $\pi$ to $\partial_{0}\widehat{W}$ is a fibre-bundle with closed $P$-manifold fibres. 
\end{remark}
We eliminate dependence on $\varepsilon$ by setting
 \begin{equation} \mathbf{C}^{P, \pitchfork}_{d+1}(X; a, b) := \colim_{\varepsilon \to 0}\mathbf{C}^{P, \pitchfork}_{d+1}(X; a, b, \varepsilon).\end{equation}
\begin{defn} We define $\mathbf{C}^{P, \pitchfork}_{d+1}(X) := \bigsqcup\mathbf{C}^{P, \pitchfork}_{d+1}(X; a, b)$ with union ranging over
  all pairs of smooth real-valued functions $(a, b)$ with $a \leq b$. \end{defn} 
  This definition makes
$\mathbf{C}^{P, \pitchfork}_{d+1}$ into a $\mathbf{CAT}$-valued
sheaf. 
 \begin{defn} \label{defn: C-sheaf} For $X \in \mathcal{X}$, smooth functions $a, b: X \longrightarrow \R$ with $a(x) \leq b(x)$ for all $x \in X$, and a real number $\varepsilon > 0$,
 we define $\mathbf{C}^{P}_{d+1}(X, a, b, \varepsilon) \subset \mathbf{C}^{P, \pitchfork}_{d+1}(X, a, b, \varepsilon)$ be the subset 
 consisting of all elements $W \in \mathbf{C}^{P, \pitchfork}_{d+1}(X, a, b, \varepsilon)$ which satisfy the further condition:
\begin{enumerate}
\item[iv.]
For $\nu = a, b$ and $x \in X$, let $J_{\nu}$ be the interval $((\nu - \epsilon_{0})(x), (\nu + \epsilon_{0})(x)) \subseteq \R$ and let
  $$
V_{\nu} \; = \; (\pi, f)^{-1}(\{x\}\times J_{\nu}) \; \subset \; \{x\}\times J_{\nu}\times\R_{+}\times \R^{d+\infty}\times\R^{p+m}.
$$ 
Then the following holds,
 $$
V_{\nu} \; = \; \{x\} \times J_{\nu}\times M  \; \subset \; \{x\}\times J_{\nu}\times\R_{+}\times \R^{d+\infty}\times\R^{p+m}
$$
for some $d$-dimensional submanifold $M \subset \R_{+}\times \R^{d+\infty}\times\R^{p+m}$. 
\end{enumerate}
 It follows that the boundary $\partial M = M \cap
 \left(\{x\}\times J_{a}\times\R_{+}\times
   \R^{d+\infty}\times\R^{p+m}\right)$ has the factorization
 $\partial_{1}M = \beta_{1}M \times i_{P}(P)$.
 \end{defn}  
 The above definition should be compared to \cite[Definition 2.8]{GMTW 09}. 
 We then may define  $\mathbf{C}^{P}_{d+1}$ in the same way as for $\mathbf{C}^{P, \pitchfork}_{d+1}$ 
 by taking the limits as $\epsilon_{0}, \epsilon_{1} \to 0$ and taking the disjoint union over all pairs of real valued
 functions $a, b: X \longrightarrow \R$ such that $a(x) \leq b(x)$.

Using the topological structure on $\mathbf{Cob}^{P}_{d+1}$ given in (\ref{eq: top category 1}) together with Lemmas \ref{lemma: fibre-bundles} and \ref{lemma: fibre-bundles 2} regarding fibre-bundles with $P$-manifold fibres, 
there is a natural isomorphism 
\begin{equation}
\beta: C^{\infty}(\; \underline{\hspace{.3cm}} \;, \mathbf{Cob}^{P}_{d+1}) \stackrel{\cong} \longrightarrow \mathbf{C}^{P}_{d+1},
\end{equation}
 given by sending a smooth map $f: X \longrightarrow \mathbf{Cob}^{P}_{d+1}$ to the fibre-bundle of $P$-manifold cobordisms over $X$ that $f$ induces by pull-back. 
By (\ref{eq: canonical classifying space equivalence}), this isomorphism induces a weak homotopy equivalence, $B \mathbf{Cob}^{P}_{d+1} \simeq B|\mathbf{C}^{P}_{d+1}|$. 
 
 Denote by $i: \mathbf{C}^{P}_{d+1} \rightarrow \mathbf{C}^{P, \pitchfork}_{d+1}$ the natural transformation induced by inclusion. 
 The following proposition is proven in exactly the same way as \cite[Proposition 4.4]{GMTW 09}.
 \begin{proposition} The inclusion map $i: \mathbf{C}^{P}_{d+1} \rightarrow \mathbf{C}^{P, \pitchfork}_{d+1}$ induces a weak homotopy equivalence,
    $$B|\mathbf{C}^{P}_{d+1}| \simeq  B|\mathbf{C}^{P, \pitchfork}_{d+1}|.$$
  \end{proposition}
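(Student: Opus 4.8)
The plan is to follow the proof of \cite[Proposition 4.4]{GMTW 09} essentially verbatim, the only additional work being to keep track of the $P$-manifold structure throughout. Recall that $\mathbf{C}^{P}_{d+1}$ and $\mathbf{C}^{P,\pitchfork}_{d+1}$ differ only by the extra ``cylindrical near the walls'' condition iv.\ of Definition \ref{defn: C-sheaf}: for $W \in \mathbf{C}^{P}_{d+1}(X;a,b,\varepsilon)$, over a neighbourhood $X\times J_{\nu}$ of each wall $\nu = a,b$ the submanifold $W$ must be the literal product $\{x\}\times J_{\nu}\times M$ for a fixed $d$-dimensional $P$-submanifold $M$, whereas an element of $\mathbf{C}^{P,\pitchfork}_{d+1}(X;a,b,\varepsilon)$ is only required to have $(\pi,f)$ a $P$-submersion there. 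First I would invoke the natural weak homotopy equivalence $|\bar{\beta}\mathcal{F}|\sim B|\mathcal{F}|$ for $\mathbf{CAT}$-valued sheaves recalled in Section \ref{The Classifying Space}, reducing the statement to showing that the induced inclusion of $\mathbf{Set}$-valued cocycle sheaves $\bar{\beta}i\colon \bar{\beta}\mathbf{C}^{P}_{d+1}\longrightarrow\bar{\beta}\mathbf{C}^{P,\pitchfork}_{d+1}$ is a weak equivalence. By the concordance-theoretic criterion of \cite{MW 07} for a subsheaf inclusion, it suffices to prove: for every $X\in\Ob(\mathcal{X})$, every closed $A\subseteq X$, and every element of $\bar{\beta}\mathbf{C}^{P,\pitchfork}_{d+1}(X)$ whose germ near $A$ already lies in $\bar{\beta}\mathbf{C}^{P}_{d+1}$, there is a concordance, constant near $A$, to an element of $\bar{\beta}\mathbf{C}^{P}_{d+1}(X)$ (the injectivity half is handled by the same argument applied to a concordance, which is itself an element of the sheaf over $X\times\R$).

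The heart of the argument is a parametrized straightening of collars. Given $W\in\mathbf{C}^{P,\pitchfork}_{d+1}(X;a,b,\varepsilon)$, condition iii.\ together with properness (condition ii.) makes the restriction of $(\pi,f)$ over $X\times(\nu-\varepsilon,\nu+\varepsilon)$ a proper $P$-submersion, hence, by an Ehresmann-type argument, a fibre bundle of $d$-dimensional $P$-manifolds with some fibre $M_{\nu}$. Using a fibrewise collar and connection one trivialises this bundle over $X\times(\nu-\varepsilon,\nu+\varepsilon)$ compatibly with the Bockstein factorization $\partial_{1}W=\beta_{1}W\times i_{P}(P)$, identifying its total space with $X\times(\nu-\varepsilon,\nu+\varepsilon)\times M_{\nu}$ as a family of $P$-manifolds. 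The corresponding family of $P$-embeddings of $M_{\nu}$ into $\R_{+}\times\R^{d+\infty}\times\R^{p+m}$ is a map $X\to\mathcal{M}_{P}(M_{\nu})$ in the sense of Lemma \ref{lemma: fibre-bundles}; since $\mathcal{E}_{P}(M_{\nu})$ is weakly contractible (Theorem \ref{theorem: weakly contractible}) and the quotient $\mathcal{E}_{P}(M_{\nu})\to\mathcal{M}_{P}(M_{\nu})$ is a locally trivial fibre bundle (Theorem \ref{theorem: local triviality}), this family of embeddings can be deformed, through $P$-embeddings and coherently in $x\in X$, to the constant family at a single fixed embedded $P$-submanifold $M_{\nu}\subset\R_{+}\times\R^{d+\infty}\times\R^{p+m}$. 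Carrying out this deformation only inside a slightly smaller neighbourhood of each wall, cut off with a bump function so that it is stationary away from the walls and near $A$, and letting the deformation parameter run over an auxiliary $\R$-coordinate, produces the required concordance from $W$ to an element of $\mathbf{C}^{P}_{d+1}(X)$; the walls $\nu=a$ and $\nu=b$ are treated independently since their neighbourhoods are disjoint. The cocycle-sheaf version is obtained by performing the same deformation simultaneously on every patch $U_{S}$ of the cover, which is legitimate because all the choices above are made coherently.

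I expect the main obstacle to be precisely the $P$-equivariant bookkeeping: one must verify at every stage of the straightening that the submanifold remains a $P$-submanifold in the sense of Definition \ref{defn: P-submanifold} --- in particular that the product factorization $\partial_{1}W=\beta_{1}W\times i_{P}(P)$ with the \emph{fixed} embedded copy $i_{P}(P)$ is preserved, so that the fibrewise trivialisations and ambient isotopies must be chosen to respect the splitting into the $\beta_{1}$-direction and the $P$-direction. This is exactly what the $P$-manifold versions of Theorems \ref{theorem: weakly contractible} and \ref{theorem: local triviality} are designed to supply, so once those are invoked the deformation runs just as in \cite{GMTW 09}; the remaining points --- smoothness in the $X$-variable, the cut-off near $A$, and the passage to cocycle sheaves --- are routine.
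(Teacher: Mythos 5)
Your overall skeleton (reduce via cocycle sheaves to a relative concordance--lifting statement, then straighten elements of $\mathbf{C}^{P,\pitchfork}_{d+1}$ near the walls) is the right shape, and it is indeed what the paper intends: the paper gives no independent argument but simply asserts that the proof is the same as \cite[Proposition 4.4]{GMTW 09}, with the $P$-structure carried along. However, your central straightening step contains a genuine error, stemming from a misreading of condition iv.\ of Definition \ref{defn: C-sheaf}. That condition is imposed for each fixed $x \in X$ separately: it asks that $(\pi,f)^{-1}(\{x\}\times J_{\nu})$ be a product $\{x\}\times J_{\nu}\times M$ where the $d$-dimensional $P$-submanifold $M$ is allowed to depend on $x$. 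It does \emph{not} ask that $W$ be a product $X\times J_{\nu}\times M$ with one fixed $M$ near the wall. Your proposal aims at this stronger statement: you trivialise the wall bundle over $X\times J_{\nu}$ and then deform the resulting classifying map $X\to\mathcal{M}_{P}(M_{\nu})$ to a constant map. That deformation is impossible in general: weak contractibility (Theorem \ref{theorem: weakly contractible}) applies to the total space $\mathcal{E}_{P}(M_{\nu})$, not to the base $\mathcal{M}_{P}(M_{\nu})\simeq \BDiff_{P}(M_{\nu})$ of the bundle of Theorem \ref{theorem: local triviality}, and by Lemma \ref{lemma: fibre-bundles} a homotopy of the map $X\to\mathcal{M}_{P}(M_{\nu})$ to a constant would trivialise the bundle of $P$-manifolds near the wall. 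Since restricting a concordance to the two ends of $X\times\R$ produces isomorphic bundles, no concordance can achieve this when that bundle is nontrivial over $X$; so the deformation you describe cannot exist, and fortunately is not needed.

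The correct straightening, as in \cite{GMTW 09}, works only in the $\R$-direction: over $X\times(\nu-\varepsilon,\nu+\varepsilon)$ the proper $P$-submersion $(\pi,f)$ admits a lift of the coordinate vector field $\partial/\partial t$ to a vector field on $(\pi,f)^{-1}(X\times J_{\nu})$ chosen compatibly with the $P$-structure (tangent to $\partial_{1}W$, of product form $\xi_{\beta}\times 0$ with respect to $\partial_{1}W=\beta_{1}W\times i_{P}(P)$, and respecting the collars); its flow makes $W$ cylindrical in the $t$-direction over a smaller wall neighbourhood, for each $x$ simultaneously but with the fibre still varying with $x$, which is exactly condition iv. Cutting off with a bump function away from the walls and near $A$, and running the flow parameter along an auxiliary $\R$-coordinate, gives the required relative concordance; the cocycle-sheaf bookkeeping then goes through as you describe. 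With that replacement your argument matches the intended proof; as written, the step ``deform the family of embeddings to a constant family'' is a gap that cannot be repaired, only removed.
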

  
  We now define a new sheaf which can be compared directly to $\mathbf{D}^{P}_{d+1}$. 
  \begin{defn} For $X \in \Ob(\mathcal{X})$, we define $\mathbf{D}^{P,\pitchfork}_{d+1}(X)$ to be the set
   of pairs $(W, a) \in \mathbf{D}^{P}_{d+1}(X)\times C^{\infty}(X, \R)$ such that 
   for all $x \in X$, the restriction map $f|_{\pi^{-1}(x)}: \pi^{-1}(x) \longrightarrow \R$ is transverse to $a(x) \in \R$.
We say that $f$ is \textit{fibrewise transverse} to $a: X \rightarrow \R$ with respect to the submersion $\pi$.
   The set $\mathbf{D}^{P,\pitchfork}_{d+1}(X)$ is then given the structure of a partially ordered set by declaring 
   $(W, a) \leq (V, b)$
   if $W = V$ and $a(x) \leq b(x)$ for all $x \in X$.
   By considering the partially ordered set $\mathbf{D}^{P,\pitchfork}_{d+1}(X)$ a category, $\mathbf{D}^{P,\pitchfork}_{d+1}$ is a $\mathbf{CAT}$-valued sheaf of $\mathcal{X}$. 
   \end{defn}  
  There is a natural transformation,
 \begin{equation} \label{eq: alpha} 
 \alpha: \mathbf{D}^{P,\pitchfork}_{d+1} \longrightarrow \mathbf{C}^{P, \pitchfork}_{d+1}
 \end{equation}
 which is defined in exactly the same way as the map $\mathbf{D}^{\pitchfork}_{d+1} \rightarrow \mathbf{C}^{\pitchfork}_{d+1}$ in from \cite[Page 17]{GMTW 09} for the corresponding sheaves for non-singular manifolds. 
The following proposition is proved in exactly the same way as the map
\cite[Proposition 4.4]{GMTW 09}.
\begin{proposition} The natural transformation $\alpha: \mathbf{D}^{P,\pitchfork}_{d+1} \longrightarrow \mathbf{C}^{P, \pitchfork}_{d+1}$ induces a weak homotopy equivalence $B|\mathbf{D}^{P, \pitchfork}_{d+1}| \simeq B|\mathbf{C}^{P, \pitchfork}_{d+1}|.$
\end{proposition}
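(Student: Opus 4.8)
The plan is to transplant the proof of \cite[Proposition 4.4]{GMTW 09} to the Baas--Sullivan setting; below I describe its shape and indicate where the singularity structure enters. First I would reduce the statement to one about $\mathbf{Set}$-valued sheaves. Using the natural equivalence $|\bar{\beta}\mathcal{F}| \sim B|\mathcal{F}|$ recalled above, together with the natural bijection (\ref{concordance relative iso}) (so that a map of $\mathbf{Set}$-valued sheaves inducing bijections on all relative concordance sets $\mathcal{G}[X,A;z]$ induces a weak homotopy equivalence on representing spaces), it suffices to prove that $\bar{\beta}\alpha \colon \bar{\beta}\mathbf{D}^{P,\pitchfork}_{d+1} \to \bar{\beta}\mathbf{C}^{P,\pitchfork}_{d+1}$ induces a bijection on all such sets. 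By the standard device of replacing the pair $(X,A)$ by $(Y\times[0,1],\,Y\times\{0,1\})$, injectivity follows formally from surjectivity, so it is enough to establish the following relative lifting property: for every $X\in\Ob(\mathcal{X})$, every closed $A\subseteq X$, every $t\in\bar{\beta}\mathbf{C}^{P,\pitchfork}_{d+1}(X)$ and every germ $s$ of $\bar{\beta}\mathbf{D}^{P,\pitchfork}_{d+1}$ near $A$ with $\bar{\beta}\alpha(s)=t$ near $A$, there are a global section $\hat s\in\bar{\beta}\mathbf{D}^{P,\pitchfork}_{d+1}(X)$ agreeing with $s$ near $A$ and a concordance rel $A$ from $\bar{\beta}\alpha(\hat s)$ to $t$. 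Since $\bar{\beta}$ merely repackages data over the members $U_S$ of an open cover, this reduces, with the cover combinatorics handled exactly as in \cite{GMTW 09}, to the corresponding statement for $\mathbf{D}^{P,\pitchfork}_{d+1}$ and $\mathbf{C}^{P,\pitchfork}_{d+1}$ themselves.

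The geometric heart is the construction of the lift. Recall that an element of (a nerve level of) $\mathbf{C}^{P,\pitchfork}_{d+1}(X)$ is a $P$-submanifold $W$ of a band $X\times(a_0-\varepsilon,a_k+\varepsilon)\times\R_+\times\R^{d+\infty}\times\R^{p+m}$ together with smooth functions $a_0\le\cdots\le a_k$ on $X$ such that $\pi\colon W\to X$ is a $P$-submersion, $(\pi,f)$ is proper, and $(\pi,f)$ is a $P$-submersion over a neighbourhood of each wall $f^{-1}(a_i)$; whereas an element of the same nerve level of $\mathbf{D}^{P,\pitchfork}_{d+1}(X)$ is a $P$-submanifold proper over all of $X\times\R$ with $f$ merely fibrewise transverse to each $a_i$, and $\alpha$ restricts such a datum to a band and straightens $f$ near the walls (exactly as for the non-singular map of \cite[p.\ 17]{GMTW 09}). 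Given a $\mathbf{C}^{P,\pitchfork}$-datum, and a germ of a $\mathbf{D}^{P,\pitchfork}$-lift near $A$, I would build the global lift by extending $W$ across the two outer walls: because $(\pi,f)$ is a $P$-submersion over $X\times[a_k,a_k+\varepsilon)$, the restriction $(\pi,f)^{-1}(X\times[a_k,a_k+\varepsilon))$ is a fibre bundle of $P$-manifolds, hence, after shrinking $\varepsilon$, $P$-diffeomorphic over $X\times[a_k,a_k+\varepsilon)$ to the product of its wall fibre $E$ (a closed $P$-submanifold over each point of $X$) with $[0,\varepsilon)$ carrying the product $P$-structure; one then glues on $E\times[0,\infty)$ using the $P$-collar of Definition~\ref{P mfd} to continue $W$ over all $t\ge a_k$, and symmetrically below $a_0$. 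The resulting $W'$ is proper over $X\times\R$, $\pi$ remains a $P$-submersion, and $f$ is fibrewise submersive, hence transverse, near each $a_i$, so Proposition~\ref{prop: P-transverse} shows its level sets are $P$-submanifolds with the expected Bockstein; thus $(W';a_0,\dots,a_k)\in\mathbf{D}^{P,\pitchfork}_{d+1}(X)$, and applying $\alpha$ to it yields a datum concordant to $t$ by sliding the attached collars in and out. Near $A$ one arranges all choices to agree with the prescribed germ, interpolating between the chosen collars and the given ones with a partition of unity.

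The only ingredient beyond the non-singular case $P=\emptyset$, which is \cite[Proposition 4.4]{GMTW 09}, is the observation that every submersion and every map occurring above is by construction a $P$-submersion or a $P$-map (Section~\ref{Singularities}): the wall fibre $E$ is therefore automatically a $P$-submanifold, the attached collar is a $P$-collar, each union remains a smooth $P$-submanifold, and the factorization $\partial_1=\beta_1\times i_P(P)$ is preserved at every stage because the collaring and the reparametrizations are performed compatibly on the Bockstein. I expect the main obstacle to be precisely this last point: carrying out the wall-extension, and the accompanying concordance, \emph{in families over $X$, relative to the closed set $A$, and entirely within the category of $P$-submanifolds}, so that $\beta_1 W$ together with the $P$-factor of $\partial_1 W$ stays an honest product throughout. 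This is the $P$-manifold counterpart of the collaring manipulations already performed in \cite{GMTW 09}, and it introduces no new homotopy-theoretic content.
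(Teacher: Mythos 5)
Your proposal takes exactly the route the paper intends: the paper's entire proof is the remark that the statement ``is proved in exactly the same way as [GMTW 09, Proposition 4.4],'' and your sketch is a faithful (indeed more detailed) transplantation of that argument --- reduction to a relative concordance-lifting statement for the nerve sheaves, followed by elongation of the bordism across the outer walls using the product structure that the wall-submersion condition provides, with the $P$-structure carried along at each step. There is no substantive difference between your approach and the paper's.
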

  
We now compare $\mathbf{D}^{P, \pitchfork}_{d+1}$ to $\mathbf{D}^{P}_{d+1}$. By considering $\mathbf{D}^{P}_{d+1}$ as a $\mathbf{CAT}$-valued sheaf with only identity morphisms, there is a forgetful functor,
$\gamma: \mathbf{D}^{P, \pitchfork}_{d+1} \longrightarrow \mathbf{D}^{P}_{d+1}$
defined by sending $(W, a) \in \mathbf{D}^{P, \pitchfork}_{d+1}(X)$ to $W \in \mathbf{D}^{P}_{d+1}(X)$. 
This induces a natural transformation 
$$\bar{\beta}\gamma: \bar{\beta}\mathbf{D}^{P, \pitchfork}_{d+1} \longrightarrow \bar{\beta}\mathbf{D}^{P}_{d+1} \cong \mathbf{D}^{P}_{d+1}$$
where $\bar{\beta}\mathbf{D}^{P}_{d+1} \cong \mathbf{D}^{P}_{d+1}$ is the isomorphism from Remark \ref{remark: set valued sheaf}.  
The next proposition is proven in the same way as \cite[Proposition 4.2]{GMTW 09}.
 \begin{proposition}
   The map $\bar{\beta}\gamma: \bar{\beta}\mathbf{D}^{P, \pitchfork}_{d+1} \longrightarrow \mathbf{D}^{P}_{d+1}$ induces a weak homotopy equivalence $B|\mathbf{D}^{P, \pitchfork}_{d+1}| \simeq |\mathbf{D}^{P}_{d+1}|$.
 \end{proposition}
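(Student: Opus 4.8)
My approach is to follow \cite[Proposition 4.2]{GMTW 09} essentially line by line, the only difference being that the word ``transverse'' is everywhere read in the $P$-sense of Definition \ref{defn: P-transversality}. First I would trade the classifying spaces for representing spaces of $\mathbf{Set}$-valued sheaves. By the weak homotopy equivalence $|\bar\beta\mathcal{F}| \sim B|\mathcal{F}|$ recalled in Section \ref{subsection: Cat-valued sheaves} (from \cite{MW 07}), applied to $\mathcal{F} = \mathbf{D}^{P,\pitchfork}_{d+1}$, together with the isomorphism $\bar\beta\mathbf{D}^{P}_{d+1}\cong\mathbf{D}^{P}_{d+1}$ of Remark \ref{remark: set valued sheaf} (and the fact that a $\mathbf{Set}$-valued sheaf has discrete category-valued realization), and the naturality of all these identifications, it is enough to prove that
$$\bar\beta\gamma\colon \bar\beta\mathbf{D}^{P,\pitchfork}_{d+1}\longrightarrow\mathbf{D}^{P}_{d+1}$$
is a weak equivalence of $\mathbf{Set}$-valued sheaves, i.e.\ induces isomorphisms $\bar\beta\mathbf{D}^{P,\pitchfork}_{d+1}[S^{k},\mathrm{pt};z]\cong\mathbf{D}^{P}_{d+1}[S^{k},\mathrm{pt};z]$ for all $k$ and all basepoints.

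Second, I would invoke the standard cocycle-sheaf criterion used throughout \cite{MW 07} and \cite{GMTW 09}: a natural transformation $\gamma\colon\mathcal{F}\to\mathcal{G}$ from a $\mathbf{CAT}$-valued sheaf to a $\mathbf{Set}$-valued sheaf induces a weak homotopy equivalence $B|\mathcal{F}|\sim|\mathcal{G}|$ provided that, for every $X\in\Ob(\mathcal{X})$ and every $W\in\mathcal{G}(X)$, the ``fibre'' $\mathbf{CAT}$-valued sheaf $\mathcal{A}_{W}$ on the poset of open subsets of $X$ — whose value on $U\subseteq X$ is the full subcategory $\gamma^{-1}(W|_{U})(U)$ of $\mathcal{F}(U)$ — has weakly contractible classifying space $B|\mathcal{A}_{W}|$. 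In the case at hand $\mathcal{A}_{W}(U)$ is exactly the set of smooth maps $a\colon U\to\R$ that are fibrewise transverse to $W|_{U}$ (the condition appearing in the definition of $\mathbf{D}^{P,\pitchfork}_{d+1}$), partially ordered by $a\le b$ iff $a(u)\le b(u)$ for all $u\in U$. So the whole proposition reduces to showing that $\mathcal{A}_{W}$ is a sheaf of non-empty posets in which every finite subset has an upper bound; a sheaf of non-empty, upward-directed posets has weakly contractible classifying space, which is exactly the hypothesis of the cited criterion.

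Third, the two things to check. \emph{Non-emptiness:} fix an open $U\subseteq X$. Since $(\pi,f)\colon W\to X\times\R$ is a proper $P$-map (condition ii of Definition \ref{defn: D sheaf}) and $\pi$ is a $P$-submersion, for each $u\in U$ the $P$-manifold $\pi^{-1}(u)$ carries the proper map $f_{u}:=f|_{\pi^{-1}(u)}$, whose Bockstein is the proper map $(f_{\beta_{1}})_{u}$; by Sard's theorem the regular values of $f_{u}$ and of $(f_{\beta_{1}})_{u}$ are open and dense in $\R$, and by properness of $(\pi,f)$ the locus of $(u,t)\in U\times\R$ at which $t$ fails to be a regular value of $f_{u}$ (or of $(f_{\beta_{1}})_{u}$) is closed with nowhere-dense slices. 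A globally defined fibrewise-transverse section $a\colon U\to\R$ — also over non-compact $U$ — is then produced by the usual parametrised transversality argument using a locally finite partition of unity, the required local finiteness coming from properness of $(\pi,f)$; the same closedness shows that being fibrewise transverse is an open condition on $a$ in the fine $C^{\infty}$-topology. \emph{Upper bounds:} given $a_{1},\dots,a_{r}\in\mathcal{A}_{W}(U)$, pick any smooth $h\colon U\to\R$ with $h>\max_{i}a_{i}$ pointwise; since fibrewise-transverse functions are $C^{\infty}$-dense and the approximation can be taken $C^{0}$-small, there is a fibrewise-transverse $b$ with $b>\max_{i}a_{i}$, hence $a_{i}\le b$ for all $i$. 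This proves $\mathcal{A}_{W}$ is a sheaf of non-empty, upward-directed posets, so $B|\mathcal{A}_{W}|$ is weakly contractible, and the criterion yields the desired weak homotopy equivalence $B|\mathbf{D}^{P,\pitchfork}_{d+1}|\sim|\mathbf{D}^{P}_{d+1}|$.

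The step I expect to need the most care — the only genuinely new point over \cite{GMTW 09} — is making the $P$-structure compatible with every Sard and perturbation argument above: ``$a$ fibrewise transverse to $W$'' means transversality in the sense of Definition \ref{defn: P-transversality}, i.e.\ simultaneous transversality of $f_{u}$ and of $(f_{\beta_{1}})_{u}$, so the genericity and patching must be carried out coherently on each fibre and on its Bockstein, and the perturbations must respect the product structure $\partial_{1}W=\beta_{1}W\times i_{P}(P)$ and the collar of condition ii of Definition \ref{defn: i-P maps}, so that each fibrewise preimage $f_{u}^{-1}(a(u))$ is again a genuine closed $P$-submanifold by Proposition \ref{prop: P-transverse}. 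Once this bookkeeping is in place the argument is that of \cite[Proposition 4.2]{GMTW 09} verbatim.
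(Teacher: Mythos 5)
Your proposal is correct and takes essentially the same route as the paper, which proves this proposition simply by appealing to the argument of \cite[Proposition 4.2]{GMTW 09}; your write-up is exactly that argument (local existence of fibrewise transverse functions via Sard's theorem plus openness from properness of $(\pi,f)$, upward-directedness of the poset of such functions, and the Madsen--Weiss cocycle-sheaf criterion), carried out with transversality read in the $P$-sense of Definition \ref{defn: P-transversality}. No substantive gap: the only extra care needed is the $P$-compatibility bookkeeping you already flag, which is the same point the paper leaves implicit in its citation.
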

The last four propositions imply that there is a weak
homotopy equivalence, 
$$|\mathbf{D}^{P}_{d+1}| \simeq B\mathbf{Cob}^{P}_{d+1}.$$ 
Combining this with Theorem \ref{Main Theorem} yields the weak homotopy equivalence,
$$B\mathbf{Cob}^{P}_{d+1} \; \simeq \; \Omega^{\infty-1}_{P}\MT(d+1),$$
thus completing the proof of Theorem \ref{thm: Main}. 

\subsection{The Bockstein Functor} \label{The Boundary Map}
Recall the cobordism category $\mathbf{Cob}_{d+1}$ from \cite{GMTW 09}. 
Setting $P$ equal to $\emptyset$ we have an isomorphism of topological categories,
$\mathbf{Cob}^{\emptyset}_{d+1} \cong \mathbf{Cob}_{d+1}.$
We now consider the functor 
$\beta_{1}: \mathbf{Cob}^{P}_{d+1} \longrightarrow \mathbf{Cob}_{d-p}$
defined by sending a $P$-subcobordism 
$$W \subset [a, b]\times\R_{+}\times\R^{d+\infty}\times\R^{p+m}$$
to the embeded (non-singular) cobordism given by, 
$\beta_{1}W \subset [a, b]\times\{0\}\times\R^{d+\infty}.$ 
The functor $\beta_{1}$ is defined similarly on objects.
Furthermore, the category $\mathbf{Cob}_{d+1}$ is isomorphic to the subcategory of $\mathbf{Cob}^{P}_{d+1}$ consisting of all submanifolds 
$$W \subset [a, b]\times\R_{+}\times\R^{d+\infty}\times\R^{p+m}$$
such that 
$$W\cap ([a, b]\times\{0\}\times\R^{d+\infty}\times\R^{p+m}) \; = \; \emptyset.$$ 
We denote by 
$$i: \mathbf{Cob}_{d+1} \longrightarrow \mathbf{Cob}^{P}_{d+1}$$
the inclusion functor.
Theorem \ref{thm: homotopy fibre} states that the composition of functors
$$\xymatrix{
\mathbf{Cob}_{d+1} \ar[rr]^{i} && \mathbf{Cob}^{P}_{d+1} \ar[rr]^{\beta_{1}} && \mathbf{Cob}_{d-p}
}$$
induces a homotopy-fibre sequence on the level of classifying spaces,
$$\xymatrix{
B\mathbf{Cob}_{d+1} \ar[rr]^{B(i)} && B\mathbf{Cob}^{P}_{d+1} \ar[rr]^{B(\beta_{1})} && B\mathbf{Cob}_{d-p}.
}$$
We now give a proof of Theorem \ref{thm: homotopy fibre}.
\begin{proof}[Proof of Theorem \ref{thm: homotopy fibre}.] \label{proof: homotopy fibre}
There are versions of the functors $i_{d}$ and $\beta_{1}$ defined on the sheaf level which yields a commutative diagram of 
natural transformations,
\begin{equation} \label{eq: diagram of fibre-sequences}
\xymatrix@-.3pc@R-.2pc{
C^{\infty}(\; \cdot \;, \mathbf{Cob}_{d+1}) \ar[rr]^{\eta} \ar[d]^{i_{d}} &&
 \mathbf{C}_{d+1} \ar[rr]^{i} \ar[d]^{i_{d}} && \mathbf{C}^{\pitchfork}_{d+1} \ar[d]^{i_{d}} && \mathbf{D}^{\pitchfork}_{d+1} \ar[d]^{i_{d}} \ar[ll]_{\alpha} \ar[rr]^{\gamma} && \mathbf{D}_{d+1} \ar[d]^{i_{d}}\\
 C^{\infty}(\; \cdot \;, \mathbf{Cob}^{P}_{d+1}) \ar[rr]^{\eta} \ar[d]^{\beta_{1}} &&
 \mathbf{C}^{P}_{d+1} \ar[rr]^{i} \ar[d]^{\beta_{1}} && \mathbf{C}^{P,\pitchfork}_{d+1} \ar[d]^{\beta_{1}} && \mathbf{D}^{P, \pitchfork}_{d+1} \ar[d]^{\beta_{1}} \ar[ll]_{\alpha} \ar[rr]^{\gamma} && \mathbf{D}^{P}_{d+1} \ar[d]^{\beta_{1}}\\
 C^{\infty}(\; \cdot \;, \mathbf{Cob}_{d-p}) \ar[rr]^{\eta} &&
 \mathbf{C}_{d-p} \ar[rr]^{i} && \mathbf{C}^{\pitchfork}_{d-p} && \mathbf{D}^{\pitchfork}_{d-p} \ar[ll]_{\alpha} \ar[rr]^{\gamma} && \mathbf{D}_{d-p}.
}
\end{equation}
This diagram of natural transformations then induces a commutative diagram of maps of spaces,
\begin{equation} \label{eq: classifying space commutativity}
\xymatrix@C-.3pc@R-.2pc{
B\mathbf{Cob}_{d+1} \ar[rr]^{B\eta}_{\simeq} \ar[d]^{Bi_{d}} &&
 B|\mathbf{C}_{d+1}| \ar[rr]^{Bi}_{\simeq} \ar[d]^{Bi_{d}} && B|\mathbf{C}^{\pitchfork}_{d+1}| \ar[d]^{Bi_{d}} && B|\mathbf{D}^{\pitchfork}_{d+1}| \ar[d]^{Bi_{d}} \ar[ll]_{B\alpha}^{\simeq} \ar[rr]^{B\gamma}_{\simeq} && |\mathbf{D}_{d+1}| \ar[d]^{Bi_{d}}\\
 B\mathbf{Cob}^{P}_{d+1} \ar[rr]^{B\eta}_{\simeq} \ar[d]^{B\beta_{1}} &&
 B|\mathbf{C}^{P}_{d+1}| \ar[rr]^{Bi}_{\simeq} \ar[d]^{B\beta_{1}} && B|\mathbf{C}^{P,\pitchfork}_{d+1}| \ar[d]^{B\beta_{1}} && B|\mathbf{D}^{P, \pitchfork}_{d+1}| \ar[d]^{\beta_{1}} \ar[ll]_{B\alpha}^{\simeq} \ar[rr]^{B\gamma}_{\simeq} && |\mathbf{D}^{P}_{d+1}| \ar[d]^{B\beta_{1}}\\
B\mathbf{Cob}_{d-p} \ar[rr]^{B\eta}_{\simeq} &&
 B|\mathbf{C}_{d-p}| \ar[rr]^{Bi}_{\simeq} && B|\mathbf{C}^{\pitchfork}_{d-p}| && B|\mathbf{D}^{\pitchfork}_{d-p}| \ar[ll]_{B\alpha}^{\simeq} \ar[rr]^{B\gamma}_{\simeq} && |\mathbf{D}_{d-p}|
}
\end{equation}
such that all horizontal maps are weak homotopy equivalences.
Now consider the fibre-sequence of infinite loop-spaces,
$$\Omega^{\infty-1}\MT(d+1) \longrightarrow \Omega^{\infty-1}\MT_{P}(d+1) \longrightarrow \Omega^{\infty-1}\MT(d-p)$$
which is induced by the cofibre sequence of spectra,
$$\Sigma^{-1}\MT(d-p) \longrightarrow \MT(d+1) \longrightarrow \MT_{P}(d+1).$$
The weak homotopy equivalences constructed in the proof of Theorem \ref{The Main Theorem} yield a homotopy commutative diagram,
\begin{equation} \label{eq: homotopy commutative diagram}
\xymatrix{
|\mathbf{D}_{d+1}| \ar[d]^{|i|} \ar[rr]^{\simeq \ \ } && \Omega^{\infty-1}\MT(d+1) \ar[d] \\
|\mathbf{D}^{P}_{d+1}| \ar[d]^{|\beta_{1}|} \ar[rr]^{\simeq \ \ } && \Omega^{\infty-1}\MT_{P}(d+1) \ar[d] \\
|\mathbf{D}_{d-p}| \ar[rr]^{\simeq \ \ } && \Omega^{\infty-1}\MT(d-p) 
}
\end{equation}
where the horizontal maps are weak equivalences. 
Homotopy commutativity of (\ref{eq: homotopy commutative diagram}) together with the fact that the right column is a fibre-sequence, implies that 
$$\xymatrix{
|\mathbf{D}_{d+1}| \ar[rr]^{|i_{d}|} && |\mathbf{D}^{P}_{d+1}| \ar[rr]^{|\beta_{1}|} && |\mathbf{D}_{d-p}|
}$$
is a homotopy fibre-sequence. 
Then, commutativity of (\ref{eq: classifying space commutativity}) implies that 
$$\xymatrix{
B\mathbf{Cob}_{d+1} \ar[rr]^{B|i_{d}|} && B\mathbf{Cob}^{P}_{d+1} \ar[rr]^{B|\beta_{1}|} && B\mathbf{Cob}_{d-p}
}$$
is a homotopy fibre sequence. 
This completes the proof of Theorem \ref{thm: homotopy fibre}.
 \end{proof}
 
\section{The Space of $P$-Submersions}\label{The Submersion Theorem}
In this section we prove a result which implies Claim \ref{claim:
  submersion prop} used in the proof of Lemma \ref{lemma: limiting map equivalence}. 
  Our result is a version of \textit{Phillips'
  Submersion Theorem} \cite{Ph 67} for $P$-manifolds.
  
  Before stating our main result, we must review some submersion theory. 
  Let $X$ and $Y$ be smooth manifolds. 
  We denote by $\Sub(X, Y)$ the space of submersions $X \rightarrow Y$, topologized in the $C^{\infty}$-topology. 
  We denote by $\Sub^{f}(X, Y)$ the space of fibrewise surjective bundle maps, $TX \longrightarrow TY$. 
  We will express elements of $\Sub^{f}(X, Y)$ as pairs $(\hat{f}, f)$ where $\hat{f}: TX \longrightarrow TY$ is a fibrewise surjective bundle map and $f: X \rightarrow Y$ is the continuous map which underlies $\hat{f}$, i.e. $f$ is the unique map from $X$ to $Y$ such that the diagram 
  $$\xymatrix{
  TX \ar[rr]^{\hat{f}} \ar[d] && TY \ar[d] \\
  X \ar[rr]^{f} && Y
  }$$
  commutes, where the vertical maps are the bundle projections. 
  Elements of $\Sub^{f}(X, Y)$ are referred to as \textit{formal submersions}.
  Since $f$ is uniquely determined by $\hat{f}$ there is redundancy in this notation, however it will be useful to keep track of the underlying map.
 There is a map 
 $$\xymatrix{
 D: \Sub(X, Y) \longrightarrow \Sub^{f}(X, Y)
 }$$
 given by sending a submersion $f: X \rightarrow Y$ to the pair $(df, f)$ where $df: TX \rightarrow Y$ is the differential of $f$. 
 Clearly, the map $D$ is an embedding. 
 Now, a smooth manifold is said to be an \textit{open manifold} if it has no compact components.
 The main theorem from \cite{Ph 67} is the following. 
 \begin{theorem}[Phillips 1967] \label{theorem: phillips submersion theorem}
 Let $X$ be an open smooth manifold and let $Y$ be a smooth manifold without boundary.
 Then the embedding 
 $D: \Sub(X, Y) \longrightarrow \Sub^{f}(X, Y)$
 is a weak homotopy equivalence. 
  \end{theorem}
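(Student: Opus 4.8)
The plan is to upgrade the claim to its parametric form: that $D$ induces a bijection on $\pi_{k}$ for every $k\ge 0$ and every basepoint, together with the evident relative version, which then yields a weak homotopy equivalence by the standard argument. Unwinding the definitions, what must be proved is a parametric \emph{$h$-principle} for submersions: given a compact polyhedron $K$ with subpolyhedron $L\subseteq K$, a closed subset $A\subseteq X$, and a $K$-family of formal submersions $(\hat F, F)\colon K\to f\Sub(X, Y)$ which is \emph{holonomic} (i.e.\ lies in the image of $D$) over $L$ and over a neighborhood of $A$, one can deform this family, rel $L$ and rel a neighborhood of $A$, to a $K$-family of genuine submersions. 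Taking $K=D^{k}$, $L=S^{k-1}$, $A=\emptyset$ gives surjectivity on $\pi_{k}$; taking $K=D^{k}\times[0,1]$ with $L$ the two ends gives injectivity.

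The engine is a handle induction on $X$ that uses openness essentially. First I would invoke the structure theory of open manifolds: an open smooth $n$-manifold $X$ is an increasing union $X=\bigcup_{i\ge 0}X_{i}$ of compact codimension-$0$ submanifolds with boundary, where $X_{0}$ is a small closed neighborhood of $A$ and each $X_{i+1}$ is obtained from $X_{i}$ by attaching finitely many handles of index $\le n-1$ (equivalently, $X$ deformation retracts onto a subcomplex of positive codimension). The point supplied by openness is precisely the absence of top-index handles. The local input is that the sheaf on $X$ assigning to an open set $U$ the space of submersions $U\to Y$ is \emph{microflexible}, and that this persists in $K$-families: since ``being a submersion'' is an open condition on $1$-jets, a small deformation of a submersion on a compact piece can be propagated back over a slightly smaller open set for a short parameter-time, by flowing along a compactly supported vector field and invoking openness to remain within submersions.

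The inductive step is then to extend a genuine $K$-family of submersions, already defined and holonomic near $X_{i}$, across one handle $H\cong D^{\lambda}\times D^{n-\lambda}$ with $\lambda\le n-1$, matching the prescribed formal family where it is already holonomic. Because $\lambda<n$, the core $D^{\lambda}\times\{0\}$ has positive codimension in $X$; one first homotopes the formal family so that it becomes holonomic over a neighborhood of the core (integrating the formal datum over the contractible core disk, an elementary Hirsch-type move carrying no obstruction), then chooses a $K$-independent diffeotopy of $X$ supported near $H$ that compresses $H$ into that neighborhood, and finally transports the genuine solution outward along this diffeotopy by iterated application of microflexibility. This is Gromov's mechanism ``microflexible $+$ $\Diff$-invariant $\Rightarrow$ flexible over open manifolds''; the supports of all the isotopies are chosen disjoint from $L$ and from the fixed neighborhood of $A$, so the rel-constraints are preserved. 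Since $K$ is compact and nothing moves over $L$ or near $A$, the successive handle-extensions stabilize on compacta and assemble to a genuine $K$-family of submersions on all of $X$ with the required boundary behavior, proving the $h$-principle and hence the theorem.

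The hard part is the handle-extension step: arranging the preliminary homotopy that makes the formal family holonomic near the core, coordinating it with the compressing diffeotopy, and verifying that microflexibility---which only yields a \emph{short-time} extension---can be iterated finitely many times along that diffeotopy to cover the entire handle, all in the parametrized, rel-$L$, rel-$A$ setting. This bookkeeping is where the real content of \cite{Ph 67} lies; everything else is formal.
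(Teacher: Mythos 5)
There is nothing in the paper to compare your argument against: Theorem \ref{theorem: phillips submersion theorem} is quoted as the main theorem of \cite{Ph 67} and used as a black box. The paper's own contribution in Section \ref{The Submersion Theorem} is the relative $P$-manifold version, Theorem \ref{theorem: h-principle P-mfds}, which is \emph{deduced} from Phillips' theorem (together with the relative parametric $h$-principle cited from \cite{EM 02}) by exhibiting $f\Sub_{P}(W,X)\to f\Sub(\beta_{1}W,X)$ as a Serre fibration and chasing homotopy-cartesian squares; the paper never re-proves the absolute statement.

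On its own merits, your sketch is a correct outline of the now-standard route to Phillips' theorem: it is Gromov's argument (microflexibility of the submersion sheaf, $\Diff$-invariance, exhaustion of an open manifold by handlebodies with handles of index $\le n-1$, holonomic approximation near the positive-codimension cores, and transport of solutions along compressing diffeotopies), essentially as presented in \cite{EM 02}. Phillips' original 1967 proof is instead a direct handle-by-handle induction in the Hirsch--Palais covering-homotopy tradition, but the mechanism is the same: openness enters only through the absence of top-index handles, and the crux is the extension of a parametric family of genuine submersions across a single handle rel the locus where it is already prescribed. You correctly isolate that step as the hard part and defer it; that is acceptable for a sketch, but be aware that it (together with the convergence of the infinitely many deformations over the exhaustion $X=\bigcup X_{i}$, which must be arranged to be locally finite so that the limit exists in the parametric, rel-$L$, rel-$A$ setting) is where all of the actual content of \cite{Ph 67} resides. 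If this theorem were being proved in the paper rather than cited, those two points would have to be carried out in full.
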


  For what follows, let $W$ be a $P$-manifold and let $X$ be a smooth manifold. 
  Recall from Section \ref{Singularities} that a smooth $P$-map $f: W \longrightarrow X$ is said to be a \textit{$P$-submersion} if both $f$ and $f_{\beta_{1}}: \beta_{1}W \rightarrow X$ are submersions. 
  We denote by $\Sub_{P}(W, X)$ the space of $P$-submersions $W \rightarrow X$, topologized as a subspace of the space of smooth maps from $W$ to $X$. 
 Below we define the space of \textit{formal $P$-submersions}.
 \begin{defn}
 We denote by $\Sub^{f}_{P}(W, X)$ the subspace of $\Sub^{f}(W, X)$ consisting of all formal submersions $(\hat{f}, f)$ which satisfy the following:
  \begin{enumerate}
  \item[i.] The underlying map $f: W \longrightarrow X$ is a $P$-map. 
  \item[ii.] There exists a fibrewise surjective bundle map $\hat{f}_{\beta_{1}}: T\beta_{1}W \longrightarrow TX$ such that the restriction of the bundle map $\hat{f}: TW \longrightarrow TX$ to the sub-bundle $T\partial_{1}W \subset TW|_{\partial_{1}W}$ (defined over $\partial_{1}W$) has the factorization,
  $$\xymatrix{
  T\partial_{1}W \ar[rr]^{d\phi_{1}} && T\beta_{1}W\times TP \ar[rr]^{\text{proj.}} && T\beta_{1}W \ar[rr]^{\hat{f}_{\beta}} && TX}$$
where $d\phi_{1}$ is the differential of the structure map $\phi_{1}: \partial_{1}W \stackrel{\cong} \longrightarrow \beta_{1}W\times P$.
  \end{enumerate}
  \end{defn}
  Notice that condition ii. of the above definition implies that $\hat{f}|_{T\partial_{1}W}: T\partial_{1}W \longrightarrow TX$ is fibrewise surjective. 
  Notice also that the correspondence $(\hat{f}, f) \mapsto (\hat{f}_{\beta_{1}}, f_{\beta_{1}})$ defines a continuous map 
  \begin{equation} \label{eq: formal P-sub restriction}
  \xymatrix{
\beta_{1}:  \Sub^{f}_{P}(W, X) \longrightarrow \Sub^{f}(\beta_{1}W, X).
  }
  \end{equation}

  The embedding $D: \Sub(W, X) \longrightarrow \Sub^{f}(W, X)$ from Theorem \ref{theorem: phillips submersion theorem}
restricts to an embedding, 
  $$\xymatrix{
 D_{P}: \Sub_{P}(W, X) \longrightarrow \Sub^{f}_{P}(W, X).
  }$$
  \begin{defn} A $P$-manifold $W$ is said to be an \textit{open $P$-manifold} if $\partial_{0}W = \emptyset$ and both $\beta_{1}W$ and $W$ have no compact components. 
\end{defn}
We have the following generalization of Theorem \ref{theorem: phillips submersion theorem}. 
\begin{theorem} \label{theorem: h-principle P-mfds}
Let $W$ be an open $P$-manifold and let $X$ be a smooth manifold without boundary. 
Then the embedding $D_{P}: \Sub_{P}(W, X) \longrightarrow \Sub^{f}_{P}(W, X)$ is a weak homotopy equivalence. 
\end{theorem}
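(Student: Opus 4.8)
The plan is to deduce the statement from Phillips' Submersion Theorem (Theorem~\ref{theorem: phillips submersion theorem}), applied both to the Bockstein $\beta_1 W$ and to $W$ itself, by comparing the restriction‑to‑$\beta_1 W$ maps on the two sides of $D_P$. Consider the commutative square
\[
\xymatrix{
\Sub_P(W, X) \ar[rr]^{D_P} \ar[d]_{r} && f\Sub_P(W, X) \ar[d]^{\hat r} \\
\Sub(\beta_1 W, X) \ar[rr]^{D} && f\Sub(\beta_1 W, X),
}
\]
in which $r(f) = f_{\beta_1}$ and $\hat r$ is the restriction map~(\ref{eq: formal P-sub restriction}). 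Since $\beta_1 W$ is an open manifold without boundary and $X$ has empty boundary, the bottom map $D$ is a weak homotopy equivalence by Theorem~\ref{theorem: phillips submersion theorem}. The images of $r$ and $\hat r$ are unions of path components of $\Sub(\beta_1 W, X)$ and $f\Sub(\beta_1 W, X)$ respectively (whether a collared bundle datum on $\mathcal{O}p(\partial_1 W)$ extends over $W$ depends only on its homotopy class), and $D$ carries $\mathrm{im}(r)$ onto $\mathrm{im}(\hat r)$; hence $D$ restricts to a weak equivalence between these. It therefore suffices to prove that $r$ and $\hat r$ are Serre fibrations onto their images and that $D_P$ restricts to a weak homotopy equivalence on each fibre, after which the theorem follows by comparing the two long exact sequences of homotopy groups, over every basepoint.

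I would then identify and compare the fibres. Fix $g \in \mathrm{im}(r)$. Because $\partial_1 W = \beta_1 W \times P$, the fibre $r^{-1}(g)$ is the space of submersions $f : W \to X$ with $f|_{\partial_1 W} = g\circ\mathrm{pr}_{\beta_1 W}$ (any such $f$ is automatically a smooth $P$-map with $f_{\beta_1} = g$), and the fibre $\hat r^{-1}(dg, g)$ is the space of fibrewise surjective bundle maps $\hat f : TW \to TX$ whose restriction to the subbundle $T\partial_1 W \subset TW|_{\partial_1 W}$ equals $dg\circ\mathrm{pr}$. Using the collar $h_1 : \partial_1 W \times \R_+ \hookrightarrow W$ one may, up to homotopy equivalence, collar‑normalise the boundary behaviour, and then the inclusion $r^{-1}(g) \hookrightarrow \hat r^{-1}(dg,g)$ becomes the comparison map for the open relation ``submersion to $X$'' on $W$ taken relative to a neighbourhood of the closed set $\partial_1 W$. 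Since every component of $W$ is non‑compact, $W$ remains open after excising a collar, so a relative, parametric strengthening of Theorem~\ref{theorem: phillips submersion theorem} (which follows from Gromov's $h$-principle for open $\mathrm{Diff}$-invariant relations over open manifolds, or from Phillips' argument run with parameters and rel a neighbourhood) applies and shows $D_P : r^{-1}(g) \to \hat r^{-1}(dg,g)$ is a weak homotopy equivalence. The same relative parametric statement — extend a prescribed family of submersions of $\beta_1 W$ across a collar of $\partial_1 W$ as products and then across the remaining open piece of $W$ — shows that $r$ is a Serre fibration onto its image; $\hat r$ is a Serre fibration by the much easier extension of fibrewise surjective bundle maps. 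Combining these, $D$ is a weak equivalence on the component‑matched bases, $D_P$ is a weak equivalence on every fibre, and $r,\hat r$ are fibrations, so $D_P$ is a weak homotopy equivalence of total spaces.

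The main difficulty is to carry out every deformation in the $h$-principle compatibly with the product structure along $\partial_1 W$ — that is, preserving the factorisation through $T\beta_1 W \times TP$ in condition ii.\ of the definition of $f\Sub_P(W,X)$ throughout all homotopies. The mechanism that handles this is the collar $h_1$ together with the identification $\partial_1 W = \beta_1 W \times P$: on a collar neighbourhood of $\partial_1 W$ all relevant data are pulled back from the boundaryless open manifold $\beta_1 W$, where the ordinary Phillips theorem applies verbatim and where all auxiliary diffeotopies may be taken of the form (diffeotopy of $\beta_1 W$) $\times\,\mathrm{id}_P$; one then runs the $h$-principle on the complement of that collar, which is again open precisely because ``open $P$-manifold'' forces every component of $W$, not merely of $\beta_1 W$, to be non‑compact. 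Excising the collar could otherwise create a compact component, and this is the one place where the hypothesis on $W$ — as opposed to only on $\beta_1 W$ — is genuinely used.

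One bookkeeping point worth spelling out when writing the details: that $D$ matches $\mathrm{im}(r)$ with $\mathrm{im}(\hat r)$. Given a holonomic point $(dg,g)\in\mathrm{im}(\hat r)$, the map $g\circ\mathrm{pr}_{\beta_1W}\colon\partial_1W\to X$ is a submersion (its differential is $dg\circ\mathrm{pr}_{T\beta_1W}$, surjective), so it extends as a product over a collar to a genuine submersion; extending the underlying formal $P$-submersion of $W$ rel that collar by the relative parametric $h$-principle produces a genuine $P$-submersion $f$ of $W$ with $f_{\beta_1}=g$, i.e.\ $g\in\mathrm{im}(r)$. Since $D$ is a $\pi_0$-bijection and each component of $\mathrm{im}(\hat r)$ contains such a holonomic point, $D(\mathrm{im}(r))=\mathrm{im}(\hat r)$, which is what makes the five‑lemma comparison go through.
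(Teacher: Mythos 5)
Your overall plan — compare the two restriction maps to the Bockstein, apply Theorem \ref{theorem: phillips submersion theorem} to $\beta_{1}W$ on the base, and a relative parametric h-principle on $W$ rel a collar of $\partial_{1}W$ on the fibres, then run a five-lemma argument — is viable and uses the same two essential inputs as the paper. But the weight-bearing step you leave essentially unargued is the claim that $r: \Sub_{P}(W,X) \to \Sub(\beta_{1}W,X)$ is a Serre fibration onto its image. This is a statement about spaces of \emph{genuine} submersions; it does not follow from Phillips' theorem, and it is not the standard relative parametric h-principle either, since that is stated rel a neighbourhood of a closed subset on which the formal solution is already holonomic and is kept fixed, whereas you need lifts with \emph{prescribed boundary values} on $\partial_{1}W$ only. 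To prove it you would have to lift formally first (using the fibration on the formal side), then collar-normalize so that the prescribed boundary data becomes a genuine solution on a neighbourhood of $\partial_{1}W$, and only then invoke the relative h-principle; the collar-compression needed here is itself delicate (compressing the collar coordinate can destroy submersivity on interior slices, and $\partial_{1}W$ is non-compact, so one needs variable-width collars and a check that the Bockstein values are preserved exactly throughout). The same issue recurs in your fibre comparison, which is an h-principle with exact (Dirichlet-type) boundary conditions on $\partial_{1}W$ rather than the quoted ``rel a neighbourhood where holonomic'' form, again only reducible to the standard statement after the collar normalization is carried out. As written, these two claims are genuine gaps, and they are exactly where the real work lies.

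For comparison, the paper's proof is arranged precisely to avoid any fibration statement on the genuine side. It proves only that the \emph{formal} restriction $\beta_{1}: f\Sub_{P}(W,X) \to f\Sub(\beta_{1}W,X)$ is a Serre fibration (Lemma \ref{lemma: restriction fibration 1}, a bundle-map statement resting on Phillips' Lemma 5.3), introduces the intermediate space $f\Sub_{P,\beta}(W,X)$ of formal $P$-submersions whose Bockstein part is holonomic, shows the inclusion $f\Sub_{P,\beta}(W,X) \hookrightarrow f\Sub_{P}(W,X)$ is a weak equivalence by pulling back the Phillips equivalence $\Sub(\beta_{1}W,X) \hookrightarrow f\Sub(\beta_{1}W,X)$ along that fibration (a homotopy-cartesian square), and then kills the relative homotopy groups $\pi_{n}\bigl(f\Sub_{P,\beta}(W,X), \Sub_{P}(W,X)\bigr)$ directly with the relative parametric h-principle, keeping the restriction to $\partial_{1}W$ fixed. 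If you wish to keep your fibration-and-five-lemma architecture, you must supply complete proofs of the fibration property of $r$ and of the exact-boundary-value h-principle for the fibres; otherwise the paper's rearrangement reaches the same conclusion using only the formal-side fibration, which is the cheap one.
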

The above theorem is proven in stages.
We must first derive some intermediate results. 
\begin{lemma} \label{lemma: restriction fibration 1}
The map 
$$\xymatrix{
\beta_{1}: \Sub^{f}_{P}(W, X) \longrightarrow \Sub^{f}(\beta_{1}W, X), \quad (\hat{f}, f) \mapsto (\hat{f}_{\beta_{1}}, f_{\beta_{1}})
}$$
is a Serre-fibration. 
\end{lemma}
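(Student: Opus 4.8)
The plan is to verify the Serre-fibration property directly, by solving the relative lifting problem against the pair $(D^k \times \{0\}, D^k \times [0,1])$ — equivalently, against $(D^k, \partial D^k \times [0,1] \cup D^k \times \{0\})$ — using the product collar structure near $\partial_1 W$ to build the lift. So suppose we are given a map $g_0 \colon D^k \to f\Sub_P(W,X)$ and a homotopy $h_t \colon D^k \to f\Sub(\beta_1 W, X)$ with $h_0 = \beta_1 \circ g_0$; we must lift $h_t$ to $\tilde g_t \colon D^k \to f\Sub_P(W,X)$ with $\tilde g_0 = g_0$ and $\beta_1 \circ \tilde g_t = h_t$.

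First I would isolate the local model near $\partial_1 W$. Fix the collar embedding $h_1 \colon \partial_1 W \times \R_+ \to W$ and the structure diffeomorphism $\phi_1 \colon \partial_1 W \xrightarrow{\cong} \beta_1 W \times P$, so that a neighborhood of $\partial_1 W$ in $W$ is identified with $\beta_1 W \times P \times \R_+$. Over this neighborhood, condition ii.\ of the definition of $f\Sub_P(W,X)$ forces the formal submersion to be determined, along $\partial_1 W$ itself, by its $\beta_1 W$-component together with the normal data in the $P$ and $\R_+$ directions. The key observation is that the map $\beta_1$ on formal $P$-submersions is, near $\partial_1 W$, essentially a projection: a formal submersion $(\hat f, f)$ restricted to the collar is equivalent data to (a) the formal submersion $(\hat f_{\beta_1}, f_{\beta_1})$ on $\beta_1 W$, (b) a $P$-map $f$ on $W$ extending $f_{\beta_1}\circ \mathrm{proj}$, and (c) a choice of fibrewise-surjective bundle extension of $\hat f_{\beta_1}\circ \mathrm{proj} \circ d\phi_1$ from $T\partial_1 W$ to $TW|_{\partial_1 W}$, all with no further constraints. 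The space of such extensions in (c) is a fibre of an affine (hence contractible-fibre) bundle, and the space of $P$-maps in (b) extending a given boundary value is likewise governed by a bundle with contractible fibres over a neighborhood.

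Next I would perform the lift in two steps. Using a tubular/collar neighborhood $V$ of $\partial_1 W$ in $W$ and a smaller neighborhood $V' \Subset V$, first extend $h_t$ to a homotopy of formal $P$-submersions $\hat g_t$ defined only over $V$ (compatibly with $g_0$), which is possible because over the collar the restriction map to $f\Sub(\beta_1 W, X)$ admits local sections and has the affine-fibre structure just described — this is where one uses that collar coordinates turn the $\partial_1 W$-data into a product. Then, over $W$, patch $\hat g_t$ on $V'$ with the constant homotopy $g_0$ on $W \setminus V$ using a smooth cutoff in the collar coordinate $\R_+$: because the collar identification is a genuine product and condition ii.\ only constrains behavior along $\partial_1 W = \partial_1 W \times \{0\}$, a convex combination of bundle maps (fibrewise surjectivity being an open condition, preserved after reparametrizing the homotopy parameter so the interpolation happens only where both agree to high order) yields a well-defined $\tilde g_t \in f\Sub_P(W,X)$ restricting correctly. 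One must check that the underlying maps remain $P$-maps throughout — this follows since the $P$-map condition is itself about a factorization along $\partial_1 W$ which is preserved under the patching, and that fibrewise surjectivity is maintained, which holds since we only modify things in a neighborhood where the target bundle map was already surjective.

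The main obstacle I expect is the bookkeeping in the patching step: ensuring simultaneously that (i) the interpolation between $\hat g_t$ and $g_0$ stays fibrewise surjective (not automatic for a naive linear homotopy of bundle maps, so one reparametrizes and uses a cutoff supported where the two maps are $C^1$-close), (ii) the factorization condition ii.\ through $T\beta_1 W$ is preserved exactly, not just up to homotopy, along $\partial_1 W$, and (iii) everything is continuous in the $D^k$-parameter. A clean way to handle (i)–(ii) together is to note that $f\Sub_P(W,X)$ sits inside the space of all bundle maps as an open subset intersected with a closed linear condition along $\partial_1 W$, so the fibre of $\beta_1$ over a point is an open subset of an affine space; local contractibility of fibres plus the collar product structure then gives the Serre-fibration property by the standard criterion (lifting against $D^k \times \{0\} \hookrightarrow D^k \times [0,1]$). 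The remaining verifications are routine applications of the existence and uniqueness of collars and of partition-of-unity arguments.
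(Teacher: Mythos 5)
Your overall instinct---that near $\partial_{1}W$ the map $\beta_{1}$ looks like a projection and that the collar should drive a homotopy-lifting argument---is reasonable, but as written the proof has genuine gaps at exactly the points where the work lies. First, the claim that the fibre of $\beta_{1}$ over a point is ``an open subset of an affine space'' is false: an element of the fibre consists of a fibrewise surjective bundle map \emph{together with its underlying map} $f:W\to X$, and the underlying-map component lives in a mapping space into the manifold $X$, which carries no affine structure unless $X$ is Euclidean. Consequently the patching step does not make sense as stated: a ``convex combination'' of the partial lift over the collar with the constant homotopy $g_{0}$ is undefined on the underlying maps, and even for the bundle-map components fibrewise surjectivity is not preserved by interpolation. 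The proposed fix (a cutoff supported where the two maps are $C^{1}$-close) does not resolve this, because in the lifting problem the homotopy $h_{t}$ may move the boundary data arbitrarily far from $\beta_{1}\circ g_{0}$, so there is no reason the two families you want to interpolate are ever close on the overlap region. These are not routine verifications; they are the substance of the statement.

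The paper avoids all of this by not proving the lifting property by hand. It introduces the space $f\Sub(W,X)|_{\partial_{1}W}$ of fibrewise surjective bundle maps $TW|_{\partial_{1}W}\to TX$ and quotes Phillips' result (\cite[Lemma 5.3]{Ph 67}) that the restriction map $r|_{\partial_{1}}:f\Sub(W,X)\to f\Sub(W,X)|_{\partial_{1}W}$ is a Serre fibration. Using the collar and the differential of the structure map one gets a splitting $TW|_{\partial_{1}W}\cong(T\beta_{1}W\oplus\epsilon^{1})\times TP$, hence a map $T_{P}:f\Sub(\beta_{1}W,X)\to f\Sub(W,X)|_{\partial_{1}W}$, and the square with $\beta_{1}$ and $r|_{\partial_{1}}$ as vertical maps, the inclusion $f\Sub_{P}(W,X)\hookrightarrow f\Sub(W,X)$ on top and $T_{P}$ on the bottom is cartesian; since Serre fibrations are stable under pullback, $\beta_{1}$ is a Serre fibration. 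If you want to keep a direct argument, the honest version of your sketch is to recast $f\Sub(W,X)$ as a space of sections of a fibre bundle over $W$ and use that restriction of sections along the closed cofibration $\partial_{1}W\hookrightarrow W$ is a fibration (which is essentially the content of Phillips' lemma); the affine/convex-combination language should be removed, since it cannot be made literal here.
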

\begin{proof}
Denote by $\Sub^{f}(W, X)|_{\partial_{1}W}$ the space of pairs $(\hat{g}, g)$ where 
$$\hat{g}: TW|_{\partial_{1}W} \longrightarrow TX$$
is a fibrewise surjective bundle map and $g: \partial_{1}W \longrightarrow X$ is map which makes the diagram 
$$\xymatrix{
TW|_{\partial_{1}W} \ar[rr]^{\hat{g}} \ar[d] && TX \ar[d] \\
\partial_{1}W \ar[rr]^{g} && X
}$$
commute. 
We have a restriction map,
\begin{equation} \label{eq: restriction map formal sub 1}
\xymatrix{
r|_{\partial_{1}}: \Sub^{f}(W, X) \longrightarrow \Sub^{f}(W, X)|_{\partial_{1}W}, \quad (\hat{f}, f) \mapsto (\hat{f}|_{\partial_{1}W}, f|_{\partial_{1}W})
}
\end{equation}
which is a Serre-fibration by \cite[Lemma 5.3]{Ph 67}.
From Example \ref{example: tangent bundle} we see that $TW$ has the structure of a $P$-vector bundle.
The collar embedding $h_{1}: \partial_{1}W\times\R_{+}\longrightarrow W$ together with the differential of the structure map 
$\phi_{1}: \partial_{1}W \stackrel{\cong} \longrightarrow \beta_{1}W\times P$,
 induces a bundle isomorphism
\begin{equation} \label{eq: bundle iso splitting}
\hat{\phi}_{1}: TW|_{\partial_{1}W} \stackrel{\cong} \longrightarrow (T\beta_{1}W\oplus\epsilon^{1})\times TP,
\end{equation}
which covers $\phi_{1}$ (above, $\epsilon^{1}$ is the one dimensional trivial bundle over $\beta_{1}W$).
 Using (\ref{eq: bundle iso splitting}), we define a map 
 \begin{equation} \label{eq: times P formal sub}
 \xymatrix{
T_{P}:  \Sub^{f}(\beta_{1}W, X) \longrightarrow \Sub^{f}(W, X)|_{\partial_{1}W}
}
 \end{equation}
by sending a fibrewise surjective bundle map $\hat{g}: T\beta_{1}W \longrightarrow TX$, to the fibrewise surjective bundle map $TW|_{\partial_{1}W} \longrightarrow TX$ given by the composition,
$$\xymatrix{
TW|_{\partial_{1}W} \ar[rr]^{\hat{\phi}_{1}\ \ \ \ }_{\cong} && (T\beta_{1}W\oplus\epsilon^{1})\times TP \ar[rr]^{ \ \ \ \text{proj.}} && T\beta_{1}W \ar[rr]^{\hat{g}} && TX.}
$$
It follows immediately from the definition of (\ref{eq: times P formal sub}) that the diagram,
$$\xymatrix{
\Sub^{f}_{P}(W, X) \ar[d]^{\beta_{1}} \ar@{^{(}->}[rr] && \Sub^{f}(W, X) \ar[d]^{r|_{\partial_{1}}}\\
\Sub^{f}(\beta_{1}W, X) \ar[rr]^{T_{P}} && \Sub^{f}(W, X)|_{\partial_{1}W} 
}$$
is a cartesian square. 
Since (\ref{eq: restriction map formal sub 1}) is a Serre-fibration, the fact that the above diagram is cartesian implies that 
$\xymatrix{
\beta_{1}: \Sub^{f}_{P}(W, X) \longrightarrow \Sub^{f}(\beta_{1}W, X)
}$
is a Serre-fibration as well. 
This concludes the proof of the lemma.
\end{proof}

We need to define a an intermediate space which can be compared to be both $\Sub_{P}(W, X)$ and $\Sub^{f}_{P}(W, X)$.
\begin{defn}
For a $P$-manifold $W$ and smooth manifold $X$, denote by $\Sub^{f}_{P, \beta}(W, X)$ the subspace of $\Sub^{f}_{P}(W, X)$ consisting of all formal submersions $(\hat{f}, f)$ such that, $\hat{f}_{\beta_{1}} = d f_{\beta_{1}}$
where $d f_{\beta_{1}}$ denotes the differential of $f_{\beta_{1}}$.
\end{defn}

\begin{lemma}
Let $W$ be an open $P$-manifold and let $X$ be a smooth manifold. 
The inclusion $\Sub^{f}_{P, \beta}(W, X) \hookrightarrow \Sub^{f}_{P}(W, X)$ is a weak homotopy equivalence. 
\end{lemma}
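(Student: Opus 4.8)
The statement to prove is:

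\begin{proof}[Proof proposal]
The plan is to realize the inclusion $f\Sub_{P,\beta}(W,X) \hookrightarrow f\Sub_{P}(W,X)$ as the inclusion of the fibre-pullback of a contractible sub-bundle of the base, and to deduce the weak equivalence from the Serre-fibration established in Lemma \ref{lemma: restriction fibration 1}. First I would recall the Serre-fibration $\beta_{1}: f\Sub_{P}(W,X) \longrightarrow f\Sub(\beta_{1}W, X)$ and observe that, by definition, $f\Sub_{P,\beta}(W,X)$ is exactly the preimage under $\beta_{1}$ of the subspace $D(\Sub(\beta_{1}W, X)) \subset f\Sub(\beta_{1}W, X)$, i.e.\ the image of the embedding $D$ sending a genuine submersion $g$ of $\beta_{1}W$ to $(dg, g)$. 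Thus I would form the pullback square
$$\xymatrix{
f\Sub_{P,\beta}(W,X) \ar[rr] \ar[d] && f\Sub_{P}(W,X) \ar[d]^{\beta_{1}} \\
\Sub(\beta_{1}W, X) \ar[rr]^{D} && f\Sub(\beta_{1}W, X),
}$$
where the top map is the inclusion and the bottom map is the embedding $D$ of Theorem \ref{theorem: phillips submersion theorem}.

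Next I would note that since $\beta_{1}$ is a Serre-fibration, this square is homotopy cartesian; hence the induced map on the vertical homotopy fibres is a weak equivalence, and more usefully, the top horizontal map is obtained from the bottom horizontal map by pullback along a fibration, so its homotopy-theoretic behaviour is controlled entirely by that of $D$. The key input is that $\beta_{1}W$ is an open manifold (this is part of the hypothesis that $W$ is an open $P$-manifold) and $X$ has no boundary, so Phillips' Submersion Theorem (Theorem \ref{theorem: phillips submersion theorem}) applies to give that $D: \Sub(\beta_{1}W, X) \longrightarrow f\Sub(\beta_{1}W, X)$ is a weak homotopy equivalence. Pulling back a weak equivalence along a Serre-fibration yields a weak equivalence, so the inclusion $f\Sub_{P,\beta}(W,X) \hookrightarrow f\Sub_{P}(W,X)$ is a weak homotopy equivalence, which is exactly the claim.

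The main obstacle, and the one place where genuine care is needed, is verifying that the square above is indeed a pullback \emph{on the nose}: one must check, directly from the definitions of $f\Sub_{P}$ and $f\Sub_{P,\beta}$, that a formal $P$-submersion $(\hat f, f)$ lies in $f\Sub_{P,\beta}(W,X)$ if and only if its image $(\hat f_{\beta_{1}}, f_{\beta_{1}})$ under the restriction map \eqref{eq: formal P-sub restriction} is of the form $(d g, g)$ for some submersion $g$. The forward direction is the definition; the reverse direction uses that the underlying map $f$ is already a $P$-map so $f_{\beta_{1}}$ is determined, and the condition $\hat f_{\beta_{1}} = d f_{\beta_{1}}$ together with fibrewise surjectivity forces $f_{\beta_{1}}$ to be a genuine submersion. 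Once this identification is in place the argument is formal: homotopy-cartesianness of a strict pullback along a Serre-fibration, combined with Phillips' theorem for the open manifold $\beta_{1}W$, finishes the proof.
\end{proof}
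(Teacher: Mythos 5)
Your proposal is correct and follows essentially the same route as the paper: identify $f\Sub_{P,\beta}(W,X)$ as the strict pullback of $\Sub(\beta_{1}W,X)\hookrightarrow f\Sub(\beta_{1}W,X)$ along the Serre-fibration $\beta_{1}$ from Lemma \ref{lemma: restriction fibration 1}, observe the square is therefore homotopy cartesian, and apply Phillips' theorem to the open manifold $\beta_{1}W$. The only difference is cosmetic (you phrase the conclusion as right properness of pullbacks along fibrations rather than via the homotopy-cartesian square), so nothing further is needed.
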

\begin{proof}
We may consider $\Sub(\beta W, X)$ as the subspace of $\Sub^{f}(\beta W, X)$ which consists of all formal submersions $(\hat{g}, g)$ such that $g$ is smooth and $\hat{g} = dg$ ($dg$ here is the differential of $g$). 
Recall the restriction map 
$\xymatrix{
\beta_{1}: \Sub^{f}_{P, \beta_{1}}(W, X) \longrightarrow \Sub^{f}(\beta_{1}W, X)
}$
which is a Serre-fibration by Lemma \ref{lemma: restriction fibration 1}. 
By definition we have 
$$\xymatrix{
\Sub^{f}_{P, \beta}(W, X) = \beta_{1}^{-1}(\Sub(\beta W, X)),
}$$
thus the diagram
$$\xymatrix{
\Sub^{f}_{P, \beta}(W, X) \ar@{^{(}->}[rr] \ar[d]^{\beta_{1}} && \Sub^{f}_{P}(W, X) \ar[d]^{\beta_{1}} \\
\Sub(\beta_{1}W, X) \ar@{^{(}->}[rr] && \Sub^{f}(\beta_{1}W, X)
}$$
is cartesian. 
Since the right-vertical map is a Serre-fibration, it follows that the above diagram is homotopy-cartesian. 
Since $W$ is an open $P$-manifold, it follows that $\beta_{1}W$ is an open manifold. 
By Theorem \ref{theorem: phillips submersion theorem} the bottom horizontal map in the above diagram is a weak equivalence. 
Since it is homotopy cartesian, it follows that the upper-horizontal map is a weak equivalence as well. 
This completes the proof.
\end{proof}

\begin{proof}[Proof of Theorem \ref{theorem: h-principle P-mfds}]
The space $\Sub_{P}(W, X)$ may be realized as the subspace of $\Sub^{f}_{P, \beta}(W, X)$ which consists of all formal $P$-submersions $(\hat{f}, f)$ such that $f$ is a smooth $P$-map and $df = \hat{f}$. 
To prove the theorem it will suffice to show that the relative homotopy group, 
$$\xymatrix{
\pi_{n}(\Sub^{f}_{P, \beta}(W, X), \; \Sub_{P}(W, X)) = 0 \quad \text{for all $n$.}
}$$ 
Let $F: (D^{n}, S^{n-1}) \longrightarrow (\Sub^{f}_{P, \beta}(W, X), \; \Sub_{P}(W, X))$ be a map of pairs. 
By definition of the space $\Sub^{f}_{P, \beta}(W, X)$, for all $x \in D^{n}$ we have $\beta_{1}F(x) \in \Sub(\beta_{1}W, X) \subset \Sub^{f}(\beta_{1}W, X)$. 
In other words, the formal submersion $\beta_{1}F(x)$ is integrable for all $x \in D^{n}$. 
Since $W$ is an open $P$-manifold, it follows automatically that $\partial_{1}W$ is an open manifold, as well as $W$. 
We then may apply the \textit{Relative Parametric H-Principle} (see  \cite[6.2 C]{EM 02} for the definition and \cite[7.2.4]{EM 02} for the statement of the relevant theorem needed to apply it) to obtain a homotopy 
$$\xymatrix{
F_{t}: (D^{n}, S^{n-1}) \longrightarrow (\Sub^{f}_{P, \beta}(W, X), \; \Sub_{P}(W, X)) \quad \text{for $t \in [0,1]$}
}$$
such that:
\begin{enumerate}
\item[i.] $F_{0} = F$,
\item[ii.] $F_{t}|_{\partial_{1}W} = F|_{\partial_{1}W}$ for all $t \in [0,1]$, and
\item[iii.] $F_{1}(D^{n}) \subset \Sub_{P}(W, X)$. 
\end{enumerate}
This completes the proof of the Theorem.
\end{proof}

\section{Stabilization of Sections of Vector Bundles}
\label{The Stabilization Map}
In this section we prove a lemma that implies Claim \ref{claim:
  stabilization map} used in the proof of Lemma \ref{lemma: limiting map equivalence}.  
  This result is essentially a relative version of \cite[Lemma 6]{MW 07}.

For any space $X$ and vector bundles $V_{1}$ and $V_{2}$ over $X$,
let $\Iso(E^{1}, E^{2})$ be the space of bundle-isomorphisms
covering the identity map. We have a stabilization map,
\begin{equation} \label{eq: stabilization}
  \sigma: \Iso(E^{1}, E^{2}) \longrightarrow \Iso(E_{1}\oplus \epsilon^{1}, E^{2}\oplus \epsilon^{1}), \quad   f \mapsto f\oplus Id_{\epsilon^{1}}.
 \end{equation}
 From \cite[Lemma 6]{MW 07} we have the following.
 \begin{lemma} \label{lemma: bundle stabilization map}
 Let $X$ be a manifold and let $E^{1}, E^{2} \longrightarrow V$ be vector bundles of fibre-dimension $k$. 
 The the stabilization map, 
$$ \sigma: \Iso(E^{1}, E^{2}) \longrightarrow \Iso(E^{1}\oplus \epsilon^{1}, E^{2}\oplus \epsilon^{1})$$
 is $(k-\dim(X)-1)$-connected.
 \end{lemma}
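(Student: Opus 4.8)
The plan is to reduce the statement to a fact about the map of fibre bundles that $\sigma$ induces over the base $X$, and then to a classical connectivity estimate for a fibre inclusion coming from the orthogonal groups. First I would observe that, after choosing a Riemannian metric on the total spaces, $\Iso(E^{1},E^{2})$ is the space of sections of the bundle $\mathrm{Iso}(E^{1},E^{2}) \to X$ whose fibre over $x$ is the space of linear isomorphisms $E^{1}_{x} \to E^{2}_{x}$, which is homeomorphic to $GL_{k}(\R)$, hence homotopy equivalent to $O(k)$. Similarly $\Iso(E^{1}\oplus\epsilon^{1},E^{2}\oplus\epsilon^{1})$ is the section space of a bundle with fibre $GL_{k+1}(\R)\simeq O(k+1)$, and the stabilization map $f\mapsto f\oplus Id_{\epsilon^{1}}$ is induced fibrewise by the standard inclusion $O(k)\hookrightarrow O(k+1)$ (equivalently, by the inclusion of the stabilizer $GL_{k}(\R)\hookrightarrow GL_{k+1}(\R)$).

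Next I would use the fact that the standard inclusion $O(k)\hookrightarrow O(k+1)$ is $(k-1)$-connected; this is classical, e.g.\ from the fibre sequence $O(k)\to O(k+1)\to S^{k}$. More precisely, the fibre of the map of fibre bundles over $X$ is, over each point, the homotopy fibre of $O(k)\hookrightarrow O(k+1)$, which is $(k-2)$-connected, i.e.\ the map on fibres is $(k-1)$-connected. Then I would invoke the standard principle that if $p\colon E\to B$ and $p'\colon E'\to B$ are fibre bundles over a CW complex $B$ of dimension $\le d$, and a bundle map $E\to E'$ over $B$ is $c$-connected on each fibre, then the induced map on section spaces $\Gamma(B;E)\to\Gamma(B;E')$ is $(c-d)$-connected. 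Applying this with $B=X$ (triangulated so $\dim X$ is its CW-dimension), $c=k-1$, and $d=\dim(X)$ gives that $\sigma$ is $(k-1-\dim(X))=(k-\dim(X)-1)$-connected, as claimed.

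The main obstacle, and the place where some care is required, is the section-space connectivity principle: strictly speaking one proves it by induction over the skeleta of $X$, using obstruction theory (the obstructions to lifting/deforming a section through the $c$-connected fibre map live in $H^{j}(X;\pi_{j-1}(\text{fibre of the fibre map}))$, which vanish for $j-1\le c$, i.e.\ $j\le c+1$, so the relevant relative homotopy groups of the section spaces vanish in degrees $\le c-d$). This is exactly the argument in \cite[Lemma 6]{MW 07}, so I would simply cite it: the reference already establishes precisely this statement, and the only thing to check is that the setup here — two vector bundles of the same fibre-dimension $k$ over a manifold $X$, with the stabilization map adding a trivial line bundle — is literally the setup of that lemma. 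Hence the proof is essentially a citation together with the identification of $\Iso(E^{1},E^{2})$ with the relevant section space; no further calculation is needed.
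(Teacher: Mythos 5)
Your proposal is correct and is essentially the paper's approach: the paper offers no independent proof of this lemma, simply importing it from \cite[Lemma 6]{MW 07}, which is exactly the citation you end with. The sketch you add --- identifying $\Iso(E^{1},E^{2})$ with the space of sections of a bundle with fibre $GL_{k}(\R)\simeq O(k)$, noting that the fibrewise stabilization $O(k)\hookrightarrow O(k+1)$ is $(k-1)$-connected, and applying the obstruction-theoretic fact that a $c$-connected fibrewise map over a base of dimension $\dim(X)$ induces a $(c-\dim(X))$-connected map of section spaces --- is the standard argument behind that reference and is accurate.
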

 We will need to use a relative version of the above lemma. 
 Let $A \subset X$ be a submanifold and let $E^{1}, E^{2} \longrightarrow X$ be vector bundles. 
 For $g \in \Iso(E^{1}|_{A}, E^{2}|_{A})$, denote by $\Iso(E^{1}, E^{2})_{g}$ the subspace of $\Iso(E^{1}, E^{2})$ consisting of all bundle isomorphisms $f: E^{1} \stackrel{\cong} \longrightarrow E^{2}$ such that the restriction of $f$ to $E^{1}|_{A}$ is equal to $g$. 
 We have the following.
 \begin{proposition} \label{prop: relative stabilization}
 Let $A \subset X$ be a submanifold of positive codimension and let $E^{1}, E^{2} \longrightarrow X$ be vector bundles of fibre-dimension $k$ and 
 let $g \in \Iso(E^{1}|_{A}, E^{2}|_{A})$.
 Then the stabilization map 
 $$\sigma: \Iso(E^{1}, E^{2})_{g} \longrightarrow \Iso(E^{1}\oplus\epsilon^{1}, E^{2}\oplus\epsilon^{1})_{g\oplus Id_{\epsilon^{1}}}$$
 is $(k-\dim(X)-1)$-connected. 
 \end{proposition}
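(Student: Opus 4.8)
The plan is to deduce Proposition \ref{prop: relative stabilization} from the absolute statement of Lemma \ref{lemma: bundle stabilization map} by comparing the two stabilization maps across the fibrations obtained by restricting bundle isomorphisms to $A$. Write $\widetilde{E}^{i} := E^{i}\oplus\epsilon^{1}$ and $\widetilde{g} := g\oplus Id_{\epsilon^{1}}$. First I would note that $\Iso(E^{1}, E^{2})$ is the space of sections of the fibre bundle over $X$ whose fibre over $x$ is the set of linear isomorphisms $E^{1}_{x}\to E^{2}_{x}$, and likewise for $\Iso(\widetilde{E}^{1}, \widetilde{E}^{2})$. Restriction of sections of a fibre bundle to a closed submanifold is a Serre fibration --- an elementary consequence of the homotopy lifting property together with a tubular neighbourhood of $A$ in $X$, of the same flavour as the restriction fibration appearing in Lemma \ref{lemma: restriction fibration 1}. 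Hence the restriction maps
$$\rho: \Iso(E^{1}, E^{2}) \longrightarrow \Iso(E^{1}|_{A}, E^{2}|_{A}), \qquad \rho': \Iso(\widetilde{E}^{1}, \widetilde{E}^{2}) \longrightarrow \Iso(\widetilde{E}^{1}|_{A}, \widetilde{E}^{2}|_{A})$$
are Serre fibrations whose fibres over $g$ and over $\widetilde{g}$ are exactly $\Iso(E^{1}, E^{2})_{g}$ and $\Iso(\widetilde{E}^{1}, \widetilde{E}^{2})_{\widetilde{g}}$.

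Since stabilization commutes with restriction to $A$, the maps $f\mapsto f\oplus Id_{\epsilon^{1}}$ assemble into a commuting ladder of fibration sequences whose top row is the fibration sequence of $\rho$, whose bottom row is that of $\rho'$, and whose three vertical maps are, from fibre to total space to base, the map $\sigma$ of the proposition, the absolute stabilization map $\sigma_{X}$ over $X$, and the absolute stabilization map $\sigma_{A}$ over $A$. (We may assume the fibres $\Iso(E^{1}, E^{2})_{g}$ and $\Iso(\widetilde{E}^{1}, \widetilde{E}^{2})_{\widetilde{g}}$ non-empty, which is the case in the application, Claim \ref{claim: stabilization map}.) Applying Lemma \ref{lemma: bundle stabilization map} twice, $\sigma_{X}$ is $(k-\dim(X)-1)$-connected and $\sigma_{A}$ is $(k-\dim(A)-1)$-connected; because $A$ has positive codimension, $\dim(A)\le\dim(X)-1$, so $\sigma_{A}$ is at least $(k-\dim(X))$-connected, i.e.\ one degree more connected than $\sigma_{X}$. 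The proof concludes with the standard comparison of the two long exact sequences of homotopy groups: in a map of fibration sequences whose total-space map is $n$-connected and whose base map is $(n+1)$-connected, the induced map on fibres is $n$-connected --- a four-lemma argument on the ladder, the extra degree on the base supplying the monomorphism needed to get surjectivity in the top degree. Taking $n=k-\dim(X)-1$ shows that $\sigma: \Iso(E^{1}, E^{2})_{g}\to\Iso(\widetilde{E}^{1}, \widetilde{E}^{2})_{\widetilde{g}}$ is $(k-\dim(X)-1)$-connected.

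The routine parts are checking that $\rho$ and $\rho'$ are Serre fibrations and that the ladder commutes. The one point that requires care is the connectivity bookkeeping in the last step, and in particular the honest use of the positive-codimension hypothesis: it is precisely this that raises the connectivity of $\sigma_{A}$ one degree above that of $\sigma_{X}$, which is exactly what the four-lemma argument needs in order to conclude on fibres.
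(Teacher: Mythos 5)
Your proposal is correct and follows essentially the same route as the paper: both realize $\Iso(E^{1},E^{2})_{g}$ as the fibre of the restriction Serre fibration to $A$, compare the two fibration sequences via the stabilization ladder, apply Lemma \ref{lemma: bundle stabilization map} to the total space (giving $(k-\dim(X)-1)$-connectivity) and to the base (where positive codimension gives one extra degree), and conclude by the five-lemma on the long exact sequences of homotopy groups. The only cosmetic differences are your justification of the restriction fibration (tubular neighbourhood argument rather than the paper's citation of Phillips) and your explicit remark about non-emptiness of the fibres.
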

 \begin{proof} 
 Consider the restriction map 
 $$r: \Iso(E^{1}, E^{2}) \longrightarrow \Iso(E^{1}|_{A}, E^{2}|_{A}), \quad f \mapsto f|_{A}.$$
The space $\Iso(E^{1}, E^{2})_{g}$ is equal to the subspace $r^{-1}(g) \subset \Iso(E^{1}, E^{2})$. 
 Furthermore it follows from \cite[Lemma 5.3]{Ph 67} that the map $r$ is a Serre-fibration. 
 The stabilization map induces a map of fibre-sequences,
 $$\xymatrix{
 \Iso(E^{1}, E^{2})_{g} \ar[d] \ar[rr] && \Iso(E^{1}\oplus\epsilon^{1}, E^{2}\oplus\epsilon^{1})_{g\oplus Id_{\epsilon^{1}}} \ar[d] \\
  \Iso(E^{1}, E^{2}) \ar[rr] \ar[d] &&   \Iso(E^{1}\oplus\epsilon^{1}, E^{2}\oplus\epsilon^{1}) \ar[d] \\
  \Iso(E^{1}|_{A}, E^{2}|_{A}) \ar[rr] && \Iso(E^{1}|_{A}\oplus\epsilon^{1}, E^{2}|_{A}\oplus\epsilon^{1}).
 }$$
 By Lemma \ref{lemma: bundle stabilization map} the middle horizontal map is $(k-\dim(X)-1)$-connected and since $\dim(A) < \dim(X)$ the bottom horizontal map is at least
 $(k-\dim(X))$-connected by \ref{lemma: bundle stabilization map}.
It then follows by application of the \textit{five-lemma} to the long exact sequence in homotopy groups that the top-horizontal map is $(k-\dim(X)-1)$-connected as well. 
This completes the proof. 
 \end{proof}
 
To prove Claim \ref{claim: stabilization map} we will need a version of Lemma \ref{lemma: bundle stabilization map} adapted for $P$-vector bundles over $P$-manifolds. 
For what follows, let $M$ be a $P$-manifold. Let $E_{P} \rightarrow P$ be a vector bundle and let 
 $E^{1}, E^{2} \longrightarrow M$ 
 be $P$-vector bundles of the same fibre-dimension, equipped with a specified identification, $E_{P} = E^{1}_{P} = E^{2}_{P}$.
 We denote by $\Iso_{E_{P}}(E^{1}, E^{2})$ the space of pairs $(f, f_{\beta})$ where 
 $$f: E^{1} \stackrel{\cong} \longrightarrow E^{2} \quad \text{and} \quad f_{\beta}: E^{1}_{\beta_{1}} \stackrel{\cong} \longrightarrow E^{2}_{\beta_{1}}$$ 
are bundle isomorphisms that cover the identity maps such that the diagram
$$\xymatrix{
E^{1}|_{\partial_{1}M} \ar[d]^{\widehat{\phi}_{E^{1}}} \ar[rrr]^{f|_{\partial_{1}M}} &&&  E^{2}|_{\partial_{1}M} \ar[d]^{\widehat{\phi}_{E^{1}}} \\
E^{1}_{\beta_{1}}\times E_{P}\oplus \epsilon^{1} \ar[rrr]^{f_{\beta_{1}}\times Id_{E_{P}}\oplus Id_{\epsilon^{1}}} &&& E^{2}_{\beta_{1}}\times E_{P}\oplus \epsilon^{1}  
}$$
 commutes.
 The is a stabilization map 
 \begin{equation} \label{eq: P-bundle stabilization}
 \sigma_{E_{P}}: \Iso_{E_{P}}(E^{1}, E^{2}) \longrightarrow \Iso_{E_{P}}(E^{1}\oplus\epsilon^{1}, E^{2}\oplus\epsilon^{1}), \quad (f, f_{\beta}) \mapsto (f\oplus\epsilon^{1}, f_{\beta}\oplus\epsilon^{1}).
  \end{equation}
  The main result of this section is the following.
  \begin{theorem}
  Let $M$, $E_{P}$, $E^{1}$ and $E^{2}$ be as above and denote by $k$ the fibre-dimension of $E^{1}$ and $E^{2}$. 
  Suppose that the fibre-dimension of $E_{P}$ equal to $p = \dim(P)$. 
  Then the stabilization map from (\ref{eq: P-bundle stabilization}) is $(k - \dim(M) -1)$-connected. 
  \end{theorem}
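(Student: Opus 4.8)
The plan is to exhibit $\Iso_{E_{P}}(E^{1},E^{2})$ as the total space of a Serre fibration over $\Iso(E^{1}_{\beta_{1}},E^{2}_{\beta_{1}})$ and then to compare the stabilization map $\sigma_{E_{P}}$ with ordinary stabilization maps on the base and on the fibre, for which the connectivity is already known from Lemma \ref{lemma: bundle stabilization map} and Proposition \ref{prop: relative stabilization}. First I would set up the fibration: the map $p_{\beta}\colon \Iso_{E_{P}}(E^{1},E^{2}) \longrightarrow \Iso(E^{1}_{\beta_{1}},E^{2}_{\beta_{1}})$ sending $(f,f_{\beta})$ to $f_{\beta}$ is well defined, and the commuting square in the definition of $\Iso_{E_{P}}(E^{1},E^{2})$ shows that $f_{\beta}$ determines $f|_{\partial_{1}M}$ completely, namely $f|_{\partial_{1}M} = \widehat{\phi}_{E^{2}}^{-1}\circ(f_{\beta}\times Id_{E_{P}}\oplus Id_{\epsilon^{1}})\circ\widehat{\phi}_{E^{1}}$. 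Hence $\Iso_{E_{P}}(E^{1},E^{2})$ is the pull-back of the restriction map $r\colon \Iso(E^{1},E^{2}) \longrightarrow \Iso(E^{1}|_{\partial_{1}M},E^{2}|_{\partial_{1}M})$ along the closed embedding $\Iso(E^{1}_{\beta_{1}},E^{2}_{\beta_{1}}) \hookrightarrow \Iso(E^{1}|_{\partial_{1}M},E^{2}|_{\partial_{1}M})$, $f_{\beta}\mapsto \widehat{\phi}_{E^{2}}^{-1}\circ(f_{\beta}\times Id_{E_{P}}\oplus Id_{\epsilon^{1}})\circ\widehat{\phi}_{E^{1}}$. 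Since $r$ is a Serre fibration by \cite[Lemma 5.3]{Ph 67}, so is $p_{\beta}$, and its fibre over $f_{\beta}$ is exactly the space $\Iso(E^{1},E^{2})_{g}$ of bundle isomorphisms over $M$ whose restriction to $\partial_{1}M$ equals the prescribed $g=g(f_{\beta})$.

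Next I would verify that the stabilization maps are compatible with these fibrations. Forming $E^{i}\oplus\epsilon^{1}$ over $M$ and re-grouping the trivial line summands, the induced $P$-vector bundle structure on $E^{i}\oplus\epsilon^{1}$ has auxiliary bundle $E^{i}_{\beta_{1}}\oplus\epsilon^{1}$ over $\beta_{1}M$ and the same $E_{P}$ over $P$; consequently $p_{\beta}\circ\sigma_{E_{P}} = \sigma_{\beta}\circ p_{\beta}$, where $\sigma_{\beta}\colon \Iso(E^{1}_{\beta_{1}},E^{2}_{\beta_{1}}) \longrightarrow \Iso(E^{1}_{\beta_{1}}\oplus\epsilon^{1},E^{2}_{\beta_{1}}\oplus\epsilon^{1})$ is the ordinary stabilization map over $\beta_{1}M$. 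On fibres, $\sigma_{E_{P}}$ restricts (using $g(f_{\beta})\oplus Id_{\epsilon^{1}} = g(\sigma_{\beta}(f_{\beta}))$, which is again just a reordering of trivial summands) to the relative stabilization map $\sigma\colon \Iso(E^{1},E^{2})_{g} \longrightarrow \Iso(E^{1}\oplus\epsilon^{1},E^{2}\oplus\epsilon^{1})_{g\oplus Id_{\epsilon^{1}}}$ of Proposition \ref{prop: relative stabilization}, applied with $A=\partial_{1}M\subset M$. Thus I obtain a morphism of fibration sequences with total map $\sigma_{E_{P}}$, base map $\sigma_{\beta}$, and fibre map $\sigma$.

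Then I would count dimensions. From $E^{i}|_{\partial_{1}M} \cong (E^{i}_{\beta_{1}}\times E_{P})\oplus\epsilon^{1}$ and $\dim E_{P} = p$ one gets $\dim E^{i}_{\beta_{1}} = k - p - 1$, while $\dim\beta_{1}M = \dim\partial_{1}M - p = \dim(M) - 1 - p$. Hence Lemma \ref{lemma: bundle stabilization map} shows $\sigma_{\beta}$ is $\bigl((k-p-1) - (\dim(M)-1-p) - 1\bigr) = (k-\dim(M)-1)$-connected, and Proposition \ref{prop: relative stabilization} (valid since $\partial_{1}M$ has positive codimension in $M$) shows $\sigma$ is $(k-\dim(M)-1)$-connected. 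Applying the five lemma to the ladder of long exact sequences in homotopy groups associated to the two Serre fibrations, and using that both $\sigma$ and $\sigma_{\beta}$ are $(k-\dim(M)-1)$-connected, I conclude that $\sigma_{E_{P}}$ is $(k-\dim(M)-1)$-connected; one restricts to the relevant path components (or assumes the spaces nonempty, the statement being vacuous otherwise) so that the exact sequences apply.

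The main obstacle I anticipate is Step~1: establishing cleanly that $p_{\beta}$ is a genuine Serre fibration with the stated fibres, i.e. pinning down the pull-back description and invoking \cite[Lemma 5.3]{Ph 67} correctly in the presence of corners on $M$, together with the bookkeeping in Step~2 confirming that $\sigma_{E_{P}}$ really does cover the ordinary stabilization on the $\beta_{1}$-factor and restricts to the relative stabilization on fibres. Everything after that is a routine application of the two already-proven connectivity estimates and the five lemma.
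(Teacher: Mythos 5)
Your proposal follows essentially the same route as the paper: realize $\Iso_{E_{P}}(E^{1},E^{2})\to\Iso(E^{1}_{\beta_{1}},E^{2}_{\beta_{1}})$ as a pull-back of the restriction fibration of \cite[Lemma 5.3]{Ph 67} (so it is a Serre fibration with fibre the relative isomorphism space over $(M,\partial_{1}M)$), then compare stabilization maps on base and fibre via Lemma \ref{lemma: bundle stabilization map} and Proposition \ref{prop: relative stabilization} and conclude with the five lemma, with the same dimension count $(k-p-1)-(\dim(M)-p-1)-1=k-\dim(M)-1$. The argument is correct, and your explicit attention to path components/base points in the five-lemma step is a small refinement of, not a departure from, the paper's proof.
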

  \begin{proof} 
  Consider the map 
  $$T_{E_{P}}: \Iso(E^{1}_{\beta_{1}}, E^{2}_{\beta_{2}}) \longrightarrow \Iso(E^{1}|_{\partial_{1}M}, E^{2}|_{\partial_{1}M})$$
  defined by sending a bundle isomorphism $f: E^{1}_{\beta_{1}} \longrightarrow E^{2}_{\beta_{1}}$ to the bundle isomorphism given by the composition,
  $$\xymatrix{
  E^{1}|_{\partial_{1}M} \ar[rr]^{\hat{\phi}_{E^{1}}}_{\cong} && (E^{1}_{\beta_{1}}\times E_{P})\oplus\epsilon^{1} \ar[rrr]^{f_{\beta}\times Id_{E_{P}}\oplus Id_{\epsilon^{1}}} &&& (E^{1}_{\beta_{1}}\times E_{P})\oplus\epsilon^{1} \ar[rr]^{\ \ (\hat{\phi}_{E^{2}})^{-1}}_{\cong} && E^{2}|_{\partial_{1}M}.
  }$$
  We also have the map 
  $$\beta_{1}: \Iso(E^{1}_{\beta_{1}}, E^{2}_{\beta_{2}}) \longrightarrow \Iso(E^{1}_{\beta_{1}}, E^{2}_{\beta_{1}}), \quad (f, f_{\beta}) \mapsto f_{\beta}.$$
  It follows immediately from the definition of the space $\Iso_{E_{P}}(E^{1}, E^{2})$ that the diagram,
\begin{equation} \label{eq: label cartesian bundle map diagram}
\xymatrix{
\Iso_{E_{P}}(E^{1}, E^{2}) \ar[d]^{\beta_{1}} \ar@{^{(}->}[rr] && \Iso(E^{1}, E^{2}) \ar[d]^{r} \\
\Iso(E^{1}_{\beta_{1}}, E^{2}_{\beta_{1}}) \ar[rr]^{T_{E_{P}}} && \Iso(E^{1}|_{\partial_{1}M}, E^{2}|_{\partial_{1}M})
}
\end{equation}
is cartesian, where the top horizontal map is the inclusion map and the right-vertical map $r$ is the restriction map. 
By \cite[Lemma 5.3]{Ph 67} the restriction map $r$ is a Serre-fibration with fibre over $g \in \Iso(E^{1}|_{\partial_{1}M}, E^{2}|_{\partial_{1}M})$ equal to the space
$\Iso(E^{1}, E^{2})_{g}$. 
It follows from this that the diagram (\ref{eq: label cartesian bundle map diagram}) is homotopy cartesian
and that the left-vertical map $\beta_{1}$ is a Serre-fibration as well. 
For $g \in \Iso(E^{1}_{\beta_{1}}, E^{2}_{\beta_{1}})$, the fibre over the map $\beta_{1}$ over $g$ is equal to the space 
$\Iso(E^{1}|_{\partial_{1}M}, E^{2}|_{\partial_{1}M})_{T_{E_{P}}(g)}$. 
We then have a map of fibre sequences,
 $$\xymatrix{
\Iso(E^{1}, E^{2})_{T_{E_{P}}(g)} \ar[d] \ar[rr] && \Iso(E^{1}\oplus\epsilon^{1}, E^{2}\oplus\epsilon^{1})_{T_{E_{P}}(g)\oplus Id_{\epsilon^{1}}} \ar[d] \\
  \Iso_{E_{P}}(E^{1}, E^{2}) \ar[rr] \ar[d] &&   \Iso_{E_{P}}(E^{1}\oplus\epsilon^{1}, E^{2}\oplus\epsilon^{1}) \ar[d] \\
 \Iso(E^{1}_{\beta_{1}}, E^{2}_{\beta_{1}}) \ar[rr] &&  \Iso(E^{1}_{\beta_{1}}\oplus\epsilon^{1}, E^{2}_{\beta_{1}}\oplus\epsilon^{1}).
 }$$
 By Lemma \ref{lemma: bundle stabilization map} and Proposition \ref{prop: relative stabilization} the top horizontal map is $(k-\dim(M)-1)$-connected and the degree of connectivity of the bottom-horizontal map is equal to,
 $$[(k-p-1) - \dim(\beta_{1}M) - 1] \; = \; [(k-p-1) - (\dim(M) - p -1) - 1] \; = \; (k-\dim(M)-1)$$
 as well.
 It then follows by application of the \textit{five-lemma} to the long-exact sequence in homotopy groups associated to the above fibrations that the middle map is $(k-\dim(X)-1)$-connected.
 This completes the proof of the Theorem.
 \end{proof}


\section{Proof of Proposition  \ref{prop: homotopy monoid}}\label{section:monoidal}
In Section \ref{The Main Theorem} we proved that for all $k$
there is an isomorphism of sets
\begin{equation} \label{eq: unbased isomorphism}
\xymatrix{
[S^{k}, |\mathbf{D}^{P}_{d+1}|] \cong [S^{k},
    \Omega^{\infty-1}\MT_{P}(d+1)] 
    }
 \end{equation}
  which is induced by a zig-zag, 
  $\xymatrix{
  |\mathbf{D}^{P}_{d+1}| && |\widehat{\mathbf{D}}^{P}_{d+1}| \ar[ll]_{\simeq} \ar[rr]^{T \ \ \ \ } &&  \Omega^{\infty-1}\MT_{P}(d+1)
  }$
  where the first map is a weak homotopy equivalence. 
  In order to prove Proposition
  \ref{prop: homotopy monoid}, we need to show that we have an
  isomorphism of homotopy groups for any choice of base-point
  on any path component.  In this section we resolve this issue. 
  
  First note that since $\Omega^{\infty-1}\MT_{P}(d+1)$ is an infinite loop-space, it has the structure of a topological monoid.
 In particular, this implies that the identity component of $\Omega^{\infty-1}\MT_{P}(d+1)$ is a connected $H$-space. 
 It then follows from \cite[Example 4A.3]{H 01} that for all $k \in \N$ and $x_{0}$ in the identity component $\Omega^{\infty-1}_{0}\MT_{P}(d+1) \subset \Omega^{\infty-1}\MT_{P}(d+1)$, 
 there is a bijection between the homotopy group
$\pi_{k}(\Omega^{\infty-1}\MT_{P}(d+1), x_{0})$ and the set $[S^{k}, \Omega^{\infty-1}_{0}\MT_{P}(d+1)]$
 induced by the natural map defined by forgetting the base point. 
 Now, the monoid structure on $\pi_{0}(\Omega^{\infty-1}\MT_{P}(d+1))$ is a group. 
This implies that all path components of $\Omega^{\infty-1}\MT_{P}(d+1)$ are homotopy equivalent. 
It follows that for all $x \in \Omega^{\infty-1}\MT_{P}(d+1)$ in any path component and all $k \in \N$, there is an isomorphism
\begin{equation} \label{eq: forget base point 1}
\pi_{k}(\Omega^{\infty-1}\MT_{P}(d+1), x) \stackrel{\cong} \longrightarrow [S^{k}, \Omega^{\infty-1}_{x}\MT_{P}(d+1)],
\end{equation}
where $\Omega^{\infty-1}_{x}\MT_{P}(d+1) \subset \Omega^{\infty-1}\MT_{P}(d+1)$ is the path component that contains the element $x \in \Omega^{\infty-1}\MT_{P}(d+1)$.

  We will need to show that $|\mathbf{D}^{P}_{d+1}|$ has the structure
  of a monoid (with product defined up to homotopy) and that the map
$|T|: |\mathbf{D}^{P}_{d+1}| \rightarrow \Omega^{\infty-1}\MT_{P}(d+1)$ 
is a monoid homomorphism inducing isomorphism on $\pi_{0}$. The method
  of this section is very similar to the proof of \cite[Theorem 3.8]{MW
    07}. 
\begin{proposition} \label{prop: monoid homomorphism} 
The space $|\widehat{\mathbf{D}}^{P}_{d+1}|$ has
the structure of topological monoid up to homotopy.
\end{proposition}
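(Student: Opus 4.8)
The plan is to equip $|\widehat{\mathbf{D}}^{P}_{d+1}|$ with a product coming from a ``juxtaposition'' operation on the sheaf $\widehat{\mathbf{D}}^{P}_{d+1}$, exactly as in the proof of \cite[Theorem 3.8]{MW 07} and as in \cite[Section 6]{GMTW 09}. The key point is that an element of $\widehat{\mathbf{D}}^{P}_{d+1}(X)$ is a $P$-submanifold $W \subset X\times\R\times\R_{+}\times\R^{d+\infty}\times\R^{p+m}$ with a vertical tubular neighborhood, where the projection $(\pi,f): W \to X\times\R$ is proper and $\pi$ is a $P$-submersion. The $\R$-coordinate gives us room to translate and concatenate along $\R$.

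First I would introduce an auxiliary sheaf, call it $\widehat{\mathbf{D}}^{P,\delta}_{d+1}$, consisting of pairs (element of $\widehat{\mathbf{D}}^{P}_{d+1}(X)$, locally constant function $X \to \R$) where the $\R$-coordinate of $W$ is bounded away from the given value, together with a ``collapse'' map forgetting the second coordinate; a partition-of-unity / bump-function argument shows the forgetful map $\widehat{\mathbf{D}}^{P,\delta}_{d+1} \to \widehat{\mathbf{D}}^{P}_{d+1}$ is a weak equivalence of sheaves, since any $W$ can be translated in the $\R$-direction so that $0$ (or any chosen value) lies in a ``gap'' — properness of $(\pi,f)$ guarantees the image in $\R$ is locally bounded, so for compact parameter families such gaps exist after translation. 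This is the standard ``scanning for a gap'' trick. Then the monoid product $\mu: \widehat{\mathbf{D}}^{P}_{d+1}(X) \times \widehat{\mathbf{D}}^{P}_{d+1}(X) \to \widehat{\mathbf{D}}^{P}_{d+1}(X)$ is defined on a cofinal subsystem by first rescaling the $\R$-coordinates of $W_{1}$ and $W_{2}$ into disjoint half-lines $(-\infty,0)$ and $(0,\infty)$ respectively (using the gap structure), then taking $W_{1} \cup W_{2}$ as a submanifold of $X\times\R\times\R_{+}\times\R^{d+\infty}\times\R^{p+m}$. One must check that $W_{1}\cup W_{2}$ is again a smooth $P$-submanifold: the faces combine as $\partial_{1}(W_{1}\cup W_{2}) = \partial_{1}W_{1} \sqcup \partial_{1}W_{2}$, and since each $\partial_{i}W_{j} = \beta_{1}W_{j}\times i_{P}(P)$, the union inherits the $P$-structure with $\beta_{1}(W_{1}\cup W_{2}) = \beta_{1}W_{1}\sqcup\beta_{1}W_{2}$; properness of $(\pi,f)$ on the union follows from properness on each piece since they live over disjoint regions of the $\R$-factor; the vertical tubular neighborhoods combine to give one on the union. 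Associativity up to homotopy and the unit (given by the empty manifold $\emptyset \in \widehat{\mathbf{D}}^{P}_{d+1}(X)$) then follow by the usual reparametrization homotopies.

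The hard part will be verifying that the product is well defined and homotopy-coherent \emph{at the level of sheaves}, i.e.\ that passing to geometric realization one genuinely obtains an $H$-space (ideally an $E_{\infty}$ or at least $A_{\infty}$ structure, though for the present proposition a unital homotopy-associative product suffices). The subtlety is that the rescaling of the $\R$-coordinate depends on choices (the bump functions realizing the gap), so $\mu$ is only well defined as a map into a suitable cofinal system; one must organize these choices into a contractible space of choices so that $\mu$ descends to $|\widehat{\mathbf{D}}^{P}_{d+1}|$ unambiguously. This is handled exactly as in \cite[Section 6]{GMTW 09}: one works with the variant where $W$ is required to be ``cylindrical'' (a product $f^{-1}(0)\times\R$) near $\R$-values in prescribed intervals, using Lemma \ref{lemma: concordance representation}, so that the concatenation is literally gluing of cylinders along a common boundary and no choices are needed. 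Concretely, I would define the monoid product using the cocycle-sheaf model $\bar\beta\widehat{\mathbf{D}}^{P}_{d+1}$ as in the proof of \cite[Theorem 3.8]{MW 07}, where the juxtaposition is built patchwise and the compatibility over overlaps is automatic. The $P$-manifold bookkeeping adds nothing essential beyond tracking that the Bockstein faces and the collar conditions of Definition \ref{defn: i-P maps} are preserved under disjoint union and translation, which they manifestly are.

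Finally, I would note for later use (in proving the full Proposition \ref{prop: homotopy monoid}) that the map $|\widehat{T}|: |\widehat{\mathbf{D}}^{P}_{d+1}| \to \Omega^{\infty-1}\MT_{P}(d+1)$ is compatible with this monoid structure — the Pontryagin-Thom construction of $\widehat{T}_{n}$ sends disjoint union of $P$-submanifolds in disjoint $\R$-regions to the loop-sum of the corresponding maps $X\times\R \to \Omega_{P}^{d+n}\widehat{\Th(U_{d+1,n}^{\perp})}$ — so $|\widehat{T}|$ is a monoid homomorphism, and since the bijection \eqref{eq: unbased isomorphism} on $\pi_{0}$ respects sums it induces an isomorphism of the monoids $\pi_{0}$. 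That observation, combined with the isomorphism \eqref{eq: forget base point 1} for the target and its analogue for the $H$-space $|\widehat{\mathbf{D}}^{P}_{d+1}|$ via \cite[Example 4A.3]{H 01}, will give the base-point-independence needed for Proposition \ref{prop: homotopy monoid}; but for the present proposition only the construction of the monoid structure on $|\widehat{\mathbf{D}}^{P}_{d+1}|$ is required.
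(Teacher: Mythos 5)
There is a genuine gap in your construction of the product. You propose to concatenate along the $\R$-coordinate by compressing $W_{1}$ and $W_{2}$ into the regions $f<0$ and $f>0$, justified by a ``scanning for a gap'' argument: you claim properness of $(\pi,f)$ makes the image of $f$ locally bounded so that gaps exist after translation. This is false. A typical element of $\widehat{\mathbf{D}}^{P}_{d+1}(X)$ is unbounded in the $\R$-direction with $f$ surjective onto $\R$ --- indeed, by Lemma \ref{lemma: concordance representation} every concordance class has a representative of the form $f^{-1}(0)\times\R$ --- so there are no gaps to translate into. Worse, the compression itself is unavailable: if one reparametrizes so that $f(W)\subset(-\infty,0)$, then for a nonempty $W$ of this product type the preimage of a compact set $K\times[-\varepsilon,\varepsilon]$ contains a noncompact end of $W$, so $(\pi,f)$ is no longer proper and the result is not an element of the sheaf. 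Conceptually, the $\R$-coordinate is exactly the delooping direction ($|\mathbf{D}^{P}_{d+1}|\sim\Omega^{\infty-1}\MT_{P}(d+1)$, not $\Omega^{\infty}$), so it cannot also serve as the direction of a Pontryagin-type concatenation product on $|\widehat{\mathbf{D}}^{P}_{d+1}|$ itself.

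The paper's proof goes a different way: the product is \emph{disjoint union}, with disjointness arranged in the ambient $\R^{d+\infty}$-directions rather than in the $\R$-coordinate. One introduces the auxiliary sheaf $\widehat{\mathbf{D}}^{P}_{d+1}\bar{\times}\widehat{\mathbf{D}}^{P}_{d+1}$ of pairs $((W_{1},e_{1}),(W_{2},e_{2}))$ whose tubular neighborhood images are disjoint; the partially defined map $\mu$ sending such a pair to $(W_{1}\sqcup W_{2},e_{1}\sqcup e_{2})$ is strictly associative and commutative with unit the empty manifold. The inclusion $j$ of this sheaf into $\widehat{\mathbf{D}}^{P}_{d+1}\times\widehat{\mathbf{D}}^{P}_{d+1}$ is a weak equivalence because, after increasing the ambient dimension (this is where the colimit over $n$ is used), any $W_{1}$ can be isotoped off $W_{2}$; choosing a pseudo-inverse $|k|$ of $|j|$ then gives the homotopy-monoid structure $|\mu|\circ|k|$ on $|\widehat{\mathbf{D}}^{P}_{d+1}|$. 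Your closing remarks about $|\widehat{T}|$ respecting the product and the $\pi_{0}$-comparison are fine in spirit (and correspond to Proposition \ref{prop: monoid iso loops}), but they rest on a product whose construction, as you describe it, does not exist; replacing the $\R$-direction concatenation by the disjoint-union/ambient-separation argument is the missing ingredient.
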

\begin{proof} The monoid (up to homotopy) structure on $|\widehat{\mathbf{D}}^{P}_{d+1}|$ is defined as follows:
Let
$\widehat{\mathbf{D}}^{P}_{d+1}\bar{\times}\widehat{\mathbf{D}}^{P}_{d+1}$
be the sheaf defined by letting
$(\widehat{\mathbf{D}}^{P}_{d+1}\bar{\times}\widehat{\mathbf{D}}^{P}_{d+1})(X)$
consist of all pairs
$$ 
((W_{1}, e_{1}), (W_{2}, e_{2}))
\in
\widehat{\mathbf{D}}^{P}_{d+1}(X)\times\widehat{\mathbf{D}}^{P}_{d+1}(X)$$
such that the image of $e^{1}$ and $e^{2}$ are disjoint in $X\times
\R\times\R_{+}\times\R^{d+\bar{n}}\times\R^{p+m}$. There is a natural map
\begin{equation} \label{eq: partial product} 
\mu:
(\widehat{\mathbf{D}}^{P}_{d+1}\bar{\times}\widehat{\mathbf{D}}^{P}_{d+1})(X)
\longrightarrow \widehat{\mathbf{D}}^{P}_{d+1}(X)
\end{equation}
defined by sending a pair $((W_{1}, e_{1}), (W_{2}, e_{2}))$ to the element $(W_{1}\sqcup W_{2}, e_{1}\sqcup e_{2}).$
This map yields a partially defined product on $\widehat{\mathbf{D}}^{P}_{d+1}(X)$ which is clearly associative and commutative. The identity element is given by the empty set. 
The inclusion map 
$$\xymatrix{j: \widehat{\mathbf{D}}^{P}_{d+1}\bar{\times}\widehat{\mathbf{D}}^{P}_{d+1} \ar[rr] && \widehat{\mathbf{D}}^{P}_{d+1} \times \widehat{\mathbf{D}}^{P}_{d+1}}$$
is a weak equivalence of sheaves. Roughly, given 
$$((W_{1}, e_{1}), (W_{2}, e_{2})) \in \widehat{\mathbf{D}}^{P}_{d+1}(X)\times\widehat{\mathbf{D}}^{P}_{d+1}(X)$$ 
such that the images of $e_{1}$ and $e_{2}$ intersect, after increasing the dimension of the ambient space,  one can find an isotopy of embeddings which pulls $W_{1}$ away from $W_{2}$. 
Letting $|k|$ be a pseudo-inverse for $|j|$, the product described above yields a homotopy monoid structure on the representing space $|\widehat{\mathbf{D}}^{P}_{d+1}|$ with product given by
\begin{equation} \label{eq: product} \xymatrix{
|\widehat{\mathbf{D}}^{P}_{d+1}| \times |\widehat{\mathbf{D}}^{P}_{d+1}| \ar[r]^{\simeq} & |\widehat{\mathbf{D}}^{P}_{d+1} \times \widehat{\mathbf{D}}^{P}_{d+1}| \ar[rr]^{|k|} && |\widehat{\mathbf{D}}^{P}_{d+1}\bar{\times}\widehat{\mathbf{D}}^{P}_{d+1}| \ar[rr]^{|\mu|} && |\widehat{\mathbf{D}}^{P}_{d+1} |} \end{equation}
where the left-most map is some choice of homotopy equivalence. The empty-set element in $\widehat{\mathbf{D}}^{P}_{d+1}(\text{pt.})$ (which induces the empty-set element in $\widehat{\mathbf{D}}^{P}_{d+1}(X)$ for any $X$ by pulling back over the constant map) determines an element  $e \in |\widehat{\mathbf{D}}^{P}_{d+1}|$. This is easy to see by examining the construction of $|\widehat{\mathbf{D}}^{P}_{d+1}|$ as the geometric realization of the simplicial set $(l \mapsto \widehat{\mathbf{D}}^{P}_{d+1}(\triangle^{l}))$. From the fact that the empty set is the identity for the partially defined product in  (\ref{eq: partial product}) it follows that $e$ is the identity (up to homotopy) for the product defined in (\ref{eq: product}). Associativity also follows from associativity of (\ref{eq: partial product}). \end{proof}

Since $|\widehat{\mathbf{D}}^{P}_{d+1}|$ has the structure of a monoid up to homotopy, 
it follows from \cite[Example 4A.3]{H 01} that for each $k \in \N$ and point $x_{0}$ on the identity component $|\widehat{\mathbf{D}}^{P}_{d+1}|_{0} \subset |\widehat{\mathbf{D}}^{P}_{d+1}|$,
there is an isomorphism, $\pi_{k}(|\widehat{\mathbf{D}}^{P}_{d+1}|, x_{0}) \stackrel{\cong} \longrightarrow [S^{k}, |\widehat{\mathbf{D}}^{P}_{d+1}|_{0}]$ induced by the map defined by forgetting base points. 
Now consider the map $|H|\circ|\widehat{T}|: |\widehat{\mathbf{D}}^{P}_{d+1}| \longrightarrow \Omega^{\infty-1}\MT_{P}(d+1)$ from Section \ref{subsection: parametrized pontryagin thom}).
\begin{proposition} \label{prop: monoid iso loops} The map $|H|\circ|\widehat{T}|$
induces a monoid isomorphism,
$$\xymatrix{ \pi_{0}(|\widehat{\mathbf{D}}^{P}_{d+1}|) \stackrel{\cong} \longrightarrow \pi_{0}(\Omega^{\infty-1}\MT_{P}(d+1)). }$$
\end{proposition}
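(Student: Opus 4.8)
The plan is to identify both monoids $\pi_0(|\widehat{\mathbf{D}}^P_{d+1}|)$ and $\pi_0(\Omega^{\infty-1}\MT_P(d+1))$ with the cobordism group $\Omega^P_d$, and to check that the map $|H|\circ|\widehat T|$ respects these identifications. First I would observe that by Corollary \ref{coefficients} we already have $\pi_{-1}\MT_P(d+1)\cong\Omega^P_d$, and since $\Omega^{\infty-1}\MT_P(d+1)$ is an infinite loop space, $\pi_0(\Omega^{\infty-1}\MT_P(d+1))\cong\pi_{-1}\MT_P(d+1)\cong\Omega^P_d$ as groups; the additive structure on the left is exactly the $H$-space sum, which in homotopy matches addition in the spectrum. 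This handles the target side entirely with results already in the excerpt.

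For the source side, I would use the concordance description: by \eqref{concordance relative iso} (the non-relative case, with $A=\emptyset$), $\pi_0(|\widehat{\mathbf{D}}^P_{d+1}|)\cong\widehat{\mathbf{D}}^P_{d+1}[\mathrm{pt}]$, the set of concordance classes over a point. An element over a point is (after Lemma \ref{lemma: concordance representation} and the forgetful equivalence $F$ of \eqref{eq: forgetful map D}) essentially a closed $P$-submanifold $W\subset\R\times\R_+\times\R^{d+\infty}\times\R^{p+m}$ with $f\colon W\to\R$ proper, i.e.\ a closed $d$-dimensional $P$-manifold $M=f^{-1}(0)$ together with a product structure $W\cong M\times\R$; a concordance between two such is precisely a $(d+1)$-dimensional $P$-cobordism. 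Thus $\widehat{\mathbf{D}}^P_{d+1}[\mathrm{pt}]\cong\Omega^P_d$ as sets. The partially-defined product $\mu$ of \eqref{eq: partial product} is disjoint union of $P$-submanifolds with disjoint tubular neighborhoods, so under this bijection it corresponds to disjoint union of closed $P$-manifolds, which is exactly the addition in $\Omega^P_d$ (the empty $P$-manifold being the identity, matching the identity element $e$ constructed in Proposition \ref{prop: monoid homomorphism}). Hence $\pi_0(|\widehat{\mathbf{D}}^P_{d+1}|)\cong\Omega^P_d$ as monoids, and in fact as a group.

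It remains to check that $|H|\circ|\widehat T|$ induces the identity $\Omega^P_d\to\Omega^P_d$ under these two identifications. Here I would run the Pontryagin--Thom construction of $\widehat T_n$ (Section \ref{subsection: parametrized pontryagin thom}) on a representative $W=M\times\R$ over a point and trace through the resulting based map; by the bijection of Lemma \ref{lemma: limiting map equivalence} (with $X=\mathrm{pt}$), $H\circ T$ recovers the concordance class of $W$, i.e.\ the class of $M$ in $\Omega^P_d$, and the diagram \eqref{eq: concordance diagram} shows that on concordance classes over a point the composite is the standard Pontryagin--Thom isomorphism $\Omega^P_d\xrightarrow{\cong}\pi_{-1}\MT_P(d+1)$ that underlies Corollary \ref{coefficients}. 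Since that map is known to be a bijection, the composite is a bijection; combined with the fact that both sides are monoids under disjoint union / $H$-space sum and that the Pontryagin--Thom construction of $\widehat T$ sends a disjoint union of $P$-manifolds (with disjoint tubular neighborhoods) to the sum of the corresponding based maps, it is a monoid homomorphism, hence a monoid isomorphism. The main obstacle I anticipate is the last point: verifying carefully that $\widehat T$ is compatible with the partially-defined product $\mu$, i.e.\ that the collapse map associated to $W_1\sqcup W_2$ with disjoint tubular neighborhoods is (up to homotopy) the loop-sum of the two individual collapse maps. This is a standard feature of Pontryagin--Thom collapse but needs the bookkeeping with the extra $\R$-coordinate and the $P$-singularity factorization (\ref{eq: bundle factorization}) to be spelled out; I would reduce it to the corresponding statement for $\mathbf{D}_{d+1}$ proved in \cite{GMTW 09} and \cite{MW 07} by the same argument as for \cite[Theorem 3.8]{MW 07}, as suggested in the text.
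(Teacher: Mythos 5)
Your proposal is correct and takes essentially the same route as the paper: the bijectivity on $\pi_{0}$ is already supplied by Lemma \ref{lemma: limiting map equivalence} and diagram (\ref{eq: concordance diagram}) applied to a point, and the only substantive new check is that $\widehat{T}$ carries the partially defined product (disjoint union with disjoint tubular neighborhoods) to the loop sum, which you, like the paper, settle by the classical Pontryagin--Thom collapse argument (the paper simply refers to \cite{St 68} for this). Your additional identification of both $\pi_{0}$'s with $\Omega^{P}_{d}$ via Corollary \ref{coefficients} and concordance classes over a point is a harmless elaboration that the paper does not spell out.
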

\begin{proof} This proposition is proven by examining the Thom-Pontryagin map
$$\xymatrix{
\widehat{T}: \widehat{\mathbf{D}}^{P}_{d+1}(\text{pt.})  \longrightarrow \mathcal{Z}^{P}_{d+1}(\text{pt.})
}$$ 
and checking that it sends a disjoint union $(W_{1}\sqcup W_{2}, e_{1}\sqcup e_{2}) \in \widehat{\mathbf{D}}^{P}_{d+1}(\text{pt.})$ to a sum of ``loops'' in $\mathcal{Z}^{P}_{d+1}(\text{pt.}) \sim \Omega^{\infty-1}_{P}\widehat{\Th(U^{\perp}_{d+1, n})}$. 
This follows the exact same argument as in the proof of the classical \textit{Pontryagin-Thom Theorem} from \cite{St 68}. We refer the reader there for details. 
\end{proof}  

Now, since $\pi_{0}(\Omega^{\infty-1}\MT_{P}(d+1))$ is a group, it follows from Proposition \ref{prop: monoid iso loops} that $\pi_{0}(|\widehat{\mathbf{D}}^{P}_{d+1}|)$ is a group as well. 
From this group structure it follows that all path components of $|\widehat{\mathbf{D}}^{P}_{d+1}|$ are homotopy equivalent. 
We then have that for every $x \in |\widehat{\mathbf{D}}^{P}_{d+1}|$ in any path component and for all $k \in \N$, there is an isomorphism
\begin{equation} \label{eq: unbased iso 2}
\xymatrix{
 \pi_{k}(|\widehat{\mathbf{D}}^{P}_{d+1}|, x) \stackrel{\cong} \longrightarrow [S^{k}, |\widehat{\mathbf{D}}^{P}_{d+1}|_{x}], 
 }
 \end{equation}
 where $|\widehat{\mathbf{D}}^{P}_{d+1}|_{x} \subset |\widehat{\mathbf{D}}^{P}_{d+1}|$ is the path component containing $x$. 
Consider the commutative diagram,
 $$\xymatrix{
  \pi_{k}(|\widehat{\mathbf{D}}^{P}_{d+1}|, x) \ar[d]^{\cong} \ar[rrr]^{|H|\circ|\widehat{T}|} &&&  \pi_{k}(\Omega^{\infty-1}\MT_{P}(d+1), y) \ar[d]^{\cong} \\
  [S^{k}, |\widehat{\mathbf{D}}^{P}_{d+1}|_{x}] \ar[rrr]^{|H|\circ|\widehat{T}|}_{\cong} &&&  [S^{k}, \Omega^{\infty-1}_{y}\MT_{P}(d+1)], }$$
  where $y = |H|\circ|\widehat{T}|(x)$. 
It follows that the top horizontal map is an isomorphism. 
This concludes the proof of Proposition \ref{prop: homotopy monoid}.
  
 \appendix 
\section{} \label{Appendix A} In this section we prove a result which implies 
Theorem \ref{theorem: local triviality}. This result is a slight modification of the main theorem 
from \cite{BF 81} and our proof is similar.
\begin{lemma} \label{lemma: Principal bundle} Let $(W; M_{a}, M_{b})$ be a $P$-manifold cobordism triple. 
For any positive integer $n$, the quotient map, 
$$\xymatrix{
q: \mathcal{E}_{P, n}(W; M_{a}, M_{b}) \longrightarrow  \dfrac{\mathcal{E}_{P, n}(W; M_{a}, M_{b})}{\Diff_{P}(W; M_{a}, M_{b})}
}$$
is a locally trivial fibre bundle. 
\end{lemma}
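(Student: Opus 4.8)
The plan is to show that the quotient map $q$ admits local sections, which for a free action of a topological group on a reasonable space is enough to conclude local triviality. Concretely, I would adapt the argument of \cite{BF 81}: the key point is that $\mathcal{E}_{P,n}(W; M_a, M_b)$ carries enough smooth structure that one can build a slice through any given embedding $\varphi_0$, and that the action of $\Diff_P(W; M_a, M_b)$ is free (which is recorded in Section \ref{section: mapping spaces}) and admits local cross-sections.

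First I would fix a point $[\varphi_0] \in \mathcal{E}_{P,n}(W; M_a, M_b)/\Diff_P(W; M_a, M_b)$ with representative $\varphi_0 \colon W \hookrightarrow X\times[0,1]\times\R_+\times\R^{\bar n}\times\R^{p+m}$ (here $X = \mathrm{pt.}$). Using a tubular neighborhood of the image $\varphi_0(W)$ that is compatible with the product and collar structure — i.e.\ whose restriction over $\partial_1 W = \beta_1 W\times i_P(P)$ splits as a product of a tubular neighborhood of $\beta_1 W$ with the fixed tubular neighborhood $e_P$ of $i_P(P)$, and which respects the collar $q_0, q_1$ prescribed in Definition \ref{defn: maps P-cobrdism} — one gets a normal-form description: embeddings near $\varphi_0$ correspond to sections of this normal bundle satisfying the $P$-compatibility conditions. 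The $P$-structure conditions (i, ii, iii of Definition \ref{defn: maps P-cobrdism}) are all expressible as the section being of product form near $\partial_1 W$ and of collar form near $M_a \sqcup M_b$; these are closed linear conditions, so the space of nearby $P$-embeddings is locally modeled on a topological vector space (a space of sections of a bundle over $W$ subject to the product/collar constraints). This is where one uses \cite{Hi 76} for the openness of embeddings among such sections, exactly as in Proposition \ref{prop: P-diff open set}.

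Next I would produce the local section. Given $\varphi$ close to $\varphi_0$, the normal-form data associates to $\varphi$ a diffeomorphism $g_\varphi \colon W \to W$ (essentially "read off" the reparametrization by composing $\varphi$ with the tubular projection of $\varphi_0$ and the inverse of the zero section) such that $\varphi \circ g_\varphi^{-1}$ lies in the chosen slice. The compatibility of the tubular neighborhood with the product and collar structure forces $g_\varphi$ to be a $P$-diffeomorphism: its restriction to $\partial_1 W$ factors as $g_{\beta_1 W}\times \mathrm{Id}_P$ because the section data over $\partial_1 W$ was of product form with the $P$-coordinate fixed by $e_P$, and it respects the collars because the section data respected the collars. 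Thus $\varphi \mapsto g_\varphi$ is continuous on a neighborhood $U$ of $[\varphi_0]$ in the base, giving a continuous local section $s \colon U \to \mathcal{E}_{P,n}(W; M_a, M_b)$, $s([\varphi]) = \varphi\circ g_\varphi^{-1}$. Combined with freeness, the map $U \times \Diff_P(W; M_a, M_b) \to q^{-1}(U)$, $([\varphi], h) \mapsto s([\varphi])\circ h$, is the desired local trivialization, with inverse $\psi \mapsto ([\psi], g_\psi)$ where $g_\psi$ is the unique diffeomorphism with $\psi = s([\psi])\circ g_\psi$; one checks this $g_\psi$ is a $P$-diffeomorphism by the same argument.

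The main obstacle I expect is verifying that the reparametrizing diffeomorphism $g_\varphi$ genuinely lands in $\Diff_P(W; M_a, M_b)$ rather than merely $\Diff(W)$ — that is, carefully engineering the tubular neighborhood and the slice so that the $P$-product structure along $\partial_1 W$ and the collar conditions along $M_a, M_b$ are preserved under the "read off the reparametrization" construction, uniformly for $\varphi$ near $\varphi_0$. This requires choosing the tubular neighborhood of $\varphi_0(W)$ so that it restricts over $\partial_1 W$ to $(\text{tub.\ nbhd.\ of }\beta_1 W) \times e_P$ and is of collar product form near the incoming/outgoing ends, and then checking that this property is stable: the normal data of any nearby $P$-embedding automatically inherits the product/collar shape, so $g_\varphi$ does too. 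The remaining bookkeeping — continuity of $\varphi \mapsto g_\varphi$ in the $C^\infty$-topology, and the fact that $\Diff_P$ acts freely so that $g_\psi$ is unique — is routine and parallels \cite{BF 81} and Proposition \ref{prop: P-diff open set} above.
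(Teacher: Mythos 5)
Your proposal is correct and follows essentially the same route as the paper's Appendix A proof: both adapt \cite{BF 81} by choosing a tubular (geodesic) neighborhood of the image of a fixed $P$-embedding that splits as a product over $\partial_{1}W$ (a neighborhood of $\beta_{1}W$ times the fixed neighborhood of $i_{P}(P)$), reading off the reparametrizing diffeomorphism of a nearby embedding by composing with the tubular projection and $\varphi_{0}^{-1}$, and invoking the openness of $\Diff_{P}$ inside $C^{\infty}_{P}$ (Proposition \ref{prop: P-diff open set}) to conclude that this reparametrization is a $P$-diffeomorphism, which yields the local trivialization. The only cosmetic difference is that you phrase the local data as a slice/local section while the paper writes the trivialization $g \mapsto ([g], f^{-1}\circ\pi\circ g)$ directly; these are the same construction.
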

\begin{proof} Let $f \in \mathcal{E}_{P, n}(W; M_{a}, M_{b})$ and let $[f]$  denote the class of $f$ in the orbit space, 
$$\xymatrix{
\dfrac{\mathcal{E}_{P, n}(W; M_{a}, M_{b})}{\Diff_{P}(W; M_{a}, M_{b})}.
}$$ 
By how the action is defined, for any $g \in q^{-1}([f])$ we have $g(W) = f(W)$. 
By definition of the space $\mathcal{E}_{P, n}(W; M_{a}, M_{b})$ we have 
$$f(\partial_{1}W) \subset [0,1]\times\{0\}\times\R^{d+\bar{n}}\times\R^{p+m}$$
and there exists a real number $\varepsilon > 0$ such that 
$$[0, \varepsilon)\times f(\partial_{1}W) \subset f(W)$$
where $[0, \varepsilon)\times f(\partial_{1}W)$ is understood to be the set of all
$$(t, \; s, \; x, \; y) \in [0,1]\times\R_{+}\times\R^{d+\bar{n}}\times\R^{p+m}$$
such that $(t, 0, x, y) \in f(\partial_{1}W)$ and $s \in [0, \varepsilon) \subset \R_{+}$. 
Let 
$$N \subset [0,1]\times\R_{+}\times\R^{d+\bar{n}}\times\R^{p+m}$$
be a geodesic neighborhood for $f(W)$. 
Denote by $\pi: N \longrightarrow f(W)$ the projection map.
We have,
\begin{equation} \label{eq: geodesic neighborhood factorization}
N \cap( [0,1]\times\{0\}\times\R^{d+\bar{n}}\times\R^{p+m}) \; = \; N_{\beta_{1}}\times N_{P}
\end{equation}
where $N_{\beta_{1}} \subset \R^{d+\bar{n}}$ is a geodesic neighborhood for $f_{\beta_{1}}(\beta_{1}W) \subset \R^{d+\bar{n}}$
and $N_{P} \subset \R^{p+m}$ is a geodesic neighborhood for $i_{P}(P) \subset \R^{p+m}$. 
We denote by 
$$\pi_{\beta_{1}}: N_{\beta_{1}W} \longrightarrow \beta_{1}W \quad \text{and} \quad  \pi_{P}: N_{P} \longrightarrow P$$ 
the projection maps. 
Now let 
$$z \; \in \; \bigg([0,1]\times [0, \varepsilon)\times \R^{d+\bar{n}}\times i_{P}(P)\bigg)\cap N.$$ 
We write $z$ as
$z = (s, t, x, y)$
with $s \in [0, \varepsilon)$,  $(t, x) \in [0,1]\times \R^{\bar{d}-1+\bar{n}}$, and $y \in i_{P}(P)$
(we permute the factors of $[0, \varepsilon)$ and $[0, 1]$ to make for more convenient notation for the constructions ahead).
It follows from the factorization of (\ref{eq: geodesic neighborhood factorization}) that,
  \begin{equation} \label{eq: proj} 
  \pi(s, t, x, y) \; = \; (s, \pi_{\beta_{1}}(t, x), y).
  \end{equation} 
 Let
 $$\xymatrix{
 U \subset \mathcal{E}_{P, n}(W; M_{a}, M_{b})
 }$$
 be an open neighborhood of $f$ with the property that $g(W) \subset N$ for
 all $g \in U$. By definition of the $C^{\infty}$-topology, such a
 subset does indeed exist.
 Now let $q$ be the quotient map from the statement of the theorem. 
Let let $\bar{U}$ denote the image $q(U)$. 
 For any such $g \in q^{-1}(\bar{U})$, we obtain a smooth map
 $W \longrightarrow W$ given by the formula $x \mapsto f^{-1}\circ\pi\circ g(x)$.
 We will abuse notation and denote this map by 
 $$f^{-1}\circ\pi\circ g: W \longrightarrow W.$$
 It follows from (\ref{eq: proj}) that for all $g \in q^{-1}(\bar{U})$, the map $f^{-1}\circ\pi\circ g$ is an element of the mapping space $C^{\infty}_{P}(W; M_{a}, M_{b})$ introduced in Section \ref{section: mapping spaces}. 
 We have a map 
 $$\alpha: q^{-1}(\bar{U}) \longrightarrow C^{\infty}_{P}(W; M_{a}, M_{b}), \quad g \mapsto f^{-1}\circ\pi\circ g.$$
 Notice that $\alpha(f) = Id_{W}$, which is of course an element of $\Diff_{P}(W; M_{a}, M_{b})$. 
By Proposition \ref{prop: P-diff open set},
 $$\xymatrix{
 \Diff_{P}(W; M_{a}, M_{b}) \subset C^{\infty}_{P}(W; M_{a}, M_{b})
 }$$ 
 is an open subset.
We may then choose a small neighborhood $\bar{U}' \subset \bar{U}$ of $[f]$ such that 
 $$\xymatrix{
 \alpha(q^{-1}(\bar{U}')) \subset \Diff_{P}(W; M_{a}, M_{b}).
 }$$ 
 Using $\alpha$, we define a map 
 $$\xymatrix{
 \Phi: q^{-1}(\bar{U}') \longrightarrow \bar{U}'\times\Diff_{P}(W; M_{a}, M_{b}), \quad g \mapsto ([g], \alpha(g)).
 }$$ 
 It follows easily that this map $\Phi$ is a local trivialization of the projection $q$. 
 This concludes the proof of the Theorem.
  \end{proof}

  \end{document}